\newcommand{\rmd}{\mathrm{d}}
\numberwithin{equation}{section}
\numberwithin{figure}{section}
\newcommand\tabcaption{\def\@captype{table}\caption}
\newtheoremstyle{theorem}%
  {3pt}
  {3pt}
  {}
  {}
  {\bfseries\color{red}}
  {\textcolor{red}{.}}
  {.5em}
  {}
\theoremstyle{theorem}
\newtheorem{thm}{Theorem}[section]
\newtheoremstyle{remark}%
  {3pt}
  {3pt}
  {}
  {}
  {\bfseries\color{magenta}}
  {\textcolor{magenta}{.}}
  {.5em}
  {}
\theoremstyle{remark}
\newtheorem{rmk}[thm]{Remark}
\newtheoremstyle{lemma}%
  {3pt}
  {3pt}
  {}
  {}
  {\bfseries\color{blue}}
  {\textcolor{blue}{.}}
  {.5em}
  {\thmname{#1} \thmnumber{#2}\thmnote{ (#3)}}
\theoremstyle{lemma}
\newtheorem{lem}[thm]{Lemma}
\newtheoremstyle{definition}%
  {3pt}
  {3pt}
  {}
  {}
  {\bfseries\color{green}}
  {\textcolor{green}{.}}
  {.5em}
  {\thmname{#1} \thmnumber{#2}\thmnote{ (#3)}}
\theoremstyle{definition}
\newtheorem{defn}[thm]{Definition}
\definecolor{orange}{RGB}{255,127,0}
\newcommand{\magenta}[1]{\textcolor{magenta}{#1}}
\newcommand{\ml}{\boldsymbol{\Lambda}}
\newcommand{\ms}{\boldsymbol{\Sigma}}
\newcommand{\mr}[1]{\mathbf{R}_{\text{#1}}}
\newcommand{\mrt}[1]{\mathbf{R}_{\text{#1}}(t)}
\newcommand{\zeros}{\mathbf{0}}
\newcommand{\vx}{\mathbf{x}}
\newcommand{\vy}{\mathbf{y}}
\newcommand{\ve}{\boldsymbol{\epsilon}}
\newcommand{\vz}{\mathbf{z}}
\newcommand{\vh}{\mathbf{h}}
\newcommand{\vf}{\mathbf{f}}
\newcommand{\vxt}{\mathbf{x}(t)}
\newcommand{\vyt}{\mathbf{y}(t)}
\newcommand{\vzt}{\mathbf{z}(t)}
\newcommand{\vm}[1]{\boldsymbol{\mu}_{\text{#1}}}
\newcommand{\vmt}[1]{\boldsymbol{\mu}_{\text{#1}}(t)}
\newcommand{\smooth}[1]{\overleftarrow{#1}}
\newcommand{\dt}{\Delta t}
\newcommand{\vw}{\mathbf{W}}
\newcommand{\vwt}[1]{\mathbf{W}_{#1}(t)}
\newcommand{\va}{\boldsymbol{\alpha}}
\newcommand{\ma}{\mathbf{A}}
\newcommand{\mb}{\mathbf{B}}
\newcommand{\mc}{\boldsymbol{\Gamma}}
\newcommand{\cF}{\mathcal{F}}
\newcommand{\pp}{\mathbb{P}}
\newcommand{\rr}{\mathbb{R}}
\newcommand{\tran}{\mathtt{T}}
\newcommand{\ee}[1]{\mathbb{E}\left[#1\right]}
\newcommand{\nf}{\normalfont{f}}
\newcommand{\ns}{\normalfont{s}}
\renewcommand*{\@cite@ofmt}{\bfseries\hbox}
\title{A Martingale-Free Introduction to Conditional Gaussian Nonlinear Systems}
\author{Marios Andreou and Nan Chen}
\date{\today}
\begin{document}

\maketitle\tableofcontents

\begin{abstract}
The Conditional Gaussian Nonlinear System (CGNS) is a broad class of nonlinear stochastic dynamical systems. Given the trajectories for a subset of state variables, the remaining follow a Gaussian distribution. Despite the conditionally linear structure, the CGNS exhibits strong nonlinearity, thus capturing many non-Gaussian characteristics observed in nature through its joint and marginal distributions. Desirably, it enjoys closed analytic formulae for the time evolution of its conditional Gaussian statistics, which facilitate the study of data assimilation and other related topics. In this paper, we develop a martingale-free approach to improve the understanding of CGNSs. This methodology provides a tractable approach to proving the time evolution of the conditional statistics by deriving results through time discretization schemes, with the continuous-time regime obtained via a formal limiting process as the discretization time-step vanishes. This discretized approach further allows for developing analytic formulae for optimal posterior sampling of unobserved state variables with correlated noise. These tools are particularly valuable for studying extreme events and intermittency and apply to high-dimensional systems. Moreover, the approach improves the understanding of different sampling methods in characterizing uncertainty. The effectiveness of the framework is demonstrated through a physics-constrained, triad-interaction climate model with cubic nonlinearity and state-dependent cross-interacting noise.

\end{abstract}

\section{Introduction} \label{sec:1}

Complex turbulent nonlinear dynamical systems (CTNDSs) are prevalent in various fields such as geoscience, engineering, and material science \cite{majda2016introduction, majda2006nonlinear, strogatz2018nonlinear, deisboeck2007complex, kitano2001foundations, sheard2008principles, wilcox1988multiscale}. These systems are characterized by high-dimensional state spaces, multiscale structures, and chaotic dynamics with positive Lyapunov exponents \cite{dijkstra2013nonlinear, palmer1993nonlinear, majda2012physics, sapsis2013statistically, majda2012filtering, harlim2014ensemble}. They often exhibit non-Gaussian features, such as intermittent extreme events and skewed probability density functions \cite{cousins2014quantification, farazmand2019extreme, denny2009prediction, mohamad2015probabilistic}. Key mathematical challenges include understanding their qualitative properties \cite{majda2006nonlinear, strogatz2018nonlinear, wiggins2003introduction}, forecasting, state estimation \cite{majda2012filtering, lahoz2010data, evensen2009data, law2015data}, uncertainty quantification (UQ), causal dependence \cite{bossomaier2016introduction, kim2017causation, almomani2020entropic, elinger2020information, chen2023causality}, developing reduced-order models \cite{brunton2016discovering, ahmed2021closures, chekroun2017data, lin2021data, mou2021data, peherstorfer2015dynamic, hijazi2020data, smarra2018data, chen2024minimum}, and accurate prediction of their response to perturbations \cite{majda2010quantifying, ragone2016new, lucarini2017predicting, andreou2024statistical}.

Due to the lack of perfect knowledge of nature and potential computational limits, CTNDSs are often approximated with unavoidable errors \cite{majda2012lessons, palmer2001nonlinear, orrell2001model, hu2010ensemble, benner2015survey}. State estimation is crucial for parameter estimation, prediction, and control. However, model errors in turbulent systems can be easily amplified in time. Therefore, data assimilation (DA) is widely used in practice to mitigate such errors. DA combines model information with available observations to improve state estimation. For turbulent systems, Bayesian inference is often used in DA to provide a statistical estimate known as the posterior distribution, with the model defining the so-called prior distribution while the observations construe the likelihood of the data. In many applications, observations are limited to only a subset of variables \cite{majda2012filtering, kalnay2003atmospheric, lahoz2010data, evensen2009data, law2015data}. This creates additional challenges in DA, especially when inferring the unobserved variables.

Depending on the use of observations, DA methods can often be divided into two categories: filtering and smoothing. Filtering uses past observations up to the present, making it essential for initializing real-time forecast models. Smoothing, which incorporates both past and future data, provides more accurate offline estimates, particularly in reanalysis for climate and oceanography \cite{rauch1965maximum, chen2020efficient, sarkka2023bayesian, uppala2005era}. For linear systems with additive Gaussian noise, the celebrated Kalman filters and Rauch-Tung-Striebel smoothers offer closed-form solutions for the posterior distributions \cite{rauch1965maximum, kalman1961new, bucy1987filtering, bierman1977factorization}. Beyond improving state estimation, the optimal filter and smoother statistics enable effective sampling of trajectories, offering insights beyond the point-wise estimates. Since these sampled trajectories incorporate the information from the observations, they are often more accurate than those simulated from imperfect models. These state estimation and sampling methods are widely used in practice to understand dynamical properties, develop surrogate models, assist in building stochastic parameterizations, and quantify uncertainty and model error.

Closed-form solutions for DA are rarely available for general CTNDSs due to their nonlinear dynamics and non-Gaussian statistics, especially for sampling and estimating the latent dynamics. As a result, numerical methods such as the ensemble Kalman filter (EnKF) and particle filter have been developed for nonlinear or non-Gaussian systems \cite{evensen2009data, law2015data, anderson2001ensemble, delmoral1997nonlinear, liu1998sequential}, along with their smoother counterparts \cite{evensen2000ensemble, kitagawa1996monte}. These methods are widely used across various scientific disciplines and are essential for ensemble forecasting in operational systems \cite{molteni1996ecmwf, palmer2019ecmwf, toth1997ensemble}. However, simulating high-dimensional CTNDSs is computationally expensive, limiting the number of ensemble members or particles that can be used, which can introduce a nontrivial bias and instability in the state estimates (such as finite-time blowup of the
solution) \cite{gottwald2013mechanism, harlim2010catastrophic}, with the number of samples needed to address these stability concerns increasing at an exponential rate \cite{snyder2008obstacles}. Techniques like noise inflation, localization, and resampling help address these issues, but they are often empirical and lack natural integration into the state estimation framework \cite{anderson2001ensemble, chen2020predicting, hol2006resampling, greybush2011balance}. Furthermore, as inherently approximate in nature, the careful study of the accuracy of these pro tem approaches is warranted. Thus, closed-form analytic methods are preferred for enhancing not only the computational efficiency and
numerical behavior of probabilistic state estimation, but also in promoting the accurate capture of the salient non-Gaussian features of the dynamics, e.g., intermittent extreme events, and also in facilitating the theoretical study of the model error, as well as the uncertainty in the estimated state.

Instead of modifying DA schemes directly, computational challenges in state estimation can be addressed by developing approximate models with analytic solutions for posterior distributions while retaining key system characteristics. Linearizing strongly nonlinear systems can introduce significant errors in the mean's and higher-order moments' time evolution, but a special class of systems, known as conditional Gaussian nonlinear systems (CGNSs), offers a better alternative \cite{chen2022conditional, liptser2001statisticsI, liptser2001statisticsII, chen2016filtering, chen2018conditional}. Despite the intrinsic nonlinearity and non-Gaussian statistics, the conditional distribution of the unobserved variables given one time series of the observed variables is Gaussian. This conditional Gaussianity provides closed-form solutions for the optimal filter and smoother statistics, which facilitate efficient and accurate state estimation without empirical tuning. The CGNS framework has been successfully applied in DA, prediction, and machine learning, offering analytic tools for performance assessment and error quantification \cite{chen2022conditional, chen2024cgnsde, chen2014predicting, chen2014information}.

This paper aims to introduce the CGNS framework without relying on martingale theory \cite{liptser2001statisticsII} to establish the conditional Gaussianity of the posterior distribution or to derive optimal nonlinear state estimation equations. This approach unifies discrete- and continuous-time formulations, which are often treated separately, by showing that the discrete system can converge to its continuous counterpart under appropriate consistency and stability conditions. In addition to state estimation through filtering and smoothing, we explore the conditional sampling of the unobserved variables when correlated noise appears. These sampled trajectories, similar to ensemble members in the ensemble DA, preserve temporal dependencies. The temporal correlation distinguishes them from time series formed by independent posterior samples at different instants. The sampling schemes are derived using forward (filter-based) and backward (smoother-based) methods, with closed-form solutions provided for both. This facilitates systematically comparing these methods in capturing nonlinear and non-Gaussian features associated with the unobserved states, especially for intermittent behaviors and extreme events. The sampled trajectories further highlight the limitations of treating the posterior mean estimates as surrogates of the hidden variables. Finally, we discuss a hierarchy that emerges in the fluctuation component of the sample-to-sample uncertainty, which decreases as more information from the observable process is incorporated, depending on whether the filter- or smoother-based sampling methodology is being used.

The remainder of this work is organized as follows. Section \ref{sec:2} presents the CGNS modeling framework and derives the optimal nonlinear filter and smoother state estimation equations using a martingale-free approach. In Section \ref{sec:3}, we construct the filter-based forward and smoother-based backward sampling procedures for the hidden process and demonstrate their first- and second-order moment consistency with the Gaussian statistics of the optimal posterior distributions from Section \ref{sec:2}. We also compare their dynamical properties in terms of their uncertainty, specifically through their damping and fluctuation constituents, both together and with those from the unconditional forward run of the system. An illustrative numerical experiment in Section \ref{sec:4} highlights the differences between the optimal nonlinear filter and smoother, focusing on effective state estimation and sampling of the hidden intermittent time series. The article concludes in Section \ref{sec:5}. The Appendix includes the details of the proofs of most theorems in this work.

\section{The Conditional Gaussian Nonlinear System Modeling Framework} \label{sec:2}

\subsection{Basic Concepts} \label{sec:2.1}

Throughout this paper, boldface letters are exclusively used to denote vectors for mathematical clarity. Lowercase boldface letters represent column vectors, while uppercase boldface letters denote matrices. The only exception is $\vw$ (with subscripts or superscripts), which denotes a Wiener process. Although this always corresponds to a column vector in this work, we use an uppercase letter due to literary convention.

Let $t$ denote the time variable, with $t \in [0, T]$, where $T > 0$ may be infinite. Let $(\Omega, \mathcal{F}, \mathbb{P})$ be a complete probability space, and let $\{\mathcal{F}_t\}_{t \in [0, T]}$ be a filtration of sub-$\sigma$-algebras of $(\Omega, \mathcal{F})$. We assume this filtration is augmented (i.e., complete and right-continuous), forming the stochastic basis $(\Omega, \mathcal{F}, \{\mathcal{F}_t\}_{t \in [0, T]}, \mathbb{P})$. For any filtration, there exists a smallest augmented filtration refining $\{\mathcal{F}_t\}_{t \in [0, T]}$ (known as its completion), so this is without loss of generality. We denote the partially observable $(S, \mathcal{A})$-valued stochastic process as $(\mathbf{x}(t, \omega), \mathbf{y}(t, \omega))$ for $t \in [0, T]$ and $\omega \in \Omega$. Here, $\mathbf{x}$ is the observable component, while $\mathbf{y}$ is the unobservable component. The theory that follows can be applied mutatis mutandis to any partially observable stochastic process that takes values over a measurable space $(S, \mathcal{A})$, where $S$ is a separable Hilbert space (finite-dimensional or not) over a complete scalar ground field, and $\mathcal{A}$ is a $\sigma$-algebra on $S$. For this work, we consider complex-valued finite-dimensional processes with respect to the usual Euclidean inner product. Thus, we set $S = \mathbb{C}^{k+l}$ and $\mathcal{A} = \mathcal{B}_{\mathbb{C}^{k+l}} \equiv \mathcal{B}_{\mathbb{R}^{2(k+l)}}$, with $\mathbf{x}$ as a $k$-dimensional vector and $\mathbf{y}$ as an $l$-dimensional vector. Here, $\mathcal{B}_{\mathbb{R}^{2(k+l)}}$ is the Borel $\sigma$-algebra of $\mathbb{R}^{2(k+l)}$, since $\mathrm{dim}_{\mathbb{R}}(\mathbb{C}^{k+l}) = 2(k+l)$. We assume that
\begin{equation*}
(\mathbf{x}, \mathbf{y}) = \left((x_1(t), \ldots, x_k(t), y_1(t), \ldots, y_l(t)), \mathcal{F}_t\right), \quad t \in [0, T],
\end{equation*}
indicating that the partially observable random process is (jointly) adapted (or non-anticipative) to the filtration $\{\mathcal{F}_t\}_{t \in [0, T]}$. This means that for all times $t \in [0, T]$, the random vector defined by $\left(\mathbf{x}(t, \cdot)^\tran, \mathbf{y}(t, \cdot)^\tran\right)^\tran: \Omega \to \mathbb{C}^{k+l}$ is an $(\mathcal{F}_t; \mathcal{A})$-measurable function. Specifically, this implies that the natural filtration $\mathcal{F}$ with respect to $\{\mathbf{x}(s)\}_{s \leq t}$, which is the sub-$\sigma$-algebra generated by the observable processes for times $s \leq t$, is defined as
\begin{equation*}
\mathcal{F}_t^{\mathbf{x}} := \sigma\left(\{ \mathbf{x}(s) \}_{s \leq t}\right) = \left\{ \mathbf{x}(s)^{-1}[A] = \mathbf{x}(s, \cdot)^{-1}[A]: A \in \mathcal{A}, \, s \leq t \right\}.
\end{equation*}
This satisfies $\mathcal{F}_t^{\mathbf{x}} \subseteq \mathcal{F}_t$, since $\mathbf{x}$ is adapted to the filtration $\{\mathcal{F}_t\}_{t \in [0, T]}$ by construction. By definition, $\mathcal{F}_t^{\mathbf{x}}$ is the smallest such filtration. We refer to this natural filtration as the observable $\sigma$-algebra (at time $t$) for the remainder of this work.

It can be shown that, at each time instant $t\in[0,T]$, the optimal estimate in the mean-square sense, which minimizes the expected squared norm error of some $\cF_t$-measurable function $h(t,\vxt,\vyt)$ of $t$, $\vx$, and $\vy$, based on the observations up to time $t$, $\{\mathbf{x}(s)\}_{s\leq t}$, is exactly its conditional expectation conditioned on the observable $\sigma$-algebra at time $t$, or $\ee{\vh\big|\cF_t^\vx}=\ee{\vh(t,\vxt,\vyt)\big|\cF_t^\vx}$. This is known as the a-posteriori mean, and its optimality rests on the tacit assumption that $\ee{\left\|\vh(t,\vxt,\vyt)\right\|_2^2}$ is finite, where $\left\|\boldsymbol{\cdot}\right\|_2$ denotes the usual Euclidean norm over $\mathbb{C}^{\mathrm{dim}(\mathbf{h})}$ \cite{liptser2001statisticsI, liptser2001statisticsII}. This stochastic functional is to be understood as the up to a $\pp$-null set unique integrable $\mathbb{C}^{\mathrm{dim}(\mathbf{h})}$-valued and $\cF_t^{\vx}$-measurable random vector satisfying
\begin{equation*}
    \int_{F} \ee{\vh\big|\cF_t^\vx} \rmd\pp=\int_{F} \vh \rmd\pp, \quad \forall F\in \cF_t^\vx,
\end{equation*}
with the existence and uniqueness (up to $\pp$-null sets) of the a-posteriori mean being provided by the Hilbert projection theorem. This means that the random vector such that the residual $\vh-\ee{\vh\big|\cF_t^\vx}$ is orthogonal to the indicator functions $\mathds{1}_F$ under the Euclidean inner product over $L^2\left(\Omega, \cF_t, \pp; \mathbb{C}^{\mathrm{dim}(\mathbf{h})}\right)$, or the Lebesgue quotient space of square-integrable processes taking values in $\mathbb{C}^{\mathrm{dim}(\mathbf{h})}$, modulo the adapted processes with a squared null integral $\forall F\in\cF_t^\vx$ w.r.t.\ $\rmd\pp$, i.e., $\int_F \|\mathbf{b}(t)\|^2\rmd\pp=0$. Usually $\vh$ is mainly a function of the unobserved process $\vy$, and in this work we exclusively use $\vh=\vy$ to recover the optimal filter and smoother conditional Gaussian statistics of the hidden process when conditioning on the observations.

The goal in optimal state estimation is to characterize the a-posteriori mean by a system of stochastic differential equations (SDEs) called the {optimal nonlinear filter (state estimation) equations} or {optimal nonlinear smoother (state estimation) equations}, depending on whether we condition up to the current observation or the whole observational period (in the case where an observed time series exists a-priori), respectively. In general, without any special assumptions on the structure of the processes $\vh$ and $\vx$,  $\ee{\vh\big|\cF_t^\vx}$ is difficult to determine. However, under the assumption that their components are of a specific type, then we can characterize the a-posteriori mean through a system of analytic equations. Specifically, in the most general setting, such a system of analytic equations can be recovered under certain regularity assumptions, which include the $\vh=(\vh(t,\vxt,\vyt),\cF_t)$ process, $t\leq T$, being represented as
\begin{equation} \label{eq:assumptionh}
    \vh(t,\vxt,\vyt)=\mathbf{h}(0,\vx(0),\vy(0))+\int_0^t\boldsymbol{\xi}(s,\vx(s),\vy(s))\rmd s+\vz(t,\vxt,\vyt),
\end{equation}
where $\vz=(\vz(t,\vxt,\vyt),\cF_t)$ is a martingale and $\boldsymbol{\xi}=(\boldsymbol{\xi}(t,\vxt,\vyt),\cF_t)$ is a nonanticipative process such that it is $\pp$-a.s.\ (almost surely) in an appropriate $L^2$ Bochner space, i.e., $\pp\left(\int_0^T\left\|\boldsymbol{\xi}(s,\vx(s),\vy(s))\right\|_2\rmd s<+\infty\right)=1$, while the observable process is assumed to be an Itô process of the diffusive type, i.e.,
\begin{equation} \label{eq:assumptionx}
    \vxt=\mathbf{x}(0)+\int_0^t\mathbf{a}(s,\vx(s))\rmd s+\int_0^t\mathbf{B}(s,\mathbf{x}(s))\rmd \mathbf{W}(s),
\end{equation}
where $\mathbf{W}=(\vwt{},\cF_t)$ is a $d$-dimensional complex-valued Wiener process, and the processes $\mathbf{a}(t,\vxt)\in\mathbb{C}^k$ and $\mathbf{B}(t,\vxt)\in\mathbb{C}^{k\times d}$ are nonanticipative (at least for almost all times $t\in[0,T]$), with the former being (Lebesgue) integrable while the latter is $\mathbf{W}$-integrable, $\pp$-a.s \cite{liptser2001statisticsI}. Unfortunately, under this general formulation, one encounters the inherent difficulty of needing to know the higher-order conditional moments of the desired stochastic functional $\vh$, specifically the ratios between them (in the tesnor sense), as to be able to arrive to a closed system of explicitly solvable equations for the a-posteriori mean (or covariance).

Nonetheless, for a broad class of nonlinear systems, that being the class of CGNSs, the challenge of non-closedness can be effectively addressed to yield a closed system of optimal nonlinear posterior state estimation equations, as discussed in Chapter 12 of Liptser \& Shiryayev \cite{liptser2001statisticsII}. The CGNS framework is particularly powerful in this context, as it provides a closed system of equations for the a-posteriori mean, stemming from the conditional Gaussianity of the posterior distributions. This property allows for the expression of higher-order conditional moments solely in terms of the lower-order ones, specifically through the conditional mean and covariance tensor, thanks to the conditional Gaussian structure.

\subsection{Conditionally Gaussian Nonlinear Systems} \label{sec:2.2}

In its most general form, a conditional Gaussian system of processes consists of two diffusion-type processes defined by the following system of stochastic differentials given in Itô form
\cite{liptser2001statisticsII, chen2018conditional, chen2016filtering2}:
\begin{align} 
    \rmd\vxt &= \left[\ml^\vx(t,\vx)\vyt+\vf^\vx(t,\vx)\right]\rmd t+\ms_1^\vx(t,\vx)\rmd \vwt{1}+\ms_2^\vx(t,\vx)\rmd \vwt{2}, \label{eq:condgauss1}\\
    \rmd\vyt &= \left[\ml^\vy(t,\vx)\vyt+\vf^\vy(t,\vx)\right]\rmd t+\ms_1^\vy(t,\vx)\rmd \vwt{1}+\ms_2^\vy(t,\vx)\rmd \vwt{2}, \label{eq:condgauss2}
\end{align}
where 
\begin{equation*}
    \vw_1=\big((W_{11}(t),\ldots,W_{1d}(t)),\cF_t\big) \quad\text{and}\quad \vw_2=\big((W_{21}(t),\ldots,W_{2r}(t)),\cF_t\big),
\end{equation*}
are two mutually independent complex-valued Wiener processes (i.e., both their real and imaginary parts are mutually independent real-valued Wiener processes) and almost every path of $\vx$ and $\vy$ is in $C^0([0,T];\mathbb{C}^k)$ and $C^0([0,T];\mathbb{C}^l)$, respectively. The elements of the vector- and matrix-valued functions of multiplicative factors ($\ml^\vx,\ml^\vy$), forcings ($\vf^\vx,\vf^\vy$), and noise feedbacks ($\ms_1^\vx,\ms_2^\vx,\ms_1^\vy,\ms_2^\vy$) are assumed to be nonanticipative (adapted) functionals over the measurable time-function cylinder
\begin{equation*}
    (C^{0,k}_T,\mathscr{B}^k_T):=\big([0,T]\times C^0([0,T];\mathbb{C}^k), \mathcal{B}([0,T])\otimes
 \mathcal{B}(C^0([0,T];\mathbb{C}^k))\big),
\end{equation*}
where $\otimes$ denotes the tensor-product $\sigma$-algebra on the underlying product space, i.e.,
\begin{equation*}
    \mathscr{B}^k_T=\mathcal{B}([0,T])\otimes\mathcal{B}(C^0([0,T];\mathbb{C}^k))=\sigma\left(\big\{A\times B: A\in\mathcal{B}([0,T]), B\in\mathcal{B}(C^0([0,T];\mathbb{C}^k))\big\}\right),
\end{equation*}
with $\mathcal{B}(C^0([0,T];\mathbb{C}^k))$ being a $\sigma$-algebra generated by some topology on the space of continuous functions from $[0,T]$ to $\mathbb{C}^k$, $C^0([0,T];\mathbb{C}^k)$ (e.g., the Isbell topology or the $\sigma$-algebra generated by equipping the space of continuous functions with the compact-open topology, which simplifies to the topology of compact convergence, thus making the space Polish and reducing $\mathcal{B}(C^0([0,T];\mathbb{C}^k))$ to the $\sigma$-algebra of Borelian subsets \cite{dimaio1998topologies, escardo2001topologies}). This means that if $g(t,\vx)$ denotes a standard element of these functionals, for each $t\in[0,T]$, it is $(\cF_t^\vx;\mathbb{C})$-measurable. Observe how this is a weaker assumption imposed on the model parameters compared to progressive joint measurability, i.e., where each one of their standard elements is instead $(\mathcal{B}([0,t])\otimes\cF_t^\vx;\mathbb{C})$-measurable for every $t\in[0,T]$ \cite{karatzas2014brownian}. It is important to emphasize here the fact that in a CGNS, the unobservable component $\vy$ enters into \eqref{eq:condgauss1}--\eqref{eq:condgauss2} in a purely linear manner, whereas the observable process $\vx$ can enter into the coefficients of both equations in any $\cF^\vx_t$-measurable way, by allowing the model components $\ml^\vx$, $\ml^\vy$, $\vf^\vx$, $\vf^\vy$, $\ms^\vx_1$, $\ms^\vx_2$, $\ms^\vy_1$, and $\ms^\vy_2$ to be nonlinear functionals of the known components of the state variable.

\subsection{Significance and Broad Applications of CGNSs} \label{sec:2.3}
A variety of CTNDSs can be represented within the CGNS framework. Notable examples include physics-constrained nonlinear stochastic models, such as noisy Lorenz models \cite{lorenz1963deterministic, lorenz1984formulation}, low-order representations of Charney-DeVore flows \cite{olbers2001gallery}, and models capturing topographic mean flow interactions \cite{harlim2014ensemble, ferrari2003seasonal}. Additionally, stochastically coupled reaction-diffusion models are prevalent in fields like neuroscience and ecology, including models like the stochastically coupled FitzHugh-Nagumo \cite{treutlein1985noise, lindner2004effects, treutlein1985noise, lindner2004effects}, stochastically forced predator-prey models \cite{sun2010rich, sadhu2018stochastic}, and SIR epidemic models \cite{gray2011stochastic, kim2013sir}. Furthermore, multiscale models for geophysical flows, such as the Boussinesq equations with added noise \cite{majda2003introduction, castaing1989scaling, kelliher2011boundary} and the stochastically forced rotating shallow water equation \cite{chen2023uncertainty}, also fall within this framework \cite{chen2018conditional}. This modeling approach has been applied to describe many natural phenomena as well, such as the Madden-Julian oscillation and Arctic sea ice dynamics \cite{chen2014predicting, chen2022efficient}.

In addition to modeling a plethora of complex phenomena, the CGNS framework and its closed analytic DA formulae have been utilized to answer many theoretical questions and design computational strategies. Specifically, the classical Kalman-Bucy filter \cite{kalman1961new} serves as a fundamental special case of a CGNS DA scheme. But beyond this, the CGNSs has been utilized to develop an exact nonlinear Lagrangian DA algorithm that advances rigorous analysis to understand model errors and uncertainties in state recovery \cite{chen2014information, chen2015noisy, chen2016model}. This includes the usage of the framework not just for high-dimensional state estimation of ocean flows, but also in determining the optimal launching locations for Lagrangian tracers under uncertainty \cite{chen2024launching, bolt2024causation}.  Furthermore, the analytically solvable DA schemes have been applied for state estimation and the prediction of intermittent time series associated with monsoons and other climatic events \cite{chen2016filtering2, chen2018predicting}. Moreover, the effective DA procedures have contributed to developing rapid algorithms for addressing high-dimensional Fokker-Planck equations \cite{chen2017beating, chen2018efficient}.

Importantly, the underlying principles of designing CGNSs have been extended to much broader applications. Examples include the development of forecasting models within the realm of dynamic stochastic superresolution \cite{branicki2013dynamic, keating2012new}, the creation of stochastic superparameterizations for geophysical turbulence \cite{majda2009mathematical, grooms2014stochastic, majda2014new}, and the design of efficient multiscale DA strategies \cite{majda2014blended, deng2024lemda}. It has also been used for the development of efficient data-driven multiscale reduced-order, as well as deep learning,
modeling frameworks for CTNDSs \cite{mou2023efficient, chen2024cgkn}.

The CGNS framework differs significantly from other closure modeling approaches, such as linear regression models \cite{freedman2009statistical, yan2009linear} and projection methods like the Galerkin proper orthogonal decomposition \cite{holmes2012turbulence, hasselmann1988pips, kwasniok1996reduction}. These methods typically create surrogate models that are more computationally efficient than the original, addressing truncation errors through closure terms. In contrast, closure models for the large-scale variable $\vx$ do not incorporate the small-scale variables $\vy$, relying instead on "diagnostic'' terms that parameterize the nonlinear coupling between slow and fast variables. These terms often involve past values of $\vx$ and are derived from the Mori-Zwanzig (MZ) formalism from statistical mechanics \cite{mori1965transport, zwanzig2001nonequilibrium, chorin2002optimal, chorin2009stochastic}. The CGNS, however, employs a simplified prognostic equation for the unresolved small-scale variables $\vy$, allowing them to influence the dynamics only linearly, while permitting nonlinear dependencies in the coefficients of drift and diffusion.

A treatise on the broad applications of the CGNS framework in the prediction, state estimation, and uncertainty quantification of multiscale nonlinear stochastic systems is provided in Chen and Majda \cite{chen2018conditional}.

\subsection{Preliminaries} \label{sec:2.4}
Let the following denote the standard elements of the vector- and matrix-valued functionals appearing in \eqref{eq:condgauss1}--\eqref{eq:condgauss2} as model parameters:
\begin{gather*}
        \vf^\vx(t,\vx):=(f^\vx_1(t,x),\ldots,f^\vx_k(t,x))^\mathtt{T},\quad\vf^\vy(t,\vx):=(f^\vy_1(t,x),\ldots,f^\vy_l(t,x))^\mathtt{T},\\
        \ml^\vx(t,\vx):=\left(\Lambda^\vx_{ij}(t,\vx)\right)_{k\times l},\quad \ms_1^\vx(t,\vx):=\left(\Sigma^{\vx,1}_{ij}(t,\vx)\right)_{k\times d},\quad \ms_2^\vx(t,\vx):=\left(\Sigma^{\vx,2}_{ij}(t,\vx)\right)_{k\times r},\\
    \ml^\vy(t,\vx):=\left(\Lambda^\vy_{ij}(t,\vx)\right)_{l\times l},\quad\ms_1^\vy(t,\vx):=\left(\Sigma^{\vy,1}_{ij}(t,\vx)\right)_{l\times d},\quad\ms_2^\vy(t,\vx):=\left(\Sigma^{\vy,2}_{ij}(t,\vx)\right)_{l\times r}.
\end{gather*}
As to be able to obtain the main results which define the CGNS framework and its potency in DA through closed-form expressions for the posterior statistics, a set of sufficient regularity conditions needs to be assumed a-priori. For the following conditions, each respective pair of indices $i$ and $j$ takes all admissible values and $\vz$ is a $k$-dimensional function in $C^0([0,T];\mathbb{C}^k)$:
\begin{enumerate}[label=\magenta{\textbf{(\arabic*)}}]
    \item The multiplicative factor matrix and forcing vector in the unobservable process are Lebesgue integrable, while all noise feedback matrices and multiplicative factor matrix and forcing vector in the observable process are $\vw$-integrable (in the Itô sense), or equivalently, by Itô isometry,
    \begin{align*}
        \int_0^T \Big[&|f^\vy_i(t,\vz)|+|\Lambda^\vy_{ij}(t,\vz)|+\big| f^\vx_i(t,\vz)\big|^2+\big|\Lambda^\vx_{ij}(t,\vz)\big|^2 \\
        & +\big|\Sigma^{\vx,1}_{ij}(t,\vz)\big|^2+\big|\Sigma^{\vx,2}_{ij}(t,\vz)\big|^2+\big|\Sigma^{\vy,1}_{ij}(t,\vz)\big|^2+\big|\Sigma^{\vy,2}_{ij}(t,\vz)\big|^2\Big]\rmd t<+\infty.
    \end{align*}
    This ensures the existence of the integrals in \eqref{eq:condgauss1}--\eqref{eq:condgauss2} \cite{liptser2001statisticsI, liptser2001statisticsII}.

    \item $\displaystyle |\Lambda^\vx_{ij}(t,\vz)|,|\Lambda^\vy_{ij}(t,\vz)|\leq L_1$ for some (uniform) $L_1>0$, $\forall t\in[0,T]$.
    
    \item If $g(t,\vz)$ denotes any element of the multiplicative factors in the drift dynamics, $\ml^\vx$ and $\ml^\vy$, or of the noise feedback matrices, $\ms_m^\vx(t,\vz)$ and $\ms_m^\vy(t,\vz)$ for $m=1,2$, and $K(s)$ is a nondecreasing right-continuous function taking values in $[0,1]$, then there $\exists L_2,L_3,L_4,L_5>0$ such that for any $\vz$ and $\mathbf{w}$ $k$-dimensional functions in $C^0([0,T];\mathbb{C}^k)$ we have
    \begin{gather*}
        |g(t,\vz)-g(t,\mathbf{w})|^2\leq L_2\int_0^t \left\|\vz(s)-\mathbf{w}(s)\right\|_2^2\rmd K(s)+L_3\left\|\vzt-\mathbf{w}(t)\right\|_2^2, \ \forall t\in[0,T],\\
        |g(t,\vz)|^2\leq L_4\int_0^t (1+\left\|\vz(s)\right\|_2^2)\rmd K(s)+L_5(1+\left\|\vzt\right\|_2^2), \ \forall t\in[0,T].
    \end{gather*}
    By the properties of $K$, the integrals are to be understood in the Lebesgue–Stieltjes integration sense. The first inequality acts as a generalized or weak global Lipschitz condition, while the second one establishes an at most linear growth condition, both in terms of the spatial component (which can be weakened to local boundedness or local integrability of the drift components and to Sobolev diffusion elements for existence of a unique local strong solution \cite{zhang2005strong, veretennikov1981strong}). 

    \item $\ee{\|\vx(0)\|_2^{2}+\|\vy(0)\|_2^{2}}<+\infty$. This also implies $\pp$-a.s.\ finiteness of the initial conditions by Markov's inequality.

    \begin{itemize}
        \item[$\blacktriangleright$] Assumptions \textbf{\magenta{(1)}}--\textbf{\magenta{(4)}} are sufficient in establishing the existence and uniqueness (both in the strong (pathwise) and weak (in law) sense) of a strong solution to the CGNS of SDEs that is continuous in $t$, continuously depends on the initial distributions and additional model parameters, and satisfies $\underset{t\in[0,T]}{\sup}\left\{\ee{\|\vx(t)\|_2^{2}+\|\vy(t)\|_2^{2}}\right\}<+\infty$ \cite{arnold1974stochastic, mao2008stochastic, stroock1997multidimensional, evans2012introduction, oksendal2003stochastic}. A proof of this specific fact is given in Theorem 12.4 of Liptser \& Shiryaev \cite{liptser2001statisticsII} for when $k=l=1$, which can be easily adapted to the multi-dimensional case for arbitrary $k,l\in\mathbb{N}$ (see also \cite{kolodziej1980state}). These results also extend to the existence and uniqueness of solutions, that are also continuous in $t$ and continuously depend on the initial data and additional model parameters, to the optimal nonlinear filter and smoother state estimation equations, as well as associated sampling SDEs, which are provided in Sections \ref{sec:2.5} and \ref{sec:3.1}--\ref{sec:3.2}, respectively (see note accompanying assumptions \textbf{\magenta{(5)}}--\textbf{\magenta{(8)}}). Note that the globality of the Lipschitz conditions in \textbf{\magenta{(3)}} is unavoidable, since from classical SDE theory, while local Lipschitzness (in space, uniformly in time) guarantees strong uniqueness, it can only ensure existence up to the "blow-up time" of the local solution \cite{karatzas2014brownian}. Notably, in the case where \textbf{\magenta{(3)}} holds solely for the observable diffusion components $\ms^\vx_m$, with $m=1,2$, then this is sufficient for the existence and uniqueness of continuous solutions to the optimal nonlinear filter and smoother state estimation equations (and associated sampling formulae), granted that there preexists a strong local solution to the CGNS of equations, as it leads to the unique strong solution of an auxiliary SDE which is necessary in establishing a certain equivalence of probability measures and to give a well-defined Bayes formula \cite{kolodziej1980state}.
    \end{itemize}

    \item The sum of the Gramians (with respect to rows) of the noise coefficient matrices in the observable process are uniformly nonsingular, i.e., the elements of the inverse of 
    \begin{equation*}
        (\ms^\vx\circ\ms^\vx)(t,\vz)=\ms_1^\vx(t,\vz)\ms_1^\vx(t,\vz)^\dagger+\ms_2^\vx(t,\vz)\ms_2^\vx(t,\vz)^\dagger,
    \end{equation*}
    are uniformly bounded in $[0,T]$, where $\boldsymbol{\cdot}^\dagger$ denotes the Hermitian transpose operator. This ensures the nondegeneracy of stochastic measures associated with $\vx$, as well as the invertibility of the observable Gramian over time.

    \item $\displaystyle \int_0^T \ee{|\Lambda^\vx_{ij}(t,\vzt)y_j(t)|}\rmd t< +\infty$.

    \item $\displaystyle \ee{|y_j(t)|}<+\infty, \ t\in[0,T]$.

    \item For $\mu_{\text{f},j}(t):=\ee{y_j(t)\big|\cF_t^\vx}$, where $t\in[0,T]$ and $j=1,\ldots,l$, we assume that for all $i=1,\ldots,k$ and $j=1,\ldots,l$ we have:
    \begin{equation*}
        \pp\left(\int_0^T\left|\Lambda^\vx_{ij}(t,\vzt)\mu_{\text{f},j}(t)\right|^2\rmd t<+\infty\right)=1.
    \end{equation*}

    \begin{itemize}
        \item[$\blacktriangleright$] While assumptions \textbf{\magenta{(1)}}--\textbf{\magenta{(4)}} are there to guarantee existence and uniqueness of a continuous strong solution to the CGNS of SDEs, the additional assumptions \textbf{\magenta{(5)}}--\textbf{\magenta{(8)}} extend these properties to solutions of the optimal nonlinear posterior state estimation equations, and combined with the previous conditions they are additionally sufficient in achieving the stochastic stability (e.g., in the mean-square sense) \cite{saito1996stability, higham2000stability, higham2000mean, higham2003exponential, huang2012exponential, tocino2012mean}, as well as the strong and weak consistency of temporally discretized numerical integration schemes applied to \eqref{eq:condgauss1}--\eqref{eq:condgauss2}, like those of Euler-Maruyama (EM) and Milstein \cite{kloeden1992numerical, sarkka2019applied}, for when the limit of the maximum width of the time subintervals goes to zero. These conditions are known to be equivalent to strong and weak convergence, respectively, through an appropriate stochastic version of the Lax equivalence theorem \cite{kloeden1992numerical, lang2010lax}. 
    \end{itemize}
\end{enumerate}
We note here that the conditions \textbf{\magenta{(1)}}--\textbf{\magenta{(8)}} assumed here, are only sufficient, with the possibility of some of them being slightly weakened, as already discussed (see the works of Kolodziej et al. for the CGNS framework specifically \cite{kolodziej1980state, kolodziej1986state}).

Contingent on this set of sufficient assumptions, it is then possible to show that the posterior distribution of the unobserved variables, when conditioning on the observational data, is Gaussian. This is exactly the reason why the system in \eqref{eq:condgauss1}--\eqref{eq:condgauss2} is called a conditionally Gaussian dynamical system of equations. For the remainder of this work, as to make our notation more explicit, we write $\big(\boldsymbol{\cdot}\big|\vx(s),s\leq t\big)$ to indicate the fact that we are conditioning on the observable $\sigma$-algebra at time $t$, $\big(\boldsymbol{\cdot}\big|\cF_t^\vx\big)$ (and likewise for when conditioning on other (sub-)$\sigma$-algebras, unless otherwise noted). Strictly speaking this is an abuse of notation since it is not correct to condition on an explicit path but instead the $\sigma$-algebra generated by the state variable, specifically the observable filtration in this case \cite{oksendal2003stochastic}. Nonetheless, this abuse of notation is adopted for notational simplicity and didactic reasons.

\begin{thm}[\textbf{Conditional Gaussianity}] \label{thm:condgaussianity}
    Let $\vxt$ and $\vyt$ satisfy \eqref{eq:condgauss1}--\eqref{eq:condgauss2} and assume that the regularity conditions \textbf{\magenta{(1)}}--\textbf{\magenta{(8)}} hold. Additionally, assume that the initial conditional distribution $\pp\big(\vy(0)\leq \boldsymbol{\alpha}_0\big|\vx(0)\big)$\footnote{The event $\{\vy(s)\leq \boldsymbol{\alpha}=(\alpha_{1},\ldots,\alpha_{l})^\tran\}$ is to be understood coordinate-wise, i.e., $\{y_1(s)\leq \alpha_{1},\ldots,y_l(s)\leq \alpha_{l}\}$, with the linear partial ordering assumed on $\mathbb{C}$, i.e., $y_j(s)\leq \alpha_{j}\Leftrightarrow \big(\mathrm{Re}(y_j(s))<\mathrm{Re}(\alpha_{j})\big) \vee \big(\mathrm{Re}(y_j(s))=\mathrm{Re}(\alpha_{j}) \wedge \mathrm{Im}(y_j(s))\leq\mathrm{Im}(\alpha_{j})\big)$. (Note that this ordering is still not compatible with multiplication, since $\mathbb{C}$ is algebraically closed thus prohibiting it from being an ordered field.) This ordering is adopted for the remainder of this work.} be $\pp$-a.s.\ Gaussian, $\mathcal{N}_l(\vm{\nf}(0),\mr{\nf}(0))$, and mutually independent from the Wiener processes $\vw_1$ and $\vw_2$, where
    \begin{equation*}
        \vm{\nf}(0):=\ee{\vy(0)\big|\vx(0)} \quad\text{and}\quad \mr{\nf}(0):=\ee{(\vy(0)-\vm{\nf}(0))(\vy(0)-\vm{\nf}(0))^\dagger\big|\vx(0)}.
    \end{equation*}
    Furthermore, assume $\pp\left(\mathrm{tr}(\mr{\nf}(0))<+\infty\right)=1$, where $\mathrm{tr}(\boldsymbol{\cdot})$ denotes the trace operator, meaning the initial estimation mean-square error between $\vy(0)$ and $\vm{\nf}(0)$ is almost surely finite. Then, for any set of $\{t_j\}_{1\leq j\leq n}$ such that $0\leq t_1 < t_2 < \cdots < t_n\leq t$, with $t\in[0,T]$, and $\va_1,\ldots,\va_n\in\mathbb{C}^l$, the conditional distribution
    \begin{equation*}
        \pp\big(\vy(t_1)\leq \va_1,\ldots,\vy(t_n)\leq \va_n\big|\vx(s),s\leq t\big),
    \end{equation*}
    is $\pp$-a.s.\ Gaussian.
\end{thm}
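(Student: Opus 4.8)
The plan is to prove conditional Gaussianity by the promised martingale-free route: establish the statement first for the Euler--Maruyama (EM) time discretization of \eqref{eq:condgauss1}--\eqref{eq:condgauss2} on a partition $0=s_0<s_1<\cdots<s_N=T$ with mesh $\dt$, and then pass to the limit $\dt\to0^+$ using the strong $L^2$-convergence of EM recalled in assumptions \textbf{\magenta{(1)}}--\textbf{\magenta{(10)}}. On the discrete level the scheme reads, with $\vx_m:=\vx_{\dt}(s_m)$ and $\vy_m:=\vy_{\dt}(s_m)$ and increments $\Delta\vw_{1,m},\Delta\vw_{2,m}$,
\begin{align*}
    \vx_{m+1}&=\vx_m+\big[\ml^\vx(s_m,\vx)\vy_m+\vf^\vx(s_m,\vx)\big]\dt+\ms_1^\vx(s_m,\vx)\Delta\vw_{1,m}+\ms_2^\vx(s_m,\vx)\Delta\vw_{2,m},\\
    \vy_{m+1}&=\vy_m+\big[\ml^\vy(s_m,\vx)\vy_m+\vf^\vy(s_m,\vx)\big]\dt+\ms_1^\vy(s_m,\vx)\Delta\vw_{1,m}+\ms_2^\vy(s_m,\vx)\Delta\vw_{2,m}.
\end{align*}
The key structural observation is that, for each fixed step, the coefficients $\ml^\vx,\ml^\vy,\vf^\vx,\vf^\vy,\ms_1^\vx,\ms_2^\vx,\ms_1^\vy,\ms_2^\vy$ evaluated at $s_m$ are $\cF_{s_m}^\vx$-measurable (this is exactly the weak nonanticipativity built into the CGNS definition), the dependence of both $\vx_{m+1}$ and $\vy_{m+1}$ on $\vy_m$ is affine, and the noise increments $\Delta\vw_{1,m},\Delta\vw_{2,m}$ are independent Gaussians independent of $\cF_{s_m}$.

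The core of the argument is an induction on $m$ establishing the following joint statement: conditioned on $\cF_{s_m}^\vx$ (equivalently, on $\vx_0,\dots,\vx_m$), the random vector $(\vy_0,\vy_1,\dots,\vy_m)$ is $\pp$-a.s.\ jointly Gaussian, and moreover it remains jointly Gaussian when we additionally condition on $\vx_{m+1},\dots,\vx_M$. The base case $m=0$ is the hypothesis that $\pp(\vy(0)\le\boldsymbol\alpha_0\mid\vx(0))$ is a.s.\ $\mathcal N_l(\vm{f}(0),\mr{f}(0))$ with a.s.\ finite trace, plus independence from the Wiener increments. For the inductive step I would argue as follows. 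Fix the full observed trajectory $\vx_0,\dots,\vx_M$ and condition on it. By the inductive hypothesis $(\vy_0,\dots,\vy_m)$ is (a.s.) Gaussian given this data. Writing the $\vx$-update at step $m$ as $\ms_1^\vx(s_m)\Delta\vw_{1,m}+\ms_2^\vx(s_m)\Delta\vw_{2,m}=\vx_{m+1}-\vx_m-[\ml^\vx(s_m)\vy_m+\vf^\vx(s_m)]\dt$, knowing $\vx_{m+1}$ together with the $\cF_{s_m}^\vx$-measurable coefficients imposes one affine Gaussian constraint on the increment pair; the remaining (orthogonal) part of $(\Delta\vw_{1,m},\Delta\vw_{2,m})$ stays Gaussian and independent of $(\vy_0,\dots,\vy_m)$. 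Since $\vy_{m+1}$ is an affine function of $\vy_m$ and of these increments with $\cF_{s_m}^\vx$-measurable (hence, under the conditioning, deterministic) coefficients, and since a conditioned-and-then-affinely-transformed Gaussian vector with an appended affine-Gaussian coordinate is again jointly Gaussian, we conclude $(\vy_0,\dots,\vy_{m+1})$ is a.s.\ jointly Gaussian given $\vx_0,\dots,\vx_M$. Integrating out $\vx_{m+1},\dots,\vx_M$ requires a small lemma: if a vector is conditionally Gaussian given a $\sigma$-algebra $\mathcal G$ and we only want conditioning on a sub-$\sigma$-algebra $\mathcal G'\subseteq\mathcal G$, Gaussianity is preserved provided the conditional mean and covariance are themselves $\mathcal G'$-measurable (or, more robustly, one shows directly via conditional characteristic functions $\ee{e^{\rmi\langle\boldsymbol\theta,(\vy_0,\dots,\vy_{m+1})\rangle}\mid\cF_{s_{m+1}}^\vx}$ that the exponent is a.s.\ a quadratic in $\boldsymbol\theta$). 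I would implement this cleanly with conditional characteristic functions throughout, since that sidesteps having to track measurability of means and covariances by hand and makes "a.s.\ Gaussian" precise.

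Once the discrete finite-dimensional distributions $\pp(\vy_{\dt}(t_1)\le\va_1,\dots,\vy_{\dt}(t_n)\le\va_n\mid\vx_{\dt}(s),s\le t)$ are shown a.s.\ Gaussian for every partition containing $t_1,\dots,t_n$, I pass to the continuous-time limit. By assumptions \textbf{\magenta{(1)}}--\textbf{\magenta{(10)}}, the EM continuous-time extension converges to $(\vxt,\vyt)$ in $\sup_{t\in[0,T]}\ee{\|\cdot\|_2^2}$ with rate $O(\dt)$; in particular $\vy_{\dt}(t_j)\to\vy(t_j)$ and $\vx_{\dt}(s)\to\vx(s)$ in $L^2$, hence (along a subsequence) a.s. Convergence in $L^2$ of the conditioned vectors, together with $L^2$-convergence of the conditioning data, gives convergence of the conditional characteristic functions $\ee{e^{\rmi\langle\boldsymbol\theta,\vy_{\dt}(\vec t)\rangle}\mid\cF_t^{\vx_{\dt}}}$ to $\ee{e^{\rmi\langle\boldsymbol\theta,\vy(\vec t)\rangle}\mid\cF_t^\vx}$ (at least along a subsequence, after the standard martingale-convergence/backward-martingale bookkeeping for the convergence of the conditioning $\sigma$-algebras), so the limiting conditional characteristic function is again $\pp$-a.s.\ the exponential of an affine-quadratic form in $\boldsymbol\theta$, which is precisely a.s.\ joint Gaussianity of $(\vy(t_1),\dots,\vy(t_n))$ given $\{\vx(s)\}_{s\le t}$. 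Finiteness of the limiting conditional covariance (needed to exclude degenerate "Gaussians" with infinite variance) follows from $\sup_t\ee{\|\vyt\|_2^2}<\infty$ and the a.s.\ finiteness of $\mathrm{tr}(\mr{f}(0))$.

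The main obstacle I anticipate is not the discrete induction — that is essentially linear algebra dressed in conditional characteristic functions — but the rigorous handling of the two limiting operations involving $\sigma$-algebras: (i) showing the conditional-Gaussianity of the EM chain is \emph{stable} under passing from conditioning on the full discrete path $\vx_0,\dots,\vx_M$ to conditioning on an initial segment (the "marginalization" step inside the induction), and (ii) showing that conditional laws converge when both the conditioned random vectors and the conditioning $\sigma$-algebras $\cF_t^{\vx_{\dt}}$ vary with $\dt$. Step (ii) is the delicate one: $\cF_t^{\vx_{\dt}}$ need not be monotone in $\dt$, so one cannot invoke a plain martingale convergence theorem; the clean fix is to note that it suffices to test against a separating family of bounded continuous functionals of finitely many observations $\vx_{\dt}(u_1),\dots,\vx_{\dt}(u_p)$ and use $L^2$-convergence $\vx_{\dt}(u_i)\to\vx(u_i)$ together with the defining identity $\int_F\ee{\cdot\mid\cF_t^\vx}\d\pp=\int_F(\cdot)\d\pp$ to pass to the limit, invoking dominated convergence with the uniform $L^2$-bound as the envelope. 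This is precisely the "formal passage to the limit as the maximal discretization time step vanishes" advertised after assumption \textbf{\magenta{(10)}}, and making it non-formal is where the real work lies.
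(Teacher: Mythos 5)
The paper does not prove this theorem by the discretization route at all: its proof is a citation to Theorem 12.6 of Liptser \& Shiryaev and to Kolodziej, whose arguments work directly in continuous time with conditional characteristic functions and a conditional uniqueness-of-law theorem, and the paper explicitly remarks that this proof is \emph{independent} of the martingale-free discretization framework used elsewhere in the article. Your discrete-time induction is essentially sound (it is, in substance, the conditional characteristic function induction of Theorem 13.3 in Liptser \& Shiryaev, and your plan to run it through conditional characteristic functions rather than the loose ``one affine constraint on the increment pair'' phrasing is the right way to handle conditioning on the full discrete path). The problem lies in the passage to continuous time, which you yourself identify as the real work but do not actually close.

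The gap is this: strong $L^2$ (or a.s.\ along a subsequence) convergence of the EM paths, together with testing against bounded continuous functionals of finitely many coordinates $\vx_{\dt}(u_1),\ldots,\vx_{\dt}(u_p)$, only identifies limits of \emph{unconditional} expectations such as $\ee{e^{\rmi\langle\boldsymbol{\theta},\vy_{\dt}(\vec t)\rangle}\,g(\vx_{\dt}(u_1),\ldots,\vx_{\dt}(u_p))}$. Writing the discrete conditional characteristic function as $\exp\big(\rmi\langle\boldsymbol{\theta},\mathbf{m}_{\dt}\rangle-\tfrac12\boldsymbol{\theta}^\dagger\mathbf{R}_{\dt}\boldsymbol{\theta}\big)$ with $\mathbf{m}_{\dt},\mathbf{R}_{\dt}$ measurable with respect to $\cF_t^{\vx_{\dt}}$, what you obtain in the limit is only $\ee{e^{\rmi\langle\boldsymbol{\theta},\vy(\vec t)\rangle}\mathds{1}_F}=\lim\ee{\exp\big(\rmi\langle\boldsymbol{\theta},\mathbf{m}_{\dt}\rangle-\tfrac12\boldsymbol{\theta}^\dagger\mathbf{R}_{\dt}\boldsymbol{\theta}\big)\mathds{1}_{F_{\dt}}}$ for cylinder events; this averaged identification is compatible with the limiting conditional law given $\cF_t^{\vx}$ being a \emph{mixture} of Gaussians, which is generically non-Gaussian. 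To exclude that, you must show that the conditional statistics $\mathbf{m}_{\dt},\mathbf{R}_{\dt}$ themselves converge in probability to $\cF_t^{\vx}$-measurable limits, i.e., a stability/robustness statement for the discrete filter--smoother recursions under perturbation of both the process and the conditioning $\sigma$-algebras. Since $\cF_t^{\vx_{\dt}}$ is neither contained in nor monotone toward $\cF_t^{\vx}$, no martingale (or any elementary) convergence theorem applies, and dominated convergence with a uniform $L^2$ envelope does not address the measurability/identification issue at all. Note also that you cannot simply borrow the paper's filtering and smoothing derivations to supply this convergence, since those derivations take the present theorem as an input, so you would need an independent argument for the convergence of the discrete recursions — this is precisely the substantial work your sketch leaves open, and it is the reason the paper falls back on the classical continuous-time proof (conditional characteristic functions plus a conditional law-uniqueness theorem, where the approximating observation $\sigma$-algebras are nested inside $\cF_t^{\vx}$) rather than deriving Theorem \ref{thm:condgaussianity} from the EM scheme.
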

\begin{proof}[Proof of Theorem \ref{thm:condgaussianity}]
    This is Theorem 12.6 in Liptser \& Shiryaev \cite{liptser2001statisticsII}, which is the multi-dimensional analog of Theorem 11.1. For the analogous result in the case of discrete time, see Theorem 13.3, with the respective sufficient assumptions given in Subchapter 13.2.1. Thorough details are also provided in Kolodziej \cite{kolodziej1980state} for the continuous-time case. The proof, regardless of continuous- or discrete-time, uses the conditional characteristic function method and a conditional version of the law-uniqueness theorem \cite{kolodziej1980state, yuan2016some}, and is independent of the discretization-based martingale-free proof framework adopted in this work.
\end{proof}

\subsection{Analytically Solvable Filter and Smoother Posterior Distributions} \label{sec:2.5}

Before proceeding, we first make some notational clarifications. The big-O notation has its usual meaning of describing the limiting behavior of a scalar, vector, or matrix expression. Specifically, if $\mathbf{f}$ is either a scalar-, vector-, or matrix-valued function with $\|\boldsymbol{\cdot}\|$ being an appropriate norm over its range, then for $g$ being a strictly positive scalar function over the positive reals, we have that
\begin{equation*}
    \mathbf{f}(\dt)=O(g(\dt)) \ (\text{as } \dt\to0^+) \overset{\text{def}}{\Longleftrightarrow} \limsup_{\dt\to0^+}\frac{\|\mathbf{f}(\dt)\|}{g(\dt)}<+\infty,
\end{equation*}
where $\dt$, in this work, exclusively describes the mesh or norm (length of the longest subinterval) of a partition over the time interval of interest. Additionally, if $\vz\in\mathbb{C}^{k}$ and $\mathbf{w}\in\mathbb{C}^{l}$ are two random vectors, then $\mathrm{Cov}(\vz,\mathbf{w})=\overline{\vz^\prime(\mathbf{w}^\prime)^\dagger}\in\mathbb{C}^{k\times l}$ is their cross-covariance matrix, while $\mathrm{Var}(\vz,\mathbf{w})\in\mathbb{C}^{(k+l)\times (k+l)}$ is their covariance matrix, which when decomposed into a $2\times 2$ block matrix, its $(1,2)$ block is exactly $\mathrm{Cov}(\vz,\mathbf{w})$, while its $(2,1)$ block is $\mathrm{Cov}(\vz,\mathbf{w})^\dagger=\mathrm{Cov}(\mathbf{w},\vz)\in\mathbb{C}^{l\times k}$. Essentially, with this notation, we have that $\mathrm{Var}(\vz,\mathbf{w})\equiv\mathrm{Cov}((\vz^\tran,\mathbf{w}^\tran)^\tran,(\vz^\tran,\mathbf{w}^\tran)^\tran)$. Finally, for $A(\lambda)$ and $B(\lambda)$ being two families of objects parameterized by $\lambda\in \Lambda$, then $A\lesssim B$ is equivalent to there $\exists C>0$ such that $\forall\lambda\in\Lambda$ we have $A(\lambda)\leq CB(\lambda)$. This aids in simplifying our notation and relieving from the need of keeping track of constant multiplicative factors in inequalities.

In what follows, we establish the time-discretization setup which is utilized throughout this work. Specifically, we define the EM discretization of the CGNS in \eqref{eq:condgauss1}--\eqref{eq:condgauss2} and the associated continuous-time extended EM discrete approximation which recovers the strong solution to the CGNS under the expected mean-square metric up to first-order, i.e., $O(\dt)$ (see foregoing discussion). We first start by "compactifying" the CGNS in \eqref{eq:condgauss1}--\eqref{eq:condgauss2}, which we represent via the formal integral equation
\begin{equation} \label{eq:compactGGNS}
    \mathbf{v}(t)=\mathbf{v}(0)+\int_0^t\vf(s,\mathbf{v}(s))\rmd s+\int_0^t\ms(s,\mathbf{v}(s))\rmd \mathbf{W}(s),
\end{equation}
for
\begin{gather*}
    \mathbf{v}(t):=\begin{pmatrix}
        \vxt\\\vyt
    \end{pmatrix}, \quad \vw(t):=\begin{pmatrix}
        \vw_1(t)\\ \vw_2(t)
    \end{pmatrix}, \quad  \vf(t,\mathbf{v}):=\begin{pmatrix}
            \mathbf{0}_{k\times k} & \ml^\vx(t,\vx) \\
            \mathbf{0}_{l\times k} & \ml^\vy(t,\vx)
        \end{pmatrix}\mathbf{v}+\begin{pmatrix}
            \vf^\vx(t,\vx)\\\vf^\vy(t,\vx)
        \end{pmatrix}, \\ 
        \ms(t,\mathbf{v}):=\begin{pmatrix}
            \ms_1^\vx(t,\vx) & \ms_2^\vx(t,\vx)\\
            \ms_2^\vy(t,\vx) & \ms_2^\vy(t,\vx)
        \end{pmatrix}.
\end{gather*}
As aforementioned, the discrete-time approximation of $\mathbf{v}$ adopted in this work, which is coincidentally the simplest one used for Monte Carlo approximation of \eqref{eq:compactGGNS}, and also recovers its strong solution in the limit, is given by the explicit EM scheme that is defined as the discretized process
\begin{equation} \label{eq:emdiscrete}
    \hat{\mathbf{v}}_{\dt}^{j+1}=\hat{\mathbf{v}}_{\dt}^{j}+\vf(t_j,\hat{\mathbf{v}}_{\dt}^{j})(t_{j+1}-t_j)+\ms(t_j,\hat{\mathbf{v}}_{\dt}^{j})\sqrt{t_{j+1}-t_j}\ve^j, \ j=0,1,\ldots,J-1, \quad \hat{\mathbf{v}}_{\dt}^{0}\overset{\rmd}{=}\mathbf{v}(0),
\end{equation}
where $\{t_j\}_{0\leq j\leq J}$ is a partition of $[0,T]$ with $t_0=0$, $t_n=T$, and norm or mesh given by $\dt=\underset{0\leq j\leq J-1}{\max}\{\dt_j\}$ for $\dt_j=t_{j+1}-t_j$, and $\ve^j$ are $(k+l)$-dimensional complex standard normal vectors that are mutually independent from each other, as well as from the initial distribution of the state variables, $\mathbf{v}(0)$. For simplicity, we from now on assume that this temporal partition is uniform, i.e., it is an equidistant time discretization with a uniform time subinterval length of $\dt_j\equiv \dt=T/J$, $\forall j$. This implies that $t_j=j\dt$ for $j=0,1,\ldots,J$. With this, we now define the continuous-time extension of the discrete approximation $\{\hat{\mathbf{v}}_{\dt}^j\}_{0\leq j\leq J}$ by the following piecewise definition:
\begin{equation} \label{eq:continuoustimeext1}
    \mathbf{v}_{\dt}(t):= \hat{\mathbf{v}}_{\dt}^{j}+\vf(t_j,\hat{\mathbf{v}}_{\dt}^{j})(t-t_j)+\ms(t_j,\hat{\mathbf{v}}_{\dt}^{j})(\vw(t)-\vw(t_j)) \text{ for } t\in[t_j,t_{j+1}),
\end{equation}
with $j=0,1,\ldots,J-1$, or equivalently as
\begin{equation} \label{eq:continuoustimeext2}
    \mathbf{v}_{\dt}(t):=\hat{\mathbf{v}}_{\dt}^{0}+\int_0^t\vf(\eta_{\dt}(s),\bar{\mathbf{v}}_{\dt}(s))\rmd s+\int_0^t\ms(\eta_{\dt}(s),\bar{\mathbf{v}}_{\dt}(s))\rmd \mathbf{W}(s),
\end{equation}
where $\bar{\mathbf{v}}_{\dt}(s)$ is defined as the piecewise constant process $\bar{\mathbf{v}}_{\dt}(t):=\hat{\mathbf{v}}_{\dt}^{j}$, for $t\in[t_j,t_{j+1})$, $\forall j=0,1,\ldots,J-1$, and likewise $\eta_{\dt}(t):=t_j=j\dt=jT/J$ for $t\in[t_j,t_{j+1})$, $\forall j=0,1,\ldots,J-1$ (in this sense we can think of this partition as being tagged by the left end-points $t_j$). Note how $\mathbf{v}_{\dt}(t)$ and $\bar{\mathbf{v}}_{\dt}(t)$ coincide with the discrete solution at the gridpoints, i.e., $\mathbf{v}_{\dt}(t_j)=\bar{\mathbf{v}}_{\dt}(t_j)=\hat{\mathbf{v}}_{\dt}^{j}$, $\forall j=0,1,\ldots,J$. As discussed, it is then immediate that this process recovers the strong solution to \eqref{eq:compactGGNS} (i.e., \eqref{eq:condgauss1}--\eqref{eq:condgauss2}) through the estimate
\begin{equation} \label{eq:emconvergence}
    \ee{\underset{t\in[0,T]}{\sup}\{\left\|\mathbf{v}(t)-\mathbf{v}_{\dt}(t)\right\|_2^2\}}\lesssim O(\dt),
\end{equation}
for $\dt$ small. Based then on the previous discussion concerning the consistency, stability, and convergence of the explicit EM scheme defined by \eqref{eq:emdiscrete}--\eqref{eq:continuoustimeext2}, and through \eqref{eq:emconvergence}, we henceforth associate the continuous-time extension of the discrete EM process with the strong solution of the CGNS, as a way to also further clarify the subsequent notation. As such, in the sequel, we have $\vx\approx\vx_{\dt}$ and $\vy\approx\vy_{\dt}$, as defined in \eqref{eq:continuoustimeext1} or \eqref{eq:continuoustimeext2}, with this association being true or exact in the limit as $\dt\to0^+$ (via \eqref{eq:emconvergence}). This association is explicitly made through $\vx^j:=\vx(t_j)\approx\vx_{\dt}(t_j)=\hat{\vx}_{\dt}^j$ and $\vy^j:=\vy(t_j)\approx\vy_{\dt}(t_j)=\hat{\vy}_{\dt}^j$, where generally in this work the superscript $\boldsymbol{\cdot}^{\,j}$ denotes the discrete approximation of the continuous form of the respective variable when evaluated on $t_j$ under this regime, which through \eqref{eq:emconvergence} and the continuity of the model parameters in \eqref{eq:condgauss1}--\eqref{eq:condgauss2} provided by assumption \textbf{\magenta{(3)}} and assumption \textbf{\magenta{(9)}}, which is outlined in what follows, likewise recovers the continuous-time functional in the limit. For example, $\ml^{\vx,j}\equiv \ml^\vx(t_j,\vx(t_j))$, and to make things concrete in terms of the convergence, by using $\vx$ as an example, we have from \eqref{eq:emconvergence} that 
\begin{equation*}
   \lim_{\dt\to0^+} \vx_{\dt}(t)=\vx(t), \ \forall t\in[0,T]\Leftrightarrow \lim_{\substack{\dt\to0^+\\ J'\dt=T'}} \vx_{\dt}^{J'}=\vx(T'), \ \forall T'\in[0,T],
\end{equation*}
where for the remainder of this work the limits being taken as $\dt\to0^+$ are to be understood in this sense. As such, under this framework, we discretize \eqref{eq:condgauss1}--\eqref{eq:condgauss2} as (note the form of \eqref{eq:emdiscrete}):
\begin{align}
    \vx^{j+1} &= \vx^{j}+\left(\ml^{\vx,j}\vy^j+\vf^{\vx,j}\right)\dt+\ms_1^{\vx,j}\sqrt{\dt} \ve_1^j+\ms_2^{\vx,j}\sqrt{\dt} \ve_2^j, \label{eq:discretecondgauss1}\\
    \vy^{j+1} &= \vy^{j}+\left(\ml^{\vy,j}\vy^j+\vf^{\vy,j}\right)\dt+\ms_1^{\vy,j}\sqrt{\dt} \ve_1^j+\ms_2^{\vy,j}\sqrt{\dt} \ve_2^j, \label{eq:discretecondgauss2}
\end{align}
where $\ve_1^j$ and $\ve_2^j$ are mutually independent complex standard Gaussian random noises, and $\dt$ is assumed to be sufficiently small (or equivalently $J$ is sufficiently large). 

This notational setup and formulation of the continuous-time extension of the EM discretized solution to the CGNS are required to understand the proofs to the subsequent results presented this work, and which can be found in the appropriate appendices.

Accompanying the conditional Gaussianity provided by Theorem \ref{thm:condgaussianity}, we further assume the following additional conditions (which concur with assumptions \textbf{\magenta{(1)}}--\textbf{\magenta{(8)}} from before):
\begin{itemize}

\item[$\blacktriangleright$] Since the focus of this work is to develop a martingale-free framework for the optimal nonlinear posterior state estimation and sampling equations through the EM discretization scheme described by \eqref{eq:compactGGNS}--\eqref{eq:discretecondgauss2}, we point to the following works (and references therein), which analyze and prove the uniform or final time instant mean-square or mean-absolute error, and pathwise convergence of the explicit EM method, both in the strong and weak sense, with the strong order of convergence being $p=1/2$, and with the weak one being $p=1$ \cite{kloeden1992higher, higham2002strong, hutzenthaler2012convergence, gyongy1998note, yuan2008note, wang2016numerical}. These results are generally established under conditions equivalent to \textbf{\magenta{(1)}}--\textbf{\magenta{(8)}} assumed here (or weaker), along with the additional assumptions:
        \begin{gather*}
           \text{\textbf{\magenta{(9)}}} \quad \left\|\mathbf{f}(t,\vz,\mathbf{u})-\mathbf{f}(s,\vz,\mathbf{u})\right\|_2+\left\|\ms(t,\vz)-\ms(s,\vz)\right\|_2\leq L_6\big(1+(\left\|\vz\right\|^2_2+\left\|\mathbf{u}\right\|^2_2)^{1/2}\big)|t-s|^{1/2},\\
            \text{\textbf{\magenta{(10)}}} \quad \ee{\left\|
            \vx(0)-\vx_{\dt}(0)\right\|^2_2+\left\|
            \vy(0)-\vy_{\dt}(0)\right\|^2_2}\leq L_7\dt,
        \end{gather*}
        where $\vz\in C^0([0,T];\mathbb{C}^k)$, $\mathbf{u}\in C^0([0,T];\mathbb{C}^l)$, and $\mathbf{f}:=\begin{pmatrix}
            \mathbf{0}_{k\times k} & \ml^\vx \\
            \mathbf{0}_{l\times k} & \ml^\vy
        \end{pmatrix}\begin{pmatrix}
            \vx\\\vy
        \end{pmatrix}+\begin{pmatrix}
            \vf^\vx\\\vf^\vy
        \end{pmatrix}$, $\ms:=\begin{pmatrix}
            \ms_1^\vx & \ms_2^\vx\\
            \ms_2^\vy & \ms_2^\vy
        \end{pmatrix}$, for $L_6,L_7>0$, with $\begin{pmatrix}
            \vx_{\dt}(t)\\\vy_{\dt}(t)
        \end{pmatrix}$ being the continuous-time extension of the EM discretization corresponding to \eqref{eq:condgauss1}--\eqref{eq:condgauss2} (see \eqref{eq:continuoustimeext1}--\eqref{eq:continuoustimeext2}). Especially, and most importantly, under the optimality in the mean-square sense that the posterior statistics (filter or smoother) enjoy, the expected uniform (in $[0,T]$) squared error of the EM method (which is stronger to to the uniform or terminal time expected absolute error) is bounded by $O(\dt)$ \cite{kloeden1992numerical, pamen2017strong}, where $\dt$ is the norm or mesh of the temporal partition of $[0,T]$, and under assumptions \textbf{\magenta{(1)}}--\textbf{\magenta{(10)}}, the strong solution of the CGNS provided by \textbf{\magenta{(1)}}--\textbf{\magenta{(4)}} is exactly recovered in the limit as $\dt\to0^+$ \cite{kaneko1988note, mikulevicius1991rate}. This has the important implication that martingale theory can be avoided in proving most results in the continuous-time CGNS framework, since the desired results can be instead established for the discrete-time case using such time discretization schemes, for which then the pertinent results for the case of continuous time can be obtained from these by a formal passage to the limit as the maximal discretization time step vanishes (or the number of time subintervals grows unboundedly) \cite{chen2018efficient}. This provides a unified treatment of the discrete- and continuous-time settings of CGNSs. This is further made possible exactly because the optimal nonlinear posterior state estimation equations, and associated sampling SDEs, are defined by first-order differential operators (infinitesimal generators), which by strong and weak consistency guarantee a first-order coherence between the discrete- and continuous-time counterparts of these dynamical equations. 

    \item[\textbf{\magenta{(11)}}] $\displaystyle \int_0^T \ee{|\ml_{ij}^\vx(t,\vx)|^4+|\vf_i^\vy(t,\vx)|^4+|\Sigma^{\vy,1}_{ij}(t,\vx)|^4+|\Sigma^{\vy,2}_{ij}(t,\vx)|^4}\rmd t<+\infty$. The assumption on the finiteness of the quantity $\int_0^T \ee{|(\vf_i^\vy(t,\vx))|^4}\rmd t$ is also essential in the stochastic optimal control problem based on \eqref{eq:condgauss2}, as it restricts the additive controls to the class of expected $L^4$ functionals, i.e., which have their expectation in the Bochner space $L^4([0,T];\mathbb{C}^p)$ (see Chapter 3 of Kolodziej \cite{kolodziej1980state} and Kolodziej \& Mohler \cite{kolodziej1986state})

    \item[\textbf{\magenta{(12)}}] $\displaystyle \ee{\left\|\vy(0)\right\|_2^4}<+\infty$. This also aids in providing the control $\ee{\underset{t\in[0,T]}{\sup}\{\left\|\vy(t)\right\|_2^4\}}<+\infty$ \cite{kolodziej1980state, liu2019stochastic}.

    \begin{itemize}
        \item [$\blacktriangleright$] Like before, the sufficient assumptions \textbf{\magenta{(11)}} and \textbf{\magenta{(12)}} can be slightly weakened \cite{kolodziej1980state, kolodziej1986state}.
    \end{itemize}
\end{itemize}
With these, we can then yield the optimal nonlinear filter state estimation equations as showcased in the following theorem. In what follows, the subscript "$\,\text{\normalfont{f}}\,$" is used to denote the filter conditional Gaussian statistics, which appropriately stands for filter. The filter conditional Gaussian statistics are also known as the filter posterior mean and filter posterior covariance under the Bayesian inference dynamics framework.

\begin{thm}[\textbf{Optimal Nonlinear Filter State Estimation Equations}]
    \label{thm:filtering}
    Let the assumptions of Theorem \ref{thm:condgaussianity} and the additional regularity conditions \textbf{\magenta{(9)}}--\textbf{\magenta{(12)}}, to hold. Then for any $t\in[0,T]$ the $\cF^\vx_t$-measurable Gaussian statistics of the Gaussian conditional distribution
    \begin{equation*}
        \pp\big(\vyt\big|\vx(s),s\leq t\big) \overset{\rmd}{\sim}\mathcal{N}_l(\vmt{\nf},\mrt{\nf}),
    \end{equation*}
    where
    \begin{equation*}
        \vmt{\nf}:=\ee{\vyt\big|\vx(s),s\leq t} \quad\text{and}\quad \mrt{\nf}:=\ee{(\vyt-\vmt{\nf})(\vyt-\vmt{\nf})^\dagger\big|\vx(s),s\leq t},
    \end{equation*}
    are the unique continuous solutions of the system of optimal nonlinear filter equations,
    \begin{align}
    \rmd \vmt{\nf}&=(\ml^\vy\vm{\nf}+\vf^\vy)\rmd t+(\ms^\vy\circ \ms^\vx+\mr{\nf}(\ml^\vx)^\dagger)(\ms^\vx\circ\ms^\vx)^{-1}(\rmd \vx -(\ml^\vx\vm{\nf}+\vf^\vx)\rmd t), \label{eq:filter1}\\
    \rmd \mrt{\nf}&=\big(\ml^\vy\mr{\nf}+\mr{\nf}(\ml^\vy)^\dagger+\ms^\vy\circ\ms^\vy-(\ms^\vy\circ \ms^\vx+\mr{\nf}(\ml^\vx)^\dagger)(\ms^\vx\circ\ms^\vx)^{-1}(\ms^\vx\circ \ms^\vy+\ml^\vx\mr{\nf})\big)\rmd t\label{eq:filter2},
    \end{align}
    with initial values $\vm{\nf}(0)=\ee{\vy(0)\big|\vx(0)}$ and $\mr{\nf}(0)=\ee{(\vy(0)-\vm{\nf}(0))(\vy(0)-\vm{\nf}(0))^\dagger\big|\vx(0)}$, where the noise interactions through the Gramians (with respect to rows) are defined as
    \begin{gather*}
        (\ms^\vx\circ \ms^\vx)(t,\vx):=\ms_1^\vx(t,\vx)\ms_1^\vx(t,\vx)^\dagger+\ms_2^\vx(t,\vx)\ms_2^\vx(t,\vx)^\dagger, \\
        (\ms^\vy\circ \ms^\vy)(t,\vx):=\ms_1^\vy(t,\vx)\ms_1^\vy(t,\vx)^\dagger+\ms_2^\vy(t,\vx)\ms_2^\vy(t,\vx)^\dagger, \\
        (\ms^\vx\circ \ms^\vy)(t,\vx):=\ms_1^\vx(t,\vx)\ms_1^\vy(t,\vx)^\dagger+\ms_2^\vx(t,\vx)\ms_2^\vy(t,\vx)^\dagger,\quad (\ms^\vy\circ \ms^\vx)(t,\vx):=(\ms^\vx\circ \ms^\vy)(t,\vx)^\dagger.
    \end{gather*}
    Furthermore, if the initial covariance matrix $\mr{\nf}(0)$ is positive-definite ($\pp$-a.s.), then all the matrices $\mrt{\nf}$, for $t\in[0,T]$, remain positive-definite ($\pp$-a.s.).
\end{thm}

See Appendix \ref{sec:Appendix_B} for the proof. We highlight that Theorem \ref{thm:filtering} can also be established under sufficient assumptions that only guarantee the existence of a weak, i.e., martingale, solution to \eqref{eq:condgauss1}--\eqref{eq:condgauss2}, meaning the regularity conditions in \textbf{\magenta{(1)}}--\textbf{\magenta{(12)}} can be substantially relaxed \cite{kolodziej1980state}.

The filter mean equation under the Bayesian inference framework for DA has an intuitive structure. The first two terms on the right-hand side of \eqref{eq:filter1}, specifically $\ml^\vy\vm{\nf}+\vf^\vy$, represent the forecast mean derived from the latent dynamics in \eqref{eq:condgauss2}. The remaining terms account for the correction of the posterior mean state based on partial observations. The matrix-valued factor preceding the innovation or measurement pre-fit residual $\rmd \vxt -(\ml^\vx\vm{\nf}+\vf^\vx)\rmd t$ resembles the Kalman gain in classical Kalman filter theory, expressed as $\mathbf{K} = (\ms^\vy\circ\ms^\vx+\mr{\nf}(\ml^{\vx})^{\dagger}) (\ms^\vx\circ\ms^\vx)^{-1}$, which quantifies the influence of observations on updating the predicted state. Even if the latent dynamics do not explicitly depend on $\vx$, the observational process in \eqref{eq:condgauss1} couples the observed and latent components, allowing observations to adjust the state estimation and correct model forecasts. Additionally, $\vm{\nf}(t)$ satisfies a random ordinary differential equation linear in $\vm{\nf}(t)$ \cite{strand1970random, soong1973random, neckel2013random, han2017random}, while $\mr{\nf}(t)$ is the solution to a random Riccati equation \cite{wang1999stability, casaban2018solving, bishop2019stability}, with coefficients depending on the observable random vector $\vx$.

The CGNS framework also provides closed analytic formulae for optimal smoother state estimation. The smoother posterior distribution at time $t\in[0,T]$ leverages observational information from the entire interval, resulting in a more accurate, less uncertain, and unbiased estimated state compared to the filter solution. To solve the optimal nonlinear smoother equations, a forward pass (filtering) from $t=0$ to $t=T$ is followed by a backward pass (smoothing) from $t=T$ to $t=0$. The forthcoming theorem presents the optimal nonlinear smoother equations, using the subscript $\,\text{\normalfont{s}}\,$ (note the lack of italicization) to denote smoother conditional Gaussian statistics, which should not be confused with the time variable
$s$ in $\vx(s)$ or $\vx^s$. These smoother conditional Gaussian statistics are referred to as the smoother posterior mean and covariance within the Bayesian inference framework. But, for the assertion of this result to be feasible for the case of our martingale-free framework, we enforce a final extraneous assumption on the model parameters of the CGNS, specifically the noise feedbacks of the hidden process:
\begin{itemize}
    \item[\textbf{\magenta{(13)}}] Adopting the temporal discretization setup from the beginning of the section, we assume that
    \begin{equation*}
        a_{\dt}(M):=\underset{\theta>0}{\inf}\Bigg\{\mathbb{E}\Bigg[e^{\theta\big(M\big(1+\underset{0\leq j \leq J}{\max}\big\{\big\|\ms_1^{\vy,j}\big\|_2^2+\big\|\ms_2^{\vy,j}\big\|_2^2\big\}\big)-1/\dt\big)}\Bigg]\Bigg\}=o(\dt),
    \end{equation*}
    as $\dt\to0^+$, for every $M=M(\omega)$ with $\pp(M<+\infty)=1$.
    \begin{itemize}
        \item [$\blacktriangleright$] As illuminated by the proof framework in this work, this is admittedly a rather ad hoc condition. This restriction essentially enforces a temporal control onto the uncertainty levels that should be expected in the latent phase space from a free unconditional run of the CGNS, without being too restrictive (e.g., assuming constant feedbacks). As an example, this condition is trivially satisfied if  $\underset{t\in[0,T]}{\sup}\big\{\big\|\ms^\vy_m(t,\vx)\big\|_2^2\big\}<+\infty$, $\pp$-a.s., for $m=1,2$, which is in many applications a natural condition being satisfied; for example this holds when the noise feedbacks are continuous in time for almost every path of $\vx$ with $T<+\infty$. The use of this assumption will be in developing a consistent optimal discrete-time smoother, and as is showcased in Appendix \ref{sec:Appendix_C} this assumption can be slightly weakened or altered while still being conformant to our proof framework under dominated convergence results. In such a scenario, this assumption can be regarded as a ``Courant-Friedrichs-Lewy-type" convergence restriction enforced onto the time step $\dt$ from an asymptotic expansion-based standpoint. Specifically, by requiring $\displaystyle
        \frac{\partial}{\partial \theta}\mathbb{E}\Bigg[e^{\theta\big(M\big(1+\underset{0\leq j \leq J}{\max}\big\{\big\|\ms_1^{\vy,j}\big\|_2^2+\big\|\ms_2^{\vy,j}\big\|_2^2\big\}\big)-1/\dt\big)}\Bigg]<0$ for $\theta>0$, we get $\displaystyle \dt<\frac{1}{M\big(1+\underset{0\leq j \leq J}{\max}\big\{\big\|\ms_1^{\vy,j}\big\|_2^2+\big\|\ms_2^{\vy,j}\big\|_2^2\big\}\big)}$ $\pp$-a.s, where due to the explicit nature of our time discretization scheme (by the EM method), then the right-hand side of this inequality can be regarded as a ratio of the "length-scale" to the "velocity" of the uncertainty levels over the latent state space (see expressions of $M$ in the proof of Lemma \ref{lem:tailboundsspectral}). 
    \end{itemize}
\end{itemize}
To establish the assertion of this theorem we require various Lemmas, which we state and prove, along with the proof of this theorem, in Appendix \ref{sec:Appendix_C}.

\begin{thm} [\textbf{Optimal Nonlinear Smoother State Estimation Backward Equations}]
    \label{thm:smoothing}
    Let the assumptions of Theorem \ref{thm:condgaussianity} and the additional regularity conditions \textbf{\magenta{(9)}}--\textbf{\magenta{(13)}}, to hold. In addition, let
    \begin{equation} \label{eq:positivedefinite}
        \pp\left( \underset{t\in[0,T]}{\mathrm{inf}}\{ \mathrm{det}(\mr{\nf}(t))\}>0\right)=1,
    \end{equation}
    since it is not guaranteed that the positive-definiteness of the filter posterior covariance tensor is preserved in the backward pass. Then the system of optimal nonlinear smoother equations is given by the following backward differential equations,
    \begin{align}
        \smooth{\rmd \vm{\ns}}(t)&=-(\ml^\vy\vm{\ns}+\vf^\vy-\mb\mr{\nf}^{-1}(\vm{\nf}-\vm{\ns}))\rmd t+(\ms^\vy\circ \ms^\vx)(\ms^\vx\circ \ms^\vx)^{-1}( \smooth{\rmd\vx}+(\ml^\vx\vm{\ns}+\vf^\vx)\rmd t), \label{eq:revbackinter1} \\
         \smooth{\rmd \mr{\ns}}(t)&= -\big((\ma+\mb\mr{\nf}^{-1})\mr{\ns}+\mr{\ns}(\ma+\mb\mr{\nf}^{-1})^\dagger-\mb\big)\rmd t,\label{eq:revbackinter2}
    \end{align}
    for $T\geq t\geq 0$, with $t$ running backward, where the auxiliary matrices $\ma$ and $\mb$ are defined in \eqref{eq:auxiliarymata}--\eqref{eq:auxiliarymatb}. The backward-arrow notation on the left-hand side of \eqref{eq:revbackinter1}--\eqref{eq:revbackinter2} and in $\smooth{\rmd \vx}$ on right-hand side of \eqref{eq:revbackinter1} is to be understood as
    \begin{equation*}
        \smooth{\rmd \vm{\ns}}(t):=\lim_{\Delta t\to 0} \left(\vm{\ns}(t)-\vm{\ns}(t+\Delta t)\right), \quad \smooth{\rmd \mr{\ns}}(t):=\lim_{\Delta t\to 0} \left(\mr{\ns}(t)-\mr{\ns}(t+\Delta t)\right), \quad \smooth{\rmd \vx}(t):=\lim_{\Delta t\to 0} \left(\vx(t)-\vx(t+\Delta t)\right),
    \end{equation*}
    for $\vm{\ns}(t)$ and $\mr{\ns}(t)$ being the mean and covariance of the smoother posterior distribution at time $t$ which is Gaussian, $\pp\big(\vyt\big|\vx(s), s\in[0,T]\big)\overset{\rmd}{\sim}\mathcal{N}_l\big(\vm{\ns}(t),\mr{\ns}(t)\big)$. In other words, the notation $\smooth{\frac{\rmd\boldsymbol{\cdot}}{\rmd t}}$ corresponds to the negative of the usual derivative, which means the system in \eqref{eq:revbackinter1}--\eqref{eq:revbackinter2} is meant to be solved backward over $[0,T]$. The "starting" values for the smoother posterior statistics, $\left(\vm{\ns}(T),\mr{\ns}(T)\right)$, are the same as those of the corresponding filter estimates at the endpoint $t=T$, $\left(\vm{\nf}(T),\mr{\nf}(T)\right)$.
\end{thm}

We would like to note here that the smoother evolution equations in \eqref{eq:revbackinter1}--\eqref{eq:revbackinter2} are consistent with representations through Markov smoothing semigroups associated with the transport equation for the smoother conditional distribution corresponding to similar nonlinear partially observed backward diffusion flow models \cite{anderson2021backward}.

\section{Optimal Conditional Sampling Procedures} \label{sec:3}

The posterior distribution from filtering or smoothing provides optimal pointwise state estimation in the mean-square sense, where "pointwise" refers to estimation at a fixed time instant. However, in many applications, it is crucial to obtain sampled trajectories of the hidden processes conditioned on observations. These sampled trajectories reflect temporal statistics that cannot be accessed through pointwise posterior estimates alone. Unfortunately, relying solely on posterior distributions is insufficient to generate unbiased, dynamically consistent trajectories. This is because each point in a sampled trajectory must account for the nonlinear temporal dependence and correlations with neighboring points. In practice, the posterior mean time series is often used as a surrogate for the hidden variable, but this approach overlooks the temporal correlation contained in the posterior covariance, namely the correlated uncertainty. It leads to the failure of capturing many key dynamical features, such as the autocorrelation function (ACF). Simply drawing independent samples from the posterior distributions at different time points results in a noisy time series that misses underlying dynamics due to the lack of temporal correlation integration.

The closed-form analytic formulae presented in this section provide an efficient and dynamically consistent method for sampling unobserved trajectories of $\vy$ conditioned on the observed $\vx$. This applies to both forward sampling (up to the current time) and backward sampling (over the entire observation period). These sampling methods allow to improve the understanding of nonlinear dynamical features in CTNDSs. All subsequent results assume the validity of assumptions \textbf{\magenta{(1)}}--\textbf{\magenta{(13)}} (or their weaker counterparts), along with any additional assumptions outlined in Section \ref{sec:2.4}.

\subsection{Filter-Based Forward Sampling} \label{sec:3.1}

In the main theorem of this subsection, we state the formula for optimal nonlinear forward sampling of the unobserved process in a CGNS, conditioned on observable data up to the current time. Its proof can be found in Appendix \ref{sec:Appendix_D}. In what follows, the subscript "$\,\text{f}\,$" stands for "filter" since the forward sampling procedure is based on the filter solution; see also Theorem \ref{thm:forwardsamplefilter} for a consistency result.

\begin{thm}[\textbf{Optimal Nonlinear Forward Sampling Formula}] \label{thm:forwardsample}
    When conditioned on the natural filtration of $\cF$ w.r.t.\ $\{\vx(s)\}_{s\leq t}$ and $\vy(t)$, $\cF_t^{\vx,\vy(t)}:= \cF_t^\vx\vee\cF_t^{\vy(t)}=\sigma\big(\cF_t^\vx\cup\sigma(\vy(t))\big)$ (also known as the join of the (sub-)$\sigma$-algebras), for some $t\in[0,T]$, then the optimal (in the mean-square sense) strategy for sampling the trajectories associated with the unobservable variables $\vy$ at time $t$, which we denote by $\hat{\vy}_{\text{\nf}}$, is given by the following SDE:
    \begin{equation} \label{eq:forwardsample}
        \rmd\hat{\vy}_{\text{\nf}}(t) = \rmd\vm{\nf}+(\ma-\mr{\nf}\mc)(\hat{\vy}_{\text{\nf}}-\vm{\nf})\rmd t+(\mb+\mr{\nf}\mc\mr{\nf})^{1/2}\rmd \vw_{\hat{\vy}_{\text{\nf}}},
    \end{equation}
    where $\rmd \vw_{\hat{\vy}_{\text{\nf}}}$ is an $l$-dimensional complex-valued white noise source that is mutually independent from $\vw_1$ and $\vw_2$, $\ma$ and $\mb$ are given as in \eqref{eq:auxiliarymata}--\eqref{eq:auxiliarymatb}, and $\mc$ is defined as
    \begin{equation} \label{eq:auxiliarymatc}
        \mc(t,\vx) := \ml^\vx(t,\vx)^{\dagger}(\ms^\vx\circ \ms^\vx)^{-1}(t,\vx)\ml^\vx(t,\vx).
    \end{equation}
    The $\boldsymbol{\cdot}^{1/2}$ denotes the square root of a matrix. It also holds that $\mb(t,\vx)+\mr{\nf}(t)\mc(t,\vx)\mr{\nf}(t)$ is necessarily
    nonnegative-definite $\pp$-a.s., or that
    \begin{equation} \label{eq:positivedefiniteforwardsample}
            \pp\left( \underset{t\in[0,T]}{\mathrm{inf}}\{ \mathrm{det}(\mb(t,\vx)+\mr{\nf}(t)\mc(t,\vx)\mr{\nf}(t))\}>0\right)=1,
        \end{equation}
    and so its square root is unique as can be shown by its eigendecomposition.
\end{thm}

Similar to the filtering procedure in Theorem \ref{thm:filtering}, the forward sampling procedure in Theorem \ref{thm:forwardsample} is a sequential method, which makes use of the information only in the past to compute the pathwise value at the current time instant, thus allowing it to be integrated into online schemes where the observations arrive in a serial manner. Furthermore, the formula given in \eqref{eq:forwardsample} states that the sampled trajectory of $\vy$, $\hat{\vy}_{\text{\nf}}$, is based on the posterior mean estimate of the filter $\vm{\nf}$, but it also includes temporal correlated uncertainties that depend both on the observations and the underlying nonlinear model, both in the deterministic and stochastic dynamics. Actually, through an equivalent representation of the filter posterior mean and covariance matrix (using elementary algebraic manipulations) \cite{liptser2001statisticsII},
\begin{align}
\begin{split}
    \rmd\vm{\nf}(t)&=\big((\ma-\mr{\nf}\mc)\vm{\nf}+\vf^\vy\big)\rmd t+\big(\ms^\vy\circ \ms^\vx+\mr{\nf}\left(\ml^\vx\right)^{\dagger}\big)(\ms^\vx\circ \ms^\vx)^{-1}\big(\rmd \vx - \vf^\vx\rmd t\big), \\
    \rmd\mr{\nf}(t)&=\big((\ma-\mr{\nf}\mc)\mr{\nf}+\mr{\nf}(\ma-\mr{\nf}\mc)^\dagger+\mb+\mr{\nf}\mc\mr{\nf}\big)\rmd t,
\end{split}\label{eq:alternativefiltermean}
\end{align}
it becomes quickly apparent that for the exponential mean-square stability of the filter state estimation equations to be satisfied, it is necessary that the real parts of all the eigenvalues of $\mathbf{A}-\mr{\nf}\boldsymbol{\Gamma}$ are negative \cite{soong1973random, neckel2013random, khasminskii2012stochastic, arnold2014random, crauel2015nonautonomous, han2017random, bishop2019stability}, and as such the samples $\hat{\vy}_{\text{\nf}}$ have the tendency to collapse towards the filter mean state $\vm{\nf}$; this convergence assertion is made concrete in Theorem \ref{thm:forwardsamplefilter} and its proof. Notice how this also illustrates the fact that the damping in the filter posterior mean and the forward-based sampled trajectories is the same. For more on the stochastic stability of the optimal nonlinear filter, we recommend the surveys of Chigansky \cite{chigansky2006stability} and van Handel \cite{vanhandel2007filtering}, and their references therein. As for the stability of the nonlinear filter covariance matrix, see Wang \& Lei for the discrete-time case \cite{wang1999stability} and Bishop \& Del Moral for the continuous-time case \cite{bishop2019stability}.

Another important observation regarding \eqref{eq:forwardsample}, is that its noise feedback is not simply $(\ms^\vy\circ\ms^\vy)^{1/2}$ as in the original latent dynamical equation \eqref{eq:condgauss2}, but instead
\begin{equation*}
    (\mb+\mr{\nf}\mc\mr{\nf})^{1/2}=\left((\ms^\vy\circ\ms^\vy)-\big((\ms^\vy\circ\ms^\vx)(\ms^\vx\circ\ms^\vx)^{-1}(\ms^\vx\circ\ms^\vy)-\mr{\nf}\mc\mr{\nf}\big)\right)^{1/2}.
\end{equation*}
Now, while it is tempting here to presume that this equation has much larger damping in its uncertainty compared to the original model, by enforcing that $\mb+\mr{\nf}\mc\mr{\nf}\preceq(\ms^\vy\circ\ms^\vy)$, where "$\preceq$" is understood in the Loewner partial ordering sense over the closed pointed convex cone of nonnegative-definite matrices, it could very well be the case that this is not actually true. Indeed, observe that for $\mb+\mr{\nf}\mc\mr{\nf}\preceq(\ms^\vy\circ\ms^\vy)$ to hold, it is equivalent to have
\begin{equation*}
    {(\ms^\vy\circ\ms^\vx)(\ms^\vx\circ\ms^\vx)^{-1}(\ms^\vx\circ\ms^\vy)-\mr{\nf}\mc\mr{\nf}\succeq \mathbf{0}_{l\times l}}.
\end{equation*}
Now, while $(\ms^\vy\circ\ms^\vx)(\ms^\vx\circ\ms^\vx)^{-1}(\ms^\vx\circ\ms^\vy)$ is positive-definite by assumption \textbf{\magenta{(5)}}, and so is $\mr{\nf}\mc\mr{\nf}$ for the same reason together with the positive-definiteness of the filter covariance matrix, after possibly the additional assumption of $\ms^\vy\circ\ms^\vx$ and $\ml^\vx$ being $\pp$-a.s.\ full rank, a simple application of the Lidskii-Mirsky-Wielandt majorization theorem yields \cite{li1999lidskii, stewart1990matrix}
\begin{equation} \label{eq:eigconditionwindow}
    \mu_i((\ms^\vy\circ\ms^\vx)(\ms^\vx\circ\ms^\vx)^{-1}(\ms^\vx\circ\ms^\vy)-\mr{\nf}\mc\mr{\nf})\leq \rho((\ms^\vy\circ\ms^\vx)(\ms^\vx\circ\ms^\vx)^{-1}(\ms^\vx\circ\ms^\vy))-\mu_i(\mr{\nf}\mc\mr{\nf}),
\end{equation}
for any $i=1,\ldots,l$ (where here we use the notation from Lemma \ref{lem:tailboundsspectral}). Even under our set of regularity conditions, this leaves enough leeway such that $\mu_{i_o}((\ms^\vy\circ\ms^\vx)(\ms^\vx\circ\ms^\vx)^{-1}(\ms^\vx\circ\ms^\vy)-\mr{\nf}\mc\mr{\nf})<0$ for some $i_o\in\{1,\ldots,l\}$ and appropriately chosen model parameters in \eqref{eq:condgauss1}--\eqref{eq:condgauss2}, which means $\mb+\mr{\nf}\mc\mr{\nf}\not\preceq(\ms^\vy\circ\ms^\vy)$. Actually, such an example is chosen in our numerical case study in Section \ref{sec:4} as to illustrate this fact. Intuitively, it means some observations can actually deteriorate the filter posterior state estimation, thus potentially leading to an increase in the uncertainty in the sampled trajectories.

On the other hand, if sufficient conditions are enforced such that $\mb+\mr{\nf}\mc\mr{\nf}\preceq(\ms^\vy\circ\ms^\vy)$ holds $\pp$-a.s., then the uncertainty represented by this noise coefficient takes into account the information from the observations effectively, which is embodied in $\mr{\nf}$, $(\ms^\vx\circ\ms^\vx)^{-1}$, and $\ml^\vx$, thus leading to a much smaller overall uncertainty instead. Therefore, the uncertainty, represented by the equilibrium variance, is lower in the forward samples $\hat{\vy}_{\text{\nf}}$ conditioned on the observations, \eqref{eq:forwardsample}, than the $\vy$ resulting from a free run of the coupled model, \eqref{eq:condgauss2}. Of course, this reduction in the uncertainty stems from the information that is provided by the observational time series.

Another useful conclusion that can be drawn from \eqref{eq:forwardsample} stems from looking at the associated forward equation for the residual. Specifically, we define the residual of the sampled trajectory $\hat{\vy}_{\text{\nf}}$ with respect to the filter mean state $\vm{\nf}$, i.e., $\Tilde{\hat{\vy}}_{\text{f}}=\hat{\vy}_{\text{\nf}}-\vm{\nf}$; notice how this is different from the mean-fluctuation decomposition in \eqref{eq:reynoldsdecomp}, which is why we do not use the overline notation but instead a tilde (see also Theorem \ref{thm:forwardsamplefilter}). Then, by \eqref{eq:forwardsample}, $\Tilde{\hat{\vy}}_{\text{f}}$ satisfies the SDE:
\begin{equation} \label{eq:residualforward}
    \rmd\Tilde{\hat{\vy}}_{\text{f}}(t) = \frac{1}{2}\big(\rmd\hat{\vy}_{\text{\nf}}+(\ma-\mr{\nf}\mc)\Tilde{\hat{\vy}}_{\text{f}}\rmd t+(\mb+\mr{\nf}\mc\mr{\nf})^{1/2}\rmd \vw_{\hat{\vy}_{\text{\nf}}}\big).
\end{equation}
This equation generates the residual time series, reflecting the uncertainty in the sampled trajectory, which exhibits non-trivial temporal dependence. The temporal behavior of $\Tilde{\hat{\vy}}_{\text{f}}$ in \eqref{eq:residualforward} differs significantly from that of $\vy$ in \eqref{eq:condgauss2}, as seen even in the one-dimensional case. Assume $k=l=1$, so both $\vx=x$ and $\vy=y$ are one-dimensional, with constant coefficients, where $\ml^\vx=\Lambda^x>0$ and $\ml^\vy=\Lambda^y<0$. Here, $\mr{\nf}=R_f$ converges to a positive constant $R_{\infty}>0$ over time, as shown by \eqref{eq:filter2} \cite{gardiner2009stochastic, sarkka2019applied}. The decorrelation time of the marginal, conditional, and joint distributions (since the constant-coefficient CGNS has reversible conditional Gaussianity) equals the reciprocal of its damping coefficient \cite{harlim2008mathematical}, per Doob's theorem \cite{doob1942topics, doob1942brownian, borisov1983criterion}, which states that the autocorrelation of a stationary Gaussian Markov process is proportional to $e^{-as}$, where $a$ is the damping coefficient. For $\ma=A$ and $\mc=\Gamma$, the decorrelation times for the original system \eqref{eq:condgauss2} and the residual of the sampled trajectory \eqref{eq:residualforward} are $-1/\Lambda^y>0$ and $-1/(A-R_{\infty}\Gamma)>0$, respectively, with the latter holding asymptotically. Then, by assuming the noise feedbacks of the system are positive, we have that $-(A-R_{\infty}\Gamma)>-\Lambda^y>0$, which shows that the decorrelation time of the residual is shorter due to the added information from observations. This means that, in a linear time-invariant CGNS, the residual decorrelates faster than the unobserved process. However, this does not necessarily imply lower uncertainty in the forward samples.

We conclude this subsection with the following theorem, which links the pointwise Gaussian statistics of the forward-sampled trajectories to those of the filter posterior distribution. This establishes that the pointwise Gaussian statistics of the trajectories generated by the forward sampling procedure are consistent, at each time instant, with those of the filtering posterior distribution. See Appendix \ref{sec:Appendix_E} for the proof.
\begin{thm}[\textbf{Consistency Between Forward Sampling and Filter State Estimation}] \label{thm:forwardsamplefilter}
    Assume that the real parts of all the eigenvalues of $\mathbf{A}-\mr{\nf}\boldsymbol{\Gamma}$ are negative ($\pp$-a.s.). Then, the conditional Gaussian nonlinear filter estimate, \eqref{eq:filter1}--\eqref{eq:filter2}, can be recovered by applying a mean-fluctuation decomposition to the forward sampling equation \eqref{eq:forwardsample}. In other words:
    \begin{enumerate}[label=\textbf{(\alph*)}]
        \item The ensemble average of the forward sampling equation \eqref{eq:forwardsample}, conditioned on the natural filtration of $\cF$ w.r.t.\ $\{\vx(s)\}_{s\leq t}$ and $\vy(t)$, $\cF_t^{\vx,\vy(t)}$, for some $t\in[0,T]$, is the time evolution of the posterior mean in \eqref{eq:filter1}.

        \item The ensemble average of the quadratic form of the residual equation of \eqref{eq:forwardsample}, conditioned on the natural filtration of $\cF$ w.r.t.\ $\{\vx(s)\}_{s\leq t}$ and $\vy(t)$, $\cF_t^{\vx,\vy(t)}$, for some $t\in[0,T]$, is the time evolution of the posterior covariance in \eqref{eq:filter2}.
    \end{enumerate}
\end{thm}

\subsection{Smoother-Based Backward Sampling} \label{sec:3.2}

The forward sampling formula in Theorem \ref{thm:forwardsample} is derived from the nonlinear filter estimate, as outlined in Theorem \ref{thm:forwardsamplefilter}. Since smoothing provides a more accurate and less uncertain state estimation, it is natural to consider a corresponding smoother-based sampling formula for the latent variables. This formula runs backward in time and is conditioned on the full sequence of measurements from the observable variables. The following result explicitly presents this backward sampling procedure for the latent variables, where its proof can be found in Appendix \ref{sec:Appendix_F}. As before, the subscript "$\,\text{s}\,$'' denotes "smoother'', indicating that this sampler is based on the backward smoother solution. See also Theorem \ref{thm:backwardsamplesmoother} for the consistency result.

\begin{thm}[\textbf{Optimal Nonlinear Backward Sampling Formula}] \label{thm:backwardsample}
    When conditioned on the terminating observable $\sigma$-algebra, $\cF_T^{\vx}$, i.e., a complete realization of the observed variables $\vx$, then the optimal (in the mean-square sense) strategy for sampling the trajectories associated with the unobservable variables $\vy$ at time $T\geq t\geq 0$, which we denote by $\hat{\vy}_{\text{\ns}}$, satisfies the following explicit formula:
    \begin{equation} \label{eq:backwardsample}
        \smooth{\rmd \hat{\vy}_{\text{\ns}}}(t) = (-\ml^\vy \vm{\nf}-\vf^\vy +(\ma+\mb\mr{\nf}^{-1})(\vm{\nf}-\hat{\vy}_{\text{\ns}}))\rmd t+ (\ms^\vy\circ \ms^\vx)(\ms^\vx\circ \ms^\vx)^{-1}\big(\smooth{\rmd \vx}+(\ml^\vx\vm{\nf}+\vf^\vx)\rmd t \big) +\mb^{1/2}\rmd \vw_{\hat{\vy}_{\text{\ns}}},
    \end{equation}
   for $T\geq t\geq 0$ and where $\rmd \vw_{\hat{\vy}_{\text{\ns}}}$ are independent \textit{forward} white-noise increments in \textit{reverse} time, $\tau=T-t$, and $\ma,\mb$ are given in \eqref{eq:auxiliarymata}--\eqref{eq:auxiliarymatb}. As shown in Theorem \ref{thm:filtering}, the square root of $\mb$ exists and is unique ($\pp$-a.s.). The backward-arrow notation on the left-hand is to be understood as
    \begin{equation*}
        \smooth{\rmd \hat{\vy}_{\text{\ns}}}(t):=\lim_{\Delta t\to 0} \left(\hat{\vy}_{\text{\ns}}(t)-\hat{\vy}_{\text{\ns}}(t+\Delta t)\right),
    \end{equation*}
    and likewise for $\smooth{\rmd \vx}$ on the right-hand side, where on the right-hand side of \eqref{eq:backwardsample}, $\hat{\vy}_{\text{\ns}}$ is evaluated at $t+\dt$ while all other coefficients and the filter estimate are being calculated at $t$. The formula in \eqref{eq:backwardsample} starts from $t=T$ and it is ran backwards towards $t=0$. The initial value of $\hat{\vy}_{\text{\ns}}$ in \eqref{eq:backwardsample} is drawn from the conditional Gaussian distribution corresponding to the filter estimate at $t=T$, $\mathcal{N}_l(\vm{\nf}(T),\mr{\nf}(T))$, due to the fact that the filter and the smoother estimates coincide at the end point.
\end{thm}

Comparing \eqref{eq:backwardsample} with the true underlying dynamics of $\vy$ in \eqref{eq:condgauss2}, beyond the usage of the filter mean for the deterministic dynamics, the backward sampling equation involves the extra forcing term
\begin{equation*}
    (\ma+\mb\mr{\nf}^{-1})(\vm{\nf}-\hat{\vy}_{\text{\ns}})\rmd t+(\ms^\vy\circ \ms^\vx)(\ms^\vx\circ \ms^\vx)^{-1}\big(\smooth{\rmd \vx}+(\ml^\vx\vm{\nf}+\vf^\vx)\rmd t \big).
\end{equation*}
This correction term is similar to the one in the optimal nonlinear smoother mean equation, \eqref{eq:revbackinter1}:
\begin{equation*}
    \mb\mr{\nf}^{-1}(\vm{\nf}-\vm{\ns})\rmd t+(\ms^\vy\circ \ms^\vx)(\ms^\vx\circ \ms^\vx)^{-1}\big( \smooth{\rmd\vx}+(\ml^\vx\vm{\ns}+\vf^\vx)\rmd t \big),
\end{equation*}
which takes into account the information provided by the observations and builds a connection between the smoother solution and backward sampled trajectories. Specifically, for the case of the backward samples, this correction term plays the important role of forcing and driving the sampled trajectory to meander around the filter mean state $\vm{\nf}$, under some suitable assumptions on the model parameters appearing in $\ma$ and $\mb$. However, due to the memory of the latent process, the system response of the forcing has a delayed effect, and actually fluctuates around the smoother mean $\vm{\ns}$,  which can be seen by the following corollary to Theorem \ref{thm:backwardsample}, which provides an alternative way to express the backward sampling formula is as,
\begin{equation} \label{eq:alternativebacksample}
    \smooth{\rmd \hat{\vy}}_{\text{s}}(t)=\smooth{\rmd \vm{\ns}}-(\ma+\mb\mr{\nf}^{-1})(\hat{\vy}_{\text{\ns}}-\vm{\ns})\rmd t+\mb^{1/2}\rmd \vw_{\hat{\vy}_{\text{\ns}}}.
\end{equation}

To see why this holds, we simply take the difference between \eqref{eq:backwardsample} and \eqref{eq:revbackinter1} and perform the necessary algebra. The result in \eqref{eq:alternativebacksample} presents the backward sampling framework in a form analogous to the forward sampling strategy in \eqref{eq:forwardsample}, showing that the backward-sampled trajectory oscillates around the smoother mean state. Furthermore, note that the deterministic component of the forcing has feedback of the form $\mb\mr{\nf}^{-1}+\ma$, which by \eqref{eq:revbackinter2},
\begin{equation*}
    \smooth{\rmd \vm{\ns}}(t)=-(\ma+\mb\mr{\nf}^{-1})\vm{\ns}-(\vf^\vy-\mb\mr{\nf}^{-1}\vm{\nf})\rmd t+(\ms^\vy\circ \ms^\vx)(\ms^\vx\circ \ms^\vx)^{-1}( \smooth{\rmd\vx}+\vf^\vx\rmd t),
\end{equation*}
from \eqref{eq:revbackinter1}, and the theory of random ODEs, governs the stability of the smoother covariance through its spectrum’s projection onto the real axis, which must remain positive (accounting for the minus signs in \eqref{eq:revbackinter1}--\eqref{eq:revbackinter2}) \cite{crauel2015nonautonomous, kloeden2011nonautonomous, lu2021mathematical}. To simplify the role of observations, consider no noise cross-interaction and constant noise strength in $\vy$, i.e., $\ms^\vy\circ \ms^\vy$ is constant. In this case, by \eqref{eq:filter2}, $\mr{\nf}$ positively correlates with the observational noise $\ms^\vx\circ \ms^\vx$, with a lower uncertainty in $\vx$ leading to a smaller $\mr{\nf}$ (in the trace sense) and a larger correction weight (since the quadratic term in its random Riccati is being weighted by $-\mc$ \cite{bishop2019stability}). It means observations play an important role in determining the backward sampled trajectories. Concerning the dynamical properties of the CGNS, the backward sampling formula also possesses the enticing feature that it retains the structure of the latent dynamics, which has the important implication that the temporal ACF, as well as other higher-order moments and temporal correlations, and even the marginal of both components of the state space and joint PDFs, can be accurately recovered using these sampled trajectories. An example towards this direction is showcased in the numerical case study included in Section \ref{sec:4}.

Analogous now to Theorem \ref{thm:forwardsamplefilter}, the following result establishes a consistency connection between the pointwise statistics of the backward sampled trajectories and the Gaussian statistics of the smoother posterior distribution. The proof to this result can be found in Appendix \ref{sec:Appendix_G}

\begin{thm}
    [\textbf{Consistency Between Backward Sampling and Smoother State Estimation}] \label{thm:backwardsamplesmoother}
    Assume that the real parts of all the eigenvalues of $\ma+\mb\mr{\nf}^{-1}$ are positive ($\pp$-a.s.). Additionally, assume that at $t=T$ the backward samples coincide with the conditional Gaussian nonlinear smoother mean. Then, the conditional Gaussian nonlinear smoother estimate, \eqref{eq:revbackinter1}--\eqref{eq:revbackinter2}, can be recovered by applying a mean-fluctuation decomposition to the backward sampling equation \eqref{eq:backwardsample}. In other words:
    \begin{enumerate}[label=\textbf{(\alph*)}]
        \item The ensemble average of the backward sampling equation \eqref{eq:backwardsample}, conditioned on the natural filtration of $\cF$ w.r.t.\ $\{\vx(s)\}_{s\leq T}$, $\cF_T^{\vx}$, i.e., a complete realization of the observed variables $\vx$, is the time evolution of the smoother mean in \eqref{eq:revbackinter1}.
        \item The ensemble average of the quadratic form of the residual equation of \eqref{eq:backwardsample}, conditioned on the natural filtration of $\cF$ w.r.t.\ $\{\vx(s)\}_{s\leq T}$, $\cF_T^{\vx}$, i.e., a complete realization of the observed variables $\vx$, is the time evolution of the smoother covariance in \eqref{eq:revbackinter2}.
    \end{enumerate}
\end{thm}

\subsection{A Hierarchy of Uncertainty in the Sampling Formulae for the Unobserved Variables} \label{sec:3.3}

Based on the results provided by Theorems \ref{thm:forwardsample}--\ref{thm:backwardsamplesmoother}, as well as the discussions accompanying these conclusions in Section \ref{sec:3.1} and \ref{sec:3.2}, respectively, it is then possible, under some suitable assumptions, to infer the emergence of a hierarchy in the noise feedback components of the sampling uncertainty, which can be expressed through a double matrix inequality between the noise feedback matrix in the original equation driving the latent dynamics, \eqref{eq:condgauss2}, and the noise amplitude matrices appearing in the optimal nonlinear forward and backward sampling formulas, \eqref{eq:forwardsample} and \eqref{eq:backwardsample}, respectively. We state this observation in the following remark.
\begin{rmk}
    [\textbf{Hierarchy in the Uncertainty of the Latent Dynamics}] \label{rmk:uncertaintyhierarchy}
Let "$\succeq$" to define the Loewner partial order over the closed and pointed convex cone of nonnegative-definite matrices in the space of self-adjoint matrices, with its interior being the open blunt convex cone of positive-definite matrices (in the spectral operator norm). This means $\mathbf{R}\succeq \mathbf{T}$ if $\mathbf{R}-\mathbf{T}$ is nonnegative definite, and $\mathbf{R}\succ\mathbf{T}$ if $\mathbf{R}-\mathbf{T}$ is positive definite. Then, it always holds that $\pp$-a.s.\ $\ms^\vy\circ\ms^\vy\succeq \mb$, with strict inequality if $\ms^\vy\circ\ms^\vx$ is $\pp$-a.s.\ of full rank, and that $\pp$-a.s.\ $\mb+\mr{\nf}\mc\mr{\nf}\succeq \mb$, with strict inequality if $\ml^\vx$ is $\pp$-a.s.\ of full rank (with the latter also being important in establishing the stochastic controllability of the CGNS). These are true for any $t\in[0,T]$.

As for, $\ms^\vy\circ\ms^\vy\succeq\mb+\mr{\nf}\mc\mr{\nf}$, or, equivalently, $(\ms^\vy\circ\ms^\vx)(\ms^\vx\circ\ms^\vx)^{-1}(\ms^\vx\circ\ms^\vy)\succeq \mr{\nf}\mc\mr{\nf}$, it does not necessarily always hold even under the current set of assumptions \textbf{\magenta{(1)}}--\textbf{\magenta{(13)}}, including the additional ones considered in Sections \ref{sec:2} and \ref{sec:3} thus far, as already discussed in Section \ref{sec:3.1} (see \eqref{eq:eigconditionwindow}). Following an analysis similar to the one in Lemma \ref{lem:tailboundsspectral}, specifically through the Laplace transform method for bounds in expectation of extremal eigenvalues \cite{tropp2019matrix}, we have that such a result can hold in expectation (or in the weak-sense), i.e., $\ee{(\ms^\vy\circ\ms^\vx)(\ms^\vx\circ\ms^\vx)^{-1}(\ms^\vx\circ\ms^\vy)}\succeq \ee{\mr{\nf}\mc\mr{\nf}}$, with $\ee{\boldsymbol{\cdot}}$ denoting the full expectation, if sufficient conditions are assumed to provide the control
\begin{equation*}
    \ee{\mathrm{tr}\left(e^{\theta\big((\ms^\vy\circ\ms^\vx)(\ms^\vx\circ\ms^\vx)^{-1}(\ms^\vx\circ\ms^\vy)-\mr{\nf}\mc\mr{\nf}\big)}\right)}\leq 1, \ \forall \theta<0,
\end{equation*}
As such, we have:
\begin{equation}
\begin{gathered}
    \ms^\vy\circ\ms^\vy\succeq \mb, \quad \mb+\mr{\nf}\mc\mr{\nf}\succeq \mb, \ \forall t\in[0,T] \ (\pp\text{-a.s.}),\\
    \forall t\in[0,T]: \ \ee{\mathrm{tr}\left(e^{\theta\big(\ms^\vy\circ\ms^\vy-\mb-\mr{\nf}\mc\mr{\nf}\big)}\right)}\leq 1, \ \forall \theta<0 \Leftrightarrow (\ms^\vy\circ\ms^\vx)(\ms^\vx\circ\ms^\vx)^{-1}(\ms^\vx\circ\ms^\vy)\succeq_{\mathbb{E}} \mr{\nf}\mc\mr{\nf}.
\end{gathered} \label{eq:uncertaintyhier}
\end{equation}
\end{rmk}

Essentially, Remark \ref{rmk:uncertaintyhierarchy} asserts that an increasing incorporation of information in the simulation of the latent dynamics leads to a decrease in their sample-to-sample fluctuations, since $(\ms^\vy\circ\ms^\vy)$, $\mb+\mr{\nf}\mc\mr{\nf}$, and $\mb$ are the noise feedbacks matrices corresponding to \eqref{eq:condgauss2}, \eqref{eq:forwardsample}, and \eqref{eq:backwardsample}, respectively. The result in \eqref{eq:uncertaintyhier} does so via a concise, intuitive explanation. When not conditioning on any of the observations, the fluctuations component of the latent uncertainty is large. However, by incorporating the information provided by the observed variables, this uncertainty can potentially decrease. If we move forward, i.e., by conditioning on only up to the current observational values, the initial latent uncertainty defined by the sum of the Gramians of the noise feedback matrices in the unobservable process, $\ms^\vy\circ \ms^\vy$, is "corrected" by the matrix $(\ms^\vy\circ\ms^\vx)(\ms^\vx\circ\ms^\vx)^{-1}(\ms^\vx\circ\ms^\vy)-\mr{\nf}\mc\mr{\nf}$ to give the forward-based sampling noise amplitude $\mb+\mr{\nf}\mc\mr{\nf}$. As noted by the discussion following the proof of Theorem \ref{thm:forwardsample}, specifically \eqref{eq:eigconditionwindow}, it does not always hold that $\ms^\vy\circ \ms^\vy\succeq\mb+\mr{\nf}\mc\mr{\nf}$, but assuming the validity of \eqref{eq:uncertaintyhier}, then this matrix inequality (in either the strong or weak sense) conveys that the information gained by incorporating the observations of $\vx$ into the state estimation, is inversely proportional to the observational process' uncertainty weighted by the noise cross-interaction, i.e., $(\ms^\vy\circ\ms^\vx)(\ms^\vx\circ\ms^\vx)^{-1}(\ms^\vx\circ\ms^\vy)$, but which is then counterbalanced at the same time by the information that is lost due to the inherent uncertainty of not knowing the unobservable process $\vy$, $-\mr{\nf}\mc\mr{\nf}$, with this term being the weighted factor (weighted by the filter posterior covariance) which defines the quadratic dynamics in the filter covariance's random evolution equation. Going further than that, when moving backwards, i.e., by conditioning on a complete batch of observations and then proceeding with smoothing, always guarantees a decrease in the overall, as well as component-wise, uncertainty of the hidden process, with the noise feedback now decreasing down to $\mb$, with the difference in fluctuations being just the (generally) nonnegative-definite matrix $(\ms^\vy\circ\ms^\vx)(\ms^\vx\circ\ms^\vx)^{-1}(\ms^\vx\circ\ms^\vy)$, since no filter uncertainty is adopted, and so the only uncertainty stems from that of the sampled trajectory of $\vx$, $(\ms^\vx\circ\ms^\vx)^{-1}$, and any potential noise cross-interactions that emerge by the model structure, i.e., whether $(\ms^\vx\circ\ms^\vy)\equiv\mathbf{0}_{k\times l}$ or not; this further showcases, analytically, the unbiasedness of the smoother estimate.

It should be noted that we do not make any explicit assertions about the overall uncertainty of the system and, by extension, that of the posterior estimated states. We are just presenting some results concerning its constituents, mainly in this case the noise component of the dynamical uncertainty. As can be clearly seen from the forward and backward fluctuation sampling equations, \eqref{eq:forwresidualeqn} and \eqref{eq:backresidualeqn}, the uncertainty is partly controlled by the noise feedback matrices appearing in \eqref{eq:uncertaintyhier}, but also by the corresponding damping multiplicative factors which help relax the respective sample state towards the corresponding posterior mean in an exponentially fast fashion, or, equivalently, by forcing the magnitude of the fluctuations down to zero (where these damping terms, as already noted, also control the mean-square stability of the associated posterior statistics through the projection of their spectrum onto the abscissa of the complex plane \cite{soong1973random, neckel2013random, khasminskii2012stochastic, arnold2014random, crauel2015nonautonomous, kloeden2011nonautonomous, han2017random, lu2021mathematical}). So, while the noise amplitude matrices satisfy a hierarchy such as the one indicated in Remark \ref{rmk:uncertaintyhierarchy}, this only partly illuminates the temporal behavior of the uncertainty of the system and how the observations affect it. After all, it is known that the filter and smoother estimated states will always decrease the overall state uncertainty regardless of how informative (or not) the current measurement or observation is. As such, even if the filter noise amplitude might be larger, there is always a trade-off between damping and noise, as indicated by the bias-variance decomposition of the mean-square error \cite{kohavi1997bias}, which should necessarily lead to a less uncertain state by the optimality in this metric. See Section \ref{sec:4.3} and Figure \ref{fig:case_study_fig_3} for further discussion on the matter.

\subsection{Pathwise Error Metrics for the Posterior Mean Time Series and the Sampled Trajectories} \label{sec:3.4}
In many applications, the skill in the probabilistic estimated state is quantified by the pathwise error between the posterior mean time series and the truth. The motivation of adopting such a pathwise approach is that for a Gaussian posterior distribution, the posterior mean estimate at each fixed time instant is a convex combination of the prior mean at that time instant and the pointwise maximum likelihood estimate, a feature that is inherited from the conjugacy of the model and its inclusion in the exponential family of distributions \cite{diaconis1979conjugate}.

The pathwise error between the posterior mean time series and the truth is easy to compute in practice, by just taking the difference between the two with respect to some norm at each time instant, which forms an error time series. However, by computing the pathwise error solely via the posterior mean time series in such a naive approach, we neglect the posterior uncertainty, which is a time-dependent function that contains a significant amount of information about the posterior estimate, something which is especially essential for CTNDSs. Besides, the posterior mean time series is simply a collection of the convex combinations of the pointwise maximum likelihood estimates and prior means from each time instant, which may not even be dynamically consistent with the underlying model. To resolve this issue, since the sampled trajectories from either \eqref{eq:forwardsample} or \eqref{eq:backwardsample} take into account the posterior uncertainty and temporal correlations, we may then assess the pathwise error through these sampled trajectories instead, which is more suitable for many circumstances, especially in understanding the dynamical behavior of the unobserved processes conditioned on the observations.

This subsection focuses on comparing the pathwise error in the posterior mean time series and that in the sampled trajectories. Towards this investigation, the standardized root-mean-square error (SRMSE) and the Pearson anomaly pattern correlation coefficient (Corr) are the two most widely used pathwise measurements in practice \cite{kalnay2003atmospheric, taylor2001summarizing, houtekamer1998data, lermusiaux1999data, hendon2009prospects, kim2012seasonal}. Let $\vy(t)$ denote the true $l$-dimensional time series of the unobservables and $\hat{\vy}(t)$ to represent a sampled trajectory of the unobservables, either from the filter-based forward- or smoother-based backward-conditional sampling formulas, \eqref{eq:forwardsample} and \eqref{eq:backwardsample}, respectively. Furthermore, we let $\overline{\hat{\vy}}(t):=\ee{\hat{\vy}(t)\big|\mathcal{G}}$ to denote the posterior mean time series from filtering or smoothing, depending on how we sampled $\hat{\vy}(t)$ (i.e., $\mathcal{G}=\cF_t^{\vx,\vyt}$ or $\mathcal{G}=\cF_T^{\vx}$, respectively), where the expectation is taken over all the ensembles (i.e., the samples denote the particles) at a fixed time instant. By Theorems \ref{thm:forwardsamplefilter} and \ref{thm:backwardsamplesmoother}, these are indeed the filter and smoother posterior mean time series for $t\in[0,T]$, respectively. Moreover, let $\mathbf{c}(t)\in\mathbb{C}^p$ be a stochastic process such that $\mathbf{c}(t)\in L^1([0,T];L^2\left(\Omega, \cF, \pp; \mathbb{C}^{p}\right))$, and define its discrete-time signal $\mathbf{c}_{\dt}:=\{\mathbf{c}^j_{\dt}=\mathbf{c}(t_j)\}_{j=0,1,\ldots,J}$ induced by the uniform time-discretization scheme outlined in the beginning of Section \ref{sec:2.5}. Then, we define the time average, the temporal covariance matrix, and standard deviation over $[0,T]$ of the discretized process through the discrete measure defined by the partition as follows:
\begin{gather}
    \langle \mathbf{c}_{\dt} \rangle := \frac{1}{J} \sum_{j=0}^J \mathbf{c}(t_j), \label{eq:temporal_average}\\
    \mathbf{V}(\mathbf{c}_{\dt}) := \left( V_{jm}(\mathbf{c}_{\dt})\right)_{(J+1)\times(J+1)}, \quad  V_{jm}(\mathbf{c}_{\dt}):=(\mathbf{c}(t_j)-\langle \mathbf{c}_{\dt} \rangle)^\dagger(\mathbf{c}(t_m)-\langle \mathbf{c}_{\dt} \rangle)/J, \label{eq:temporal_covariance}\\
    \mathrm{std}(\mathbf{c}_{\dt}):=\mathrm{tr}(\mathbf{V}(\mathbf{c}_{\dt}))^{1/2}.\label{eq:temporal_std}
\end{gather}
Similarly, we can define analogous metrics for the continuous-time signal via integrals. Then, the SRMSE and the Corr between the true trajectory and the posterior mean time series are defined as (where we drop the subscript of $\dt$ for notational simplicity):
\begin{gather}
    \text{SRMSE}\big(\vy,\overline{\hat{\vy}}\big):=\frac{1}{\mathrm{std}(\vy)}\sqrt{\frac{1}{J}\sum_{j=0}^J\big\|\vy^j-\overline{\hat{\vy}^j}\big\|_2^2}, \label{eq:rmse} \\
    \text{Corr}\big(\vy,\overline{\hat{\vy}}\big):=\frac{\sum_{j=0}^J \big\langle \vy^j-\langle \vy\rangle,\overline{\hat{\vy}^j}-\langle \overline{\hat{\vy}}\rangle\big\rangle_2}{\mathrm{std}(\vy)\mathrm{std}\big(\overline{\hat{\vy}}\big)J},\label{eq:corr}
\end{gather}
where $\big\langle\boldsymbol{\cdot},\boldsymbol{\cdot}\big\rangle_2$ denotes the usual Euclidean inner product over $\mathbb{C}^l$. These quantities depend on the equipartition of the time interval $[0,T]$ and are sample-based (in time), thus contrasting the corresponding population quantities which use the true temporal ensemble average instead. Nevertheless, consistency, both in the weak and strong sense, can be achieved through standard asymptotic statistical analysis in the limit as $\dt\to 0^+$, assuming prior conditions hold. The same formulae apply for the SRMSE and Corr in the sampled trajectories by replacing $\overline{\hat{\vy}}$ with $\hat{\vy}$. Notably, uncertainty in the posterior estimates does not manifest in the pathwise measurements using the posterior mean time series but does when using sample trajectories.

The following theorem establishes a link between the SRMSE in the posterior mean time series and that in the sampled trajectory, whether from forward or backward sampling \eqref{eq:forwardsample} and \eqref{eq:backwardsample}, with the latter accounting for both the average error and the uncertainty in the sampling process.
\begin{thm}[\textbf{Expected Sampling Squared Error}] \label{thm:rmsesampling}
The expected squared error in $\hat{\vy}$ related to the truth $\vy$ is given by
\begin{align}
\begin{split}
    \ee{\mathrm{SRMSE}(\vy,\hat{\vy})^2\big|\mathcal{G}}&=\ee{\frac{1}{\mathrm{std}(\vy)^2J}\sum_{j=0}^J \big\|\vy^j-\hat{\vy}^j\big\|_2^2\Big|\mathcal{G}}\\
    &=\frac{1}{\mathrm{std}(\vy)^2}\Big(\frac{1}{J}\sum_{j=0}^J\big\|\vy^j-\overline{\hat{\vy}^j}\big\|_2^2+\mathbb{E}\Big[\frac{1}{J}\sum_{j=0}^J\big\|\overline{\hat{\vy}^j}-\hat{\vy}^j\big\|_2^2\Big|\mathcal{G}\Big]\Big)\\
    &= \mathrm{SRMSE}\big(\vy,\overline{\hat{\vy}}\big)^2+\frac{\mathrm{std}\big(\overline{\hat{\vy}}\big)^2}{\mathrm{std}(\vy)^2}\ee{\mathrm{SRMSE}\big(\overline{\hat{\vy}},\hat{\vy}\big)^2\big|\mathcal{G}}.
\end{split}\label{eq:biasvariancetrade}
\end{align}
\end{thm}
\begin{proof}[\textbf{Proof of Theorem \ref{thm:rmsesampling}}]
    \eqref{eq:biasvariancetrade} is easily obtained via simple algebra and by the linearity of the expectation, after using the polarization identity,
\begin{equation*}
    \big\|\vy^j-\hat{\vy}^j\big\|_2^2=\big\|\vy^j-\overline{\hat{\vy}}^j+\overline{\hat{\vy}}^j-\hat{\vy}^j\big\|_2^2=\big\|\vy^j-\overline{\hat{\vy}}^j\big\|_2^2+\big\|\overline{\hat{\vy}}^j-\hat{\vy}^j\big\|_2^2+2\mathrm{Re}\big(\big\langle \vy^j-\overline{\hat{\vy}}^j, \overline{\hat{\vy}}^j-\hat{\vy}^j\big\rangle_2\big).
\end{equation*}
This result essentially relies on the fact that $\ee{\hat{\vy}^j|\mathcal{G}}=\overline{\hat{\vy}^j}$ regardless of the sampling strategy, due to Theorems \ref{thm:forwardsamplefilter} and \ref{thm:backwardsamplesmoother} and the uniqueness of solutions to the posterior state estimation and conditional sampling equations, i.e. for each $j=0,1,\ldots,J-1$ we have
\begin{equation*}
   \ee{\hat{\vy}^{j+1}|\mathcal{G}}=\begin{cases}
       \ee{\hat{\vy}^{j+1}\big|\cF_{t_{j+1}}^{\vx,\vy^j}}=\overline{\hat{\vy}_{\text{\nf}}^{j+1}}, & \hat{\vy}^j=\hat{\vy}_{\text{\nf}}^j, \ \mathcal{G}=\cF_{t_{j+1}}^{\vx,\vy^j}\\
       \ee{\hat{\vy}^{j+1}|\cF_T^{\vx}}=\overline{\hat{\vy}_{\text{\ns}}^{j+1}}, & \hat{\vy}^j=\hat{\vy}_{\text{\ns}}^j, \ \mathcal{G}=\cF_T^{\vx},
   \end{cases}
\end{equation*}
with this equaling correspondingly either the filter or smoother posterior mean at $t_{j+1}$.
\end{proof}

This corresponds to the bias-variance decomposition \cite{kohavi1997bias}, where the first term in \eqref{eq:biasvariancetrade} represents the bias and the second term captures the variance. Notably, the bias component is precisely the square of the traditional SRMSE in the posterior mean time series \eqref{eq:rmse}. The variance term quantifies the additional uncertainties in the sampled trajectory, beyond the variability of the posterior mean time series, which arise from the corresponding posterior covariance tensor—whether from the filter or smoother—and are often overlooked in applications that consider only the posterior mean.

The next objective is to examine the difference in anomaly pattern correlation between the posterior mean time series and the sampled trajectories, as presented in the following theorem. Note that for this result we require that our scheme is accurate enough, in other words the time step is assumed to be small.
\begin{thm}[\textbf{Anomaly Pattern Correlation of Sampled Trajectories}] \label{thm:corrsampling}
    Assuming $\dt$ is sufficiently small, then the Corr coefficient between the sampled trajectory $\hat{\vy}$ and the truth $\vy$ is given by
    \begin{align}
    \begin{split}
        \mathrm{Corr}(\vy,\hat{\vy})&=\frac{\sum_{j=0}^J \big\langle \vy^j-\langle \vy\rangle,\hat{\vy}^j-\langle \hat{\vy}\rangle\big\rangle_2}{\sqrt{\sum_{j=0}^J \left\|\vy^j-\langle \vy\rangle\right\|_2^2}\sqrt{\sum_{j=0}^J \left\|\hat{\vy}^j-\langle \hat{\vy}\rangle\right\|_2^2}}\\
        &=\frac{\sum_{j=0}^J \Big(\left\langle \vy^j-\langle \vy\rangle,\hat{\vy}^{\prime,j}-\langle \hat{\vy}^{\prime}\rangle\right\rangle_2+\big\langle \vy^j-\langle \vy\rangle,\overline{\hat{\vy}^j}-\langle \overline{\hat{\vy}}\rangle\big\rangle_2 \Big)}{\sqrt{\sum_{j=0}^J \big\|\vy^j-\langle \vy\rangle\big\|_2^2}\sqrt{\sum_{j=0}^J \Big(\big\|\overline{\hat{\vy}^j}-\big\langle \overline{\hat{\vy}}\big\rangle\big\|_2^2+\big\|\hat{\vy}^{\prime,j}-\langle\hat{\vy}^{\prime}\rangle\big\|_2^2+2\mathrm{Re}\big(\big\langle \overline{\hat{\vy}^j}-\big\langle \overline{\hat{\vy}}\big\rangle, \hat{\vy}^{\prime,j}-\langle\hat{\vy}^{\prime}\rangle\big\rangle_2\big)\Big)}}\\
        &=\eta \cdot \mathrm{Corr}\big(\vy,\overline{\hat{\vy}}\big),
    \end{split}\label{eq:correrror}
    \end{align}
    where the constant $0\leq \eta\leq 1$ is defined by
    \begin{equation*}
       \eta := \frac{\sqrt{\sum_{j=0}^J \big\|\overline{\hat{\vy}^j}-\langle \overline{\hat{\vy}}\rangle\big\|_2^2}}{\sqrt{\sum_{j=0}^J \Big(\big\|\overline{\hat{\vy}^j}-\big\langle \overline{\hat{\vy}}\big\rangle\big\|_2^2+\big\|\hat{\vy}^{\prime,j}-\langle\hat{\vy}^{\prime}\rangle\big\|_2^2+2\mathrm{Re}\big(\big\langle \overline{\hat{\vy}^j}-\big\langle \overline{\hat{\vy}}\big\rangle, \hat{\vy}^{\prime,j}-\langle\hat{\vy}^{\prime}\rangle\big\rangle_2\big)\Big)}},
    \end{equation*}
    and $\hat{\vy}^{\prime,j}=\hat{\vy}^j-\overline{\hat{\vy}^j}$ is the residual part of the sampled trajectory related to the posterior mean (where the overline denotes the conditional expectation with respect to the associated $\sigma$-algebra depending on the sampling strategy).
\end{thm}
\begin{proof}[\textbf{Proof of Theorem \ref{thm:corrsampling}}]
    Simple algebra and the bilinearity of the Euclidean inner product give the following identity:
\begin{equation*}
    \big\langle \vy^j-\langle \vy \rangle, \hat{\vy}^j-\langle \hat{\vy} \rangle \big\rangle_2 = \big\langle \vy^j - \langle \vy \rangle, \hat{\vy}^{\prime,j} - \langle \hat{\vy}^{\prime} \rangle \big\rangle_2 + \big\langle \vy^j - \langle \vy \rangle, \overline{\hat{\vy}^j} - \langle \overline{\hat{\vy}} \rangle \big\rangle_2.
\end{equation*}
Using an appropriate polarization identity, similar to that used in the proof of Theorem \ref{thm:rmsesampling}, we recover the first equality. Next, observe that the time series of $\hat{\vy}^\prime$, according to \eqref{eq:forwresidualeqn} for the forward sampler and \eqref{eq:backresidualeqn} for the backward sampler, satisfies a conditionally linear Gaussian SDE, which is conditionally independent of the truth $\vy$ (due to the independence of the stochastic forcings, i.e., their respective stochastic forcings are mutually independent Wiener processes), that also satisfies a conditionally linear Gaussian SDE. Hence, by conditional independence in the temporal direction, $\mathrm{Corr}(\vy, \hat{\vy}') \approx 0$, granted $\dt$ is sufficiently small (or $J$ is large enough) such that the sample-based quantity converges to the population one under the law of large numbers, with the convergence rate being the parametric one of $O(\sqrt{J})$; this is because even though the population-counterpart of their correlation is zero, this does not translate to the sample-based metric as well, unless $\dt$ is sufficiently small. This fact is used to derive the second equality of \eqref{eq:correrror}.
\end{proof}

Theorem \ref{thm:corrsampling} shows that the anomaly pattern correlation between the sampled trajectory and the truth, $\mathrm{Corr}(\vy, \hat{\vy})$, equals that between the truth and the posterior mean time series, $\mathrm{Corr}\big(\vy, \overline{\hat{\vy}}\big)$, up to a multiplicative factor $\eta$ which depends on the filter or smoother sampling uncertainty $\hat{\vy}^\prime$. As this uncertainty increases, the denominator in $\eta$ also increases, making $\eta$ smaller. Since $\eta \leq 1$, we have $|\mathrm{Corr}(\vy, \hat{\vy})| \leq |\mathrm{Corr}\big(\vy, \overline{\hat{\vy}}\big)|$, as expected. Only in the limiting case where the posterior uncertainty approaches zero does $\mathrm{Corr}(\vy, \hat{\vy})$ converge to $\mathrm{Corr}\big(\vy, \overline{\hat{\vy}}\big)$.

\subsection{The Temporal Autocorrelation Function} \label{sec:3.5}
The temporal autocorrelation function (ACF) measures how a signal correlates with a delayed version of itself as a function of delay \cite{gardiner2009stochastic}. For a stochastic process $\mathbf{u}$ over $[0,T]$, its scalar temporal ACF is defined as
\begin{equation} \label{eq:temporalacf}
    \mathrm{ACF}_{\mathbf{u}}(s):=\lim_{T'\to T} \frac{1}{T'}\int_0^{T'} \frac{\mathrm{tr}\left(\mathrm{Cov}(\mathbf{u}(t),\mathbf{u}(t+s))\right)}{\mathrm{tr}\left(\mathrm{Var}(\mathbf{u}(t))\right)} \rmd t.
\end{equation}
This scalar ACF focuses on "diagonal'' behavior, assuming negligible inter-coordinate correlations \cite{puccetti2022measuring}. A similar ACF in both the perfect and approximate models implies similar dynamics, at least in second-order statistics, quantified via spectral representations \cite{qi2016predicting}. For CTNDSs with extreme events, higher-order statistics are crucial, so the ACF serves as a rough measure of predictability.

To quantify system memory and dynamics recovery, we compare the ACFs of sampled trajectories to the truth. Assuming weak- or wide-sense stationarity in the CGNS dynamics (e.g., it is sufficient to assume that the CGNS of processes enjoys (geometric) ergodicity \cite{mattingly2002ergodicity, majda2016ergodicity, chen2018rigorous}), the ACFs for the truth and posterior mean time series, as $T\to +\infty$, are
\begin{align}
    \mathrm{ACF}_{\vy}(s)&:=\frac{\big\langle\mathrm{tr}\big(\ee{(\vy(t)-\overline{\vy}_{\infty})(\vy(t+s)-\overline{\vy}_{\infty})^\dagger}\big)\big\rangle}{\mathrm{tr}\left(\mathrm{Var}(\vy)_{\infty}\right)} \label{eq:acftruth},\\
    \mathrm{ACF}_{\overline{\hat{\vy}}}(s)&:=\frac{\big\langle\mathrm{tr}\big(\mathbb{E}\big[\big(\overline{\hat{\vy}}(t)-\overline{\hat{\vy}}_{\infty}\big)\big(\overline{\hat{\vy}}(t+s)-\overline{\hat{\vy}}_{\infty}\big)^\dagger\big]\big)\big\rangle}{\mathrm{tr}\left(\mathrm{Var}\big(\overline{\hat{\vy}}\big)_{\infty}\right)} \label{eq:acfpostmean},
\end{align}
where $\overline{\vy}_{\infty}$ and $\overline{\hat{\vy}}_{\infty}$ are the equilibrium means, and $\mathrm{Var}(\vy)_{\infty}$ and $\mathrm{Var}\big(\overline{\hat{\vy}}\big)_{\infty}$ are the equilibrium covariances; we highlight that the expectations and variances taken in this regime, in \eqref{eq:acftruth} and \eqref{eq:acfpostmean}, are the total ones since we consider the dynamics at the equilibrium. In practice, the sample covariance tensor is used to compute the ACF. These expressions, as well as \eqref{eq:acfsample} in Theorem \ref{thm:acfsampling}, can be derived using Euclidean norms, the interchangeability of the expectation and trace operators, and the cyclic property of the trace. The following theorem compares the ACF of the posterior mean time series with that of the sampled trajectory.

\begin{thm}
    [\textbf{Temporal ACF of Samples}] \label{thm:acfsampling}
    The temporal ACF of the sampled trajectory $\hat{\vy}$ can be calculated as
    \begin{align}
    \begin{split}
        \mathrm{ACF}_{\hat{\vy}}(s)&:=\frac{\big\langle\mathrm{tr}\big(\ee{(\hat{\vy}(t)-\overline{\hat{\vy}}_{\infty})(\hat{\vy}(t+s)-\overline{\hat{\vy}}_{\infty})^\dagger}\big)\big\rangle}{\mathrm{tr}\left(\mathrm{Var}\big(\hat{\vy}\big)_{\infty}\right)}\\
        &=\frac{\big\langle\mathrm{tr}\big(\ee{(\hat{\vy}'(t)+\overline{\hat{\vy}}(t)-\overline{\hat{\vy}}_{\infty})(\hat{\vy}'(t+s)+\overline{\hat{\vy}}(t+s)-\overline{\hat{\vy}}_{\infty})^\dagger}\big)\big\rangle}{\mathrm{tr}\left(\mathrm{Var}\big(\hat{\vy}'+\overline{\hat{\vy}}\big)_{\infty}\right)}\\
        &=\frac{\big\langle\mathrm{tr}\big(\mathbb{E}\big[\big(\overline{\hat{\vy}}(t)-\overline{\hat{\vy}}_{\infty}\big)\big(\overline{\hat{\vy}}(t+s)-\overline{\hat{\vy}}_{\infty}\big)^\dagger\big]\big)\big\rangle}{\mathrm{tr}\left(\mathrm{Var}(\hat{\vy}')_{\infty}\right)+\mathrm{tr}\left(\mathrm{Var}\big(\overline{\hat{\vy}}\big)_{\infty}\right)}+\frac{\big\langle\mathrm{tr}\big(\ee{\hat{\vy}'(t)\hat{\vy}'(t+s)^\dagger}\big)\big\rangle}{\mathrm{tr}\left(\mathrm{Var}(\hat{\vy}')_{\infty}\right)+\mathrm{tr}\left(\mathrm{Var}\big(\overline{\hat{\vy}}\big)_{\infty}\right)}\\
        &=\beta_1 \mathrm{ACF}_{\overline{\hat{\vy}}}(s)+\beta_2 \mathrm{ACF}_{\hat{\vy}'}(s),
    \end{split}\label{eq:acfsample}
    \end{align}
    where the constants $0\leq \beta_1,\beta_2\leq 1$ are given by
    \begin{align*}
       \beta_1 &= \frac{\mathrm{tr}\left(\mathrm{Var}\big(\overline{\hat{\vy}}\big)_{\infty}\right)}{\mathrm{tr}\left(\mathrm{Var}(\hat{\vy}')_{\infty}\right)+\mathrm{tr}\left(\mathrm{Var}\big(\overline{\hat{\vy}}\big)_{\infty}\right)},\\
       \beta_2 &= \frac{\mathrm{tr}\left(\mathrm{Var}(\hat{\vy}')_{\infty}\right)}{\mathrm{tr}\left(\mathrm{Var}(\hat{\vy}')_{\infty}\right)+\mathrm{tr}\left(\mathrm{Var}\big(\overline{\hat{\vy}}\big)_{\infty}\right)},
    \end{align*}
    and $\beta_1+\beta_2=1$, i.e., $\mathrm{ACF}_{\hat{\vy}}(s)$ is a convex linear combination of $\mathrm{ACF}_{\overline{\hat{\vy}}}(s)$ and $\mathrm{ACF}_{\hat{\vy}'}(s)$.
\end{thm}
\begin{proof}[\textbf{Proof of Theorem \ref{thm:acfsampling}}]
    The definition of the temporal ACF for the sampled trajectories again follows by the consistency Theorems \ref{thm:forwardsamplefilter} and \ref{thm:backwardsamplesmoother}. Simple algebra and using the interchangeability of the expectation and trace operators, along with the stability property of the conditional expectation, measurability of the posterior statistics, wide-sense stationarity of the CGNS, and the Fubini-Tonelli theorem, yields the second and third equality, since the term
    \begin{gather*}
        \hspace{-2cm}\big\langle\mathrm{tr}\big(\mathbb{E}\big[\hat{\vy}'(t)\big(\overline{\hat{\vy}}(t+s)-\overline{\hat{\vy}}_{\infty}\big)^\dagger\big]\big)\big\rangle+\big\langle\mathrm{tr}\big(\mathbb{E}\big[\big(\overline{\hat{\vy}}(t)-\overline{\hat{\vy}}_{\infty}\big)\hat{\vy}'(t+s)^\dagger\big]\big)\big\rangle\\
        \hspace{5cm}=\mathbb{E}\big[\big\langle\overline{\hat{\vy}}(t+s)^\dagger\hat{\vy}'(t)+\hat{\vy}'(t+s)^\dagger\overline{\hat{\vy}}(t)\big\rangle\big]
    \end{gather*}
    vanishes under these conditions. Specifically, under the aforementioned results and conditions, we have
    \begin{equation*}
        \mathbb{E}\big[\big\langle\overline{\hat{\vy}}(t+s)^\dagger\hat{\vy}'(t)\big\rangle\big]=\big\langle\mathbb{E}\big[\overline{\hat{\vy}}(t+s)^\dagger\mathbb{E}\big[\hat{\vy}'(t)\big|\mathcal{G}\big]\big]\big\rangle=0,
    \end{equation*}
    and likewise for $\mathbb{E}\big[\hat{\vy}'(t+s)^\dagger\overline{\hat{\vy}}(t)\big\rangle\big]$. Notice how in the third equality we also used the fact that
    \begin{equation*}
        \mathrm{tr}\left(\mathrm{Var}\big(\hat{\vy}'+\overline{\hat{\vy}}\big)_{\infty}\right)=\mathrm{tr}\left(\mathrm{Var}(\hat{\vy}')_{\infty}\right)+\mathrm{tr}\left(\mathrm{Var}\big(\overline{\hat{\vy}}\big)_{\infty}\right)+2\mathrm{tr}\left(\mathrm{Cov}\big(\hat{\vy}',\overline{\hat{\vy}}\big)_{\infty}\right),
    \end{equation*}
    where $\mathrm{tr}\left(\mathrm{Cov}\big(\hat{\vy}',\overline{\hat{\vy}}\big)_{\infty}\right)=0$, again by the stability property of the conditional expectation and measurability of the posterior mean (by following a similar procedure as we did with the expectation). 
\end{proof}

It is important to note that the temporal ACF of $\hat{\vy}'$ is not zero because $\hat{\vy}'$ satisfies a dynamical equation with a finite damping rate (see \eqref{eq:forwresidualeqn} and \eqref{eq:backresidualeqn}), for the forward and backward sampling algorithms, respectively. This is very different from the anomaly pattern correlation coefficient in \eqref{eq:correrror}, for which $\hat{\vy}'$ has no contribution (apart from the multiplicative factor). This shows why using the ACF is preferred since it incorporates uncertainty in the residual, which does not happen in lower-order statistics like Corr. It is also worth noting that the integration of the ACF is likewise a crucial quantity and is named the decorrelation time \cite{harlim2008mathematical, gardiner2009stochastic}, which measures the memory of the system. The decorrelation time of the sampled trajectory can thus be smaller, equal, or larger than that of the posterior mean time series, leading to nontrivial results.

All these arguments apply regardless of which sampling strategy we use, forward or backward. But, the ACF of the backward sampled trajectories should intuitively almost perfectly reproduce the truth while that associated with the filter or smoother posterior mean time series may contain significant biases, as we investigate in the numerical case study in Section \ref{sec:4}. This also justifies the fact that the posterior mean time series is not dynamically consistent with the truth, while the sampled trajectories take into account the uncertainty and its temporal dependence, which is a more suitable way as a pathwise surrogate for the true signal.

\section{Numerical Case Study: A Nonlinear Physics-Constrained Stochastic Reduced-Order Model} \label{sec:4}

In this section, we showcase an application of the CGNS framework through its optimal state estimation and sampling procedures. The main goal of this case study is to demonstrate how the optimal nonlinear smoother state estimation, and associated optimal nonlinear backward samples, are potent in the recovery of the non-Gaussian intermittent features present in the unobservable components of a partially observed model that fits the conditional Gaussian structure, and how they contrast their filter-based counterparts in doing so. This is established through a reduced-order triad-interaction climate model with cubic nonlinearity in the observables, as well as multiplicative and cross-interacting noise. Such a complex model also showcases the disruption to the commonly assumed to be true hierarchy in the sample-to-sample fluctuation component of the latent uncertainty which was discussed in Section \ref{sec:3.3}.

\subsection{The Model}  \label{sec:4.1}
The following nonlinear stochastic triad-system mimics structural features of low-frequency variability in general circulation models with non-Gaussian features \cite{majda2009normal}, where the observable variables comprise the low-frequency modes, while the hidden variables define the climate modes of high-frequency. This model involves extremely nontrivial interactions both in the deterministic as well as stochastic dynamics, with quadratic interactions between the state variables, a cubic nonlinearity in the observable, correlated additive and multiplicative (CAM) noise, and noise cross-interaction. Furthermore, the quadratic dyad-interaction nonlinear terms, which model the nonlinear advection, conserve the quadratic energy of the system, thus imposing physical constraints on the system, which also induce the intermittent instability onto $u_1$ by functioning as antidamping terms. This model is presented in what follows and fits the CGNS setting outlined in \eqref{eq:condgauss1}--\eqref{eq:condgauss2} for $\vx:=u_1$ and $\vy:=(u_2,u_3)^\tran$:
\begin{align}
    \begin{split}
        \rmd u_1&=\left(\gamma_1u_1+I_{12}u_1u_2+I_{13}u_1u_3-cu_1^3+L_{12}u_2+L_{13}u_3+F_1(t)\right)\rmd t\\
    &\hspace{1cm}+\sigma_1\rmd W_{u_1}+\frac{\sigma_2}{\gamma_2}(L_{12}-I_{12}u_1)\rmd W_{u_2}+\frac{\sigma_3}{\gamma_3}(L_{13}-I_{13}u_1)\rmd W_{u_3},
    \end{split} \label{eq:study1}\\
    \rmd u_2&=\left(-\frac{\gamma_2}{\varepsilon}u_2-L_{12}u_1+L_{23}u_3-I_{12}u_1^2+F_2(t)\right)\rmd t+\frac{\sigma_{2}}{\sqrt{\varepsilon}}\rmd W_{u_2}, \label{eq:study2}\\
    \rmd u_3&=\left(-\frac{\gamma_3}{\varepsilon}u_3-L_{13}u_1-L_{23}u_2-I_{13}u_1^2+F_3(t)\right)\rmd t+\frac{\sigma_{3}}{\sqrt{\varepsilon}}\rmd W_{u_3}, \label{eq:study3}
\end{align}
where $\displaystyle c=\sum_{p=2}^3 \frac{I_{1p}^2}{\gamma_p}>0$, $\gamma_1,L_{12},L_{13},L_{23},I_{12},I_{13}\in\rr$, and $\gamma_2,\gamma_3,\sigma_1,\sigma_2,\sigma_3>0$. 
The parameter $\epsilon \in (0,1]$ denotes the timescale separation between the potentially slow ($u_1$) and fast ($u_2$, $u_3$) variables. CAM noise arises from applying stochastic mode reduction to a more complex system with multiscale features \cite{franzke2005low}. The cubic nonlinearity with CAM has been applied to describe several climate phenomena \cite{majda2009normal}. The coupling between $u_2$ and $u_3$ is purely linear and represented by a skew-symmetric term $\pm L_{23}$, indicating an oscillatory structure. However, nonlinear feedback from $u_1$ influences the governing equations for $u_2$ and $u_3$, with $u_2$ and $u_3$ acting as stochastic damping for $u_1$. When $u_2$ and $u_3$ increase, this anti-damping triggers extreme events in $u_1$. In turn, strong signals in $u_1$ reduce the amplitude of $u_2$ and $u_3$ via nonlinear feedback terms involving $u_1^2$, rapidly suppressing their signals. The quadratic nonlinear terms in the equations conserve energy, which acts as a physical constraint \cite{harlim2014ensemble}. This mirrors many turbulent systems, where quadratic nonlinearities transfer energy between modes or scales, while damping and external forcing dissipate and generate energy. This coupled structure also appears in turbulent ocean flows, where $u_1$ represents geostrophically balanced components, and $u_2$ and $u_3$ model inertio-gravity waves \cite{salmon1998lectures}. The large-scale forcing terms $F_p(t)$ for $p=1,2,3$ represent external inputs, such as seasonal effects, decadal oscillations, or rapid small-scale fluctuations \cite{vallis2017atmospheric, mantua2002pacific}.

In the following, a time series of $u_1$ is observed and the goal is to study the recovery of the states of $u_2$ and $u_3$. The following simulation and model parameters are adopted in this numerical test:
\begin{gather*}
 T=60, \quad \dt=10^{-3}, \quad \gamma_1=1, \quad I_{12}=0.5, \quad I_{13}=0.5, \quad L_{12}=0.5, \quad L_{13}=0.5, \quad F_1=3, \quad \sigma_1=0.5,\\
 \epsilon = 1,\quad \gamma_2 = 1.2, \quad L_{23} = 2, \quad F_2 = 0, \quad \sigma_2=1.2,\\
 \gamma_3=0.5, \quad F_3 =0, \quad \sigma_3=0.8.
\end{gather*}

\subsection{Comparison for Filtering, Smoothing, and Sampling Solutions}  \label{sec:4.2}
Figures \ref{fig:case_study_fig_1}--\ref{fig:case_study_fig_3} demonstrate the performance of filtering, smoothing, forward sampling, and backward sampling methods. Panel (a) of Figure \ref{fig:case_study_fig_1} shows the observed time series of $u_1$, which exhibits intermittency and contains many extreme events. Correspondingly, its PDF in Panel (b) is highly skewed with a one-sided fat tail. In Panels (c) and (e), the smoother state estimates are more accurate than those of the filter in two ways. First, the smoother posterior mean is much closer to the true state. Notably, the filter posterior mean misses most peak events when $u_2$ and $u_3$ are positive, which correspond to extreme events in $u_1$. Since $u_2$ and $u_3$ act as stochastic damping for $u_1$, they trigger the extreme events in $u_1$. Thus, the occurrence of intermittent instability in $u_1$ is delayed relative to its triggers, $u_2$ and $u_3$. Recovering $u_2$ and $u_3$ in real time using only past information (as in filtering) is challenging, while future observations are crucial for accurately capturing these triggers. Second, the posterior variance, which is an indicator of the estimated uncertainty, is significantly lower for the smoother, as it incorporates more observational information. Note that, if the sampled trajectories from the forward and backward sampling methods are shown, they will exactly cover the uncertainty regions illustrated in the figure, per the results of Theorem \ref{thm:forwardsamplefilter} and \ref{thm:backwardsamplesmoother}. Finally, consistent with the pathwise behavior in Panels (b) and (d), the PDF from the smoother mean is closer to the true PDF than that of the filter, as shown in Panels (c) and (e). However, the PDF from the smoother mean does not fully match the true PDF, as the smoother mean still underestimates the variability of the signal, which is captured in the posterior uncertainty. Only the trajectories from backward sampling can generate PDFs that align with the true distribution, assuming they are sufficiently long.

\begin{figure}[ht!]
\centering
    \includegraphics[width=\textwidth]{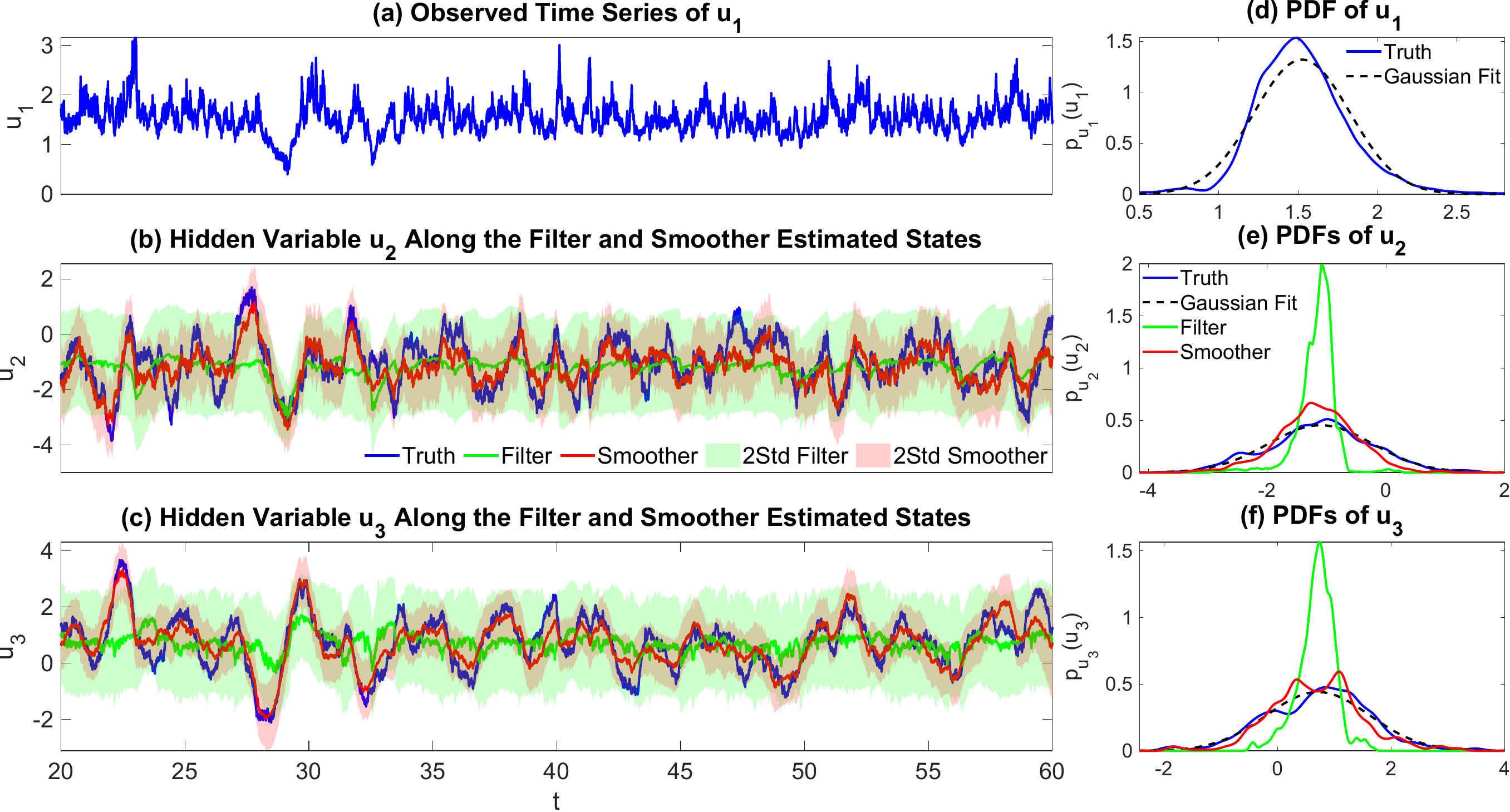}
    \caption{Performance of filtering, smoothing, forward sampling, and backward sampling methods using the reduced-order climate model in \eqref{eq:study1}--\eqref{eq:study3}. Panels (a)–(b): Observed time series of $u_1$ and its corresponding PDF. Panels (c)–(d): State estimation of $u_2$ with filtering and smoothing, along with the corresponding PDFs. Panels (e)–(f): State estimation of $u_3$ with filtering and smoothing, and the corresponding PDFs. The PDFs correspond to the full-time-length signal, i.e., for $t\in[0,60]$.}
    \label{fig:case_study_fig_1}
\end{figure}

Figure \ref{fig:case_study_fig_2} presents a statistical comparison of state estimation using different approaches. Consistent with the findings on PDF recovery, the smoother mean captures the autocorrelation function (ACF; Panels (a)–(d)) and power spectral density (PSD; Panels (e)–(h)) more effectively than the filtering mean. The filtering mean misses many peak events, leading to noticeable errors in both the decay rate during the initial period of the ACF and the low-frequency PSD. Similarly, trajectories from forward sampling, based on the filter estimate, exhibit comparable issues in their ACF and PSD (not shown here). In contrast, the smoother mean accurately captures the statistics with much smaller errors. The trajectories from backward sampling further account for the variability provided by the smoother variance. They are dynamically consistent with the true values and can yield unbiased statistics, provided they are sufficiently long. The numerical evidence presented here verifies the analysis in Section \ref{sec:3.5}. Panels (i)–(j) display the correlation coefficients between the true time series and the estimates from different methods. The smoother mean time series shows a higher correlation with the true series than the filter mean, for both regular and extreme events. Although the sampled trajectories demonstrate improved statistical capture, their correlations with the truth are weaker. This is not surprising, as the enhancement in statistical accuracy arises from incorporating additional random variability from the posterior uncertainty, which naturally decreases the pathwise similarity to the true series. The conclusion is also consistent with the analysis in Section \ref{sec:3.4} (specifically the observation that $|\mathrm{Corr}(\vy, \hat{\vy})| \leq |\mathrm{Corr}\big(\vy, \overline{\hat{\vy}}\big)|$, since $\eta \leq 1$).

\begin{figure}[ht!]
\centering
    \includegraphics[width=\textwidth]{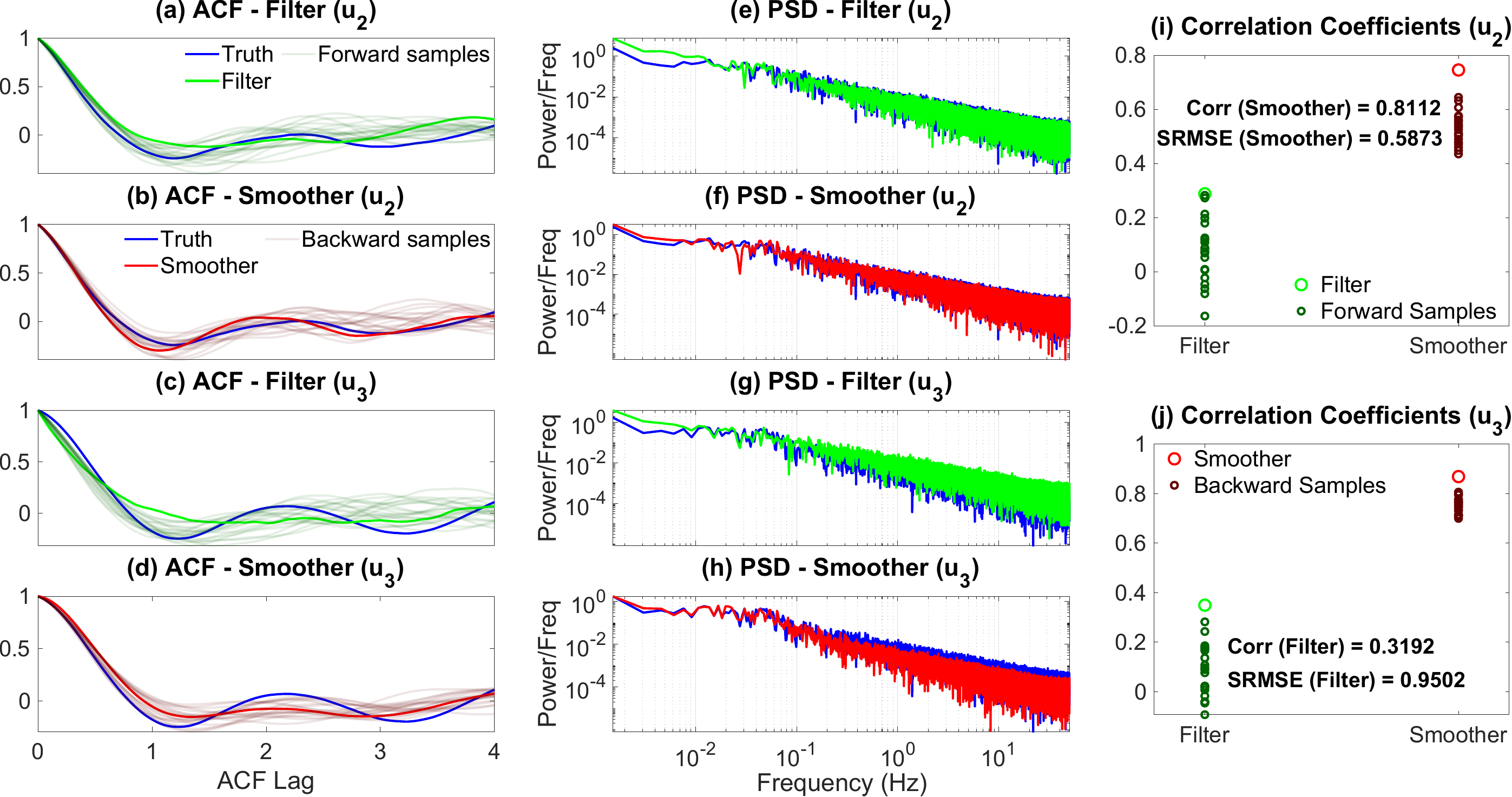}
    \caption{Statistical comparison between the state estimation using different approaches. Panels (a)–(d): Comparison of the autocorrelation function (ACF). Panels (e)–(h): Comparison of the power spectral density (PSD). Panels (i)–(j): Correlation coefficient between the true time series and the recovered ones from different methods. All comparisons are based on a time series of 60 time units. The SRMSE leads to a similar conclusion as the Corr and is therefore omitted here. }
    \label{fig:case_study_fig_2}
\end{figure}

\subsection{Analysis of the Uncertainty Levels in Different State Estimation Methods} \label{sec:4.3}

Finally, in Figure \ref{fig:case_study_fig_3}, we show the time evolution of the expected maximal eigenvalues for the damping (Panels (a)--(b)) and fluctuation (Panels (c)--(d)) components of the sampling uncertainty emerging in the unobservable variables. This figure should be understood based on the discussions following Theorems \ref{thm:forwardsample} and \ref{thm:backwardsample}, and by extending the exposition provided in Section \ref{sec:3.3}. For reference, the unconditional damping and noise feedbacks are extracted from the time evolution equation for the latent dynamics of the CGNS, \eqref{eq:condgauss2}, while the damping and fluctuation components for the forward filter-based and backward smoother-based sampling procedures can be retrieved from their corresponding residual equations, \eqref{eq:forwresidualeqn} and \eqref{eq:backresidualeqn}, by looking at their drift and diffusion coefficients, respectively. Note that the x-axis (time axis) is plotted in the logarithm scale. The solution will arrive near the equilibrium within a short period, regardless of the initial value. All subsequent discussions are based on the equilibrium state.

We start with Panels (c)--(d). Since the backward-sampling noise feedback is always no larger than the unconditional and filter-based ones (in the Loewner partial ordering sense), the maximal eigenvalues for $\mb$ will at all times be less than the respective ones for $\ms^\vy\circ\ms^\vy$ and $\mb+\mr{\nf}\mc\mr{\nf}$, with all being nonnegative due to the nonnegative-definiteness of the noise amplitudes. Yet, as already alluded in Sections \ref{sec:3.1} and \ref{sec:3.3}, it is not always that this hierarchy holds as well between these components for the filter-based sampling and unconditional forward run of the system, as is indicated in Panel (d). Since the black curve, corresponding to the expected minimum eigenvalue of $\ms^\vy\circ\ms^\vy-\mb-\mr{\nf}\mc\mr{\nf}$, goes below zero, this explicitly shows that the difference between these matrices is an indefinite matrix after the fast and exponentially-reached relaxation time. However, it is known that the overall uncertainty in the optimal filter estimated state (e.g., the posterior variance) is all times smaller than the unconditional or marginal uncertainty of $\vy=(u_2,u_3)^\tran$. So while the results shown in Panel (d) might at first seem counterintuitive, it is essential to observe that the correction term in \eqref{eq:filter2}, i.e., the terms other than $\ms^\vy\circ\ms^\vy$, which accounts for the observance of data stemming from $\vx$, does not just depend on the optimal Kalman gain interacting with the uncertainties in the innovation, but is also influenced by the damping components which help, under the Bayesian framework, to mitigate the uncertainty in the latent state variable, regardless of the quality of the current measurement. These corrective terms appear in the time evolution of the posterior covariance matrices and help reduce the overall uncertainty. They correspond to nontrivial damping feedbacks, which we showcase in Panels (a)--(b) for both the forward and backward sampling and the unconditional dynamics. Note that the damping feedback in the forward sampler is exactly the damping that helps stabilize the filter mean per \eqref{eq:alternativefiltermean}, while the smoother-based one is the damping which controls the smoother covariance per \eqref{eq:revbackinter2}, instead. As such, the overall uncertainty, while consistently decreasing with increasing incorporation of information, its constituents might have nontrivial temporal profiles and seemingly showcase counterintuitive results if only a marginal component of the uncertainty is taken into consideration. This is precisely illustrated by the fact that now the filter-based damping is always stronger compared to the unconditional one, which under a bias-variance trade-off-type argument reveals why the uncertainty indeed decreases by forward sampling the unobservable variables. Specifically, note how when the filter-based noise feedback's maximal eigenvalues go above the unconditional ones, the respective damping component of the uncertainty increases to counteract this. As such, while the observation incorporation might increase the sample-to-sample fluctuations, the overall unpredictability in the estimated state will nonetheless diminish due to the strengthened damping.

\begin{figure}[ht!]
\centering
   \includegraphics[width=\textwidth]{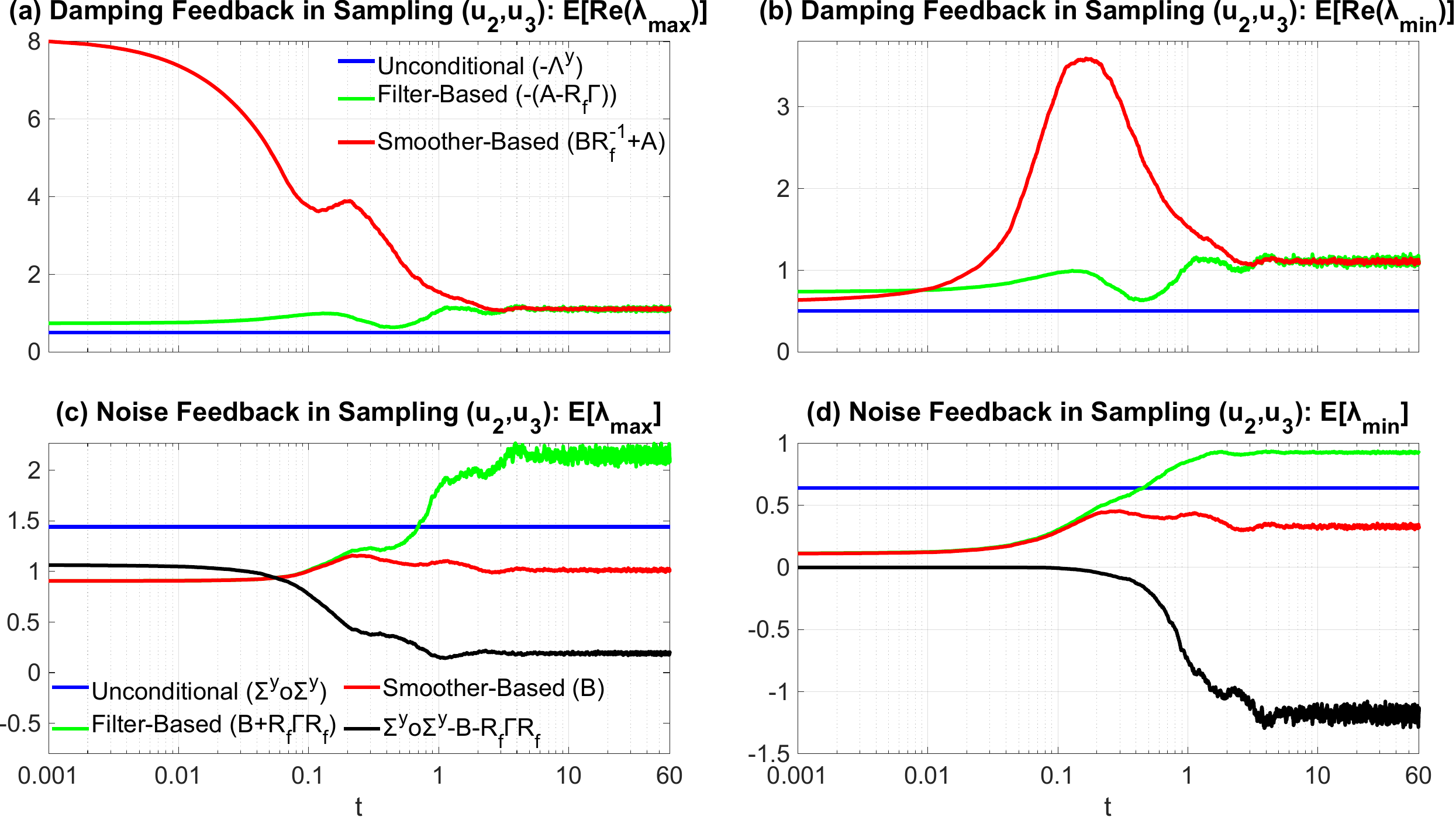}
   \caption{Time evolution of the spectrum for the components of the sample-to-sample uncertainty with respect to the different procedures of simulating the unobservable variables. Panels (a)--(b): Comparison with regards to the expected real part of the maximal and minimal eigenvalues of the damping feedback in the unconditional forward run of \eqref{eq:study2}--\eqref{eq:study3} and the conditional forward and backward optimal nonlinear sampling of $(u_2,u_3)$. The x-axis is logarithmically scaled. Panels (c)--(d): Same as (a)--(b), but instead concerning the fluctuation part of the uncertainty, i.e., the noise feedback matrices. The time evolution of the expected maximal and minimal eigenvalues is also shown for the matrix difference between the unconditional and filter-based sampling noise coefficients.}
   \label{fig:case_study_fig_3}
\end{figure}


\section{Conclusion} \label{sec:5}
This paper develops a martingale-free approach that facilitates the understanding of CGNS. The martingale-free method provides a tractable approach to proving the time evolution of conditional statistics by deriving results through time discretization schemes, with the continuous-time regime obtained via a formal limiting process as the discretization time step approaches zero. This has the consequence of a natural unification between the two settings. Furthermore, this discretized approach further allows for the development of analytic formulae for optimal posterior sampling algorithms of the unobserved state variables with correlated noise, which is crucial in settings where uncertainty from one subset of the phase space feeds into the other. These analytic tools are particularly valuable for studying extreme events and intermittency and apply to high-dimensional systems. Moreover, the approach improves the understanding of different state estimation and sampling methods in characterizing the uncertainty. The effectiveness of the methods is demonstrated through a physics-constrained, triad-interaction reduced-order climate model with cubic nonlinearity and state-dependent cross-interacting noise. The model simulation helps understand the skill of state estimation, especially for recovering the highly non-Gaussian features of intermittency and extreme events.  

\section*{Acknowledgement}
N.C. is grateful to acknowledge the support of the Office of Naval Research (ONR) N00014-24-1-2244 and Army Research Office (ARO) W911NF-23-1-0118. M.A. is supported as a research assistant under these grants.

\clearpage

\section{Appendix}

\subsection{Preliminary Definitions and Lemmas} \label{sec:Appendix_A}
To prove most of the results in this work, we first need to define the so-called Reynolds decomposition of a stochastic quantity.
\begin{defn}[\textbf{Mean-Fluctuation Decomposition}] \label{def:reynoldsdecomp}
    The {mean-fluctuation decomposition} of a complex random variable $\mathbf{z}$ at a fixed time instant, also known as the {Reynolds decomposition}, is given by \cite{vallis2017atmospheric, adrian2000analysis}:
    \begin{equation} \label{eq:reynoldsdecomp}
        \mathbf{z}=\overline{\mathbf{z}}+\mathbf{z}^\prime,
    \end{equation}
    where $\overline{\mathbf{z}}=\ee{\mathbf{z}}$ is the ensemble average or mean of $\mathbf{z}$ and $\mathbf{z}^\prime$, with $\overline{\mathbf{z}^\prime}=\zeros$, is the residual or the deviations around the expectation value (or fluctuations). The covariance tensor of $\mathbf{z}$ is given compactly by $\overline{\mathbf{z}^\prime(\mathbf{z}^\prime)^\dagger}$.
\end{defn}

Furthermore, we state the following three necessary lemmas that are extensively used to establish the assertions stated in this article. The first one concerns itself with how affine transformations affect Gaussian distributions.
\begin{lem}[\textbf{Affine Transformation of Normal Variable}] \label{lem:affinity}
      The distribution of a multivariate Gaussian random variable is invariant under affine transformations. In other words, for any constant vector $\mathbf{c}$ and constant nonzero matrix $\mathbf{G}$, if $\mathbf{z}\overset{\rmd}{\sim}\mathcal{N}_{\mathrm{dim}(\vz)}(\vm{},\mr{})$, then $\mathbf{G}\mathbf{z}+\mathbf{c}\overset{\rmd}{\sim}\mathcal{N}_{\mathrm{dim}(\mathbf{G}\vz)}(\mathbf{G}\vm{}+\mathbf{c},\mathbf{G}\mr{}\mathbf{G}^\dagger)$.
\end{lem}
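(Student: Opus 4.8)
The plan is to prove the assertion by the characteristic-function method, which — unlike a density-based change of variables — needs no hypothesis on the shape or rank of $\mathbf{G}$ (it may be neither square nor injective) and gracefully accommodates the case in which the resulting covariance $\mathbf{G}\mathbf{R}\mathbf{G}^\dagger$ is singular. Recall that, by the uniqueness theorem for characteristic functions, the law of a Gaussian vector $\vz\overset{\d}{\sim}\mathcal{N}_{\mathrm{dim}(\vz)}(\boldsymbol{\mu},\mathbf{R})$ is completely determined by its characteristic function, which for a Gaussian takes the ``exponential of (a term linear in the test variable) minus (a nonnegative quadratic form in the test variable)'' shape, the linear part carrying the mean and the quadratic part carrying the covariance; conversely, any probability measure whose characteristic function is of this form is the corresponding Gaussian.

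First I would record the first two moments of $\mathbf{w}:=\mathbf{G}\vz+\mathbf{c}$ directly. By linearity of the expectation and the fact that $\mathbf{G},\mathbf{c}$ are deterministic, $\overline{\mathbf{w}}=\mathbf{G}\,\overline{\vz}+\mathbf{c}=\mathbf{G}\boldsymbol{\mu}+\mathbf{c}$, and the Reynolds fluctuation of Definition~\ref{def:reynoldsdecomp} satisfies $\mathbf{w}^\prime=\mathbf{w}-\overline{\mathbf{w}}=\mathbf{G}(\vz-\overline{\vz})=\mathbf{G}\vz^\prime$, so the covariance tensor is $\overline{\mathbf{w}^\prime(\mathbf{w}^\prime)^\dagger}=\mathbf{G}\,\overline{\vz^\prime(\vz^\prime)^\dagger}\,\mathbf{G}^\dagger=\mathbf{G}\mathbf{R}\mathbf{G}^\dagger$. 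These match the claimed parameters, and their finiteness is inherited from that of $\boldsymbol{\mu},\mathbf{R}$ since $\mathbf{G},\mathbf{c}$ are constant.

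Second I would identify the law of $\mathbf{w}$ via its characteristic function. For an arbitrary test vector $\boldsymbol{\tau}$, using linearity of the expectation, the deterministic nature of $\mathbf{c}$, and the adjoint identity $\mathrm{Re}\,\innp{\boldsymbol{\tau},\mathbf{G}\vz}=\mathrm{Re}\,\innp{\mathbf{G}^\dagger\boldsymbol{\tau},\vz}$,
\begin{align*}
    \varphi_{\mathbf{w}}(\boldsymbol{\tau})&=\ee{\exp\!\big(\rmi\,\mathrm{Re}\,\innp{\boldsymbol{\tau},\mathbf{G}\vz+\mathbf{c}}\big)}=\exp\!\big(\rmi\,\mathrm{Re}\,\innp{\boldsymbol{\tau},\mathbf{c}}\big)\,\ee{\exp\!\big(\rmi\,\mathrm{Re}\,\innp{\mathbf{G}^\dagger\boldsymbol{\tau},\vz}\big)}\\
    &=\exp\!\big(\rmi\,\mathrm{Re}\,\innp{\boldsymbol{\tau},\mathbf{c}}\big)\,\varphi_{\vz}(\mathbf{G}^\dagger\boldsymbol{\tau}).
\end{align*}
Substituting the Gaussian form of $\varphi_{\vz}$ and collecting terms, the exponent becomes linear in $\boldsymbol{\tau}$ with coefficient prescribing the mean $\mathbf{G}\boldsymbol{\mu}+\mathbf{c}$, minus a quadratic form in $\boldsymbol{\tau}$ governed by $(\mathbf{G}^\dagger\boldsymbol{\tau})^\dagger\mathbf{R}(\mathbf{G}^\dagger\boldsymbol{\tau})=\boldsymbol{\tau}^\dagger(\mathbf{G}\mathbf{R}\mathbf{G}^\dagger)\boldsymbol{\tau}$, i.e., precisely the characteristic function of $\mathcal{N}_{\mathrm{dim}(\mathbf{G}\vz)}(\mathbf{G}\boldsymbol{\mu}+\mathbf{c},\mathbf{G}\mathbf{R}\mathbf{G}^\dagger)$; by the uniqueness theorem, $\mathbf{w}$ has that law. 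As a remark I would note the equivalent route using the characterization that a random vector is Gaussian if and only if every scalar linear functional of it is (univariate) Gaussian: since $\mathbf{a}^\dagger(\mathbf{G}\vz+\mathbf{c})=(\mathbf{G}^\dagger\mathbf{a})^\dagger\vz+\mathbf{a}^\dagger\mathbf{c}$ is an affine image of the scalar Gaussian $(\mathbf{G}^\dagger\mathbf{a})^\dagger\vz$, it is Gaussian, and the parameters are then supplied by the moment computation above.

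The computation is routine; the only point that demands care is the complex-valued bookkeeping — being consistent about realizing $\mathbb{C}^{\mathrm{dim}(\vz)}$ as $\mathbb{R}^{2\,\mathrm{dim}(\vz)}$ (as fixed in Section~\ref{sec:2.1}) when forming characteristic functions and Hermitian quadratic forms, and keeping the normalization of the Gaussian exponent uniform throughout — together with the observation that no invertibility of $\mathbf{G}$ nor nondegeneracy of $\mathbf{R}$ is required, since when $\mathbf{G}\mathbf{R}\mathbf{G}^\dagger$ is singular the conclusion still holds as a (possibly degenerate) Gaussian supported on the affine subspace $\mathbf{G}(\operatorname{supp}\vz)+\mathbf{c}$. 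This is exactly why the characteristic-function argument, rather than a Jacobian change of variables, is the right tool here.
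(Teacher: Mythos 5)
Your proposal is correct. Note, however, that the paper does not actually prove this lemma itself: it simply defers to Proposition 3.1 of Eaton's \emph{Multivariate Statistics}, so what you have written is a self-contained substitute rather than a parallel to an in-text argument. Your characteristic-function route is the standard one (and is essentially how the cited reference handles it, since Eaton defines Gaussianity through linear functionals — the very characterization you mention in your closing remark, which by itself already yields the result in one line once the first- and second-moment computation is done). Two things your write-up buys that are worth keeping: (i) you make explicit that no invertibility of $\mathbf{G}$ and no nondegeneracy of $\mathbf{R}$ are needed, which matters for how the lemma is actually used in the paper — e.g., marginalizing joint conditional Gaussians with rectangular selection/permutation matrices in the proofs of Theorem \ref{thm:filtering} and Lemmas \ref{lem:backwardscond}--\ref{lem:discretesmoother}, where $\mathbf{G}\mathbf{R}\mathbf{G}^\dagger$ may well be the covariance of a lower-dimensional block; and (ii) you flag the $\mathbb{C}^{\mathrm{dim}(\vz)}\cong\mathbb{R}^{2\,\mathrm{dim}(\vz)}$ bookkeeping fixed in Section \ref{sec:2.1}, which is the only place a careless density-based change-of-variables argument could go wrong. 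The moment computation plus the identity $\varphi_{\mathbf{w}}(\boldsymbol{\tau})=e^{\rmi\,\mathrm{Re}\innp{\boldsymbol{\tau},\mathbf{c}}}\varphi_{\vz}(\mathbf{G}^\dagger\boldsymbol{\tau})$ and the uniqueness theorem close the argument with no gaps.
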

\begin{proof}[\textbf{Proof of Lemma \ref{lem:affinity}}] This result is proved in Proposition 3.1 of Eaton \cite{eaton2007multivariate}.
\end{proof}

The second one explicitly showcases how the Bayesian update of a conditional and marginal normal behaves. In this regard, and for the remainder of this work, we denote the PDF of a multivariate $\mathrm{dim}(\mathbf{u})$-dimensional Gaussian distribution with mean $\vm{}$ and covariance matrix $\mr{}$ by $\mathcal{N}_{\mathrm{dim}(\mathbf{u})}\left(\mathbf{u};\vm{},\mr{}\right)$.
\begin{lem}[\textbf{Bayesian Update of a Gaussian}] \label{lem:bayesian}
    For two Gaussian distributions, with $k=\mathrm{dim}(\mathbf{u}_1)$ and $l=\mathrm{dim}(\mathbf{u}_2)$, we have
    \begin{equation*}
        \int_{\mathbb{C}^{k}} \mathcal{N}_l\left(\mathbf{u}_2;\mathbf{F}\mathbf{u}_1+\mathbf{b},\mr{$2$}\right)\mathcal{N}_k\left(\mathbf{u}_1;\vm{$1$},\mr{$1$}\right)\rmd\mathbf{u}_1=\mathcal{N}_l\big(\mathbf{u}_2;\mathbf{F}\vm{$1$}+\mathbf{b},\mathbf{F}\mr{$1$}\mathbf{F}^{\dagger}+\mr{$2$}\big),
    \end{equation*}
    for appropriately sized vectors and matrices, i.e., $\mathbf{b}$ and $\mathbf{F}$ are assumed to be conformable.
\end{lem}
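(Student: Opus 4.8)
The plan is to read the left-hand side as the $\mathbf{u}_2$-marginal of a two-stage hierarchical Gaussian model and then to identify that marginal using Lemma~\ref{lem:affinity}. First I would set up the hierarchy: take $\mathbf{u}_1\overset{\d}{\sim}\mathcal{N}_k(\vm{$1$},\mr{$1$})$ and, conditionally on $\mathbf{u}_1$, take $\mathbf{u}_2\overset{\d}{\sim}\mathcal{N}_l(\mathbf{F}\mathbf{u}_1+\mathbf{b},\mr{$2$})$. By construction the joint density of $(\mathbf{u}_1,\mathbf{u}_2)$ is the product $\mathcal{N}_l(\mathbf{u}_2;\mathbf{F}\mathbf{u}_1+\mathbf{b},\mr{$2$})\,\mathcal{N}_k(\mathbf{u}_1;\vm{$1$},\mr{$1$})$, so the integral in the statement is precisely the $\mathbf{u}_2$-marginal density $\int_{\mathbb{C}^k}p(\mathbf{u}_1,\mathbf{u}_2)\,\d\mathbf{u}_1$; it then suffices to show this marginal equals $\mathcal{N}_l\big(\mathbf{u}_2;\mathbf{F}\vm{$1$}+\mathbf{b},\mathbf{F}\mr{$1$}\mathbf{F}^\dagger+\mr{$2$}\big)$.

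Second, I would exhibit a jointly Gaussian representation of the pair. Introduce an auxiliary vector $\boldsymbol{\xi}\overset{\d}{\sim}\mathcal{N}_l(\zeros,\mr{$2$})$ independent of $\mathbf{u}_1$; since the conditional law of $\mathbf{u}_2$ given $\mathbf{u}_1$ is Gaussian with mean affine in $\mathbf{u}_1$ and covariance not depending on $\mathbf{u}_1$, the pair $(\mathbf{u}_1,\mathbf{u}_2)$ has the same joint law as $(\mathbf{u}_1,\mathbf{F}\mathbf{u}_1+\mathbf{b}+\boldsymbol{\xi})$, which one verifies by comparing conditional characteristic functions. Now $(\mathbf{u}_1^\tran,\boldsymbol{\xi}^\tran)^\tran$ is Gaussian as a stacking of two independent Gaussians, and $(\mathbf{u}_1^\tran,\mathbf{u}_2^\tran)^\tran$ is its image under the deterministic affine map $\vz\mapsto\begin{pmatrix}\mathbf{I}_k & \zeros\\ \mathbf{F} & \mathbf{I}_l\end{pmatrix}\vz+\begin{pmatrix}\zeros\\ \mathbf{b}\end{pmatrix}$, so by Lemma~\ref{lem:affinity} the pair $(\mathbf{u}_1,\mathbf{u}_2)$ is jointly Gaussian and hence its $\mathbf{u}_2$-marginal is Gaussian. (Complex Gaussians are handled throughout via the identification $\mathbb{C}^m\equiv\mathbb{R}^{2m}$ already fixed in Section~\ref{sec:2.1}, so Lemma~\ref{lem:affinity} applies verbatim; the Hermitian transpose in the covariance formula is consistent with this.) Third, I would read off the first two moments from the representation: $\ee{\mathbf{u}_2}=\mathbf{F}\ee{\mathbf{u}_1}+\mathbf{b}+\ee{\boldsymbol{\xi}}=\mathbf{F}\vm{$1$}+\mathbf{b}$, and, using the independence of $\mathbf{u}_1$ and $\boldsymbol{\xi}$ together with bilinearity of the covariance, the covariance of $\mathbf{u}_2$ is $\mathbf{F}\mr{$1$}\mathbf{F}^\dagger+\mr{$2$}$. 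Since a Gaussian law is uniquely determined by its mean and covariance, the $\mathbf{u}_2$-marginal equals $\mathcal{N}_l\big(\mathbf{u}_2;\mathbf{F}\vm{$1$}+\mathbf{b},\mathbf{F}\mr{$1$}\mathbf{F}^\dagger+\mr{$2$}\big)$, which is the claimed identity.

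The one point requiring genuine care — and the step I would be most deliberate about — is the legitimacy of the representation $\mathbf{u}_2\overset{\rmd}{=}\mathbf{F}\mathbf{u}_1+\mathbf{b}+\boldsymbol{\xi}$ with $\boldsymbol{\xi}$ independent of $\mathbf{u}_1$: this is exactly where one uses that the conditional covariance $\mr{$2$}$ does not depend on $\mathbf{u}_1$, and it is cleanest to establish it at the level of conditional characteristic functions before invoking Lemma~\ref{lem:affinity}. If instead one wanted a computation-only proof, the alternative is to expand both quadratic forms on the left-hand side, complete the square in $\mathbf{u}_1$, and carry out the Gaussian integral over $\mathbb{C}^k$; the lemma then collapses to two matrix identities — the Sherman--Morrison--Woodbury formula applied to $(\mr{$1$}^{-1}+\mathbf{F}^\dagger\mr{$2$}^{-1}\mathbf{F})^{-1}$ to reconcile the exponent, and the matrix determinant lemma $\det(\mr{$2$})\det(\mr{$1$})\det(\mr{$1$}^{-1}+\mathbf{F}^\dagger\mr{$2$}^{-1}\mathbf{F})=\det(\mr{$2$}+\mathbf{F}\mr{$1$}\mathbf{F}^\dagger)$ to reconcile the normalizing constant. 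I would present the probabilistic argument as the proof proper and note this computation only as a remark, since that bookkeeping, though elementary, is the most error-prone route.
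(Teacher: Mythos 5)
Your argument is correct, but it is worth noting that the paper does not prove this lemma at all: its ``proof'' is a pointer to Section 2.3.3 of Bishop, where the result is obtained for the linear-Gaussian model by writing the joint exponent in precision-matrix form, completing the square in $\mathbf{u}_1$, and reading off the marginal --- essentially the ``computation-only'' route you relegate to a closing remark, with the Woodbury identity and the determinant lemma doing the bookkeeping. Your primary argument is genuinely different and self-contained: you realize the pair as $(\mathbf{u}_1,\mathbf{F}\mathbf{u}_1+\mathbf{b}+\boldsymbol{\xi})$ with $\boldsymbol{\xi}\overset{\d}{\sim}\mathcal{N}_l(\zeros,\mr{$2$})$ independent of $\mathbf{u}_1$ (legitimate precisely because the conditional covariance does not depend on $\mathbf{u}_1$, as you correctly flag), invoke Lemma \ref{lem:affinity} to get joint Gaussianity, and then read off the mean $\mathbf{F}\vm{$1$}+\mathbf{b}$ and covariance $\mathbf{F}\mr{$1$}\mathbf{F}^\dagger+\mr{$2$}$ by linearity and independence, with the identification of the integral as the $\mathbf{u}_2$-marginal of the hierarchical joint density making the density-level statement rigorous. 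What your route buys is an elementary proof inside the paper's own toolkit (Lemma \ref{lem:affinity} plus uniqueness of a Gaussian law given its first two moments), avoiding the matrix-inversion algebra entirely and handling the complex case through the $\mathbb{C}^m\equiv\mathbb{R}^{2m}$ identification already fixed in Section \ref{sec:2.1}; what the cited computation buys is that the same completion of the square simultaneously yields the conditional distribution of $\mathbf{u}_1$ given $\mathbf{u}_2$, which the paper instead imports separately as Lemma \ref{lem:conditional}. Either argument suffices for the way the lemma is used downstream (the Gaussianity bootstraps in Appendices A, C, and D), so your proposal stands as a valid, and arguably cleaner, replacement for the citation.
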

\begin{proof}[\textbf{Proof of Lemma \ref{lem:bayesian}}] This result is proved in Section 2.3.3 of Bishop \cite{christopher2016pattern} (formulated through precision matrices, with the precision matrix defined as the inverse of the covariance matrix).
\end{proof}

Finally, the third lemma, also known as the theorem on normal correlations, is a fundamental result in conditional autoregressive modeling. Here we use the notation $p(\boldsymbol{\cdot})$ to denote the law or density (PDF), conditional or not, of the probability distribution $\pp(\boldsymbol{\cdot})$ corresponding to the random variable or vector of interest.
\begin{lem}[\textbf{Conditional Distribution of Joint Normal or Theorem on Normal Correlation}] \label{lem:conditional}
    Let the following random vector be Gaussian:
    \begin{equation*}
        \vz=(\vz_1^\tran,\vz_2^\tran)^\mathtt{T},
    \end{equation*}
    with mean $\vm{}$ and covariance $\mr{}$, where
    \begin{equation*}
        \vm{}=\begin{pmatrix}
            \vm{$1$}\\\vm{$2$}
        \end{pmatrix}, \quad \mr{}=\begin{pmatrix}
            \mr{$11$} & \mr{$12$}\\
            \mr{$21$} & \mr{$22$}
        \end{pmatrix}.
    \end{equation*}
    Then the following conditional distribution is Gaussian:
    \begin{equation*}
        \pp\big(\vz_1\big|\vz_2=\boldsymbol{\zeta}_2\big)\overset{\rmd}{\sim}\mathcal{N}_{\mathrm{dim}(\vz_1)}\left(\widetilde{\vm{}},\widetilde{\mr{}}\right),
    \end{equation*}
    with a mean dependent on the mean and state of the conditioner, while the covariance matrix is independent of it, or,
    \begin{align*}
        \widetilde{\vm{}}&=\vm{$1$}+\mr{$12$}\mathbf{R}^{-1}_{22}(\boldsymbol{\zeta}_2-\vm{$2$}), \\
        \widetilde{\mr{}}&= \mr{$11$}-\mr{$12$}\mathbf{R}^{-1}_{22}\mr{$21$}.
    \end{align*}
\end{lem}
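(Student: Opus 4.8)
The plan is to prove the theorem on normal correlations by reducing the conditional statement to a computation about an auxiliary Gaussian vector that is constructed to be uncorrelated with the conditioner. First I would handle the degenerate case: if $\mr{22}$ is singular, interpret $\mathbf{R}^{-1}_{22}$ as the Moore--Penrose pseudoinverse and restrict attention to the subspace on which $\vz_2$ actually lives (almost surely); by \textbf{Lemma \ref{lem:affinity}} an orthogonal change of coordinates lets one assume $\mr{22}$ is either nonsingular or block-diagonal with a zero block, so without loss of generality $\mr{22}$ is invertible. This degeneracy bookkeeping is the one genuinely fiddly point, and I expect it to be the main obstacle --- everything else is linear algebra dressed up in probability.

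Next, with $\mr{22}$ invertible, I would define the vector
\begin{equation*}
    \widetilde{\vz}_1 := \vz_1 - \mr{12}\mathbf{R}^{-1}_{22}(\vz_2 - \vm{2}),
\end{equation*}
which is an affine function of the jointly Gaussian pair $(\vz_1,\vz_2)$, hence jointly Gaussian with it by \textbf{Lemma \ref{lem:affinity}}. A direct second-moment calculation gives $\ee{\widetilde{\vz}_1} = \vm{1}$, $\mathrm{Cov}(\widetilde{\vz}_1,\vz_2) = \mr{12} - \mr{12}\mathbf{R}^{-1}_{22}\mr{22} = \zeros$, and $\mathrm{Var}(\widetilde{\vz}_1) = \mr{11} - \mr{12}\mathbf{R}^{-1}_{22}\mr{21} =: \widetilde{\mr{}}$ (using $\mr{21}=\mr{12}^\dagger$ and the Hermitian symmetry of $\mr{}$). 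For jointly Gaussian vectors, zero cross-covariance is equivalent to independence --- this is again an instance of \textbf{Lemma \ref{lem:affinity}} applied to the characteristic function, or may simply be cited as the standard fact --- so $\widetilde{\vz}_1$ is independent of $\vz_2$, and in particular $\widetilde{\vz}_1 \mid \{\vz_2 = \boldsymbol{\zeta}_2\} \overset{\d}{\sim} \widetilde{\vz}_1 \overset{\d}{\sim} \mathcal{N}_{\mathrm{dim}(\vz_1)}(\vm{1},\widetilde{\mr{}})$.

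Finally I would invert the defining relation: on the event $\{\vz_2 = \boldsymbol{\zeta}_2\}$ we have $\vz_1 = \widetilde{\vz}_1 + \mr{12}\mathbf{R}^{-1}_{22}(\boldsymbol{\zeta}_2 - \vm{2})$, i.e.\ $\vz_1$ conditioned on $\vz_2 = \boldsymbol{\zeta}_2$ is the fixed affine shift $\mathbf{c} := \mr{12}\mathbf{R}^{-1}_{22}(\boldsymbol{\zeta}_2 - \vm{2})$ of the conditionally-Gaussian vector $\widetilde{\vz}_1$. Applying \textbf{Lemma \ref{lem:affinity}} once more with $\mathbf{G} = \mathbf{I}$ and this $\mathbf{c}$ yields
\begin{equation*}
    \pp\big(\vz_1 \,\big|\, \vz_2 = \boldsymbol{\zeta}_2\big) \overset{\d}{\sim} \mathcal{N}_{\mathrm{dim}(\vz_1)}\Big(\vm{1} + \mr{12}\mathbf{R}^{-1}_{22}(\boldsymbol{\zeta}_2 - \vm{2}),\ \mr{11} - \mr{12}\mathbf{R}^{-1}_{22}\mr{21}\Big),
\end{equation*}
which is exactly $\widetilde{\vm{}}$ and $\widetilde{\mr{}}$ as claimed; note the covariance carries no dependence on $\boldsymbol{\zeta}_2$, consistent with the statement. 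To make the conditioning step fully rigorous one can either invoke the disintegration theorem for the regular conditional distribution, or --- in keeping with the martingale-free, elementary spirit of this paper --- verify the claim at the level of densities: write $p(\vz_1,\vz_2) = p(\widetilde{\vz}_1)\,p(\vz_2)$ using the independence just established together with the unit-Jacobian change of variables $(\vz_1,\vz_2)\mapsto(\widetilde{\vz}_1,\vz_2)$, then divide by the marginal $p(\vz_2)$ (which by \textbf{Lemma \ref{lem:bayesian}}, or directly, is $\mathcal{N}_{\mathrm{dim}(\vz_2)}(\vz_2;\vm{2},\mr{22})$) to read off $p(\vz_1\mid\vz_2=\boldsymbol{\zeta}_2)$ in the advertised Gaussian form.
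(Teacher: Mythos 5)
Your proposal is correct, but it is worth noting that the paper does not actually prove this lemma at all: its ``proof'' is a citation to Section 4.1 of Eaton, Sections 2.3.1--2.3.2 of Bishop, and Theorems 13.1--13.2 of Liptser \& Shiryaev. What you supply is the standard self-contained decorrelation argument: set $\widetilde{\vz}_1 := \vz_1 - \mr{$12$}\mathbf{R}_{22}^{-1}(\vz_2-\vm{$2$})$, check by a second-moment computation that $\widetilde{\vz}_1$ has mean $\vm{$1$}$, covariance $\mr{$11$}-\mr{$12$}\mathbf{R}_{22}^{-1}\mr{$21$}$, and zero cross-covariance with $\vz_2$, invoke the joint-Gaussianity-plus-uncorrelatedness-implies-independence fact, and then shift back by the constant $\mr{$12$}\mathbf{R}_{22}^{-1}(\boldsymbol{\zeta}_2-\vm{$2$})$ via Lemma \ref{lem:affinity}. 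This is essentially the route taken in the cited references (it is closer to Eaton's treatment than to Bishop's completion-of-the-square density manipulation), so nothing is lost relative to the paper; what your version buys is an explicit argument in the paper's own elementary, martingale-free spirit, including the correct caveat that conditioning on the null event $\{\vz_2=\boldsymbol{\zeta}_2\}$ must be read either through the regular conditional distribution (disintegration) or through the density factorization $p(\vz_1,\vz_2)=p(\widetilde{\vz}_1)p(\vz_2)$ under the unit-Jacobian change of variables. Two small remarks: the independence step is not literally an instance of Lemma \ref{lem:affinity} (which only gives the law of an affine image); it is the factorization of the joint characteristic function or density, which you correctly offer as the fallback. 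And your pseudoinverse treatment of a singular $\mr{$22$}$ is a genuine strengthening --- it matches the general form of the theorem on normal correlations in Liptser \& Shiryaev --- though the paper's statement, written with $\mathbf{R}_{22}^{-1}$, implicitly assumes invertibility, so that bookkeeping is optional here.
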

\begin{proof}[\textbf{Proof of Lemma \ref{lem:conditional}}] This is proved in Section 4.1 of Eaton \cite{eaton2007multivariate} and Sections 2.3.1--2.3.2 of Bishop \cite{christopher2016pattern}. It also corresponds to Theorems 13.1 and 13.2 in Liptser \& Shiryaev \cite{liptser2001statisticsII}.
\end{proof}

\subsection{Proof of Theorem \ref{thm:filtering}} \label{sec:Appendix_B}

\begin{proof}[\textbf{Proof of Theorem \ref{thm:filtering}}]
This is Theorem 12.7 in Liptser \& Shiryaev \cite{liptser2001statisticsII}, which is the multi-dimensional analog of Theorems 12.1 and 12.3. For the analogous result in the case of discrete time, see Theorem 13.4, which outlines the corresponding optimal recursive nonlinear filter difference equations, with the respective sufficient assumptions given in Subchapter 13.2.1. Thorough details are also provided in \cite{kolodziej1980state} for the continuous-time case. Here, we instead provide a proof under our unified discrete- and continuous-time framework without the use of martingales.

We consider the discretized equations \eqref{eq:discretecondgauss1}--\eqref{eq:discretecondgauss2}. In a similar manner, we then provide the recurrence relations that the fluctuation terms satisfy when the mean-fluctuation decomposition \eqref{eq:reynoldsdecomp} is employed in \eqref{eq:condgauss1}--\eqref{eq:condgauss2} (with the expectation being the conditional one under the observation of the discrete data $\vx^0,\ldots,\vx^j$ for $j\in\{0,\ldots, J\}$):
    \begin{align}
        \vx^{\prime,j+1}&:=\vx^{j+1}-\overline{\vx^{j+1}}=\ml^{\vx,j}\vy^{\prime, j}\dt+\ms_1^{\vx,j}\sqrt{\dt} \ve_1^j+\ms_2^{\vx,j}\sqrt{\dt} \ve_2^j,\label{eq:discretefluc1} \\
        \vy^{\prime,j+1}&:=\vy^{j+1}-\overline{\vy^{j+1}}=\vy^{\prime, j}+\ml^{\vy,j}\vy^{\prime, j}\dt+\ms_1^{\vy,j}\sqrt{\dt} \ve_1^j+\ms_2^{\vy,j}\sqrt{\dt} \ve_2^j, \label{eq:discretefluc2}
    \end{align}
    where since during averaging we conditioning on $\vx^0,\ldots,\vx^j$, terms involving $\vx^j$ should not appear in the equations for the fluctuations due to the form of \eqref{eq:discretecondgauss1}--\eqref{eq:discretecondgauss2}. We now turn our attention to the joint distribution of $(\vx^{j+1},\vy^{j+1})$, when conditioning on $\vx^{s},\ s\leq j$. During the proof of Theorem 13.6 of Liptser \& Shiryaev \cite{liptser2001statisticsII} it is proved via induction and the uniqueness of conditional characteristic functions \cite{yuan2016some} that the conditional joint distribution
    \begin{equation} \label{eq:condgaussianity}
        \pp\big(\vy^{j+1}\leq \va,\vx^{j+1}\leq \boldsymbol{\beta}\big|\vx^{s},\ s\leq j\big),
    \end{equation}
    for sequences $\{(\vx^j,\vy^j)\}_{j=0,1,\ldots,J}$ governed by discrete-time systems in the form of \eqref{eq:discretecondgauss1}--\eqref{eq:discretecondgauss2}, is $\pp$-a.s.\ Gaussian, $\forall j=0,\ldots,J-1$ (after using the exchangeability (in law) property of jointly distributed Gaussian random vectors by virtue of Lemma \ref{lem:affinity} and permutation matrices). We then aim to describe the mean and covariance matrix of this distribution. First, in light of \eqref{eq:discretecondgauss1}--\eqref{eq:discretecondgauss2}, \eqref{eq:condgaussianity}, and Lemma \ref{lem:affinity} (as to isolate the marginals), a bit of linear algebra provides the following marginalized conditionally Gaussian distributions:
    \begin{align}
        \pp\big(\vx^{j+1}\big|\vx^{s},s\leq j\big)&\overset{\rmd}{\sim}\mathcal{N}_k\big(\vx^j+(\ml^{\vx,j}\vm{\nf}^j+\vf^{\vx,j})\dt,(\ms^\vx\circ\ms^\vx)^j\dt\big), \label{eq:cg1}\\
        \pp\big(\vy^{j+1}\big|\vx^{s},s\leq j\big)&\overset{\rmd}{\sim}\mathcal{N}_l\big(\vm{\nf}^j+(\ml^{\vy,j}\vm{\nf}^j+\vf^{\vy,j})\dt,\mr{\nf}^j+(\ml^{\vy,j}\mr{\nf}^j+\mr{\nf}^j(\ml^{\vy,j})^{\dagger}+(\ms^\vy\circ\ms^\vy)^j)\dt\big),
        \label{eq:cg2}
    \end{align}
     where $\pp\big(\vy^j\big|\vx^s,s\leq j\big)\overset{\rmd}{\sim}\mathcal{N}_l(\vm{\nf}^j,\mr{\nf}^j)$ by Theorem \ref{thm:condgaussianity}. Note again how due to the nature of the EM discretization scheme being used, terms of order higher than $O(\dt)$ have been dropped in the Gaussian statistics appearing in \eqref{eq:cg1}--\eqref{eq:cg2}. As for the cross-covariance tensor, $\mathrm{Cov}\left(\vx^{j+1},\vy^{j+1}\big|\vx^{s},s\leq j\right):=\overline{\vx^{\prime,j+1}(\vy^{\prime,j+1})^\dagger}$, where the overline denotes taking the expectation when conditioned on the observations $\vx^0,\ldots,\vx^j$, we multiply \eqref{eq:discretefluc1} by the Hermitian transpose of \eqref{eq:discretefluc2} from the right, which gives
    \begin{equation}
        \mathrm{Cov}\left(\vx^{j+1},\vy^{j+1}|\vx^s,s\leq j\right):=\overline{\vx^{\prime,j+1}(\vy^{\prime,j+1})^\dagger}=(\ml^{\vx,j}\mr{\nf}^j+(\ms^\vx\circ\ms^\vy)^j)\dt,\label{eq:cg3}
    \end{equation}
    where again terms of order higher than $O(\dt)$ have been dropped, while the terms of order $O(\sqrt{\dt})$ disappear since by the law of total expectation and properties of the Gaussian random noises we have
    \begin{equation*}
        \ee{\ve_m^j(\vy^{\prime,j+1})^\dagger\big|\cF_{t_j}^\vx}=\ee{\ee{\ve_m^j(\vy^{\prime,j+1})^\dagger\big|\cF_{t_j}^{\vx,\vy^{j+1}}}\Big|\cF_{t_j}^\vx}=\ee{\ee{\ve_m^j\big|\cF_{t_j}^{\vx,\vy^{j+1}}}(\vy^{\prime,j+1})^\dagger\Big|\cF_{t_j}^\vx}
        =0, \ \forall m=1,2,
    \end{equation*}
    where $\cF_{t_j}^{\vx,\vy^{j+1}}$ denotes the join $\sigma$-algebra $\cF_{t_j}^\vx\vee\cF^{\vy^{j+1}}:=\sigma\Big(\cF_{t_j}^\vx\cup\sigma(\vy^{j+1})\Big)\supseteq\cF_{t_j}^\vx$. As such, by \eqref{eq:condgaussianity} and \eqref{eq:cg1}--\eqref{eq:cg3}, the following joint distribution can be formed
    \begin{equation} \label{eq:jointcondgaussianity}
        \pp\big(\vy^{j+1},\vx^{j+1}\big|\vx^s,s\leq j\big)\overset{\rmd}{\sim}\mathcal{N}_{l+k}\left( \ee{\vy^{j+1},\vx^{j+1}\big|\vx^s,s\leq j},\mathrm{Var}\big(\vy^{j+1},\vx^{j+1}\big|\vx^s,s\leq j\big)\right),
    \end{equation}
    where
    \begin{equation}
    \begin{gathered}
        \ee{\vy^{j+1},\vx^{j+1}\big|\vx^s,s\leq j}= \begin{pmatrix}
            \vm{\nf}^j+(\ml^{\vy,j}\vm{\nf}^j+\vf^{\vy,j})\dt\\
            \vx^j+(\ml^{\vx,j}\vm{\nf}^j+\vf^{\vx,j})\dt
        \end{pmatrix},\\
        \mathrm{Var}\left(\vy^{j+1},\vx^{j+1}\big|\vx^s,s\leq j\right)=\begin{pmatrix}
            \mr{\nf}^j(\mathbf{I}_{l\times l}+\mathbf{H}^j\dt) & \mr{\nf}^j(\mathbf{G}^{\vx,j})^\dagger\dt\\
            \mathbf{G}^{\vx,j}\mr{\nf}^j\dt & (\ms^\vx\circ\ms^\vx)^j\dt
        \end{pmatrix},
    \end{gathered} \label{eq:jointcondgaussianitydetails}
    \end{equation}
    for
    \begin{equation*}
        \mathbf{G}^{\vx,j}:=\ml^{\vx,j}+(\ms^\vx\circ\ms^\vy)^j\big(\mr{\nf}^j\big)^{-1},\quad \mathbf{H}^j:=\big(\mr{\nf}^j\big)^{-1}(\ml^{\vy,j}\mr{\nf}^j+\mr{\nf}^j(\ml^{\vy,j})^{\dagger}+(\ms^\vy\circ\ms^\vy)^j),
    \end{equation*}
    with $\mathbf{I}_{l\times l}$ denoting the identity matrix of size $l\times l$. As such, in light of Lemma \ref{lem:conditional} and \eqref{eq:jointcondgaussianity}--\eqref{eq:jointcondgaussianitydetails}, by further conditioning on $\vx^{j+1}$ (meaning we have additionally observed $\vx^{j+1}$), then we have that,
    \begin{equation*}
        \pp\big(\vy^{j+1}\big|\vx^s, s\leq j+1\big)\overset{\rmd}{\sim} \mathcal{N}_l(\vm{\nf}^{j+1},\mr{\nf}^{j+1}),
    \end{equation*}
    where we have the following recursive relations for the conditional filter mean and covariance, which essentially are the discrete-time optimal nonlinear filter state estimation equations:
    \begin{align}
        \vm{\nf}^{j+1}&=\vm{\nf}^j+(\ml^{\vy,j}\vm{\nf}^j+\vf^{\vy,j})\dt+\mathbf{K}^j(\vx^{j+1}-\vx^j-(\ml^{\vx,j}\vm{\nf}^j+\vf^{\vx,j})\dt), \label{eq:discretefilter1} \\
        \mr{\nf}^{j+1}&=\mr{\nf}^j+\big(\ml^{\vy,j}\mr{\nf}^j+\mr{\nf}^j(\ml^{\vy,j})^{\dagger}+(\ms^\vy\circ\ms^\vy)^j-\mathbf{K}^j((\ms^\vx\circ\ms^\vy)^j+\ml^{\vx,j}\mr{\nf}^j)\big)\dt, \label{eq:discretefilter2}
    \end{align}
    where
    \begin{equation} \label{eq:kalmangainmatrix}
        \mathbf{K}^j:=((\ms^\vy\circ\ms^\vx)^j+\mr{\nf}^j(\ml^{\vx,j})^{\dagger})\big((\ms^\vx\circ\ms^\vx)^{j}\big)^{-1},
    \end{equation}
    which for the remainder of this work is called the Kalman gain matrix (see subsequent discussion), evaluated at $t_j$. Taking then the limit as $\dt\to0^+$, under the EM convergence framework we have formalized, results in the recovery of the continuous-time filter differential equations for the Gaussian statistics of the posterior distribution given in \eqref{eq:filter1}--\eqref{eq:filter2}, when $t\in[0,T]$.

    As for the uniqueness and continuity of solutions to \eqref{eq:filter1}--\eqref{eq:filter2} (as well as their continuous dependence on the initial conditions and additional model parameters), the details can be found in Kolodziej \cite{kolodziej1980state}, but essentially these follow by the fact that $\vm{\nf}(t)$ satisfies a random ordinary differential equation linear in $\vm{\nf}(t)$ \cite{strand1970random, soong1973random, neckel2013random, han2017random}, while $\mr{\nf}(t)$ satisfies a random Riccati equation \cite{wang1999stability, casaban2018solving, bishop2019stability}, with matrix-valued coefficients and forcings that satisfy the conditions of the random variant of the Picard-Lindel\"of theorem (under our assumed regularity conditions) \cite{strand1970random, han2017random, soong1973random}. An alternative approach to the existence, uniqueness, and continuity of the filter Gaussian statistics is provided in the corresponding theorems from Liptser \& Shiryayev \cite{liptser2001statisticsI, liptser2001statisticsII}.

    For the final statement of the theorem, having now transitioned to the limit in continuous-time, the result can be established via elementary results from the theory of differential equations. We first define the random time
    \begin{equation*}
        \tau:={\inf}\{t\in[0,T]:\mathrm{det}(\mr{\nf}(t))=0\},
    \end{equation*}
    with the convention that $\tau=T$ if $\underset{t\in[0,T]}{\inf}\{\mathrm{det}(\mr{\nf}(t))\}>0$. This is the smallest random time on which the filter covariance matrix becomes singular, and by the virtue of the filter statistics being adapted to the filtration $\{\cF_t^\vx\}_{t\in[0,T]}$, continuous in time, and by continuity of the determinant over $\mathbb{C}^{l\times l}$, then $\tau$ is a predictable (previsible) stopping time \cite{rogers2000diffusions}. As a result, for any $t<\min\{\tau,T\}$, the matrices $\mr{\nf}^{-1}(t)$ are well-defined, and as such over this period we have ($\pp$-a.s.)
    \begin{equation*}
        \mathbf{0}_{l\times l}=\frac{\rmd}{\rmd t}(\mathbf{I}_{l\times l})=\frac{\rmd}{\rmd t}(\mr{\nf}(t)\mr{\nf}^{-1}(t))=\frac{\rmd\mr{\nf}}{\rmd t}(t)\mr{\nf}^{-1}(t)+\mr{\nf}(t)\frac{\rmd\mr{\nf}^{-1}}{\rmd t}(t),
    \end{equation*}
    or,
    \begin{equation*}
        \frac{\rmd\mr{\nf}^{-1}}{\rmd t}(t)=-\mr{\nf}^{-1}(t)\frac{\rmd\mr{\nf}}{\rmd t}(t)\mr{\nf}^{-1}(t).
    \end{equation*}
    Using then \eqref{eq:filter2}, we have that for $t<\min\{\tau,T\}$, the inverse of the filter covariance matrix satisfies the following random Riccati differential equation:
    \begin{equation} \label{eq:inversefilterdiff}
        \rmd\mr{\nf}^{-1}(t)=-\big(\ma^\dagger\mr{\nf}^{-1}+\mr{\nf}^{-1}\ma-(\ml^\vx)^\dagger(\ms^\vx\circ\ms^\vx)^{-1}\ml^\vx+\mr{\nf}^{-1}\mb\mr{\nf}^{-1}\big)\rmd t,
    \end{equation}
    where the auxiliary matrices in \eqref{eq:inversefilterdiff} are defined as:
    \begin{align}
        \ma(t,\vx)&:=\ml^\vy(t,\vx)-(\ms^\vy\circ \ms^\vx)(t,\vx)(\ms^\vx\circ \ms^\vx)^{-1}(t,\vx)\ml^\vx(t,\vx)\in\mathbb{C}^{l \times l}, \label{eq:auxiliarymata}\\
        \mb(t,\vx) &:=  (\ms^\vy\circ \ms^\vy)(t,\vx)-(\ms^\vy\circ \ms^\vx)(t,\vx)(\ms^\vx\circ \ms^\vx)^{-1}(t,\vx)(\ms^\vx\circ \ms^\vy)(t,\vx)\in\mathbb{C}^{l \times l}. \label{eq:auxiliarymatb}
    \end{align}
    Now, by $\mathrm{det}(\mr{\nf}^{-1}(t))=\mathrm{det}(\mr{\nf}(t))^{-1}$, continuity of the determinant, and $\tau$ being a predictable stopping time, it is necessary that the elements of the matrix $\mr{\nf}^{-1}(t)$ must increase as $t\uparrow \tau$. Nonetheless, it is actually possible to show that they are $\pp$-a.s.\ bounded.

    We first prove the auxiliary result of,
    \begin{equation} \label{eq:nonnegativedefmatb}
        \mb\succeq \mathbf{0}_{l\times l} \quad (\pp\text{-a.s.}),
    \end{equation}
    i.e., $\mb$ is nonnegative-definite. Let $\ve_1\overset{\rmd}{\sim}\mathcal{N}_d(\mathbf{0}_{d\times 1},\mathbf{I}_d)$ and $\ve_2\overset{\rmd}{\sim}\mathcal{N}_r(\mathbf{0}_{r\times 1},\mathbf{I}_r)$, with $\ve_1\perp\!\!\!\!\perp\ve_2$, and define $\boldsymbol{\theta}:=\ms^\vy_1\ve_1+\ms^\vy_2\ve_2$ and $\boldsymbol{\xi}:=\ms^\vx_1\ve_1+\ms^\vx_2\ve_2$. Then, by Theorem \ref{thm:condgaussianity}, we have
    \begin{equation*}
        \mathrm{Cov}\big(\boldsymbol{\theta},\boldsymbol{\theta}\big|\boldsymbol{\xi}\big):=\mb=(\ms^\vy\circ \ms^\vy)(t,\vx)-(\ms^\vy\circ \ms^\vx)(t,\vx)(\ms^\vx\circ \ms^\vx)^{-1}(t,\vx)(\ms^\vx\circ \ms^\vy)(t,\vx),
    \end{equation*}
    and as a covariance matrix it is necessarily nonnegative-definite.

    By taking now the trace of \eqref{eq:inversefilterdiff} and using its continuity and linearity to yield $\mathrm{tr}(\rmd\mr{\nf}^{-1}(t))=\rmd(\mathrm{tr}(\mr{\nf}^{-1}(t)))$, it follows then by the invariance of the trace under cyclic permutations of products that
    \begin{equation*}
        \rmd(\mathrm{tr}(\mr{\nf}^{-1}(t)))=-2\mathrm{tr}\big(\mr{\nf}^{-1}(t)\ma\big)+\mathrm{tr}\big((\ml^\vx)^\dagger(\ms^\vx\circ\ms^\vx)^{-1}\ml^\vx\big)-\mathrm{tr}\big(\mr{\nf}^{-1}\mb\mr{\nf}^{-1}\big),
    \end{equation*}
    and since $\mr{\nf}^{-1}\mb\mr{\nf}^{-1}\succeq \mathbf{0}_{l\times l}$ by \eqref{eq:nonnegativedefmatb}, for $t<\min\{\tau,T\}$, we end up with
    \begin{equation*}
        \rmd(\mathrm{tr}(\mr{\nf}^{-1}(t)))\leq -2\mathrm{tr}\big(\mr{\nf}^{-1}(t)\ma\big)+M\left\|(\ms^\vx\circ\ms^\vx)^{-1/2}\ml^\vx\right\|_2^2,
    \end{equation*}
    since by the Cauchy-Schwarz inequality on the Frobenius inner product and the equivalence of norms in finite-dimensional spaces we have that $\exists M>0$ such that
    \begin{equation*}
        \mathrm{tr}\big((\ml^\vx)^\dagger(\ms^\vx\circ\ms^\vx)^{-1}\ml^\vx\big)=\mathrm{tr}\big(\big((\ms^\vx\circ\ms^\vx)^{-1/2}\ml^\vx\big)^\dagger(\ms^\vx\circ\ms^\vx)^{-1/2}\ml^\vx\big)\leq M\left\|(\ms^\vx\circ\ms^\vx)^{-1/2}\ml^\vx\right\|_2^2.
    \end{equation*}
    After integration over $[0,t]$ for $t<\min\{\tau,T\}$ we then have
    \begin{equation} \label{eq:intermediateresult1}
        \mathrm{tr}(\mr{\nf}^{-1}(t))\leq \mathrm{tr}(\mr{\nf}^{-1}(0))+M\int_0^t\left\|(\ms^\vx\circ\ms^\vx)^{-1/2}(s,\vx)\ml^\vx(s,\vx)\right\|_2^2\rmd s-2\int_0^t\mathrm{tr}\big(\mr{\nf}^{-1}(s)\ma(s,\vx)\big)\rmd s.
    \end{equation}
    Now, since over this interval we have $\mr{\nf}^{-1}(t)\succ\mathbf{0}_{l\times l}$, then
    \begin{equation} \label{eq:intermediateresult2}
        \left\|\mr{\nf}^{-1}(t)\right\|^2_2\leq\rho(\mr{\nf}^{-1}(t))^2\leq \sum_{\mathclap{\mu(t)\in \lambda(\mr{\nf}^{-1}(t))}}\mu(t)^2= \mathrm{tr}(\mr{\nf}^{-1}(t))^2,
    \end{equation}
    where $\rho(\boldsymbol{\cdot})$ denotes the spectral radius and $\lambda(\boldsymbol{\cdot})$ denotes the spectrum of a linear operator. Furthermore, by the sub-multiplicativity of the induced Euclidean norm, we have
    \begin{align}
    \begin{split}
        \mathrm{tr}(\mr{\nf}^{-1}(0))+M\int_0^t\left\|(\ms^\vx\circ\ms^\vx)^{-1/2}(s,\vx)\ml^\vx(s,\vx)\right\|_2^2\rmd s&\\
        &\hspace{-4cm}\leq \mathrm{tr}(\mr{\nf}^{-1}(0))+M\int_0^t\left\|(\ms^\vx\circ\ms^\vx)^{-1/2}(s,\vx)\right\|_2^2\left\|\ml^\vx(s,\vx)\right\|_2^2\rmd s:=C_1,
    \end{split} \label{eq:intermediateresult3}
    \end{align}
    where $C_1<+\infty$ ($\pp$-a.s.), by assumptions \textbf{\magenta{(2)}}, \textbf{\magenta{(5)}}, and $\pp\left(\mathrm{tr}(\mr{\nf}(0))<+\infty\right)=1$. Now, again by the Cauchy-Schwarz inequality on the Frobenius inner product and the equivalence of norms in finite-dimensional spaces we have
    \begin{align}
    \begin{split}
        -\int_0^t\mathrm{tr}\big(\mr{\nf}^{-1}(s)\ma(s,\vx)\big)\rmd s\leq \int_0^t|\mathrm{tr}\big(\mr{\nf}^{-1}(s)\ma(s,\vx)\big)|\rmd s\leq M\int_0^t\left\|\mr{\nf}^{-1}(s)\right\|_2\left\|\ma(s,\vx)\right\|_2\rmd s,
    \end{split} \label{eq:intermediateresult4}
    \end{align}
    and so by combining \eqref{eq:intermediateresult2}--\eqref{eq:intermediateresult4} with \eqref{eq:intermediateresult1}, and applying Young's inequality we get
    \begin{equation*}
        \left\|\mr{\nf}^{-1}(t)\right\|^2_2\leq 2C^2_1+8M^2\left(\int_0^t\left\|\mr{\nf}^{-1}(s)\right\|_2\left\|\ma(s,\vx)\right\|_2\rmd s\right)^2.
    \end{equation*}
    Now, as a positive function, $\displaystyle\exists\int_0^t\left\|\mr{\nf}^{-1}(s)\right\|^2_2\rmd s\in(0,+\infty]$, and as such, in the generalized sense, Cauchy-Schwarz yields
    \begin{equation*}
        \left(\int_0^t\left\|\mr{\nf}^{-1}(s)\right\|_2\left\|\ma(s,\vx)\right\|_2\rmd s\right)^2\leq \int_0^t\left\|\mr{\nf}^{-1}(s)\right\|^2_2\rmd s\int_0^t\left\|\ma(s,\vx)\right\|_2^2\rmd s,
    \end{equation*}
    where $\int_0^t\left\|\ma(s,\vx)\right\|_2^2\rmd s\leq \int_0^T\left\|\ma(t,\vx)\right\|_2^2\rmd t<+\infty$ since by Young's inequality and the sub-multiplicativity of the induced Euclidean norm, we have
    \begin{align*}
        C_2:=\int_0^T\left\|\ma(t,\vx)\right\|_2^2\rmd t&\leq 2\int_0^T\left\|\ml^\vy\right\|_2^2\rmd t+2\int_0^T\left\|(\ms^\vx\circ\ms^\vx)^{-1}\right\|_2^2(\left\|\ms^\vy\circ\ms^\vx\right\|_2^4+\left\|\ml^\vy\right\|_2^4)\rmd t\\
        &\lesssim \int_0^T\left\|\ml^\vy\right\|_2^2\rmd t+\int_0^T\big(\left\|\ms_1^\vy\right\|_2^4+\left\|\ms_2^\vy\right\|_2^4+\left\|\ml^\vy\right\|_2^4\big)\rmd t<+\infty,
    \end{align*}
    as a result of assumptions \textbf{\magenta{(2)}}, \textbf{\magenta{(5)}}, and \textbf{\magenta{(11)}}. Putting all these results together, we retrieve
    \begin{equation*}
        \left\|\mr{\nf}^{-1}(t)\right\|^2_2\leq 2C^2_1+8M^2C_2\int_0^t\left\|\mr{\nf}^{-1}(s)\right\|^2_2\rmd s,
    \end{equation*}
    where by Gr\"onwall's inequality we have
    \begin{equation*}
        \left\|\mr{\nf}^{-1}(t)\right\|^2_2\leq 2C_1^2+16TC_1^2M^2C_2e^{8TM^2C_2}<+\infty, \ \forall t<\min\{\tau,T\}.
    \end{equation*}
    Using once more the equivalence of norms, in this case for $\left\|\boldsymbol{\cdot}\right\|_2$ and the $L^{\infty,\infty}$ element-wise norm, $\left\|\mathbf{V}\right\|_{\infty,\infty}:=\underset{i,j}{\max}\{|V_{ij}|\}$, then
    \begin{equation*}
        \left\|\mr{\nf}^{-1}(t)\right\|^2_{\infty,\infty}\lesssim \left\|\mr{\nf}^{-1}(t)\right\|^2_2\lesssim 1+e^{8TM^2C_2}<+\infty,
    \end{equation*}
    which proves that the elements of $\mr{\nf}^{-1}(t)$ remain $\pp$-a.s.\ bounded as $t\uparrow\tau$. But, because of this boundedness of elements, as to not contradict the definition of $\tau$ (the elements of the matrix $\mr{\nf}^{-1}(t)$ must increase as $t\uparrow \tau$), it is then necessary that $\tau=T$, and as such $\pp(\tau<T)=0$. We note here that we have essentially proved the boundedness of the elements and spectral or Euclidean norm of $\mr{\nf}^{-1}(t)$, which we can also show for the filter covariance matrix itself, $\mr{\nf}(t)$, with a similar procedure. Similarly, we can also show that $\pp\big(\int_0^T\left\|\ms^\vx\circ\ms^\vx(t,\vx)\right\|_2^2\rmd t<+\infty\big)$; details are included in the work of Kolodziej \cite{kolodziej1980state}.
\end{proof}

\subsection{Proof of Theorem \ref{thm:smoothing}} \label{sec:Appendix_C}

The following lemmas are needed to derive the optimal nonlinear smoother state estimation backward equations. The first lemma provides some concentration inequalities, specifically tail bounds on the spectrum norm, and by extension the spectral radius, of some auxiliary matrices of interest. These are important for the expansion up to leading-order terms of expressions of the form $(\mathbf{I}_p\pm \mathbf{V}\dt)^{-1}$ for $\mathbf{V}\in\mathbb{C}^{p\times p}$ through the Neumann series associated with this inverse.

\begin{lem}[\textbf{Tail Bounds for some Spectral Norms}] \label{lem:tailboundsspectral}
    Let the regularity conditions \textbf{\magenta{(1)}}--\textbf{\magenta{(13)}} to hold, and adopt the temporal discretization scheme that was formulated at the beginning of Section \ref{sec:2.5}. We further define the following auxiliary matrices:
    \begin{gather*}
        \mathbf{H}(t,\vx):=\mr{\nf}^{-1}(t)(\ml^{\vy}(t,\vx)\mr{\nf}(t)+\mr{\nf}(t)(\ml^{\vy}(t,\vx))^{\dagger}+(\ms^\vy\circ\ms^\vy)(t,\vx)),\\
        \mathbf{L}(t,\vx):=(\ms^\vx\circ\ms^\vx)^{-1}(t,\vx)\mathbf{G}^\vx(t,\vx)\mr{\nf}(t)(\mathbf{G}^\vx(t,\vx))^\dagger,
    \end{gather*}
    where $\mathbf{G}^{\vx}:=\ml^{\vx,j}(t,\vx)+(\ms^\vx\circ\ms^\vy)(t,\vx)\mr{\nf}^{-1}(t)$. Then the following are true:
    \begin{gather}
    \lim_{\substack{\dt\to0^+\\J\dt=T}}\pp\left(\underset{0\leq j\leq J}{\max}\left\{\left\|\mathbf{H}^j\dt\right\|_2\right\}\geq 1\right)=0 \quad \text{and} \quad\lim_{\substack{\dt\to0^+\\J\dt=T}}\pp\left(\underset{0\leq j\leq J}{\max}\left\{\left\|\mathbf{L}^j\dt\right\|_2\right\}\geq 1\right)=0. \label{eq:spectralnormbound}
    \end{gather}
\end{lem}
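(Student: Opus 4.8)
The plan is to prove each of the two limits in \eqref{eq:spectralnormbound} by controlling the spectral norm of $\mathbf{H}^j\dt$ (resp.\ $\mathbf{L}^j\dt$) uniformly in $j$, and then showing that the bad event has probability vanishing as $\dt\to0^+$. The key structural observation is that both $\mathbf{H}$ and $\mathbf{L}$ are, by construction, bounded by random constants that do not depend on $\dt$ (they are built from the model parameters $\ml^\vx,\ml^\vy,\ms_m^\vx,\ms_m^\vy$, the filter covariance $\mr{f}(t)$ and its inverse, and the observable Gramian inverse $(\ms^\vx\circ\ms^\vx)^{-1}$), so multiplying by $\dt$ should make the supremum over the $J=T/\dt$ gridpoints go to zero. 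The subtlety — and the reason assumption \textbf{\magenta{(13)}} is invoked — is that $\mr{f}(t)^{-1}$ need not be $\pp$-a.s.\ uniformly bounded over $[0,T]$ a priori, so one cannot simply pull out a deterministic constant; instead one must argue that $\underset{0\le j\le J}{\max}\{\|\mathbf{H}^j\|_2\}$ is $\pp$-a.s.\ bounded by some finite random variable $M(\omega)$ and then use that $M\dt\to0$ pointwise, upgrading to convergence in probability.

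Concretely, first I would fix $\omega$ outside a $\pp$-null set and use the continuity in $t$ of $\vx$ (hence of all the $\cF_t^\vx$-measurable model coefficients via \textbf{\magenta{(3)}}, \textbf{\magenta{(9)}}), together with the continuity and positive-definiteness propagation of $\mr{f}(t)$ from Theorem \ref{thm:filtering}, to conclude that $t\mapsto\mathbf{H}(t,\vx)$ and $t\mapsto\mathbf{L}(t,\vx)$ are continuous on the compact interval $[0,T]$, hence bounded: there exist random variables $M_H(\omega),M_L(\omega)$ with $\pp(M_H<+\infty)=\pp(M_L<+\infty)=1$ and $\underset{t\in[0,T]}{\sup}\{\|\mathbf{H}(t,\vx)\|_2\}\le M_H$, $\underset{t\in[0,T]}{\sup}\{\|\mathbf{L}(t,\vx)\|_2\}\le M_L$. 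For the discrete gridpoint evaluations, since $\mathbf{H}^j\equiv\mathbf{H}(t_j,\vx)$ under the association set up in Section \ref{sec:2.5}, one immediately gets $\underset{0\le j\le J}{\max}\{\|\mathbf{H}^j\dt\|_2\}\le M_H\,\dt$, and similarly for $\mathbf{L}$. Then for any $\e>0$,
\begin{equation*}
    \pp\left(\underset{0\le j\le J}{\max}\{\|\mathbf{H}^j\dt\|_2\}\ge 1\right)\le \pp(M_H\,\dt\ge 1)=\pp(M_H\ge 1/\dt)\xrightarrow[\dt\to0^+]{}0,
\end{equation*}
because $M_H<+\infty$ $\pp$-a.s.\ implies $\pp(M_H\ge 1/\dt)\to\pp(M_H=+\infty)=0$ by continuity of measure (equivalently, dominated convergence applied to $\mathds{1}_{\{M_H\ge 1/\dt\}}$). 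The same argument verbatim handles the $\mathbf{L}$ term, giving both statements in \eqref{eq:spectralnormbound}.

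I expect the main obstacle to be the role of assumption \textbf{\magenta{(13)}}: the clean continuity-on-a-compact argument above implicitly requires that the relevant random bounds be a.s.\ finite, which is exactly what must be secured when the latent-noise feedbacks $\ms_m^\vy$ are not assumed uniformly bounded. One must show that the exponential-moment control $a_{\dt}(M)=o(\dt)$ is precisely what is needed to make the tail bound quantitative — i.e., to pass from ``$M_H<+\infty$ a.s.'' to a rate — in the case where the naive supremum over $[0,T]$ could fail. The honest route here is to bound $\|\mathbf{H}^j\|_2$ by an expression involving $1+\underset{0\le j\le J}{\max}\{\|\ms_1^{\vy,j}\|_2^2+\|\ms_2^{\vy,j}\|_2^2\}$ times a factor coming from $\|\mr{f}^{-1}\|_2$, invoke a Chernoff/Markov bound $\pp(\,\cdot\ge 1/\dt)\le e^{-\theta/\dt}\,\mathbb{E}[e^{\theta(\cdots)}]$, optimize over $\theta>0$, and recognize the resulting quantity as $a_{\dt}(M)$, which is $o(\dt)\to 0$ by \textbf{\magenta{(13)}}; the contribution of $\|\mr{f}^{-1}\|_2$ itself is handled since the random Riccati flow from Theorem \ref{thm:filtering} keeps $\mr{f}(t)$ positive-definite and continuous, so its smallest eigenvalue is a.s.\ bounded below on $[0,T]$. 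Tying the deterministic-continuity bound and the exponential-moment bound together into a single clean estimate, and checking that the $\mathbf{L}$ matrix (which additionally carries the possibly-unbounded Gramian interactions $\ms^\vx\circ\ms^\vy$ and $\mr{f}^{-1}$) also fits this template, is the part that will require the most care.
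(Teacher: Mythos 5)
Your proposal is correct, and its second half (the ``honest route'') is in substance the paper's own argument: bound the spectral norm of $\mathbf{H}^j$ (and $\mathbf{L}^j$) by a random multiple of $1+\big\|\ms_1^{\vy,j}\big\|_2^2+\big\|\ms_2^{\vy,j}\big\|_2^2$ using the a.s.\ uniform bounds on $\|\mr{\nf}\|_2$, $\|\mr{\nf}^{-1}\|_2$, $\|\ml^\vy\|_2$, $\|\ml^\vx\|_2$, $\|(\ms^\vx\circ\ms^\vx)^{-1}\|_2$, then apply an exponential Chernoff bound and invoke assumption \textbf{(13)}. Where you genuinely diverge is in the reduction: the paper works matrix-by-matrix and gridpoint-by-gridpoint — Boole's inequality over the $J+1$ events, the self-adjoint dilation $\boldsymbol{\mathcal{H}}(\mathbf{H}^j)$, the matrix Laplace-transform (trace-exponential) tail bound, and Lidskii--Mirsky--Wielandt majorization to control the singular values — and it is precisely the union bound over $J\sim T/\dt$ terms that forces the $o(\dt)$ rate in \textbf{(13)}; you instead dominate $\max_j\|\mathbf{H}^j\|_2$ by a single scalar random variable and apply the ordinary scalar Markov/Chernoff inequality (or, in your first argument, mere continuity of measure), so you only need the expectation in \textbf{(13)} to be $o(1)$. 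Your more elementary route buys simplicity and a weaker effective hypothesis; the paper's route buys per-gridpoint quantitative tail bounds and sets up the matrix Laplace-transform machinery reused later (e.g., in Remark \ref{rmk:uncertaintyhierarchy}). One caveat on your framing: under the linear-growth part of condition \textbf{(3)} along a.s.\ continuous paths, $\sup_{t\in[0,T]}\big\{\|\ms^\vy_m(t,\vx)\|_2^2\big\}$ is automatically a.s.\ finite, so your ``clean'' first argument already suffices for the limit as stated (making your hedge about unbounded feedbacks largely moot); the paper nonetheless retains \textbf{(13)} because its union-bound proof needs the rate, and because the authors allow for weakened versions of the regularity conditions under which the pathwise supremum bound would not come for free.
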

\begin{proof}[\textbf{Proof of Lemma \ref{lem:tailboundsspectral}}]
    We start with the tail result concerning $\mathbf{H}$ in \eqref{eq:spectralnormbound}. Define
\begin{equation*}
    B_j(\dt):=\left\{\omega\in\Omega:\left\|\mathbf{H}^j(\omega)\dt\right\|_2\geq 1\right\}=\left\{\omega\in\Omega:\left\|\mathbf{H}(t_j,\vx(t_j,\omega))\dt\right\|_2\geq 1\right\},
\end{equation*}
for each $j=0,1,\ldots,J$. Then, for $A(\dt):=\left\{\omega\in\Omega:\underset{0\leq j\leq J}{\max}\left\{\left\|\mathbf{H}^j(\omega)\dt\right\|_2\right\}\geq 1\right\}$, it immediately follows that
\begin{equation*}
    A(\dt)=\bigcup_{j=0}^J B_j(\dt), \ \forall J\in\mathbb{N}.
\end{equation*}
By Boole's inequality, we then have $\pp(A(\dt))\leq \sum_{j=0}^J \pp(B_j(\dt))$. While this upper bound might seem rather crude, intuitively, for smaller values of $\dt$, this domination gets tighter. We now look at a specific $\pp(B_j(\dt))$. We first define the so called self-adjoint dilation matrix from operation theory \cite{tropp2012user}, also known as the Jordan–Wielandt matrix in perturbation theory and other areas \cite{eisenstat1995relative, li1999lidskii}. This can be defined as an operator from $\mathbb{C}^{d_1\times d_2}$ to $\mathbb{C}^{(d_1+d_2)\times(d_1+d_2)}$ where for $\mathbf{Y}\in\mathbb{C}^{d_1\times d_2}$
\begin{equation*}
    \boldsymbol{\mathcal{H}}(\mathbf{Y}):=\begin{pmatrix}
        \mathbf{0}_{d_1\times d_1} & \mathbf{Y} \\
        \mathbf{Y}^\dagger & \mathbf{0}_{d_2\times d_2}
    \end{pmatrix}.
\end{equation*}
Its usage is usually in assessing properties of the singular values of the input matrix through its spectrum, since
\begin{equation*}
    \boldsymbol{\mathcal{H}}(\mathbf{Y})^2=\begin{pmatrix}
        \mathbf{Y}\mathbf{Y}^\dagger & \mathbf{0}_{d_1\times d_2} \\
        \mathbf{0}_{d_2\times d_1} & \mathbf{Y}^\dagger\mathbf{Y}
    \end{pmatrix}.
\end{equation*}
The most important properties of this operator is that it is real-linear, self-adjoint, and it preserves the spectral norm of the input \cite{tropp2012user}, in other words
\begin{equation} \label{eq:selfadjointprop}
    \rho(\boldsymbol{\mathcal{H}}(\mathbf{Y}))=\left\|\boldsymbol{\mathcal{H}}(\mathbf{Y})\right\|_2=\left\|\mathbf{Y}\right\|_2
\end{equation}
Through this, then the following series of (in)equalities is true for each $j=0,1,\ldots,J$:
\begin{equation} \label{eq:boundsprobability}
    \pp(B_j(\dt))=\pp\big(\big\|\mathbf{H}^j\dt\big\|_2\geq1\big)=\pp\left(\big\|\boldsymbol{\mathcal{H}}(\mathbf{H}^j)\big\|_2\geq \frac{1}{\dt}\right)\leq \underset{\theta>0}{\mathrm{inf}}\left\{e^{-\theta/\dt}\ee{\mathrm{tr}\left(e^{\theta\boldsymbol{\mathcal{H}}(\mathbf{H}^j)} \right)}\right\},
\end{equation}
where the inequality used at the end follows from the Laplace transform method, which provides tail bounds for the extreme eigenvalues of self-adjoint matrices \cite{ahlswede2002strong, oliveira2009spectrum, tropp2019matrix}, which $\boldsymbol{\mathcal{H}}(\mathbf{H}^j)$ is by definition (even if $\mathbf{H}^j$ is not). We now isolate the quantity $\ee{\mathrm{tr}\left(e^{\theta\boldsymbol{\mathcal{H}}(\mathbf{H}^j)} \right)}$ for some $\theta>0$. Now, generally $\lambda(e^{\theta\mathbf{V}})=e^{\lambda(\theta \mathbf{V})} \pmod{2\pi i}=e^{\theta\lambda(\mathbf{V})} \pmod{2\pi i}$ for arbitrary $\mathbf{V}$, but $\boldsymbol{\mathcal{H}}(\mathbf{H}^j)$ is self-adjoint, and specifically nonnegative-definite, and as such has real and nonnegative eigenvalues which means this relation is exact, or $\lambda(e^{\theta\boldsymbol{\mathcal{H}}(\mathbf{H}^j)})=e^{\theta\lambda(\boldsymbol{\mathcal{H}}(\mathbf{H}^j))}$. Now, by block-matrix algebra, we have $\mathrm{det}(\boldsymbol{\mathcal{H}}(\mathbf{H}^j)-\lambda\mathbf{I}_{2l})=\mathrm{det}(\lambda^2\mathbf{I}_{l}-\mathbf{H}^j(\mathbf{H}^j)^\dagger)$, which shows that the eigenvalues of $\boldsymbol{\mathcal{H}}(\mathbf{H}^j)$ are exactly the square roots of the eigenvalues of $\mathbf{H}^j(\mathbf{H}^j)^\dagger$ (or, equivalently, $(\mathbf{H}^j)^\dagger\mathbf{H}^j$), which are exactly the singular values of $\mathbf{H}^j$. As such, by combining the preceding observations and using the linearity of the trace and expectation, we have
\begin{equation} \label{eq:expectedtrace}
    \ee{\mathrm{tr}\big(e^{\theta\boldsymbol{\mathcal{H}}(\mathbf{H}^j)}\big)}=\sum_{\mathclap{\mu\in\lambda((\mathbf{H}^j)^\dagger\mathbf{H}^j)}}\ee{e^{\theta\sqrt{\mu}}}.
\end{equation}
We now bound the eigenvalues of $(\mathbf{H}^j)^\dagger\mathbf{H}^j$. It is known from a multiplicative analog of the fundamental Lidskii-Mirsky-Wielandt eigenvalue majorization theorem that for $\mu_1(\mathbf{V})\geq \mu_2(\mathbf{V})\geq \cdots \geq \mu_p(\mathbf{V})$ denoting the ordered eigenvalues of some Hermitian matrix $\mathbf{V}\in\mathbb{C}^{p\times p}$ then we have \cite{li1999lidskii}
\begin{equation*}
    \mu_i(\mathbf{S}^\dagger\mathbf{V}\mathbf{S})\leq \rho(\mathbf{S}^\dagger\mathbf{S})\mu_i(\mathbf{V}), \ \forall i=1,\ldots,p,
\end{equation*}
with $\mathbf{S}\in\mathbb{C}^{p\times p}$ arbitrary. Using then this result on $(\mathbf{H}^j)^\dagger\mathbf{H}^j=\big(\mr{\nf}^j\big)^{-1}(\ml^{\vy,j}\mr{\nf}^j+\mr{\nf}^j(\ml^{\vy,j})^{\dagger}+(\ms^\vy\circ\ms^\vy)^j)^2\big(\mr{\nf}^j\big)^{-1}$, we have
\begin{equation} \label{eq:spectrumhjhj}
     \mu_i((\mathbf{H}^j)^\dagger\mathbf{H}^j)\leq \rho\big(\big(\mr{\nf}^j\big)^{-1}\big)^2 \mu_i(\ml^{\vy,j}\mr{\nf}^j+\mr{\nf}^j(\ml^{\vy,j})^{\dagger}+(\ms^\vy\circ\ms^\vy)^j)^2, \ \forall i=1,\ldots,l,
\end{equation}
since $\big(\mr{\nf}^j\big)^{-1}$ is Hermitian and for $p\in\mathbb{C}_m[z]$ a polynomial we have $p(\lambda(\mathbf{V}))=\lambda(p(\mathbf{V}))$ in $\mathbb{C}$. We have already shown during the proof of Theorem \ref{thm:filtering} that $\pp$-a.s.\ $\rho\big(\big(\mr{\nf}^j\big)^{-1}\big)<+\infty$ uniformly in time, with a similar result for the filter covariance matrix itself being true too \cite{kolodziej1980state, wang1999stability, bishop2019stability}. So, here we aim to bound $\lambda(\ml^{\vy,j}\mr{\nf}^j+\mr{\nf}^j(\ml^{\vy,j})^{\dagger}+(\ms^\vy\circ\ms^\vy)^j)$ for each $j=0,1,\ldots,J$. For that, the following series of (in)equalities hold for any $\Tilde{\mu}\in\lambda(\ml^{\vy,j}\mr{\nf}^j+\mr{\nf}^j(\ml^{\vy,j})^{\dagger}+(\ms^\vy\circ\ms^\vy)^j)$:
\begin{align}
\begin{split}
    \Tilde{\mu}&\leq \rho(\ml^{\vy,j}\mr{\nf}^j+\mr{\nf}^j(\ml^{\vy,j})^{\dagger}+(\ms^\vy\circ\ms^\vy)^j)= \big\|\ml^{\vy,j}\mr{\nf}^j+\mr{\nf}^j(\ml^{\vy,j})^{\dagger}+(\ms^\vy\circ\ms^\vy)^j\big\|_2\\
    &\leq 2\big\|\mr{\nf}^j\big\|_2\big\|\ml^{\vy,j}\big\|_2+\big\|(\ms^\vy\circ\ms^\vy)^j\big\|_2.
\end{split} \label{eq:eigenvaluebound}
\end{align}
But, as already mentioned, we have by our set of assumed regularity conditions that $\big\|\mr{\nf}^j\big\|_2<+\infty$ uniformly in time ($\pp$-a.s.) \cite{kolodziej1980state, wang1999stability, bishop2019stability}, $\big\|\ml^{\vy,j}\big\|_2<+\infty$ ($\big\|\ml^{\vy,j}\big\|_2\leq l^{3/2}\big\|\ml^{\vy,j}\big\|_{\infty,\infty}\leq l^{3/2}L_1$ by \textbf{\magenta{(2)}}), while for $\left\|(\ms^\vy\circ\ms^\vy)^j\right\|_2$, we have by the sub-multiplicativity of the spectral norm that
\begin{equation*}
    \big\|(\ms^\vy\circ\ms^\vy)^j\big\|_2\leq \big\|\ms_1^{\vy,j}\big\|^2_2+\big\|\ms_2^{\vy,j}\big\|^2_2.
\end{equation*}
As such, by combining these observations with \eqref{eq:eigenvaluebound} into \eqref{eq:spectrumhjhj}, yields for all $i=1,\ldots,l$ $\pp$-a.s. the following:
\begin{align*}
    \sqrt{\mu_i((\mathbf{H}^j)^\dagger\mathbf{H}^j)}&\leq \rho\big(\big(\mr{\nf}^j\big)^{-1}\big) \mu_i(\ml^{\vy,j}\mr{\nf}^j+\mr{\nf}^j(\ml^{\vy,j})^{\dagger}+(\ms^\vy\circ\ms^\vy)^j)\\
    &\leq \big\|\big(\mr{\nf}^j\big)^{-1}\big\|_2\big(2\big\|\mr{\nf}^j\big\|_2\big\|\ml^{\vy,j}\big\|_2+\big\|\ms_1^{\vy,j}\big\|^2_2+\big\|\ms_2^{\vy,j}\big\|^2_2\big)\\
    &\leq M\big(1+\big\|\ms_1^{\vy,j}\big\|^2_2+\big\|\ms_2^{\vy,j}\big\|^2_2\big),
\end{align*}
where the quantity $M$ is defined as
\begin{equation*}
    M:=\underset{t\in[0,T]}{\sup}\big\{\big\|\mr{f}^{-1}(t)\big\|_2\big\}\max\Big\{2l^{3/2}L_1\underset{t\in[0,T]}{\sup}\big\{\big\|\mr{f}(t)\big\|_2\big\}, 1\Big\},
\end{equation*}
which is finite $\pp$-a.s. Putting the preceding results back into \eqref{eq:expectedtrace}, gives for $\dt>0$:
\begin{align*}
    e^{-\theta/\dt} \ee{\mathrm{tr}\big(e^{\theta\boldsymbol{\mathcal{H}}(\mathbf{H}^j)}\big)}&=e^{-\theta/\dt}\sum_{\mathclap{\mu\in\lambda((\mathbf{H}^j)^\dagger\mathbf{H}^j)}}\ee{e^{\theta\sqrt{\mu}}}\\
    &\leq l\ee{e^{\theta M\big(1+\big\|\ms_1^{\vy,j}\big\|^2_2+\big\|\ms_2^{\vy,j}\big\|^2_2\big)-\theta/\dt}}\\
    &\leq l\ee{e^{\theta M\big(1+\underset{0\leq j\leq J}{\max}\big\{\big\|\ms_1^{\vy,j}\big\|^2_2+\big\|\ms_2^{\vy,j}\big\|^2_2\big\}\big)-\theta/\dt}},
\end{align*}
and as such by \eqref{eq:boundsprobability} we have for any $j=0,1,\ldots,J$ that
\begin{equation*}
    \pp(B_j(\dt))\leq l\underset{\theta>0}{\inf}\Bigg\{\mathbb{E}\Bigg[e^{\theta M\big(1+\underset{0\leq j\leq J}{\max}\big\{\big\|\ms_1^{\vy,j}\big\|^2_2+\big\|\ms_2^{\vy,j}\big\|^2_2\big\}\big)-\theta/\dt}\Bigg]\Bigg\}.
\end{equation*}
Congregating all back into a probability result for $A(\dt)$ and using $J=T/\dt$, retrieves
\begin{equation*}
    \pp(A(\dt))\leq \sum_{j=0}^J\pp(B_j(\dt))\leq l\frac{T+\dt}{\dt}a_{\dt}(M),
\end{equation*}
where $a_{\dt}(M)$ is defined in assumption \textbf{\magenta{(13)}} found in the main text. As such, by condition \textbf{\magenta{(13)}}, we have that
\begin{equation*}
    0\leq \lim_{\substack{\dt\to0^+\\ J\dt=T}} \pp(A(\dt)) \leq \lim_{\substack{\dt\to0^+\\ J\dt=T}} l\frac{T+\dt}{\dt}o(\dt)=0,
\end{equation*}
which yields the desired result of $\lim_{\substack{\dt\to0^+\\J\dt=T}}\pp\left(\underset{0\leq j\leq J}{\max}\left\{\left\|\mathbf{H}^j\dt\right\|_2\right\}\geq 1\right)=0$.

The tail result concerning $\mathbf{L}$ in \eqref{eq:spectralnormbound} is obtained by following a similar procedure. Again, recall that in the proof of Theorem \ref{thm:filtering} it is shown that $\underset{t\in[0,T]}{\sup}\big\{\big\|\mr{f}(t)\big\|_2^2\big\}, \underset{t\in[0,T]}{\sup}\big\{\big\|\mr{f}^{-1}(t)\big\|_2^2\big\}<+\infty$ ($\pp$-a.s.) \cite{kolodziej1980state, wang1999stability, bishop2019stability}. Then, if we are to proceed in the same manner, we have an analogous inequality to \eqref{eq:spectrumhjhj} for any $i=1,\ldots,k$, by using similar algebraic manipulations and tools as the ones showcased for the tail bound concerning $\mathbf{H}$ and in the proof of Theorem \ref{thm:filtering}:
\begin{align*}
    \sqrt{\mu_i((\mathbf{L}^j)^\dagger\mathbf{L}^j)}&\leq \sqrt{\rho\big(\mathbf{G}^{\vx,j}\mr{\nf}^j(\mathbf{G}^{\vx,j})^\dagger\big((\ms^\vx\circ\ms^\vx)^j\big)^{-2}\mathbf{G}^{\vx,j}\mr{\nf}^j(\mathbf{G}^{\vx,j})^\dagger\big)}\\
    &\leq \sqrt{\mathrm{tr}\big(\mathbf{G}^{\vx,j}\mr{\nf}^j(\mathbf{G}^{\vx,j})^\dagger\big((\ms^\vx\circ\ms^\vx)^j\big)^{-2}\mathbf{G}^{\vx,j}\mr{\nf}^j(\mathbf{G}^{\vx,j})^\dagger\big)}\\
    &\leq \sqrt{\mathrm{tr}\big(\big((\ms^\vx\circ\ms^\vx)^j\big)^{-1/2}\mathbf{G}^{\vx,j}\mr{\nf}^j(\mathbf{G}^{\vx,j})^\dagger\big((\ms^\vx\circ\ms^\vx)^j\big)^{-1/2}\big)\big\|\mathbf{G}^{\vx,j}\mr{\nf}^j(\mathbf{G}^{\vx,j})^\dagger\big((\ms^\vx\circ\ms^\vx)^j\big)^{-1}\big\|_2}\\
    &\leq \mathrm{tr}\big(\big((\ms^\vx\circ\ms^\vx)^j\big)^{-1/2}\mathbf{G}^{\vx,j}\mr{\nf}^j(\mathbf{G}^{\vx,j})^\dagger\big((\ms^\vx\circ\ms^\vx)^j\big)^{-1/2}\big)\\
    &\lesssim \big\|(\ms^\vx\circ\ms^\vx)^j\big)^{-1/2}\mathbf{G}^{\vx,j}\big\|^2_2\big\|\mr{\nf}^j(\mathbf{G}^{\vx,j})^\dagger\big((\ms^\vx\circ\ms^\vx)^j\big)^{-1/2}\big\|^2_2,
\end{align*}
where we used the inequality $|\mathrm{tr}(\mathbf{V}^*\mathbf{U})| \leq  \big\|\mathbf{V}\big\|_2 \mathrm{tr}\big((\mathbf{U}^\dagger\mathbf{U})^{1/2}\big)$ for $\mathbf{V},\mathbf{U}\in\mathbb{C}^{p\times p}$ \cite{bhatia2013matrix} and
\begin{equation*}
    (\mathbf{L}^j)^\dagger\mathbf{L}^j=\mathbf{G}^{\vx,j}\mr{\nf}^j(\mathbf{G}^{\vx,j})^\dagger\big((\ms^\vx\circ\ms^\vx)^j\big)^{-2}\mathbf{G}^{\vx,j}\mr{\nf}^j(\mathbf{G}^{\vx,j})^\dagger.
\end{equation*}
However, algebraic manipulations reveal that
\begin{gather*}
    \big\|(\ms^\vx\circ\ms^\vx)^j\big)^{-1/2}\mathbf{G}^{\vx,j}\big\|^2_2\lesssim\big\|\big(\ms^\vx\circ\ms^\vx)^j\big)^{-1/2}\big\|^2_2\big\|\ml^{\vx,j}\big\|^2_2+\big\|\big((\ms^\vx\circ\ms^\vx)^j\big)^{-1/2}(\ms^\vx\circ\ms^\vy)^j\big\|^2_2\big\|\big(\mr{\nf}^j\big)^{-1}\big\|^2_2,\\
    \big\|\mr{\nf}^j(\mathbf{G}^{\vx,j})^\dagger\big((\ms^\vx\circ\ms^\vx)^j\big)^{-1/2}\big\|^2_2\lesssim\big\|\big(\ms^\vx\circ\ms^\vx)^j\big)^{-1/2}\big\|^2_2\big\|\ml^{\vx,j}\big\|^2_2\big\|\mr{\nf}^j\big\|^2_2+\big\|(\ms^\vy\circ\ms^\vx)^j\big((\ms^\vx\circ\ms^\vx)^j\big)^{-1/2}\big\|^2_2,
\end{gather*}
where by \textbf{\magenta{(5)}}, $\big\|\big((\ms^\vx\circ\ms^\vx)^j\big)^{-1}\big\|_2^2$ is uniformly bounded in time ($\pp$-a.s.), and so is $\big\|\ml^{\vx,j}\big\|_2^2$ by \textbf{\magenta{(2)}}, as well as $\big\|\mr{\nf}^j\big\|_2^2$ and $\big\|\big(\mr{\nf}^j\big)^{-1}\big\|_2^2$ as aforementioned, while \cite{kolodziej1980state}
\begin{equation*}   
\big\|\big((\ms^\vx\circ\ms^\vx)^j\big)^{-1/2}(\ms^\vx\circ\ms^\vy)^j\big\|^2_2=\big\|(\ms^\vy\circ\ms^\vx)^j\big((\ms^\vx\circ\ms^\vx)^j\big)^{-1/2}\big\|^2_2\lesssim \big\|\ms_1^{\vy,j}\big\|_2^2+\big\|\ms_2^{\vy,j}\big\|_2^2.
\end{equation*}
We then proceed in the same manner as in the preceding analysis as to deal with the term $\big\|\ms_1^{\vy,j}\big\|^2_2+\big\|\ms_2^{\vy,j}\big\|^2_2$ under assumption \textbf{\magenta{(13)}} for an appropriate random scalar $\Tilde{M}>0$, specifically to obtain a bound of the form
\begin{equation*}
    \sqrt{\mu_i((\mathbf{L}^j)^\dagger\mathbf{L}^j)}\leq \Tilde{M}\big(1+\big\|\ms_1^{\vy,j}\big\|_2^2+\big\|\ms_2^{\vy,j}\big\|_2^2\big),
\end{equation*}
for each $i=1,\ldots,k$ and $j=0,1,\ldots,J$, where now $\Tilde{M}$ depends on the uniform upper bounds, in time, of $\big\|\ml^\vx\big\|_2$, $\big\|\mr{\nf}\big\|_2$, $\big\|\mr{\nf}^{-1}\big\|_2$, but also of $\big\|(\ms^\vx\circ\ms^\vx)^{-1}\big\|_2$.
\end{proof}

The second lemma concerns the distributions of the backward conditionals of the unobserved process, as well as the Markov property that the partially observed system enjoys.
\begin{lem}[\textbf{Distribution of Backwards Conditionals}] \label{lem:backwardscond}
Let the assumptions in Theorem \ref{thm:smoothing} hold. Then the conditional distributions $\pp\big(\vy^j\big|\vy^{j+1},\vx^s,s\leq j\big)$ and $\pp\big(\vy^j\big|\vy^{j+1},\vx^s,s\leq j+1\big)$ are Gaussian, $\mathcal{N}_l(\mathbf{m}_j^j,\mathbf{P}_j^j)$ and $\mathcal{N}_l(\mathbf{m}_{j+1}^j,\mathbf{P}_{j+1}^j)$, respectively, where the subscripts denote up to which observation we are conditioning on, while the superscripts denote the time instant we are currently on in terms of the unobservable. Their conditional means $\mathbf{m}_{r}^j$ and conditional covariance matrices $\mathbf{P}_r^j$, for $r=j,j+1$, satisfy the following respective discrete-time equations:
\begin{align}
    \mathbf{m}_j^j&=\vm{\nf}^j+\mathbf{C}_j^j(\vy^{j+1}-(\mathbf{I}_{l\times l}+\ml^{\vy,j}\dt)\vm{\nf}^j-\vf^{\vy,j}\dt),\label{eq:backcond1} \\
    \mathbf{P}_j^j&=\mr{\nf}^j-\mathbf{C}_j^j(\mr{\nf}^j+(\ml^{\vy,j}\mr{\nf}^j+\mr{\nf}^j(\ml^{\vy,j})^{\dagger}+(\ms^\vy\circ\ms^\vy)^j)\dt)(\mathbf{C}_j^j)^\dagger,\label{eq:backcond2}
\end{align}
and
\begin{align}
    \mathbf{m}_{j+1}^j&=\vm{\nf}^j+\mathbf{C}_{j+1}^j\begin{pmatrix}
        \vy^{j+1}-(\mathbf{I}_{l\times l}+\ml^{\vy,j}\dt)\vm{\nf}^j-\vf^{\vy,j}\dt\\
        \vx^{j+1}-\vx^{j}-(\ml^{\vx,j}\vm{\nf}^j+\vf^{\vx,j})\dt
    \end{pmatrix},\label{eq:backcond3} \\
    \mathbf{P}_{j+1}^j&=\mr{\nf}^j-\mathbf{C}_{j+1}^j\mathbf{C}^j_{22}(\mathbf{C}_{j+1}^j)^\dagger,\label{eq:backcond4}
\end{align}
where the auxiliary matrices $\mathbf{C}_j^j\in\mathbb{C}^{l\times l}$, $\mathbf{C}_{j+1}^j\in\mathbb{C}^{l\times(k+l)}$, and $\mathbf{C}^j_{22}\in\mathbb{C}^{(k+l)\times(k+l)}$, are defined by
\begin{gather}
   \mathbf{C}_j^j:=\mr{\nf}^j(\mathbf{I}_{l\times l}+\ml^{\vy,j}\dt)^\dagger(\mr{\nf}^j+(\ml^{\vy,j}\mr{\nf}^j+\mr{\nf}^j(\ml^{\vy,j})^{\dagger}+(\ms^\vy\circ\ms^\vy)^j)\dt)^{-1}, \label{eq:auxiliarymat1} \\
   \mathbf{C}_{j+1}^j:=\mathbf{C}^j_{12}(\mathbf{C}^j_{22})^{-1}, \text{ where }\mathbf{C}^j_{12}:=\begin{pmatrix}
       \mr{\nf}^j(\mathbf{I}_{l\times l}+(\ml^{\vy,j})^\dagger\dt) &\hspace{0.4em} \mr{\nf}^j(\ml^{\vx,j})^\dagger\dt
   \end{pmatrix} \label{eq:auxiliarymat2}\\
   \text{and }\mathbf{C}^j_{22}:=\begin{pmatrix}
            \mr{\nf}^j+(\ml^{\vy,j}\mr{\nf}^j+\mr{\nf}^j(\ml^{\vy,j})^{\dagger}+(\ms^\vy\circ\ms^\vy)^j)\dt & (\mr{\nf}^j(\ml^{\vx,j})^{\dagger}+(\ms^\vy\circ\ms^\vx)^j)\dt\\
            (\ml^{\vx,j}\mr{\nf}^j+(\ms^\vx\circ\ms^\vy)^j)\dt & (\ms^\vx\circ\ms^\vx)^j\dt
    \end{pmatrix}. \label{eq:auxiliarymat4}
\end{gather}
Furthermore, if there is no noise cross-interaction term, i.e., $\ms^\vy\circ\ms^\vx\equiv \zeros_{l\times k}$, then the following distributions are asymptotically equal as $\dt\to0+$ (or, equivalently, as $J\to\infty$):
\begin{equation} \label{eq:firstordermarkov}
    \lim_{\substack{\dt\to0^+\\ J\dt=T}}\pp\big(\vy^j\big|\vy^{j+1},\vx^s, s\leq J\big)=\lim_{\substack{\dt\to0^+\\ J\dt=T}}\pp\big(\vy^j\big|\vy^{j+1},\vx^s, s\leq j\big),
\end{equation}
which means $\vy^j\big|\vy^{j+1}$ possesses the backwards in time Markov property of first-order w.r.t.\ natural filtration of $\vx$, in continuous time, while it does so up to $O(\dt)$, or to leading-order, in discrete time. In general, when we allow for noise cross-interaction, we instead have the weaker Markov property of
\begin{equation} \label{eq:secondordermarkov}
    \Big(\pp\big(\vy^j\big|\vy^{j+1},\vx^s, s\leq j\big)\neq\Big)\  \pp\big(\vy^j\big|\vy^{j+1},\vx^s, s\leq j+1\big)=\pp\big(\vy^j\big|\vy^{j+1},\vx^s, s\leq J\big),
\end{equation}
which means the smoother update depends on the present as well as immediate ``future" filter states, or that $\vy^j\big|\vy^{j+1}$ is a backwards in time second-order Markov chain w.r.t.\ natural filtration of $\vx$, in continuous time (where in discrete time this equality is actually exact for all $j$ and $\dt$).
\end{lem}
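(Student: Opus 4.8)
The plan is to work entirely inside the Euler--Maruyama discretization \eqref{eq:discretecondgauss1}--\eqref{eq:discretecondgauss2}, conditioning on $\cF_j^\vx=\sigma(\vx^s,s\leq j)$ and reducing every claim to the three Gaussian lemmas (Lemmas \ref{lem:affinity}--\ref{lem:conditional}) and the spectral tail bounds of Lemma \ref{lem:tailboundsspectral}. First I would note that, conditionally on $\cF_j^\vx$, the discrete-time conditional Gaussianity (the discrete analog of Theorem \ref{thm:condgaussianity}, consistent with Theorem \ref{thm:filtering}) gives $\vy^j\mid\cF_j^\vx\sim\mathcal{N}_l(\vm{\nf}^j,\mr{\nf}^j)$, while $\ve_1^j,\ve_2^j$ are standard complex Gaussians independent of everything prior to time $j$, hence of $\vy^j$ and of $\cF_j^\vx$; so $(\vy^j,\ve_1^j,\ve_2^j)\mid\cF_j^\vx$ is jointly Gaussian. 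Since \eqref{eq:discretecondgauss1}--\eqref{eq:discretecondgauss2} write $\vy^{j+1}$ and $\vx^{j+1}$ as affine images of $(\vy^j,\ve_1^j,\ve_2^j)$ with $\cF_j^\vx$-measurable coefficients, Lemma \ref{lem:affinity} then yields joint Gaussianity of $(\vy^j,\vy^{j+1})\mid\cF_j^\vx$ and of $(\vy^j,\vy^{j+1},\vx^{j+1})\mid\cF_j^\vx$. Reading the conditional means and (cross-)covariances straight off \eqref{eq:discretecondgauss1}--\eqref{eq:discretecondgauss2} --- using $\overline{\ve^j(\ve^j)^\dagger}=\mathbf{I}$ and dropping $O(\dt^2)$ terms --- reproduces the prediction means $(\mathbf{I}_{l\times l}+\ml^{\vy,j}\dt)\vm{\nf}^j+\vf^{\vy,j}\dt$, $\vx^j+(\ml^{\vx,j}\vm{\nf}^j+\vf^{\vx,j})\dt$ and the blocks $\mathbf{C}^j_{12},\mathbf{C}^j_{22}$ of \eqref{eq:auxiliarymat2}--\eqref{eq:auxiliarymat4}. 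Applying Lemma \ref{lem:conditional} with block~$1$ taken to be $\vy^j$ and block~$2$ to be $\vy^{j+1}$ (resp.\ $(\vy^{j+1},\vx^{j+1})$) delivers the Gaussianity of the two backward conditionals and formulas \eqref{eq:backcond1}--\eqref{eq:backcond4}; the inverses in \eqref{eq:auxiliarymat1}--\eqref{eq:auxiliarymat4} are well defined for small $\dt$ by positive-definiteness of $\mr{\nf}^j$ (Theorem \ref{thm:filtering}) and assumption \textbf{\magenta{(5)}}.

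\textbf{The exact Markov reduction.} Next I would prove $\pp(\vy^j\mid\vy^{j+1},\vx^s,s\leq j+1)=\pp(\vy^j\mid\vy^{j+1},\vx^s,s\leq J)$, i.e.\ the conditional independence $\vy^j\perp(\vx^{j+2},\dots,\vx^J)\mid(\vy^{j+1},\vx^s,s\leq j+1)$. This holds \emph{exactly} for every $\dt$: each step of \eqref{eq:discretecondgauss1}--\eqref{eq:discretecondgauss2} uses only the current state and a fresh independent noise, so $\{(\vx^m,\vy^m)\}_m$ is a Markov chain and $(\vx^m,\vy^m)_{m\geq j+2}$ depends on the past solely through $(\vx^{j+1},\vy^{j+1})$, both of which already lie in the conditioning $\sigma$-algebra; hence appending $\vx^{j+2},\dots,\vx^J$ changes neither the law nor the Gaussian parameters of $\vy^j$. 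This is the last displayed equality in \eqref{eq:secondordermarkov}.

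\textbf{First- versus second-order backward Markov property.} It then remains to compare $(\mathbf{m}_j^j,\mathbf{P}_j^j)$ with $(\mathbf{m}_{j+1}^j,\mathbf{P}_{j+1}^j)$ as $\dt\to0^+$. Writing $\mathbf{C}_{j+1}^j=(\mathbf{C}_{j+1,1}^j\mid\mathbf{C}_{j+1,2}^j)$ and letting $\delta_y^j,\delta_x^j$ denote the two innovation blocks in \eqref{eq:backcond3}, one has $\mathbf{m}_{j+1}^j-\mathbf{m}_j^j=(\mathbf{C}_{j+1,1}^j-\mathbf{C}_j^j)\delta_y^j+\mathbf{C}_{j+1,2}^j\delta_x^j$, with the parallel identity for $\mathbf{P}_{j+1}^j-\mathbf{P}_j^j$ from \eqref{eq:backcond2}, \eqref{eq:backcond4}. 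I would expand $(\mathbf{C}^j_{22})^{-1}$ by the Schur/block-inverse formula and $(\mr{\nf}^j+\mathbf{E}^j\dt)^{-1}$, with $\mathbf{E}^j:=\ml^{\vy,j}\mr{\nf}^j+\mr{\nf}^j(\ml^{\vy,j})^\dagger+(\ms^\vy\circ\ms^\vy)^j$, by its Neumann series --- which is legitimate because Lemma \ref{lem:tailboundsspectral} forces the governing spectral norms $\|\mathbf{H}^j\dt\|_2,\|\mathbf{L}^j\dt\|_2<1$ with probability tending to one --- to obtain $\mathbf{C}_{j+1,2}^j=-(\ms^\vy\circ\ms^\vx)^j\big((\ms^\vx\circ\ms^\vx)^j\big)^{-1}+O(\dt)$, and, after the $O(1)$ part of the $(1,1)$-block of $\mathbf{C}_{j+1}^j$ cancels against $\mathbf{C}_j^j=\mathbf{I}_{l\times l}-[\ml^{\vy,j}\mr{\nf}^j+(\ms^\vy\circ\ms^\vy)^j](\mr{\nf}^j)^{-1}\dt+O(\dt^2)$, the estimate $\mathbf{C}_{j+1,1}^j-\mathbf{C}_j^j=(\ms^\vy\circ\ms^\vx)^j\big((\ms^\vx\circ\ms^\vx)^j\big)^{-1}\mathbf{G}^{\vx,j}\dt+O(\dt^2)$ with $\mathbf{G}^\vx$ as in Lemma \ref{lem:tailboundsspectral}. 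When $\ms^\vy\circ\ms^\vx\equiv\zeros_{l\times k}$ both quantities are $O(\dt)$ (in fact $\mathbf{C}_{j+1,1}^j-\mathbf{C}_j^j=O(\dt^2)$), while $\delta_y^j=O(1)$ and $\delta_x^j=\vx^{j+1}-\vx^j-O(\dt)\to0$ by path-continuity of $\vx$; hence $\mathbf{m}_{j+1}^j-\mathbf{m}_j^j$ and $\mathbf{P}_{j+1}^j-\mathbf{P}_j^j$ vanish as $\dt\to0^+$ (uniformly in $j$ in the accumulated sense relevant to the backward recursion), so all three kernels in \eqref{eq:firstordermarkov} agree and the first-order backward Markov property holds. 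When $\ms^\vy\circ\ms^\vx\not\equiv\zeros_{l\times k}$, the coefficient $\mathbf{C}_{j+1,2}^j$ of the extra innovation $\delta_x^j$ stays $O(1)$, so the backward kernel genuinely retains a dependence on the immediate observation $\vx^{j+1}$, giving the strictly weaker \eqref{eq:secondordermarkov} (with the equality to the full-data kernel still exact by the previous step).

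\textbf{Main obstacle.} The technical heart is the last step: controlling the Neumann/Schur expansions through Lemma \ref{lem:tailboundsspectral}, bookkeeping which terms are $O(1)$, $O(\dt)$ or $O(\dt^2)$, and verifying the exact cancellation in the $(1,1)$-block that drops $\mathbf{C}_{j+1,1}^j-\mathbf{C}_j^j$ to $O(\dt^2)$ (rather than $O(\dt)$) precisely when the cross-Gramian vanishes. A secondary subtlety is pinning down the meaning of ``asymptotically equal'' in \eqref{eq:firstordermarkov}--\eqref{eq:secondordermarkov}: since both backward kernels degenerate to the point mass at $\vy(t)$, the real content is that the $O(\dt)$ drift/diffusion data driving the backward smoother recursion --- and thus its continuous-time limit --- depend only on $\cF_j^\vx$ in the no-cross-interaction case but additionally on $\vx^{j+1}-\vx^j$ (i.e.\ on $\d\vxt$ at time $t$) otherwise, which must be expressed through the cumulative effect over the $O(1/\dt)$ recursion steps rather than a single transition kernel.
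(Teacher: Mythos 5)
Your proposal is correct and follows essentially the same route as the paper's proof: build the conditional joint Gaussian laws of $(\vy^j,\vy^{j+1})$ and $(\vy^j,\vy^{j+1},\vx^{j+1})$ given $\cF_{t_j}^{\vx}$ directly from the EM step, apply the theorem on normal correlation to obtain \eqref{eq:backcond1}--\eqref{eq:auxiliarymat4}, get the exact identity with the full-data kernel from conditional independence of the future observations, and compare the two backward kernels through the leading-order Neumann/Schur expansions of the blocks of $\mathbf{C}^j_{j+1}$ licensed by Lemma \ref{lem:tailboundsspectral}, with the cross-Gramian $\ms^\vy\circ\ms^\vx$ controlling whether the $\vx^{j+1}$-innovation survives. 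One small wording caveat: since the CGNS coefficients may be path functionals of $\vx$, the pair $(\vx^m,\vy^m)$ need not literally be a Markov chain, but your conditional-independence step still holds verbatim because the coefficients depend only on the $\vx$-history, which lies entirely in the conditioning $\sigma$-algebra, so $\vx^s$, $s\geq j+2$, is conditionally independent of $\vy^j$ given $\vy^{j+1}$ and $\vx^s$, $s\leq j+1$, exactly as you claim.
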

\begin{proof}[\textbf{Proof of Lemma \ref{lem:backwardscond}}]
We start with  the conditional distribution $\pp\big(\vy^j\big|\vy^{j+1},\vx^s,s\leq j\big)$. To do that, we shall first analyze the distribution $\pp\big(\vy^j,\vy^{j+1}\big|\vx^s,s\leq j\big)$. This distribution's Gaussianity can be established by the decomposition
\begin{equation*}
    \pp\big(\vy^j,\vy^{j+1}\big|\vx^s,s\leq j\big)=\pp\big(\vy^{j+1}\big|\vy^j,\vx^s,s\leq j\big)\pp\big(\vy^j\big|\vx^s,s\leq j\big),
\end{equation*}
and the enforcement of Lemma \ref{lem:bayesian}, since each conditional on the right-hand side is Gaussian. Indeed, the first one is Gaussian as an immediate consequence of Lemma \ref{lem:affinity} and \eqref{eq:discretecondgauss2}, while the second one is simply the Gaussian filter distribution. Specifically, in this case we let $\mathbf{u}_2\overset{\rmd}{=}\vy^{j+1}\big|\vy^{j},\vx^s,s\leq j$ and $\mathbf{u}_1\overset{\rmd}{=}\vy^j$ in Lemma \ref{lem:bayesian}, and observe how the mean of $\vy^{j+1}$ is indeed an affine transformation of $\vy^j$, when conditioned on $\vy^j$ and $\vx^s$ for $s\leq j$, by virtue of \eqref{eq:discretecondgauss2}. Otherwise, we can explicitly prove the conditional Gaussianity using the conditional characteristic function method \cite{yuan2016some}. As such we just need to find its Gaussian statistics to fully describe this distribution. We already know from \eqref{eq:cg1} that
\begin{equation*}
    \pp\big(\vy^{j+1}\big|\vx^{s},s\leq j\big)\overset{\rmd}{\sim}\mathcal{N}_l\big(\vm{\nf}^j+(\ml^{\vy,j}\vm{\nf}^j+\vf^{\vy,j})\dt,\mr{\nf}^j+(\ml^{\vy,j}\mr{\nf}^j+\mr{\nf}^j(\ml^{\vy,j})^{\dagger}+(\ms^\vy\circ\ms^\vy)^j)\dt\big),
\end{equation*}
while $\pp\big(\vy^{j}\big|\vx^{s},s\leq j\big)$ is simply given by the filtering formula,
\begin{equation*}
    \pp\big(\vy^{j}\big|\vx^{s},s\leq j\big)\overset{\rmd}{\sim}\mathcal{N}_l(\vm{\nf}^j,\mr{\nf}^j).
\end{equation*}
As for the cross-covariance term, similarly to how we derived \eqref{eq:cg3}, by multiplying \eqref{eq:discretefluc2} by $(\vy^{\prime, j})^\dagger$ from the right-hand side and taking the average when conditioning on $\vx^s,s\leq j$, gives
\begin{equation*}
    \overline{\vy^{\prime, j+1}(\vy^{\prime, j})^\dagger}=(\mathbf{I}_{l\times l}+\ml^{\vy,j}\dt)\mr{\nf}^j.
\end{equation*}
Therefore, by collecting all results thus far leads to
\begin{equation}
    \pp\big(\vy^{j},\vy^{j+1}\big|\vx^s,s\leq j\big)\overset{\rmd}{\sim}\mathcal{N}_{2l}\left( \begin{pmatrix}
        \vm{\nf}^j\\
        (\mathbf{I}_{l\times l}+\ml^{\vy,j}\dt)\vm{\nf}^j+\vf^{\vy,j}\dt
    \end{pmatrix},\begin{pmatrix}
        \mr{\nf}^j & \mr{\nf}^j(\mathbf{I}_{l\times l}+\ml^{\vy,j}\dt)^\dagger \\
        (\mathbf{I}_{l\times l}+\ml^{\vy,j}\dt)\mr{\nf}^j & \mr{\nf}^j(\mathbf{I}_{l\times l}+\mathbf{H}^j\dt)
    \end{pmatrix}\right), \label{eq:jointgaussianitydetails1}
\end{equation}
for $\mathbf{H}^j$ as in the proof of Theorem \ref{thm:filtering}. Applying then Lemma \ref{lem:conditional} to the conclusion \eqref{eq:jointgaussianitydetails1}, yields the desired conditional distribution
\begin{equation*}
   \pp\big(\vy^j\big|\vy^{j+1},\vx^s, s\leq j\big)\overset{\rmd}{\sim} \mathcal{N}_l(\mathbf{m}_j^j,\mathbf{P}_j^j),
\end{equation*}
where
\begin{align*}
    \mathbf{m}_j^j&=\vm{\nf}^j+\mathbf{C}_j^j(\vy^{j+1}-(\mathbf{I}_{l\times l}+\ml^{\vy,j}\dt)\vm{\nf}^j-\vf^{\vy,j}\dt),\\
    \mathbf{P}_j^j&=\mr{\nf}^j-\mathbf{C}_j^j(\mr{\nf}^j+(\ml^{\vy,j}\mr{\nf}^j+\mr{\nf}^j(\ml^{\vy,j})^{\dagger}+(\ms^\vy\circ\ms^\vy)^j)\dt)(\mathbf{C}_j^j)^\dagger,
\end{align*}
with the auxiliary matrix $\mathbf{C}_j^j$ given by
\begin{equation*}
   \mathbf{C}_j^j:=\mr{\nf}^j(\mathbf{I}_{l\times l}+\ml^{\vy,j}\dt)^\dagger(\mr{\nf}^j+(\ml^{\vy,j}\mr{\nf}^j+\mr{\nf}^j(\ml^{\vy,j})^{\dagger}+(\ms^\vy\circ\ms^\vy)^j)\dt)^{-1}=\mr{\nf}^j(\mathbf{I}_{l\times l}+\ml^{\vy,j}\dt)^\dagger(\mr{\nf}^j(\mathbf{I}_{l\times l}+\mathbf{H}^{j}\dt))^{-1}.
\end{equation*}
These establish \eqref{eq:backcond1}--\eqref{eq:backcond2} and \eqref{eq:auxiliarymat1}.

We proceed with $\pp\big(\vy^j\big|\vy^{j+1},\vx^s,s\leq j+1\big)$. For that, we first analyze ${\pp\big(\vy^j,\vy^{j+1},\vx^{j+1}\big|\vx^s,s\leq j\big)}$. The Gaussianity of this distribution, in a parallel manner as before, can be established by its definition,
\begin{equation*}
    \pp\big(\vy^j,\vy^{j+1},\vx^{j+1}\big|\vx^s,s\leq j\big)=\pp\big(\vy^{j+1},\vx^{j+1}\big|\vy^j,\vx^s,s\leq j\big)\pp\big(\vy^j\big|\vx^s,s\leq j\big),
\end{equation*}
and enforcing Lemma \ref{lem:bayesian}, since each conditional on the right-hand side is Gaussian, with the first being Gaussian as shown in the proof of Theorem 13.3 in Liptser \& Shiryaev \cite{liptser2001statisticsII} while the second one is simply the filter posterior distribution which is Gaussian by Theorem \ref{thm:filtering}. To do so, in this case we let $\mathbf{u}_2=((\vy^{j+1})^\tran,(\vx^{j+1})^\tran)^\tran$ and $\mathbf{u}_1=\vy^j$ in Lemma \ref{lem:bayesian}, and observe how the mean of $\mathbf{u}_2$ is indeed an affine transformation of $\mathbf{u}_1=\vy^j$, when conditioned on $\vy^j$ and $\vx^s$ for $s\leq j$, by virtue of \eqref{eq:discretecondgauss1}--\eqref{eq:discretecondgauss2}. As such, having established the normality of this joint distribution, we just need to calculate its Gaussian moments. Marginally, we already know from the proof of Theorem \ref{thm:filtering} that
\begin{align*}
    \pp\big(\vy^{j}\big|\vx^{s},s\leq j\big)&\overset{\rmd}{\sim}\mathcal{N}_l(\vm{\nf}^j,\mr{\nf}^j),\\
    \pp\big(\vy^{j+1}\big|\vx^{s},s\leq j\big)&\overset{\rmd}{\sim}\mathcal{N}_l\big(\vm{\nf}^j+(\ml^{\vy,j}\vm{\nf}^j+\vf^{\vy,j})\dt,\mr{\nf}^j+(\ml^{\vy,j}\mr{\nf}^j+\mr{\nf}^j(\ml^{\vy,j})^{\dagger}+(\ms^\vy\circ\ms^\vy)^j)\dt\big),\\
    \pp\big(\vx^{j+1}\big|\vx^{s},s\leq j\big)&\overset{\rmd}{\sim}\mathcal{N}_k\big(\vx^j+(\ml^{\vx,j}\vm{\nf}^j+\vf^{\vx,j})\dt,(\ms^\vx\circ\ms^\vx)^j\dt\big).
\end{align*}
As for the cross-covariance terms, those are given by a similar procedure as before as
\begin{equation*}
    \overline{\vy^{\prime, j+1}(\vy^{\prime, j})^\dagger}=(\mathbf{I}_{l\times l}+\ml^{\vy,j}\dt)\mr{\nf}^j \quad\text{and} \quad\overline{\vx^{\prime, j+1}(\vy^{\prime, j})^\dagger}=\ml^{\vx,j}\mr{\nf}^j\dt.
\end{equation*}
We also have the conditionally Gaussian joint distribution $p(\vy^{j+1},\vx^{j+1}|\vx^s,s\leq j)$ given in \eqref{eq:jointcondgaussianity}--\eqref{eq:jointcondgaussianitydetails}. As such, by gathering all of the established results, we end up with
\begin{equation} \label{eq:jointgaussianitydetails2}
        \pp\big(\vy^{j},\vy^{j+1},\vx^{j+1}\big|\vx^s,s\leq j\big)\overset{\rmd}{\sim}\mathcal{N}_{2l+k}\left( \begin{pmatrix}
            \vm{\nf}^j\\
            \vm{\nf}^j+\left(\ml^{\vy,j}\vm{\nf}^j+\vf^{\vy,j}\right)\dt\\
            \vx^j+\left(\ml^{\vx,j}\vm{\nf}^j+\vf^{\vx,j}\right)\dt
        \end{pmatrix},\begin{pmatrix}
            \mr{\nf}^j & \mathbf{C}^j_{12} \\
            \mathbf{C}^j_{21} & \mathbf{C}^j_{22}
        \end{pmatrix}\right),
\end{equation}
where
\begin{gather*}
    \mathbf{C}^j_{12}:=\mathrm{Cov}\big(\vy^j,(\vy^{j+1},\vx^{j+1})\big|\vx^s,s\leq j\big)=\begin{pmatrix}
    \mr{\nf}^j(\mathbf{I}_{l\times l}+(\ml^{\vy,j})^\dagger\dt) & \hspace{0.4em} \mr{\nf}^j(\ml^{\vx,j})^\dagger\dt
    \end{pmatrix},\quad \mathbf{C}^j_{21}:=(\mathbf{C}^j_{12})^\dagger,\\
    \hspace*{-1cm}
    \mathbf{C}^j_{22}:=\mathrm{Var}\big(\vy^{j+1},\vx^{j+1}\big|\vx^s,s\leq j\big)=\begin{pmatrix}
            \mr{\nf}^j\Big(\mathbf{I}_{l\times l}+\mathbf{H}^j\dt\Big) & \mr{\nf}^j(\mathbf{G}^{\vx,j})^\dagger\dt\\
            \mathbf{G}^{\vx,j}\mr{\nf}^j\dt & (\ms^\vx\circ\ms^\vx)^j\dt
        \end{pmatrix},
\end{gather*}
for the matrices $\mathbf{G}^{\vx,j}$ and $\mathbf{H}^j$ as defined in the proof of Theorem \ref{thm:filtering}. Once again, we then utilize Lemma \ref{lem:conditional} to the conclusion in \eqref{eq:jointgaussianitydetails2} as to retrieve the desired conditional distribution:
\begin{equation*}
    \pp\big(\vy^j\big|\vy^{j+1},\vx^s, s\leq j+1\big)\overset{\rmd}{\sim} \mathcal{N}_l(\mathbf{m}_{j+1}^j,\mathbf{P}_{j+1}^j),
\end{equation*}
where
\begin{equation*}
    \mathbf{m}_{j+1}^j=\vm{\nf}^j+\mathbf{C}_{j+1}^j\begin{pmatrix}
        \vy^{j+1}-(\mathbf{I}_{l\times l}+\ml^{\vy,j}\dt)\vm{\nf}^j-\vf^{\vy,j}\dt\\
        \vx^{j+1}-\vx^{j}-(\ml^{\vx,j}\vm{\nf}^j+\vf^{\vx,j})\dt
    \end{pmatrix}, \quad \mathbf{P}_{j+1}^j=\mr{\nf}^j-\mathbf{C}_{j+1}^j(\mathbf{C}^j_{12})^\dagger=\mr{\nf}^j-\mathbf{C}_{j+1}^j\mathbf{C}^j_{22}(\mathbf{C}_{j+1}^j)^\dagger,
\end{equation*}
with the auxiliary matrix $\mathbf{C}_{j+1}^j$ defined as $ \mathbf{C}_{j+1}^j:=\mathbf{C}^j_{12}(\mathbf{C}^j_{22})^{-1}$. These establish the rest of the relations, \eqref{eq:backcond3}--\eqref{eq:backcond4} and \eqref{eq:auxiliarymat2}--\eqref{eq:auxiliarymat4}.

Having established \eqref{eq:backcond1}--\eqref{eq:auxiliarymat4}, we now prove the statements concerning the Markov property of the backward conditionally Gaussian distributions of interest, \eqref{eq:firstordermarkov} and \eqref{eq:secondordermarkov}.  First, observe that for any $j=0,1,\ldots,J-1$, we have
\begin{equation} \label{eq:markovproperty}
     \pp\big(\vy^j\big|\vy^{j+1},\vx^s, s\leq J\big)\equiv \pp\big(\vy^j\big|\vy^{j+1},\vx^s, s\leq j+1\big).
\end{equation}
This equality in distribution in \eqref{eq:markovproperty} has the immediate consequence that $\vy^j\big|\vy^{j+1}$ is a backwards in time (the Markov property is intrinsically time-symmetric) second-order Markov chain w.r.t.\ natural filtration of $\vx$, in discrete time for all $J\in\mathbb{N}$. This is in general true, due to the fact that the information of $\vx^s, s\geq j+1$ is already included in the optimal filter estimated state of $\vy^{j+1}$ under our regime of working with the observational $\sigma$-algebra induced by $\{\vx^s\}_{s=0,1,\ldots,j}$, which is the natural filtration of choice in partially observed stochastic dynamical systems, but also includes the uncertainty in $\vx^{j+1}$. Essentially, when at the time step $t_j$, the conditional terms $\vy^{j+1}$ and $\vx^s, s\leq j+1$, resolve the information in the future and past, respectively, fully. To rigorously see why \eqref{eq:markovproperty} is true, by \eqref{eq:discretecondgauss1}--\eqref{eq:discretecondgauss2}, when conditioning on $\vy^{j+1}$ and $\vx^s$ for $s=0,1,\ldots,J$, by property of the filtration induced by the observational process, it is immediate that $\vy^j$ is fully determined by the information conditioned up to $t_{j+1}$, specifically by $\vy^{j+1}$ and $\vx^s$ for $s=0,1,\ldots,j+1$. Beyond this result though, it is possible to show that in the absence of noise cross-interaction, i.e., $\ms^\vy\circ\ms^\vx\equiv \zeros_{l\times k}$, we actually have that
\begin{equation*}
    \lim_{\substack{\dt\to0^+\\ J\dt=T}}\pp\big(\vy^j\big|\vy^{j+1},\vx^s, s\leq J\big)=\lim_{\substack{\dt\to0^+\\ J\dt=T}}\pp\big(\vy^j\big|\vy^{j+1},\vx^s, s\leq j\big),
\end{equation*}
or equivalently when $J\to\infty$, which is analogous to saying that  $\vy^j\big|\vy^{j+1}$ possesses the (backwards in time) Markov property of first-order w.r.t.\ natural filtration of $\vx$, in continuous-time (and up to $O(\dt)$, or to leading-order, in discrete-time). To contrast this with \eqref{eq:markovproperty}, when $\ms^\vy\circ\ms^\vx\not\equiv \zeros_{l\times k}$, then for the smoother update it is necessary to augment the information by $\vx^{j+1}$ (and only), since $\vy^{j+1}$ no longer contains all the information about the future states on its own. We mention that a similar result is shown in the proof of Theorem 13.11 (specifically (13.126)) of Liptser \& Shiryaev \cite{liptser2001statisticsII} for the discrete-time case; specifically, under our notation, we have that $\pp\big(\vy^j\big|\vx^s,s\leq J, \vy^N,\ldots,\vy^J\big)=\pp\big(\vy^j\big|\vx^s,s\leq N, \vy^N\big)$, for $j<N\leq J$, which recovers \eqref{eq:markovproperty} for $N=j+1$ and $j+1=1,\ldots,J$. Here we instead prove the result in a martingale-free approach, and also obtain a pertinent condition for the continuous-time case in the process.

By virtue of block-matrix inversion through Schur complements, we see that if we define
\begin{align}
    \begin{split}
    \mathbf{C}^j_{22}&=\begin{pmatrix}
        \mr{\nf}^j+(\ml^{\vy,j}\mr{\nf}^j+\mr{\nf}^j(\ml^{\vy,j})^{\dagger}+(\ms^\vy\circ\ms^\vy)^j)\dt & (\mr{\nf}^j(\ml^{\vx,j})^{\dagger}+(\ms^\vy\circ\ms^\vx)^j)\dt\\
        (\ml^{\vx,j}\mr{\nf}^j+(\ms^\vx\circ\ms^\vy)^j)\dt & (\ms^\vx\circ\ms^\vx)^j\dt
    \end{pmatrix}\\
    &=:\begin{pmatrix}
        \left(\mathbf{C}^j_{22}\right)_{11} & \left(\mathbf{C}^j_{22}\right)_{12}\\
        \\
        \left(\mathbf{C}^j_{22}\right)^{\hspace{0.4em} \tran}_{12} & \left(\mathbf{C}^j_{22}\right)_{22}
    \end{pmatrix},
    \end{split} \label{eq:blockcovmatrix}
\end{align}
then
\begin{align}
\begin{split}
    (\mathbf{C}^j_{22})^{-1}&=\begin{pmatrix}
        \left((\mathbf{C}^j_{22})^{-1}\right)_{11} & \left((\mathbf{C}^j_{22})^{-1}\right)_{12}\\
        \left((\mathbf{C}^j_{22})^{-1}\right)_{21} & \left((\mathbf{C}^j_{22})^{-1}\right)_{22}
    \end{pmatrix}\\
    &:=\begin{pmatrix}
        \left(\mathbf{C}^j_{22}\right)^{\hspace{0.4em} -1}_{11}+\left(\mathbf{C}^j_{22}\right)^{\hspace{0.4em} -1}_{11}\left(\mathbf{C}^j_{22}\right)_{12} \mathbf{D}^j\left(\mathbf{C}^j_{22}\right)_{12}^{\hspace{0.4em} \tran}\left(\mathbf{C}^j_{22}\right)^{\hspace{0.4em} -1}_{11} & \hspace{0.5cm}-\left(\mathbf{C}^j_{22}\right)^{\hspace{0.4em} -1}_{11}\left(\mathbf{C}^j_{22}\right)_{12} \mathbf{D}^j\\
        \\
        -\mathbf{D}^j\left(\mathbf{C}^j_{22}\right)_{12}^{\hspace{0.4em} \tran}\left(\mathbf{C}^j_{22}\right)^{\hspace{0.4em} -1}_{11} & \mathbf{D}^j
    \end{pmatrix},
\end{split} \label{eq:blockcovmatrixinverse}
\end{align}
where
\begin{equation} \label{eq:schurcomplement}
    \mathbf{D}^j:=\left(\left(\mathbf{C}^j_{22}\right)_{22}-\left(\mathbf{C}^j_{22}\right)^{\hspace{0.4em} \tran}_{12}\left(\mathbf{C}^j_{22}\right)^{\hspace{0.4em} -1}_{11}\left(\mathbf{C}^j_{22}\right)_{12}\right)^{-1},
\end{equation}
is the inverse of the Schur complement of $\left(\mathbf{C}^j_{22}\right)_{11}$ in $\mathbf{C}^j_{22}$ (which is Hermitian and positive-definite since under our assumptions so is $\mathbf{C}^j_{22}$). Since $\pp\big(\vy^j\big|\vy^{j+1},\vx^s, s\leq J\big)\equiv \pp\big(\vy^j\big|\vy^{j+1},\vx^s, s\leq j+1\big)$ always holds, if we were to assume $\ms^\vy\circ\ms^\vx\equiv \zeros_{l\times k}$, then to establish that in this regime the process $\vy^j\big|\vy^{j+1}$ enjoys the first-order Markov property in continuous-time we just need to prove
\begin{equation*}
    \lim_{\substack{\dt\to0^+\\ J\dt=T}}\pp\big(\vy^j\big|\vy^{j+1},\vx^s, s\leq j\big)=\lim_{\substack{\dt\to0^+\\ J\dt=T}}\pp\big(\vy^j\big|\vy^{j+1},\vx^s, s\leq j+1\big).
\end{equation*}
Based on the expressions we have in \eqref{eq:backcond1}--\eqref{eq:auxiliarymat4} and due to the conditional normality of the distributions, we would just need to establish equality of the statistics up to the second-order moment, i.e., we would just need to prove that
\begin{equation*}
    \lim_{\substack{\dt\to0^+\\ J\dt=T}}\mathbf{m}^j_j=\lim_{\substack{\dt\to0^+\\ J\dt=T}}\mathbf{m}^j_{j+1} \quad \text{and}\quad\lim_{\substack{\dt\to0^+\\ J\dt=T}}\mathbf{P}^j_j=\lim_{\substack{\dt\to0^+\\ J\dt=T}}\mathbf{P}^j_{j+1},
\end{equation*}
or, equivalently and respectively, that
\begin{gather*}
    \lim_{\substack{\dt\to0^+\\ J\dt=T}}\mathbf{C}_j^j(\vy^{j+1}-(\mathbf{I}_{l\times l}+\ml^{\vy,j}\dt)\vm{\nf}^j-\vf^{\vy,j}\dt)=\lim_{\substack{\dt\to0^+\\ J\dt=T}}\mathbf{C}_{j+1}^j\begin{pmatrix}
        \vy^{j+1}-(\mathbf{I}_{l\times l}+\ml^{\vy,j}\dt)\vm{\nf}^j-\vf^{\vy,j}\dt\\
        \vx^{j+1}-\vx^{j}-(\ml^{\vx,j}\vm{\nf}^j+\vf^{\vx,j})\dt
    \end{pmatrix},\\
    \lim_{\substack{\dt\to0^+\\ J\dt=T}}\mathbf{C}_j^j\left(\mr{\nf}^j+\left(\ml^{\vy,j}\mr{\nf}^j+\mr{\nf}^j(\ml^{\vy,j})^{\dagger}+(\ms^\vy\circ\ms^\vy)^j\right)\dt\right)(\mathbf{C}_j^j)^\dagger=\lim_{\substack{\dt\to0^+\\ J\dt=T}}\mathbf{C}_{j+1}^j\mathbf{C}^j_{22}(\mathbf{C}_{j+1}^j)^\dagger\left(=\lim_{\substack{\dt\to0^+\\ J\dt=T}}\mathbf{C}_{j+1}^j(\mathbf{C}^j_{12})^\dagger\right).
\end{gather*}
We first expand and simplify, up to first-order terms (i.e., up to $O(\dt)$), the expressions of all the auxiliary matrices appearing in \eqref{eq:auxiliarymat1}, \eqref{eq:auxiliarymat2}, and \eqref{eq:blockcovmatrix}--\eqref{eq:schurcomplement}. These are not only needed for the proof of the Markovian properties here, but also required to prove the expressions of the discrete smoother mean and covariance matrix (see Lemma \ref{lem:discretesmoother}). We start by expanding the auxiliary matrix $\mathbf{C}_j^j$, where after suppressing terms of order higher than $O(\dt)$ when carrying out the series expansion, it is equivalently given by
\begin{align}
\begin{split} \label{eq:auxiliarymatexpanded1}
   \mathbf{C}_j^j&=\mr{\nf}^j(\mathbf{I}_{l\times l}+\ml^{\vy,j}\dt)^\dagger\big(\mr{\nf}^j+(\ml^{\vy,j}\mr{\nf}^j+\mr{\nf}^j(\ml^{\vy,j})^{\dagger}+(\ms^\vy\circ\ms^\vy)^j)\dt\big)^{-1} \\
   &=\mr{\nf}^j(\mathbf{I}_{l\times l}+\ml^{\vy,j}\dt)^\dagger\left[\mr{\nf}^j\big(\mathbf{I}_{l\times l}+(\mr{\nf}^j)^{-1}(\ml^{\vy,j}\mr{\nf}^j+\mr{\nf}^j(\ml^{\vy,j})^{\dagger}+(\ms^\vy\circ\ms^\vy)^j)\dt\big)\right]^{-1} \\
   &=\mr{\nf}^j(\mathbf{I}_{l\times l}+(\ml^{\vy,j})^\dagger\dt)\big(\mathbf{I}_{l\times l}+\big(\mr{\nf}^j\big)^{-1}(\ml^{\vy,j}\mr{\nf}^j+\mr{\nf}^j(\ml^{\vy,j})^{\dagger}+(\ms^\vy\circ\ms^\vy)^j)\dt\big)^{-1}\big(\mr{\nf}^j\big)^{-1} \\
   &= \mr{\nf}^j(\mathbf{I}_{l\times l}+(\ml^{\vy,j})^\dagger\dt)\big(\mathbf{I}_{l\times l}-\big(\mr{\nf}^j\big)^{-1}(\ml^{\vy,j}\mr{\nf}^j+\mr{\nf}^j(\ml^{\vy,j})^{\dagger}+(\ms^\vy\circ\ms^\vy)^j)\dt+O(\dt^2)\big)\big(\mr{\nf}^j\big)^{-1} \\
   &=\mr{\nf}^j\big(\mathbf{I}_{l\times l}+(\ml^{\vy,j})^\dagger\dt-\big(\mr{\nf}^j\big)^{-1}(\ml^{\vy,j}\mr{\nf}^j+\mr{\nf}^j(\ml^{\vy,j})^{\dagger}+(\ms^\vy\circ\ms^\vy)^j)\dt+O(\dt^2)\big)\big(\mr{\nf}^j\big)^{-1} \\
   &=\big(\mr{\nf}^j+\mr{\nf}^j(\ml^{\vy,j})^\dagger\dt-\ml^{\vy,j}\mr{\nf}^j\dt-\mr{\nf}^j(\ml^{\vy,j})^{\dagger}\dt-(\ms^\vy\circ\ms^\vy)^j\dt+O(\dt^2)\big)\big(\mr{\nf}^j\big)^{-1} \\
   &=\big(\mr{\nf}^j-\ml^{\vy,j}\mr{\nf}^j\dt-(\ms^\vy\circ\ms^\vy)^j\dt+O(\dt^2)\big)\big(\mr{\nf}^j\big)^{-1}  \\
   &= \mathbf{I}_{l\times l}-\big(\ml^{\vy,j}+(\ms^\vy\circ\ms^\vy)^j\big(\mr{\nf}^j\big)^{-1}\big)\dt+O(\dt^2),
\end{split}
\end{align}
where we have used Lemma \ref{lem:tailboundsspectral}, which concerns the spectrum of $\mathbf{H}$, as to establish the ($\pp$-a.s.) leading-order expansion of the inverse $(\mathbf{I}_{l\times l} + \mathbf{H}^j\dt)^{-1}$ through the Neumann series of $-\mathbf{H}^j\dt$, for sufficiently small $\dt>0$ \cite{suzuki1976convergence, engl1985successive, stewart1998matrix}. As such, from \eqref{eq:auxiliarymatexpanded1}, for $\mathbf{C}^j_j$ we have that
\begin{equation*}
    \mathbf{C}^j_j=\mathbf{I}_{l\times l}-\mathbf{G}^{\vy,j}\dt+O(\dt^2),
\end{equation*}
where $\mathbf{G}^{\vy,j}:=\ml^{\vy,j}+(\ms^\vy\circ\ms^\vy)^j\big(\mr{\nf}^j\big)^{-1}$.

Based on the definitions of the auxiliary matrices defined in \eqref{eq:blockcovmatrix}, it is immediate that
\begin{align*}
    \left(\mathbf{C}^j_{22}\right)_{11}&:=\mr{\nf}^j+(\ml^{\vy,j}\mr{\nf}^j+\mr{\nf}^j(\ml^{\vy,j})^{\dagger}+(\ms^\vy\circ\ms^\vy)^j)\dt\equiv\mr{\nf}^j(\mathbf{I}_{l\times l}+\mathbf{H}^j\dt),\\
    \left(\mathbf{C}^j_{22}\right)^{\hspace{0.4em} \tran}_{12}&:=(\ml^{\vx,j}\mr{\nf}^j+(\ms^\vx\circ\ms^\vy)^j)\dt\equiv\mathbf{G}^{\vx,j}\mr{\nf}^j\dt.
\end{align*}
Using then the same procedure as in \eqref{eq:auxiliarymatexpanded1}, we then have
\begin{align}
\begin{split} \label{eq:auxiliarymatexpanded2}
    \left(\mathbf{C}^j_{22}\right)^{\hspace{0.4em} \tran}_{12}\left(\mathbf{C}^j_{22}\right)^{\hspace{0.4em} -1}_{11}&=\mathbf{G}^{\vx,j}\mr{\nf}^j\left(\mathbf{I}_{l\times l}+\mathbf{H}^j\dt\right)^{-1}\big(\mr{\nf}^j\big)^{-1}\dt\\
    &=\mathbf{G}^{\vx,j}\mr{\nf}^j\left(\mathbf{I}_{l\times l}-\mathbf{H}^j\dt+O(\dt^2)\right)\big(\mr{\nf}^j\big)^{-1}\dt\\
    &=\mathbf{G}^{\vx,j}\dt-\mathbf{G}^{\vx,j}\mr{\nf}^j\mathbf{H}^j\big(\mr{\nf}^j\big)^{-1}\dt^2+O(\dt^3),
\end{split}
\end{align}
and
\begin{equation} \label{eq:auxiliarymatexpanded3}
    \left(\mathbf{C}^j_{22}\right)^{\hspace{0.4em} \tran}_{12}\left(\mathbf{C}^j_{22}\right)^{\hspace{0.4em} -1}_{11}\left(\mathbf{C}^j_{22}\right)_{12}=\mathbf{G}^{\vx,j}\mr{\nf}^j(\mathbf{G}^{\vx,j})^\dagger\dt^2+O(\dt^3),
\end{equation}
where we have again used Lemma \ref{lem:tailboundsspectral}. While our scheme only considers first-order terms as of leading-order, we nevertheless explicitly state the form of the quadratic terms in $\dt$ for these auxiliary matrices, as they are needed in the subsequent analysis. Before proceeding, we define for notational simplicity the auxiliary matrix $\mathbf{K}^j:=\big((\ms^\vx\circ\ms^\vx)^{j}\big)^{-1}\mathbf{G}^{\vx,j}$. For $\mathbf{D}^j$, which as aforementioned is the inverse of the Schur complement of $\left(\mathbf{C}^j_{22}\right)_{11}$ in $(\mathbf{C}^j_{22})$, we then have
\begin{align}
\begin{split} \label{eq:auxiliarymatexpanded4}
    \mathbf{D}^j&:=\left(\left(\mathbf{C}^j_{22}\right)_{22}-\left(\mathbf{C}^j_{22}\right)^{\hspace{0.4em} \tran}_{12}\left(\mathbf{C}^j_{22}\right)^{\hspace{0.4em} -1}_{11}\left(\mathbf{C}^j_{22}\right)_{12}\right)^{-1}\\
    &=\big((\ms^\vx\circ\ms^\vx)^j\dt - \mathbf{G}^{\vx,j}\mr{\nf}^j(\mathbf{G}^{\vx,j})^\dagger\dt^2\big)^{-1}\\
    &= \big(\mathbf{I}_{k\times k}- \underbrace{\big((\ms^\vx\circ\ms^\vx)^{j}\big)^{-1}\mathbf{G}^{\vx,j}\mr{\nf}^j(\mathbf{G}^{\vx,j})^\dagger}_{\displaystyle \mathclap{=\mathbf{K}^j\mr{\nf}^j(\mathbf{G}^{\vx,j})^\dagger\equiv\mathbf{L}^j}}\dt\big)^{-1}\big((\ms^\vx\circ\ms^\vx)^{j}\big)^{-1}/\dt\\
    &= \big(\mathbf{I}_{k\times k}+ \mathbf{L}^j\dt-(\mathbf{L}^j)^2\dt^2+O(\dt^3)\big)\big((\ms^\vx\circ\ms^\vx)^{j}\big)^{-1}/\dt\\
    &= \left(\frac{1}{\dt}\mathbf{I}_{k\times k}+ \mathbf{L}^j-(\mathbf{L}^j)^2\dt+O(\dt^2)\right)\big((\ms^\vx\circ\ms^\vx)^{j}\big)^{-1}\\
    &=\frac{1}{\dt}\big((\ms^\vx\circ\ms^\vx)^{j}\big)^{-1}+\mathbf{K}^j\mr{\nf}^j(\mathbf{K}^j)^\dagger+\mathbf{L}^j\mathbf{K}^j\mr{\nf}^j(\mathbf{K}^j)^\dagger\dt+O(\dt^2),
\end{split}
\end{align}
where we have used Lemma \ref{lem:tailboundsspectral}, which concerns the spectral properties of $\mathbf{L}$, as to establish the ($\pp$-a.s.) leading-order expansion of the inverse $(\mathbf{I}_{l\times l} - \mathbf{L}^j\dt)^{-1}$ through the Neumann series of $\mathbf{L}^j\dt$, for sufficiently small $\dt>0$, while for the last equality we used that $\mathbf{L}^j\big((\ms^\vx\circ\ms^\vx)^{j}\big)^{-1}=\mathbf{K}^j\mr{\nf}^j(\mathbf{K}^j)^\dagger$. Notice that since $\mathbf{L}^j\mathbf{K}^j\mr{\nf}^j(\mathbf{K}^j)^\dagger=\mathbf{Q}^j\big((\ms^\vx\circ\ms^\vx)^{j}\big)^{-1}(\mathbf{Q}^j)^\dagger$, for $\mathbf{Q}^j=\big((\ms^\vx\circ\ms^\vx)^{j}\big)^{-1}\mathbf{G}^{\vx,j}\mr{\nf}^j(\mathbf{G}^{\vx,j})^\dagger$, then the symmetry of $\mathbf{D}^j$ (as the Schur complement of a covariance matrix it is Hermitian and positive-definite) is also apparent by its leading-order expansion. By combining the expressions we have derived thus far, we end up with
\begin{align}
\begin{split} \label{eq:auxiliarymatexpanded5}
    \mathbf{D}^j\left(\mathbf{C}^j_{22}\right)^{\hspace{0.4em} \tran}_{12}\left(\mathbf{C}^j_{22}\right)^{\hspace{0.4em} -1}_{11}&=\left(\frac{1}{\dt}\big((\ms^\vx\circ\ms^\vx)^{j}\big)^{-1}+\mathbf{K}^j\mr{\nf}^j(\mathbf{K}^j)^\dagger+O(\dt)\right)\\
    &\hspace{2cm}\times\big(\mathbf{G}^{\vx,j}\dt-\mathbf{G}^{\vx,j}\mr{\nf}^j\mathbf{H}^j\big(\mr{\nf}^j\big)^{-1}\dt^2+O(\dt^3)\big)\\
    &=\big((\ms^\vx\circ\ms^\vx)^{j}\big)^{-1}\mathbf{G}^{\vx,j}\\
    &\hspace{2cm}+\big(\mathbf{K}^j\mr{\nf}^j(\mathbf{K}^j)^\dagger\mathbf{G}^{\vx,j}-(\ms^\vx\circ\ms^\vx)^j)^{-1}\mathbf{G}^{\vx,j}\mr{\nf}^j\mathbf{H}^j\big(\mr{\nf}^j\big)^{-1}\big)\dt+O(\dt^2)\\
    &= \mathbf{K}^j+\big(\mathbf{K}^j\mr{\nf}^j(\mathbf{K}^j)^\dagger\mathbf{G}^{\vx,j}-\mathbf{K}^j\mr{\nf}^j\mathbf{H}^j\big(\mr{\nf}^j\big)^{-1}\big)\dt+O(\dt^2).
\end{split}
\end{align}

As such, when we assume $\ms^\vy\circ\ms^\vx\equiv \zeros_{l\times k}$, and if in block-matrix form we have
\begin{equation} \label{eq:auxiliaryblocks}
    \mathbf{C}_{j+1}^j=\mathbf{C}^j_{12}(\mathbf{C}^j_{22})^{-1}=\begin{pmatrix}
        \mathbf{E}^j & \hspace{0.4em} \mathbf{F}^j
    \end{pmatrix}\in\mathbb{C}^{l\times(l+k)},
\end{equation}
for $\mathbf{E}^j\in\mathbb{C}^{l\times l}$ and $\mathbf{F}^j\in\mathbb{C}^{l \times k}$, then by block-matrix multiplication we have
\begin{align}
\begin{split} \label{eq:ejexpandedsimple}
    \mathbf{E}^j&=\mr{\nf}^j(\mathbf{I}_{l\times l}+(\ml^{\vy,j})^\dagger\dt)\left[\left(\mathbf{C}^j_{22}\right)^{\hspace{0.4em} -1}_{11}+\left(\mathbf{C}^j_{22}\right)^{\hspace{0.4em} -1}_{11}\left(\mathbf{C}^j_{22}\right)_{12} \mathbf{D}^j\left(\mathbf{C}^j_{22}\right)_{12}^{\hspace{0.4em} \tran}\left(\mathbf{C}^j_{22}\right)^{\hspace{0.4em} -1}_{11}\right]\\
    &\hspace{1.5cm}-\mr{\nf}^j(\ml^{\vx,j})^\dagger\mathbf{D}^j\left(\mathbf{C}^j_{22}\right)_{12}^{\hspace{0.4em} \tran}\left(\mathbf{C}^j_{22}\right)^{\hspace{0.4em} -1}_{11}\dt\\
    &=\mathbf{C}^j_j+\mathbf{C}^j_j\mr{\nf}^j(\ml^{\vx,j})^\dagger\mathbf{D}^j\left(\mathbf{C}^j_{22}\right)_{12}^{\hspace{0.4em} \tran}\left(\mathbf{C}^j_{22}\right)^{\hspace{0.4em} -1}_{11}\dt-\mr{\nf}^j(\ml^{\vx,j})^\dagger\mathbf{D}^j\left(\mathbf{C}^j_{22}\right)_{12}^{\hspace{0.4em} \tran}\left(\mathbf{C}^j_{22}\right)^{\hspace{0.4em} -1}_{11}\dt\\
    &=\mathbf{C}^j_j+\big(\mathbf{C}^j_j-\mathbf{I}_{l\times l}\big)\mr{\nf}^j(\ml^{\vx,j})^\dagger\mathbf{D}^j\left(\mathbf{C}^j_{22}\right)_{12}^{\hspace{0.4em} \tran}\left(\mathbf{C}^j_{22}\right)^{\hspace{0.4em} -1}_{11}\dt\\
    &=\mathbf{C}^j_j+O(\dt^2)\equiv \mathbf{C}^j_j,
\end{split}
\end{align}
where we have used \eqref{eq:auxiliarymatexpanded1} and \eqref{eq:auxiliarymatexpanded5} and suppressed higher order terms, while
\begin{align}
\begin{split} \label{eq:fjexpandedsimple}
    \mathbf{F}^j&=-\mr{\nf}^j(\mathbf{I}_{l\times l}+(\ml^{\vy,j})^\dagger\dt)\left(\mathbf{C}^j_{22}\right)^{\hspace{0.4em} -1}_{11}\left(\mathbf{C}^j_{22}\right)_{12} \mathbf{D}^j + \mr{\nf}^j(\ml^{\vx,j})^\dagger\mathbf{D}^j\dt\\
    &= -\mathbf{C}^j_j\mr{\nf}^j(\ml^{\vx,j})^\dagger\mathbf{D}^j\dt+ \mr{\nf}^j(\ml^{\vx,j})^\dagger\mathbf{D}^j\dt\\
    &= -\big(\mathbf{C}^j_j-\mathbf{I}_{l\times l}\big)\mr{\nf}^j(\ml^{\vx,j})^\dagger\mathbf{D}^j\dt\\
    &= \mathbf{G}^{\vy,j}\dt\mr{\nf}^j(\ml^{\vx,j})^\dagger\big(\big((\ms^\vx\circ\ms^\vx)^{j}\big)^{-1}+\mathbf{K}^j\mr{\nf}^j(\mathbf{K}^j)^\dagger\dt+O(\dt^2)\big)\\
    &=\mathbf{G}^{\vy,j}\mr{\nf}^j(\ml^{\vx,j})^\dagger\big((\ms^\vx\circ\ms^\vx)^{j}\big)^{-1}\dt+O(\dt^2)\\
    &\to \zeros_{l \times k}, \text{ as } \dt\to0^+,
\end{split}
\end{align}
where again we have used \eqref{eq:auxiliarymatexpanded1} and \eqref{eq:auxiliarymatexpanded4}, and suppressed higher order terms. We point to the proof of Lemma \ref{lem:discretesmoother} for the general form of $\mathbf{E}^j$ and $\mathbf{F}^j$ up to first-order terms when $\ms^\vy\circ\ms^\vx\not\equiv \zeros_{l\times k}$; see \eqref{eq:auxiliarymat5} and \eqref{eq:auxiliarymat6}, respectively.

As such, it is immediate from the everything thus far that if $\ms^\vy\circ\ms^\vx\equiv \zeros_{l\times k}$, then up to order $O(\dt)$ we have
\begin{align}
\begin{split} \label{eq:meanequality}
    \left(\mathbf{m}^j_{j+1}-\vm{\nf}^j=\right)&\mathbf{C}_{j+1}^j\begin{pmatrix}
        \vy^{j+1}-(\mathbf{I}_{l\times l}+\ml^{\vy,j}\dt)\vm{\nf}^j-\vf^{\vy,j}\dt\\
        \vx^{j+1}-\vx^{j}-(\ml^{\vy,j}\vm{\nf}^j+\vf^{\vx,j})\dt
    \end{pmatrix}\\
    &=\begin{pmatrix}
        \mathbf{E}^j & \hspace{0.4em}\mathbf{F}^j
    \end{pmatrix}\begin{pmatrix}
        \vy^{j+1}-(\mathbf{I}_{l\times l}+\ml^{\vy,j}\dt)\vm{\nf}^j-\vf^{\vy,j}\dt\\
        \vx^{j+1}-\vx^{j}-(\ml^{\vy,j}\vm{\nf}^j+\vf^{\vx,j})\dt
    \end{pmatrix}\\
    &=\mathbf{C}_j^j(\vy^{j+1}-(\mathbf{I}_{l\times l}+\ml^{\vy,j}\dt)\vm{\nf}^j-\vf^{\vy,j}\dt)+O(\dt^{3/2})+O(\dt^2)\\
    &\equiv\mathbf{C}_j^j(\vy^{j+1}-(\mathbf{I}_{l\times l}+\ml^{\vy,j}\dt)\vm{\nf}^j-\vf^{\vy,j}\dt)\left(=\mathbf{m}^j_{j}-\vm{\nf}^j\right),
\end{split}
\end{align}
where we used \eqref{eq:auxiliaryblocks}--\eqref{eq:fjexpandedsimple} and \eqref{eq:discretecondgauss1}, which makes the equality in the mean true up to order $O(\dt)$ in discrete time, as well as in the limit $\dt\to0^+$ for continuous time, and
\begin{align}
    \left(\mr{\nf}^j-\mathbf{P}^j_{j+1}=\mathbf{C}_{j+1}^j(\mathbf{C}^j_{12})^\dagger=\right)&\mathbf{C}_{j+1}^j\mathbf{C}^j_{22}(\mathbf{C}_{j+1}^j)^\dagger \nonumber \\
    &=\begin{pmatrix}
        \mathbf{C}_j^j & \hspace{0.4em} O(\dt)
    \end{pmatrix}\mathbf{C}^j_{22}\begin{pmatrix}
        (\mathbf{C}_j^j)^\dagger \\ O(\dt)
    \end{pmatrix} \nonumber\\
    &=\mathbf{C}_j^j\left(\mr{\nf}^j+\left(\ml^{\vy,j}\mr{\nf}^j+\mr{\nf}^j(\ml^{\vy,j})^{\dagger}+(\ms^\vy\circ\ms^\vy)^j\right)\dt\right)(\mathbf{C}_j^j)^\dagger+O(\dt^2) \label{eq:covarianceequality}\\
    &\equiv \mathbf{C}_j^j\left(\mr{\nf}^j+\left(\ml^{\vy,j}\mr{\nf}^j+\mr{\nf}^j(\ml^{\vy,j})^{\dagger}+(\ms^\vy\circ\ms^\vy)^j\right)\dt\right)(\mathbf{C}_j^j)^\dagger\left(=\mr{\nf}^j-\mathbf{P}^j_j\right), \nonumber
\end{align}
where we again used \eqref{eq:auxiliaryblocks}--\eqref{eq:fjexpandedsimple} as well as \eqref{eq:blockcovmatrix}, which makes the equality in the covariance true up to order $O(\dt)$ in discrete time, as well as in the limit $\dt\to0^+$ for continuous time.

Combining \eqref{eq:meanequality} and \eqref{eq:covarianceequality}, are then enough to yield the desired result in continuous time,
\begin{equation*}
    \lim_{\substack{\dt\to0^+\\ J\dt=T}}\pp\big(\vy^j\big|\vy^{j+1},\vx^s, s\leq j\big)=\lim_{\substack{\dt\to0^+\\ J\dt=T}}\pp\big(\vy^j\big|\vy^{j+1},\vx^s, s\leq j+1\big)\left(=\lim_{\substack{\dt\to0^+\\ J\dt=T}}\pp\big(\vy^j\big|\vy^{j+1},\vx^s, s\leq J\big)\right),
\end{equation*}
due to the conditional Gaussianity of these distributions. This ends the proof of Lemma \ref{lem:backwardscond}.
\end{proof}

Here we make a note about the result concerning the Markov property of $\vy^j\big|\vy^{j+1}$ with respect to natural filtration of $\vx$, i.e., the fact that in general we have
\begin{equation*}
    p(\vy^j\big|\vy^{j+1},\vx^s, s\leq j+1)=p(\vy^j\big|\vy^{j+1},\vx^s, s\leq J)\neq p(\vy^j\big|\vy^{j+1},\vx^s, s\leq j), \ \forall j=0,1,\ldots,J-1,
\end{equation*}
but in the absence of noise cross-interaction we have
\begin{equation*}
    \lim_{\substack{\dt\to0^+\\ J\dt=T}}p(\vy^j\big|\vy^{j+1},\vx^s, s\leq j+1)=\lim_{\substack{\dt\to0^+\\ J\dt=T}}p(\vy^j\big|\vy^{j+1},\vx^s, s\leq j),
\end{equation*}
with equality up to $O(\dt)$ in the discrete-time case. The condition $\ms^\vy\circ\ms^\vx\equiv \zeros_{l\times k}$ is only sufficient but not necessary for going from the second-order backwards in time Markov property w.r.t.\ the natural filtration of $\vx$ (in continuous-time) to the first-order one (i.e., the stronger property). Indeed, the algebra in the proof of Lemma \ref{lem:backwardscond}, as well as the results from Lemma \ref{lem:discretesmoother} in the sequel, reveal that we may have the same result if instead we assume that $\pp$-a.s.\ over $[0,T]$ we have:
\begin{gather*}
    \rmd \vx-(\ml^\vx\vm{\nf}+\vf^\vx)\rmd t\in \mathrm{Ker}(\mathbf{F}) \text{ and } \mathbf{G}^{\vx}\equiv \zeros_{k\times l} \text{ for } \lim_{\substack{\dt\to0^+\\ J\dt=T}}\mathbf{m}^j_j=\lim_{\substack{\dt\to0^+\\ J\dt=T}}\mathbf{m}^j_{j+1},\\
    \text{and } \mathbf{F}(\ml^\vx\mr{\nf}+(\ms^\vx\circ\ms^\vy))\mathbf{E}^\dagger+\mathbf{E}(\mr{\nf}(\ml^\vx)^\dagger+(\ms^\vy\circ\ms^\vx))\mathbf{F}^\dagger+\mathbf{F}(\ms^\vx\circ\ms^\vx)^{-1}\mathbf{F}^\dagger \equiv \zeros_{l\times l} \text{ for } \lim_{\substack{\dt\to0^+\\ J\dt=T}}\mathbf{P}^j_j=\lim_{\substack{\dt\to0^+\\ J\dt=T}}\mathbf{P}^j_{j+1},
\end{gather*}
or any other equivalent (to these) relations.

The third lemma concerns itself with the discrete equations of the optimal nonlinear smoother, derived using the discrete smoother distribution.
\begin{lem}[\textbf{Discrete Optimal Nonlinear Smoother}] \label{lem:discretesmoother}
    Let the assumptions in Theorem \ref{thm:smoothing} hold. Denote by $\vm{\normalfont{s}}^{j,J}$ and $\mr{\normalfont{s}}^{j,J}$ the conditional mean and conditional covariance of the conditional distribution of the discrete smoother $\pp\big(\vy^j\big|\vx^s,s\leq J\big)$. In other words,
    \begin{equation*}
        \vm{\normalfont{s}}^{j,J}=\ee{\vy^j\big|\vx^s,s\leq J},\quad\mr{\normalfont{s}}^{j,J}=\mathrm{Cov}\big(\vy^j,\vy^j\big|\vx^s,s\leq J\big)=\ee{(\vy^j-\vm{\normalfont{s}}^{j,J})(\vy^j-\vm{\normalfont{s}}^{j,J})^\dagger\big|\vx^s,s\leq J}.
    \end{equation*}
    Given a realization of the observable $\vx^j$ for $j=0,\ldots,J$, the optimal smoother estimate $\pp\big(\vy^j\big|\vx^s, s=0,\ldots,J\big)$ is conditionally Gaussian,
    \begin{equation*}
        \pp\big(\vy^j\big|\vx^s, s=0,\ldots,J\big)\overset{\rmd}{\sim}\mathcal{N}_l(\vm{\normalfont{s}}^{j,J},\mr{\normalfont{s}}^{j,J}),
    \end{equation*}
    where the conditional mean $\vm{\normalfont{s}}^{j,J}$ and conditional covariance $\mr{\normalfont{s}}^{j,J}$ of the smoother at time step $t_j$ satisfy the following (backward) recursive equations:
    \begin{align}
        \vm{\normalfont{s}}^{j,J}&=\vm{\nf}^j+\mathbf{E}^j\left(\vm{\normalfont{s}}^{j+1,J}-(\mathbf{I}_{l\times l}+\ml^{\vy,j}\dt)\vm{\nf}^j-\vf^{\vy,j}\dt\right)+\mathbf{F}^j\left(\vx^{j+1}-\vx^{j}-(\ml^{\vx,j}\vm{\nf}^j+\vf^{\vx,j})\dt\right), \label{eq:discretesmoother1} \\
        \begin{split}
            \mr{\normalfont{s}}^{j,J}&=\mr{\nf}^j-\mathbf{C}_{j+1}^j\mathbf{C}^j_{22}(\mathbf{C}_{j+1}^j)^\dagger+\mathbf{E}^j\mr{\normalfont{s}}^{j+1,J}(\mathbf{E}^j)^\dagger\\
        &=\mr{\nf}^j+\mathbf{E}^j\left(\mr{\normalfont{s}}^{j+1,J}(\mathbf{E}^j)^{\dagger}-(\mathbf{I}_{l\times l}+\ml^{\vy,j}\dt)\mr{\nf}^j\right)-\mathbf{F}^j\ml^{\vx,j}\mr{\nf}^j\dt,
        \end{split} \label{eq:discretesmoother2}
    \end{align}
    where the auxiliary matrices $\mathbf{E}^j\in\mathbb{C}^{l\times l}$ and $\mathbf{F}^j\in\mathbb{C}^{l\times k}$ are the $(1,1)$ and $(1,2)$ blocks of $\mathbf{C}_{j+1}^j$, respectively, as shown in \eqref{eq:auxiliaryblocks}, and are explicitly given up to leading-order $O(\dt)$ by
    \begin{align}
    \begin{split} \label{eq:auxiliarymat5}
        \mathbf{E}^j&:=\mr{\nf}^j(\mathbf{I}_{l\times l}+(\ml^{\vy,j})^\dagger\dt)\left[\left(\mathbf{C}^j_{22}\right)^{\hspace{0.4em} -1}_{11}+\left(\mathbf{C}^j_{22}\right)^{\hspace{0.4em} -1}_{11}\left(\mathbf{C}^j_{22}\right)_{12} \mathbf{D}^j\left(\mathbf{C}^j_{22}\right)_{12}^{\hspace{0.4em} \tran}\left(\mathbf{C}^j_{22}\right)^{\hspace{0.4em} -1}_{11}\right]\\
        & \hspace{2cm}-\mr{\nf}^j(\ml^{\vx,j})^\dagger\mathbf{D}^j\left(\mathbf{C}^j_{22}\right)_{12}^{\hspace{0.4em} \tran}\left(\mathbf{C}^j_{22}\right)^{\hspace{0.4em} -1}_{11}\dt\\
        &=\mathbf{C}_j^j+(\ms^\vy\circ\ms^\vx)^j\big((\ms^\vx\circ\ms^\vx)^{j}\big)^{-1}\mathbf{G}^{\vx,j}\dt+O(\dt^2),
    \end{split}
    \end{align}
    and
    \begin{align}
    \begin{split} \label{eq:auxiliarymat6}
        \mathbf{F}^j&:=-\mr{\nf}^j(\mathbf{I}_{l\times l}+(\ml^{\vy,j})^\dagger\dt)\left(\mathbf{C}^j_{22}\right)^{\hspace{0.4em} -1}_{11}\left(\mathbf{C}^j_{22}\right)_{12} \mathbf{D}^j + \mr{\nf}^j(\ml^{\vx,j})^\dagger\mathbf{D}^j\dt\\
        &=-\mr{\nf}^j\Big[(\mathbf{K}^j)^{\dagger}+\left((\mathbf{G}^{\vx,j})^\dagger\mathbf{K}^j\mr{\nf}^j(\mathbf{K}^j)^{\dagger}-\big(\mr{\nf}^j\big)^{-1}(\mathbf{H}^{j})^\dagger\mr{\nf}^j(\mathbf{K}^j)^\dagger+(\ml^{\vy,j})^{\dagger}(\mathbf{K}^{j})^\dagger\right)\dt\\
        &\hspace{2cm} -(\ml^{\vx,j})^\dagger\left(\big((\ms^\vx\circ\ms^\vx)^{j}\big)^{-1}+\mathbf{K}^j\mr{\nf}^j(\mathbf{K}^j)^\dagger\dt\right)\Big]+O(\dt^2),
    \end{split}
    \end{align}
    with $\mathbf{C}_j^j\in\mathbb{C}^{l\times l}$ defined in \eqref{eq:auxiliarymat1}, $\left(\mathbf{C}_{22}^j\right)_{11}\in\mathbb{C}^{l\times l}$ and $\left(\mathbf{C}_{22}^j\right)^{\hspace{0.4em} \tran}_{12}\in\mathbb{C}^{l\times k}$ given in \eqref{eq:blockcovmatrix}, and $\mathbf{D}^j\in\mathbb{C}^{k\times k}$ provided in \eqref{eq:schurcomplement}, and are expressed up to first-order terms as
    \begin{gather*}
         \mathbf{C}^j_j:=\mr{\nf}^j(\mathbf{I}_{l\times l}+(\ml^{\vy,j})^\dagger\dt)\left(\mathbf{C}^j_{22}\right)^{\hspace{0.4em} -1}_{11}=\mathbf{I}_{l\times l}-\mathbf{G}^{\vy,j}\dt+O(\dt^2),\\
         \left(\mathbf{C}^j_{22}\right)_{11}=\mr{\nf}^j\left(\mathbf{I}_{l\times l}+\mathbf{H}^j\dt\right),\quad \left(\mathbf{C}^j_{22}\right)^{\hspace{0.4em}\tran}_{12}=\mathbf{G}^{\vx,j}\mr{\nf}^j\dt,\\
         \mathbf{D}^j=\left(\frac{1}{\dt}\mathbf{I}_{k\times k}+\mathbf{L}^j-(\mathbf{L}^j)^2\dt+O(\dt^2)\right)\big((\ms^\vx\circ\ms^\vx)^{j}\big)^{-1} \\
         \mspace{110mu} = \frac{1}{\dt}\big((\ms^\vx\circ\ms^\vx)^{j}\big)^{-1}+\mathbf{K}^j\mr{\nf}^j(\mathbf{K}^j)^\dagger+\mathbf{L}^j\mathbf{K}^j\mr{\nf}^j(\mathbf{K}^j)^\dagger\dt+O(\dt^2),
    \end{gather*}
    where the auxiliary matrices $\mathbf{G}^{\vx,j}\in\mathbb{C}^{k\times l}$, $\mathbf{G}^{\vy,j}\in\mathbb{C}^{l\times l}$, $\mathbf{H}^j\in\mathbb{C}^{l\times l}$, $\mathbf{L}^j\in\mathbb{C}^{k\times k}$, $\mathbf{K}^j\in\mathbb{C}^{k\times l}$ are defined as
    \begin{gather*}
        \mathbf{G}^{\vx,j}:=\ml^{\vx,j}+(\ms^\vx\circ\ms^\vy)^j\big(\mr{\nf}^j\big)^{-1},\quad \mathbf{G}^{\vy,j}:=\ml^{\vy,j}+(\ms^\vy\circ\ms^\vy)^j\big(\mr{\nf}^j\big)^{-1},\\
        \mathbf{H}^j:=(\mr{\nf})^{-1}(\ml^{\vy,j}\mr{\nf}^j+\mr{\nf}^j(\ml^{\vy,j})^{\dagger}+(\ms^\vy\circ\ms^\vy)^j),\\
        \mathbf{K}^j:=\big((\ms^\vx\circ\ms^\vx)^{j}\big)^{-1}\mathbf{G}^{\vx,j},\quad \mathbf{L}^j:=\big((\ms^\vx\circ\ms^\vx)^{j}\big)^{-1}\mathbf{G}^{\vx,j}\mr{\nf}^j(\mathbf{G}^{\vx,j})^\dagger=\mathbf{K}^j\mr{\nf}^j(\mathbf{G}^{\vx,j})^\dagger.
    \end{gather*}
    Finally, by definition we have $\vm{\ns}^{J,J}=\vm{\nf}^J$ and $\mr{\ns}^{J,J}=\mr{\nf}^J$, i.e., the smoother Gaussian statistics coincide with the filter ones on the end point $t_J=T$.
\end{lem}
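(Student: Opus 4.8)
The plan is to establish the result by backward induction on the time index $j$, running from the terminal step $j=J$ down to $j=0$ and peeling off one smoothing step at a time. Throughout, everything is conditioned on the observable path, so the filter statistics $\vm{\nf}^j,\mr{\nf}^j$ and all the auxiliary matrices $\mathbf{C}_j^j,\mathbf{C}_{j+1}^j,\mathbf{C}_{12}^j,\mathbf{C}_{22}^j,\mathbf{E}^j,\mathbf{F}^j,\mathbf{D}^j,\mathbf{H}^j,\mathbf{L}^j$ are $\cF_J^\vx$-measurable and may be treated as constants. The base case is immediate: conditioning on $\{\vx^s\}_{s\leq J}$ at the last instant $t_J=T$ is exactly the discrete filtering problem at $T$, so $\pp\big(\vy^J\big|\vx^s,s\leq J\big)$ is the ($\pp$-a.s.) conditionally Gaussian discrete filter law (a discrete-time instance of Theorems \ref{thm:condgaussianity}--\ref{thm:filtering}), whence $\vm{\ns}^{J,J}=\vm{\nf}^J$ and $\mr{\ns}^{J,J}=\mr{\nf}^J$.

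For the inductive step, assume $\pp\big(\vy^{j+1}\big|\vx^s,s\leq J\big)$ is Gaussian, $\mathcal{N}_l(\vm{\ns}^{j+1,J},\mr{\ns}^{j+1,J})$. Chain-ruling the joint law of $(\vy^j,\vy^{j+1})$ given $\{\vx^s\}_{s\leq J}$ and marginalizing $\vy^{j+1}$ yields
\begin{equation*}
    p\big(\vy^j\big|\vx^s,s\leq J\big)=\int_{\mathbb{C}^l} p\big(\vy^j\big|\vy^{j+1},\vx^s,s\leq J\big)\,p\big(\vy^{j+1}\big|\vx^s,s\leq J\big)\,\rmd\vy^{j+1}.
\end{equation*}
By Lemma \ref{lem:backwardscond} --- whose discrete-time identity \eqref{eq:secondordermarkov} is exact for every $j$ and $\dt$ --- the backward conditional $p\big(\vy^j\big|\vy^{j+1},\vx^s,s\leq J\big)$ equals $p\big(\vy^j\big|\vy^{j+1},\vx^s,s\leq j+1\big)=\mathcal{N}_l(\mathbf{m}_{j+1}^j,\mathbf{P}_{j+1}^j)$; writing $\mathbf{C}_{j+1}^j=(\mathbf{E}^j\ \mathbf{F}^j)$ for its $l\times l$ and $l\times k$ blocks and using that $\vx^{j+1},\vx^j$ are known, the mean \eqref{eq:backcond3} is an affine function of $\vy^{j+1}$ with linear coefficient $\mathbf{E}^j$. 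Applying Lemma \ref{lem:bayesian} to this conditional Gaussian together with the inductive Gaussian marginal of $\vy^{j+1}$ then shows $\pp\big(\vy^j\big|\vx^s,s\leq J\big)$ is Gaussian, gives \eqref{eq:discretesmoother1} for the mean, and yields
\begin{equation*}
    \mr{\ns}^{j,J}=\mathbf{P}_{j+1}^j+\mathbf{E}^j\mr{\ns}^{j+1,J}(\mathbf{E}^j)^\dagger.
\end{equation*}
Substituting $\mathbf{P}_{j+1}^j$ from \eqref{eq:backcond4} gives the first line of \eqref{eq:discretesmoother2}; using that $\mathbf{C}_{22}^j$ in \eqref{eq:auxiliarymat4} is Hermitian (being a block covariance matrix), so that $\mathbf{C}_{j+1}^j\mathbf{C}_{22}^j(\mathbf{C}_{j+1}^j)^\dagger=\mathbf{C}_{j+1}^j(\mathbf{C}_{12}^j)^\dagger=\mathbf{E}^j(\mathbf{I}_{l\times l}+\ml^{\vy,j}\dt)\mr{\nf}^j+\mathbf{F}^j\ml^{\vx,j}\mr{\nf}^j\dt$ with $\mathbf{C}_{12}^j$ as in \eqref{eq:auxiliarymat2}, reduces it to the second line of \eqref{eq:discretesmoother2}. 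This closes the induction and establishes the conditional Gaussianity of the discrete smoother together with its exact backward recursions.

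It remains to derive the leading-order-in-$\dt$ expressions \eqref{eq:auxiliarymat5}--\eqref{eq:auxiliarymat6} for the gain blocks $\mathbf{E}^j,\mathbf{F}^j$ and the auxiliary matrices feeding them. The plan here is to block-invert $\mathbf{C}_{22}^j$ from \eqref{eq:auxiliarymat4} via the Schur complement with respect to its $(2,2)$ block $(\ms^\vx\circ\ms^\vx)^j\dt$ (invertible for small $\dt$ by the uniform nonsingularity of $\ms^\vx\circ\ms^\vx$ and the positive-definiteness of $\mr{\nf}^j$ from Theorem \ref{thm:filtering}), identifying $\mathbf{D}^j$ as the inverse of that Schur complement. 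Because the blocks of $\mathbf{C}_{22}^j$ carry mixed powers of $\dt$ --- the $(1,1)$ block equals $\mr{\nf}^j(\mathbf{I}_{l\times l}+\mathbf{H}^j\dt)$ and is $O(1)$, whereas the off-diagonal and $(2,2)$ blocks are $O(\dt)$ --- one first factors out the scales and is then left with inverting bounded perturbations of the identity, namely $(\mathbf{I}_{l\times l}+\mathbf{H}^j\dt)^{-1}$ and a factor built out of $\mathbf{L}^j\dt$. Lemma \ref{lem:tailboundsspectral} is precisely what licenses this: with probability tending to one as $\dt\to0^+$ one has $\max_{0\leq j\leq J}\|\mathbf{H}^j\dt\|_2<1$ and $\max_{0\leq j\leq J}\|\mathbf{L}^j\dt\|_2<1$, so the Neumann series $(\mathbf{I}_{l\times l}+\mathbf{H}^j\dt)^{-1}=\mathbf{I}_{l\times l}-\mathbf{H}^j\dt+O(\dt^2)$ (and its $\mathbf{L}^j$-analogue) converge and may be truncated with a uniformly-in-$j$ controlled remainder. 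Collecting the expansions through first order, substituting them into $\mathbf{C}_j^j$, $(\mathbf{C}_{22}^j)_{11}$, $(\mathbf{C}_{22}^j)_{12}^\tran$ and $\mathbf{D}^j$, and re-expressing coefficients via $\mathbf{G}^{\vx,j},\mathbf{G}^{\vy,j},\mathbf{K}^j$ (for instance $\mathbf{C}_j^j=\mathbf{I}_{l\times l}-\mathbf{G}^{\vy,j}\dt+O(\dt^2)$ after invoking the definition of $\mathbf{H}^j$) delivers \eqref{eq:auxiliarymat5}--\eqref{eq:auxiliarymat6}.

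The main obstacle is this last step. The block inverse genuinely mixes $O(1)$ and $O(\dt)$ scales, so $(\mathbf{C}_{22}^j)^{-1}$ cannot be expanded by a single Neumann series; one must first separate scales through the Schur complement and then carefully track --- uniformly in $j$ --- which cross terms survive at order $\dt$ and which get relegated to $O(\dt^2)$, with the probabilistic uniform-in-$j$ spectral-norm control of Lemma \ref{lem:tailboundsspectral} being exactly what makes those truncations legitimate in the $\dt\to0^+$ limit. By comparison, the recursion structure of the first two paragraphs is routine once Lemmas \ref{lem:backwardscond} and \ref{lem:bayesian} are in hand, and the terminal identity $\vm{\ns}^{J,J}=\vm{\nf}^J$, $\mr{\ns}^{J,J}=\mr{\nf}^J$ is immediate.
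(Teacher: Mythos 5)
Your proposal is correct and follows essentially the same route as the paper's proof: factor $p\big(\vy^j,\vy^{j+1}\big|\vx^s,s\leq J\big)$ using the exact discrete identity $p\big(\vy^j\big|\vy^{j+1},\vx^s,s\leq J\big)=p\big(\vy^j\big|\vy^{j+1},\vx^s,s\leq j+1\big)$ from Lemma \ref{lem:backwardscond}, marginalize over $\vy^{j+1}$ via Lemma \ref{lem:bayesian} to obtain the backward recursions (the paper asserts Gaussianity of the smoother marginal directly from the discrete-time analog of Theorem \ref{thm:condgaussianity} where you run a backward induction—an immaterial difference), and then expand $\mathbf{E}^j$ and $\mathbf{F}^j$ to $O(\dt)$ through the Schur-complement block inverse of $\mathbf{C}^j_{22}$ with Neumann truncations licensed by Lemma \ref{lem:tailboundsspectral}, exactly as in Appendices C--D. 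One small fix: $\mathbf{D}^j$ in \eqref{eq:schurcomplement} is the inverse of the Schur complement of the $(1,1)$ block $\left(\mathbf{C}^j_{22}\right)_{11}$ in $\mathbf{C}^j_{22}$ (equivalently, the $(2,2)$ block of $(\mathbf{C}^j_{22})^{-1}$), not the inverse of the Schur complement with respect to the $(2,2)$ block as you phrase it, though this naming slip does not affect your computation since the statement's formulas already use the $(1,1)$-pivot form.
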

\begin{proof}[\textbf{Proof of Lemma \ref{lem:discretesmoother}}]
Note that although all discrete estimates we have seen thus far do depend on $J$ (and by extend on $\dt$), since they are evaluated on $t_j=j\frac{T}{J}$ (e.g., $\vm{\nf}^j$, $\mr{\nf}^j$, $\mathbf{m}_j^j$, $\mathbf{P}_j^j$, etc.), for the Gaussian statistics of the discrete smoother this dependence is noted explicitly since we condition over the whole observation period, and as such on all $J+1$ observations of $\vx$; $\vx^0,\ldots,\vx^J$.

Note that we have already showed the desired expressions for $\mathbf{C}_j^j$, $\left(\mathbf{C}_{22}^j\right)_{11}$, $\left(\mathbf{C}_{22}^j\right)^{\hspace{0.4em} \tran}_{12}$, and $\mathbf{D}^j$ in the proof of Lemma \ref{lem:backwardscond}, in \eqref{eq:auxiliarymatexpanded1}, \eqref{eq:auxiliarymatexpanded2}, \eqref{eq:auxiliarymatexpanded3}, and \eqref{eq:auxiliarymatexpanded4}, respectively. We have also recovered the expressions of the block parts of $\mathbf{C}^j_{j+1}$, $\mathbf{E}^j$ and $\mathbf{F}^j$, up to $O(\dt)$ in the absence of noise cross-interaction (i.e., when $\ms^\vy\circ\ms^\vx\equiv \zeros_{l\times k}$), in \eqref{eq:ejexpandedsimple} and \eqref{eq:fjexpandedsimple}. As such, we just need to prove the discrete smoother formulae \eqref{eq:discretesmoother1}--\eqref{eq:discretesmoother2}, and the form of the block matrices \eqref{eq:auxiliarymat5}--\eqref{eq:auxiliarymat6} for the general case (as well as to cross-check to see that when we assume absence of noise cross-interaction we recover the expressions we already have).

Having already established in the proof of Lemma \ref{lem:backwardscond} the auxiliary expressions required here, we can now fully determine the blocks of $\mathbf{C}_{j+1}^j$, $\mathbf{E}^j$ and $\mathbf{F}^j$, up to leading-order terms, for the general case where noise cross-interaction could be present. Specifically, using \eqref{eq:auxiliarymatexpanded1}--\eqref{eq:auxiliarymatexpanded5}, we have
\begin{align}
\begin{split} \label{eq:ejexpanded}
        \mathbf{E}^j&:=\mr{\nf}^j(\mathbf{I}_{l\times l}+(\ml^{\vy,j})^\dagger\dt)\left[\left(\mathbf{C}^j_{22}\right)^{\hspace{0.4em} -1}_{11}+\left(\mathbf{C}^j_{22}\right)^{\hspace{0.4em} -1}_{11}\left(\mathbf{C}^j_{22}\right)_{12} \mathbf{D}^j\left(\mathbf{C}^j_{22}\right)_{12}^{\hspace{0.4em} \tran}\left(\mathbf{C}^j_{22}\right)^{\hspace{0.4em} -1}_{11}\right]\\
        &\hspace{2cm}-\mr{\nf}^j(\ml^{\vx,j})^\dagger\mathbf{D}^j\left(\mathbf{C}^j_{22}\right)_{12}^{\hspace{0.4em} \tran}\left(\mathbf{C}^j_{22}\right)^{\hspace{0.4em} -1}_{11}\dt\\
        &=\mathbf{C}_j^j+(\mathbf{C}_j^j\mr{\nf}(\mathbf{G}^{\vx,j})^\dagger-\mr{\nf}^j(\ml^{\vx,j})^\dagger)\mathbf{K}^j\dt+O(\dt^2)\\
        &=\mathbf{C}_j^j+((\mathbf{I}_{l\times l}-\mathbf{G}^{\vy,j}\dt)\mr{\nf}(\mathbf{G}^{\vx,j})^\dagger-\mr{\nf}^j(\ml^{\vx,j})^\dagger)\mathbf{K}^j\dt+O(\dt^2)\\
        &=\mathbf{C}_j^j+\mr{\nf}(\mathbf{G}^{\vx,j}-\ml^{\vx,j})^\dagger\mathbf{K}^j\dt+O(\dt^2)\\
        &=\mathbf{C}_j^j+\mr{\nf}(\mr{\nf})^{-1}(\ms^\vy\circ\ms^\vx)^j\mathbf{K}^j\dt+O(\dt^2)\\
        &=\mathbf{C}_j^j+(\ms^\vy\circ\ms^\vx)^j\big((\ms^\vx\circ\ms^\vx)^{j}\big)^{-1}\mathbf{G}^{\vx,j}\dt+O(\dt^2),
\end{split}
\end{align}
and
\begin{align}
    \mathbf{F}^j&:=-\mr{\nf}^j(\mathbf{I}_{l\times l}+(\ml^{\vy,j})^\dagger\dt)\left(\mathbf{C}^j_{22}\right)^{\hspace{0.4em} -1}_{11}\left(\mathbf{C}^j_{22}\right)_{12} \mathbf{D}^j + \mr{\nf}^j(\ml^{\vx,j})^\dagger\mathbf{D}^j\dt\nonumber \\
    &=-\mr{\nf}^j(\mathbf{I}_{l\times l}+(\ml^{\vy,j})^\dagger\dt)\big((\mathbf{K}^j)^\dagger+\big((\mathbf{G}^{\vx,j})^\dagger\mathbf{K}^j\mr{\nf}^j(\mathbf{K}^j)^\dagger-\big(\mr{\nf}^j\big)^{-1}(\mathbf{H}^j)^\dagger\mr{\nf}^j(\mathbf{K}^j)^\dagger\big)\dt+O(\dt^2)\big) \nonumber\\
    &\quad +\mr{\nf}^j(\ml^{\vx,j})^\dagger\left(\frac{1}{\dt}\big((\ms^\vx\circ\ms^\vx)^{j}\big)^{-1}+\mathbf{K}^j\mr{\nf}^j(\mathbf{K}^j)^\dagger+O(\dt)\right)\dt+O(\dt^2) \nonumber\\
    &=-\mr{\nf}^j\Big((\mathbf{K}^j)^\dagger-(\ml^{\vx,j})^\dagger\big((\ms^\vx\circ\ms^\vx)^{j}\big)^{-1} \label{eq:fjexpanded} \\
    &\quad +\big((\mathbf{G}^{\vx,j})^\dagger\mathbf{K}^j\mr{\nf}^j(\mathbf{K}^j)^\dagger-\big(\mr{\nf}^j\big)^{-1}(\mathbf{H}^j)^\dagger\mr{\nf}^j(\mathbf{K}^j)^\dagger+(\ml^{\vy,j})^\dagger(\mathbf{K}^j)^\dagger-(\ml^{\vx,j})^\dagger\mathbf{K}^j\mr{\nf}^j(\mathbf{K}^j)^\dagger\big)\dt\Big)+O(\dt^2) \nonumber\\
    &=-\mr{\nf}^j\Big[(\mathbf{K}^j)^{\dagger}+\big((\mathbf{G}^{\vx,j})^\dagger\mathbf{K}^j\mr{\nf}^j(\mathbf{K}^j)^{\dagger}-\big(\mr{\nf}^j\big)^{-1}(\mathbf{H}^{j})^\dagger\mr{\nf}^j(\mathbf{K}^j)^\dagger+(\ml^{\vy,j})^{\dagger}(\mathbf{K}^{j})^\dagger\big)\dt \nonumber \\
    &\hspace{2cm} -(\ml^{\vx,j})^\dagger\big(\big((\ms^\vx\circ\ms^\vx)^{j}\big)^{-1}+\mathbf{K}^j\mr{\nf}^j(\mathbf{K}^j)^\dagger\dt\big)\Big]+O(\dt^2). \nonumber
\end{align}
Notice how in the absence of cross-interaction noise (i.e., $\ms^\vy\circ\ms^\vx\equiv\zeros_{l\times k})$, $\mathbf{E}^j$ reduces down to $\mathbf{C}^j_j+O(\dt^2)$, as already shown in \eqref{eq:ejexpandedsimple}. As for $\mathbf{F}^j$, observe that when $\ms^\vy\circ\ms^\vx\equiv\zeros_{l\times k}$ then we have $\mathbf{G}^{\vx,j}=\ml^{\vx,j}$ and $\mathbf{K}^j=\big((\ms^\vx\circ\ms^\vx)^{j}\big)^{-1}\ml^{\vx,j}$. With these reductions we get
\begin{align*}
    \mathbf{F}^j&=-\mr{\nf}^j\Big[(\mathbf{K}^j)^{\dagger}+\big((\mathbf{G}^{\vx,j})^\dagger\mathbf{K}^j\mr{\nf}^j(\mathbf{K}^j)^{\dagger}-\big(\mr{\nf}^j\big)^{-1}(\mathbf{H}^{j})^\dagger\mr{\nf}^j(\mathbf{K}^j)^\dagger+(\ml^{\vy,j})^{\dagger}(\mathbf{K}^{j})^\dagger\big)\dt\\
    &\hspace{2cm} -(\ml^{\vx,j})^\dagger\big(\big((\ms^\vx\circ\ms^\vx)^{j}\big)^{-1}+\mathbf{K}^j\mr{\nf}^j(\mathbf{K}^j)^\dagger\dt\big)\Big]+O(\dt^2)\\
    &=-\mr{\nf}^j\Big[(\ml^{\vx,j})^{\dagger}\big((\ms^\vx\circ\ms^\vx)^{j}\big)^{-1}+(\ml^{\vx,j})^{\dagger}\mathbf{K}^j\mr{\nf}^j(\mathbf{K}^j)^{\dagger}\dt-\big(\mr{\nf}^j\big)^{-1}(\mathbf{H}^{j})^\dagger\mr{\nf}^j(\mathbf{K}^j)^\dagger\dt+(\ml^{\vy,j})^{\dagger}(\mathbf{K}^{j})^\dagger\dt\\
    &\hspace{2cm} -(\ml^{\vx,j})^\dagger\big((\ms^\vx\circ\ms^\vx)^{j}\big)^{-1}-(\ml^{\vx,j})^\dagger\mathbf{K}^j\mr{\nf}^j(\mathbf{K}^j)^\dagger\dt\Big]+O(\dt^2)\\
    &=\big((\mathbf{H}^{j})^\dagger\mr{\nf}^j-\mr{\nf}^j(\ml^{\vy,j})^{\dagger}\big)(\ml^{\vx,j})^{\dagger}\big((\ms^\vx\circ\ms^\vx)^{j}\big)^{-1}\dt+O(\dt^2)\\
    &= \big(\ml^{\vy,j}\mr{\nf}^j+\mr{\nf}^j(\ml^{\vy,j})^{\dagger}+(\ms^\vy\circ\ms^\vy)^j-\mr{\nf}^j(\ml^{\vy,j})^{\dagger}\big)(\ml^{\vx,j})^{\dagger}\big((\ms^\vx\circ\ms^\vx)^{j}\big)^{-1}\dt+O(\dt^2)\\
    &=\mathbf{G}^{\vy,j}\mr{\nf}^j(\ml^{\vx,j})^{\dagger}\big((\ms^\vx\circ\ms^\vx)^{j}\big)^{-1}\dt+O(\dt^2),
\end{align*}
which is exactly the expression that we had in \eqref{eq:fjexpandedsimple}.

Having established the expressions for all auxiliary matrices, we can now finally proceed with the inspection of the conditional joint distribution of $p(\vy^j,\vy^{j+1}|\vx^s,s\leq J)$ and the recovery of the discrete smoother formulae for the posterior (smoother) mean and covariance. By definition of the conditional distribution, we have
\begin{align*}
    p(\vy^j,\vy^{j+1}|\vx^s,s\leq J)&=p(\vy^{j}|\vy^{j+1},\vx^s,s\leq J)p(\vy^{j+1}|\vx^s,s\leq J)\\
    &=p(\vy^{j}|\vy^{j+1},\vx^s,s\leq j+1)p(\vy^{j+1}|\vx^s,s\leq J) \ (\because \text{ Markov property}).
\end{align*}
The first conditional on the right is Gaussian, as shown in Lemma \ref{lem:backwardscond}, $\pp\big(\vy^j\big|\vy^{j+1},\vx^s, s\leq j+1\big)\overset{\rmd}{\sim} \mathcal{N}_l(\mathbf{m}^j_{j+1},\mathbf{P}^j_{j+1})$, with the conditional mean and covariance given by \eqref{eq:backcond3}--\eqref{eq:backcond4}. As for the second conditional, after marginalizing the discrete-time result of Theorem \ref{thm:condgaussianity} through Lemma \ref{lem:affinity}, we have that it also follows a normal distribution, which we denote by $\mathcal{N}_l(\vm{\normalfont{s}}^{j+1,J},\mr{\normalfont{s}}^{j+1,J})$. As such, by marginalizing this equation through Lemma \ref{lem:bayesian} (over $\vy^{j+1}$) and by using \eqref{eq:backcond3}--\eqref{eq:backcond4}, we arrive at the following result (where again note that we are conditioning on $\vx^s,s\leq J$):
\begin{align*}
    p(\vy^{j}|\vx^s,s\leq J)&=\int_{\mathbb{C}^l}p(\vy^j,\vy^{j+1}|\vx^s,s\leq J)\rmd\vy^{j+1}\\
    &=\int_{\mathbb{C}^l}p(\vy^{j}|\vy^{j+1},\vx^s,s\leq j+1)p(\vy^{j+1}|\vx^s,s\leq J)\rmd\vy^{j+1}\\
    &= \int_{\mathbb{C}^l}\mathcal{N}_l(\vy^{j};\mathbf{E}^j\vy^{j+1}+\mathbf{b}^j,\mathbf{P}^j_{j+1})\mathcal{N}_l(\vy^{j+1};\vm{\normalfont{s}}^{j+1,J},\mr{\normalfont{s}}^{j+1,J})\rmd\vy^{j+1}, \ \mathbf{b}^j:=\mathbf{m}^j_{j+1}-\mathbf{E}^j\vy^{j+1}\\
    &=\mathcal{N}_l\big(\vy^{j};\mathbf{E}^j\vm{\normalfont{s}}^{j+1,J}+\mathbf{b}^j,\mathbf{E}^j\mr{\normalfont{s}}^{j+1,J}(\mathbf{E}^j)^{\dagger}+\mathbf{P}^j_{j+1}\big).
\end{align*}
The explicit expression of $\mathbf{b}^j$ is
\begin{equation} \label{eq:residualvector}
    \mathbf{b}^j=\vm{\nf}^j-\mathbf{E}^j\big((\mathbf{I}_{l\times l}+\ml^{\vy,j}\dt)\vm{\nf}^j+\vf^{\vy,j}\dt\big)+\mathbf{F}^j\big(\vx^{j+1}-\vx^{j}-(\ml^{\vx,j}\vm{\nf}^j+\vf^{\vx,j})\dt\big),
\end{equation}
where $\mathbf{E}^j$ and $\mathbf{F}^j$ up to order $O(\dt)$ are given in \eqref{eq:ejexpanded}--\eqref{eq:fjexpanded}. As a result, with the help of \eqref{eq:backcond3}--\eqref{eq:backcond4} and the previous expressions, we see that the mean and covariance of the discrete smoother are
\begin{align*}
    \vm{\normalfont{s}}^{j,J}&=\mathbf{E}^j\vm{\normalfont{s}}^{j+1,J}+\mathbf{b}^j=\vm{\nf}^j+\mathbf{C}_{j+1}^j\begin{pmatrix}
        \vm{\normalfont{s}}^{j+1,J}-(\mathbf{I}_{l\times l}+\ml^{\vy,j}\dt)\vm{\nf}^j-\vf^{\vy,j}\dt\\
        \vx^{j+1}-\vx^{j}-(\ml^{\vx,j}\vm{\nf}^j+\vf^{\vx,j})\dt
        \end{pmatrix}\\
    &=\vm{\nf}^j+\mathbf{E}^j\big(\vm{\normalfont{s}}^{j+1,J}-(\mathbf{I}_{l\times l}+\ml^{\vy,j}\dt)\vm{\nf}^j-\vf^{\vy,j}\dt\big)+\mathbf{F}^j\big(\vx^{j+1}-\vx^{j}-(\ml^{\vx,j}\vm{\nf}^j+\vf^{\vx,j})\dt\big),\\
    \mr{\normalfont{s}}^{j,J}&=\mathbf{P}^j_{j+1}+\mathbf{E}^j\mr{\normalfont{s}}^{j+1,J}(\mathbf{E}^j)^{\dagger}=\mr{\nf}^j-\mathbf{C}_{j+1}^j\mathbf{C}^j_{22}(\mathbf{C}_{j+1}^j)^\dagger+\mathbf{E}^j\mr{\normalfont{s}}^{j+1,J}(\mathbf{E}^j)^{\dagger}\\
    &=\mr{\nf}^j-\mathbf{E}^j(\mathbf{I}_{l\times l}+\ml^{\vy,j}\dt)\mr{\nf}^j-\mathbf{F}^j\ml^{\vx,j}\mr{\nf}^j\dt+\mathbf{E}^j\mr{\normalfont{s}}^{j+1,J}(\mathbf{E}^j)^{\dagger}\\
    &=\mr{\nf}^j+\mathbf{E}^j\big(\mr{\normalfont{s}}^{j+1,J}(\mathbf{E}^j)^{\dagger}-(\mathbf{I}_{l\times l}+\ml^{\vy,j}\dt)\mr{\nf}^j\big)-\mathbf{F}^j\ml^{\vx,j}\mr{\nf}^j\dt,
\end{align*}
since it is easy to see that
\begin{equation} \label{eq:residualmatrixauxiliary}
    \mathbf{C}_{j+1}^j\mathbf{C}^j_{22}(\mathbf{C}_{j+1}^j)^\dagger=\mathbf{C}_{j+1}^j(\mathbf{C}^j_{12})^\dagger=\mathbf{E}^j(\mathbf{I}_{l\times l}+\ml^{\vy,j}\dt)\mr{\nf}^j+\mathbf{F}^j\ml^{\vx,j}\mr{\nf}^j\dt,
\end{equation}
where we note that this recovers the previous expression we had in \eqref{eq:covarianceequality}, when we have assumed the absence of noise cross-interaction. This finishes the proof of Lemma \ref{lem:discretesmoother}.
\end{proof}

With these Lemmas, we are ready to prove the optimal nonlinear smoother state estimation backward equations.
\begin{proof}[\textbf{Proof of Theorem \ref{thm:smoothing}}]
    This is Theorem 12.10 in Liptser \& Shiryaev \cite{liptser2001statisticsII}. For the analogous result in the case of discrete time, see Theorem 13.12, which outlines the corresponding optimal recursive nonlinear smoother backward difference equations, with the respective sufficient assumptions given in Subchapters 13.2.1 and 13.3.8. Here we instead provide a martingale-free proof by working over the time discretization of our model and then transitioning to the continuous-time case via the usual limit.

Recall from Lemma \ref{lem:discretesmoother}, that
    \begin{align*}
    \vm{\normalfont{s}}^{j,J}&=\vm{\nf}^j+\mathbf{E}^j\big(\vm{\normalfont{s}}^{j+1,J}-(\mathbf{I}_{l\times l}+\ml^{\vy,j}\dt)\vm{\nf}^j-\vf^{\vy,j}\dt\big)+\mathbf{F}^j\big(\vx^{j+1}-\vx^{j}-(\ml^{\vx,j}\vm{\nf}^j+\vf^{\vx,j})\dt\big),\\
    \mr{\normalfont{s}}^{j,J}&=\mr{\nf}^j+\mathbf{E}^j\big(\mr{\normalfont{s}}^{j+1,J}(\mathbf{E}^j)^{\dagger}-(\mathbf{I}_{l\times l}+\ml^{\vy,j}\dt)\mr{\nf}^j\big)-\mathbf{F}^j\ml^{\vx,j}\mr{\nf}^j\dt,
    \end{align*}
    for every $j=0,1,\ldots,J$, where the auxiliary matrices appearing in these expressions are provided up to leading-order terms in Lemma \ref{lem:discretesmoother} and its proof.

    We start off with the smoother mean. Subtracting $\vm{\normalfont{s}}^{j+1,J}$ on both sides of \eqref{eq:discretesmoother1} yields
    \begin{align*}
        \vm{\normalfont{s}}^{j,J}-\vm{\normalfont{s}}^{j+1,J}&=\vm{\nf}^j-\vm{\normalfont{s}}^{j+1,J}\\
        &\hspace{0.4cm}+\mathbf{E}^j\big(\vm{\normalfont{s}}^{j+1,J}-(\mathbf{I}_{l\times l}+\ml^{\vy,j}\dt)\vm{\nf}^j-\vf^{\vy,j}\dt\big)+\mathbf{F}^j\big(\vx^{j+1}-\vx^{j}-(\ml^{\vx,j}\vm{\nf}^j+\vf^{\vx,j})\dt\big).
    \end{align*}
    We now try to account for all the terms on the right-hand side of this mean-difference equation. Recalling the expressions for $\mathbf{E}^j$ and $\mathbf{F}^j$ proved in Lemma \ref{lem:discretesmoother} and by letting $\vm{\normalfont{s}}^{j,J}-\vm{\normalfont{s}}^{j+1,J}=\boldsymbol{\gamma}_1+\boldsymbol{\gamma}_2+\boldsymbol{\gamma}_3+\boldsymbol{\gamma}_4$, for $\boldsymbol{\gamma}_i$'s to be determined based on the expansions of these auxiliary matrices, then for each term, by suppressing terms of order higher that $O(\dt)$, we have:
    \begin{itemize}
        \item[$\bullet \ \boldsymbol{\gamma}_1$:]\hspace{0.2em} Letting $\boldsymbol{\gamma}_1:=\vm{\nf}^j-\vm{\normalfont{s}}^{j+1,J}+\mathbf{C}_j^j\big(\vm{\normalfont{s}}^{j+1,J}-(\mathbf{I}_{l\times l}+\ml^{\vy,j}\dt)\vm{\nf}^j-\vf^{\vy,j}\dt\big)$, we have
        \begin{align*}
            \boldsymbol{\gamma}_1&=\vm{\nf}^j-\vm{\normalfont{s}}^{j+1,J}+(\mathbf{I}_{l\times l}-\mathbf{G}^{\vy,j}\dt)\big(\vm{\normalfont{s}}^{j+1,J}-(\mathbf{I}_{l\times l}+\ml^{\vy,j}\dt)\vm{\nf}^j-\vf^{\vy,j}\dt\big)\\
            &=\vm{\nf}^j-\vm{\normalfont{s}}^{j+1,J}+\vm{\normalfont{s}}^{j+1,J}-(\mathbf{I}_{l\times l}+\ml^{\vy,j}\dt)\vm{\nf}^j-\vf^{\vy,j}\dt+\mathbf{G}^{\vy,j}(\vm{\nf}^j-\vm{\normalfont{s}}^{j+1,J})\dt+O(\dt^2)\\
            &=(-\ml^{\vy,j}\vm{\normalfont{s}}^{j+1,J}-\vf^{\vy,j}+(\ms^\vy\circ\ms^\vy)^j(\mr{\nf}^j)^{-1}(\vm{\nf}^j-\vm{\normalfont{s}}^{j+1,J}))\dt+O(\dt^2).
        \end{align*}

        \item[$\bullet \ \boldsymbol{\gamma}_2$:]\hspace{0.2em} Expanding terms, we have
        \begin{align*}
             \mathbf{E}^j\big(\vm{\normalfont{s}}^{j+1,J}-(\mathbf{I}_{l\times l}+\ml^{\vy,j}\dt)\vm{\nf}^j-\vf^{\vy,j}\dt\big)&=\mathbf{C}_j^j\big(\vm{\normalfont{s}}^{j+1,J}-(\mathbf{I}_{l\times l}+\ml^{\vy,j}\dt)\vm{\nf}^j-\vf^{\vy,j}\dt\big)\\
             &\hspace*{-5cm}
             +(\ms^\vy\circ\ms^\vx)^j\big((\ms^\vx\circ\ms^\vx)^{j}\big)^{-1}\mathbf{G}^{\vx,j}\big(\vm{\normalfont{s}}^{j+1,J}-(\mathbf{I}_{l\times l}+\ml^{\vy,j}\dt)\vm{\nf}^j-\vf^{\vy,j}\dt\big)\dt+O(\dt^2),
        \end{align*}
        where we used the fact that $\mathbf{E}^j=\mathbf{C}_j^j+(\ms^\vy\circ\ms^\vx)^j\big((\ms^\vx\circ\ms^\vx)^{j}\big)^{-1}\mathbf{G}^{\vx,j}\dt+O(\dt^2)$. The first term is already accounted for in $\boldsymbol{\gamma}_1$, and so we let
        \begin{equation*}
            \boldsymbol{\gamma}_2:=(\ms^\vy\circ\ms^\vx)^j\big((\ms^\vx\circ\ms^\vx)^{j}\big)^{-1}\mathbf{G}^{\vx,j}\big(\vm{\normalfont{s}}^{j+1,J}-(\mathbf{I}_{l\times l}+\ml^{\vy,j}\dt)\vm{\nf}^j-\vf^{\vy,j}\dt\big)\dt.
        \end{equation*}
        We then have
        \begin{align*}
            \boldsymbol{\gamma}_2&=(\ms^\vy\circ\ms^\vx)^j\big((\ms^\vx\circ\ms^\vx)^{j}\big)^{-1}\mathbf{G}^{\vx,j}(\vm{\normalfont{s}}^{j+1,J}-\vm{\nf}^j)\dt+O(\dt^2)\\
            &=(\ms^\vy\circ\ms^\vx)^j\big((\ms^\vx\circ\ms^\vx)^{j}\big)^{-1}(\ml^{\vx,j}+(\ms^\vx\circ\ms^\vy)^j\big(\mr{\nf}^j\big)^{-1})(\vm{\normalfont{s}}^{j+1,J}-\vm{\nf}^j)\dt+O(\dt^2)\\
            &=-(\ms^\vy\circ\ms^\vx)^j\big((\ms^\vx\circ\ms^\vx)^{j}\big)^{-1}\ml^{\vx,j}(\vm{\nf}^j-\vm{\normalfont{s}}^{j+1,J})\dt\\
            &\hspace{1cm} -(\ms^\vy\circ\ms^\vx)^j\big((\ms^\vx\circ\ms^\vx)^{j}\big)^{-1}(\ms^\vx\circ\ms^\vy)^j\big(\mr{\nf}^j\big)^{-1}(\vm{\nf}^j-\vm{\normalfont{s}}^{j+1,J})\dt+O(\dt^2).
        \end{align*}

        \item[$\bullet \ \boldsymbol{\gamma}_3$:]\hspace{0.2em} Letting $\boldsymbol{\gamma}_3:=-\mathbf{F}^j\big(\ml^{\vx,j}\vm{\nf}^j+\vf^{\vx,j}\big)\dt$, we have
        \begin{align*}
            \boldsymbol{\gamma}_3&=\mr{\nf}^j\big((\mathbf{K}^j)^\dagger-(\ml^{\vx,j})^\dagger\big((\ms^\vx\circ\ms^\vx)^{j}\big)^{-1}\big)\big(\ml^{\vx,j}\vm{\nf}^j+\vf^{\vx,j}\big)\dt+O(\dt^2)\\
            &=\mr{\nf}^j\big((\mr{\nf}^j)^{-1}(\ms^\vy\circ\ms^\vx)^j+(\ml^{\vx,j})^\dagger-(\ml^{\vx,j})^\dagger\big)\big((\ms^\vx\circ\ms^\vx)^{j}\big)^{-1}\big(\ml^{\vx,j}\vm{\nf}^j+\vf^{\vx,j}\big)\dt+O(\dt^2)\\
            &=(\ms^\vy\circ\ms^\vx)^j\big((\ms^\vx\circ\ms^\vx)^{j}\big)^{-1}\big(\ml^{\vx,j}\vm{\nf}^j+\vf^{\vx,j}\big)\dt+O(\dt^2).
        \end{align*}

        \item[$\bullet \ \boldsymbol{\gamma}_4$:]\hspace{0.2em} First, we let $\boldsymbol{\gamma}_4:=\mathbf{F}^j\left(\vx^{j+1}-\vx^j\right)$. But, observe from \eqref{eq:discretecondgauss1} that
        \begin{equation*}
            \Delta\vx^j:=\vx^{j+1}-\vx^j=\left(\ml^{\vy,j}\vy^j+\vf^{\vy,j}\right)\dt+\ms_1^{\vy,j}\sqrt{\dt} \ve_1^j+\ms_2^{\vy,j}\sqrt{\dt} \ve_2^j=O(\sqrt{\dt}),
        \end{equation*}
        and by linearity $\ee{\Delta\vx^j\big|\vx^s,s\leq J}=O(\dt)$. As such, in a similar manner as with $\boldsymbol{\gamma}_3$, we see that
        \begin{equation*}
            \boldsymbol{\gamma}_4=-(\ms^\vy\circ\ms^\vx)^j\big((\ms^\vx\circ\ms^\vx)^{j}\big)^{-1}\left(\vx^{j+1}-\vx^j\right)+O(\dt^{3/2}).
        \end{equation*}
    \end{itemize}

    Adding all of these together based on the definitions of the $\boldsymbol{\gamma}_i$'s and expansions of the auxiliary matrices as shown in Lemma \ref{lem:discretesmoother}, and carrying out the necessary algebra, after taking the limit as $\dt\to0^+$, or equivalently as $J\to \infty$, we end up with
    \begin{equation*}
        \smooth{\rmd  \vm{\ns}}(t)=-\big(\ml^\vy\vm{\ns}+\vf^\vy-\mb\mr{\nf}^{-1}(\vm{\nf}-\vm{\ns})\big)\rmd t+ (\ms^\vy\circ \ms^\vx)(\ms^\vx\circ \ms^\vx)^{-1}\big(\smooth{ \rmd\vx}+(\ml^\vx\vm{\ns}+\vf^\vx)\rmd t \big),
    \end{equation*}
    for $T\geq t\geq 0$ and where the $\ma,\mb$ matrices are given as in \eqref{eq:auxiliarymata}--\eqref{eq:auxiliarymatb}. The backward-arrow notation on the left-hand side, and $\smooth{\rmd \vx}$ on right-hand side, denotes
    \begin{equation*}
        \smooth{\rmd \vm{\ns}}(t):=\lim_{\Delta t\to 0} \left(\vm{\ns}(t)-\vm{\ns}(t+\Delta t)\right),\quad \smooth{\rmd \vx}(t):=\lim_{\Delta t\to 0} \left(\vx(t)-\vx(t+\Delta t)\right),
    \end{equation*}
    for $\vm{\ns}(t)$ being the conditional mean of the smoother distribution $\pp\big(\vyt\big|\vx(s), s\in[0,T]\big)$ at time $t$, where on the right-hand side the smoother Gaussian mean is evaluated at $t+\dt$, while all other terms and the filter mean are being evaluated at $t$.

    We now look into the smoother covariance tensor. Subtracting $\mr{\normalfont{s}}^{j,J}$ on both sides of \eqref{eq:discretesmoother2} yields
    \begin{align*}
        \hspace*{-1.2cm}
        \mr{\normalfont{s}}^{j,J}-\mr{\normalfont{s}}^{j+1,J}&=\mr{\nf}^j-\mr{\normalfont{s}}^{j+1,J}-\mathbf{C}_{j+1}^j\mathbf{C}^j_{22}(\mathbf{C}_{j+1}^j)^\dagger+\mathbf{E}^j\mr{\normalfont{s}}^{j+1,J}(\mathbf{E}^j)^{\dagger}\\
        \hspace*{-1.2cm}
        &=\mr{\nf}^j-\mr{\normalfont{s}}^{j+1,J}-\mathbf{E}^j\left(\mathbf{C}_{22}^j\right)_{11}(\mathbf{E}^j)^\dagger-\mathbf{F}^j\left(\mathbf{C}_{22}^j\right)^{\hspace{0.4em} \tran}_{12}(\mathbf{E}^j)^\dagger\\
        &\hspace{2.35cm}-\mathbf{E}^j\left(\mathbf{C}_{22}^j\right)_{12}(\mathbf{F}^j)^\dagger-\mathbf{F}^j\left(\mathbf{C}_{22}^j\right)_{22}(\mathbf{F}^j)^\dagger+\mathbf{E}^j\mr{\normalfont{s}}^{j+1,J}(\mathbf{E}^j)^{\dagger},
    \end{align*}
    where we have used \eqref{eq:blockcovmatrix}, \eqref{eq:auxiliaryblocks}, and block-matrix algebra to expand the term $\mathbf{C}_{j+1}^j\mathbf{C}^j_{22}(\mathbf{C}_{j+1}^j)^\dagger$. We now try to account for all the terms on the right-hand side of the covariance-difference equation by letting
    \begin{equation*}
        \mr{\normalfont{s}}^{j,J}-\mr{\normalfont{s}}^{j+1,J}=\sum_{i=1}^6\boldsymbol{\Theta}_i,
    \end{equation*}
    for $\boldsymbol{\Theta}_i$'s to be determined based on the expansions of auxiliary matrices up to leading-order terms as outlined in Lemmas \ref{lem:backwardscond} and \ref{lem:discretesmoother}. Specifically, for each term, by suppressing terms of order higher than $O(\dt)$, we have:
    \begin{itemize}
        \item[$\bullet \ \boldsymbol{\Theta}_1$:]\hspace{0.4em} Letting $\boldsymbol{\Theta}_1:=\mr{\nf}^j-\mr{\normalfont{s}}^{j+1,J}+\mathbf{C}_j^j\Big(\mr{\normalfont{s}}^{j+1,J}-\left(\mathbf{C}_{22}^j\right)_{11}\big)(\mathbf{C}_j^j)^\dagger$, we have
        \begin{align*}
            \boldsymbol{\Theta}_1&=\mr{\nf}^j-\mr{\normalfont{s}}^{j+1,J}+(\mathbf{I}_{l\times l}-\mathbf{G}^{\vy,j}\dt)\big(\mr{\normalfont{s}}^{j+1,J}-\mr{\nf}^j(\mathbf{I}_{l\times l}+\mathbf{H}^j\dt)\big)(\mathbf{I}_{l\times l}-\mathbf{G}^{\vy,j}\dt)^\dagger+O(\dt^2)\\
            &=-\big(\mathbf{G}^{\vy,j}\mr{\normalfont{s}}^{j+1,J}+\mr{\normalfont{s}}^{j+1,J}(\mathbf{G}^{\vy,j})^\dagger-(\ms^\vy\circ\ms^\vy)^j\big)\dt+O(\dt^2).
        \end{align*}

        \item[$\bullet \Bigg\{\substack{\displaystyle \boldsymbol{\Theta}_2\\ \\ \displaystyle \boldsymbol{\Theta}_3}$:]\hspace{1em} Expanding terms, we have
        \begin{align*}
             \mathbf{E}^j\Big(\mr{\normalfont{s}}^{j+1,J}-\left(\mathbf{C}_{22}^j\right)_{11}\Big)(\mathbf{E}^j)^\dagger&=\mathbf{C}_j^j\Big(\mr{\normalfont{s}}^{j+1,J}-\left(\mathbf{C}_{22}^j\right)_{11}\Big)(\mathbf{C}_j^j)^\dagger\\
             &+\mathbf{C}_j^j\big(\mr{\normalfont{s}}^{j+1,J}-\mr{\nf}^j\big)(\mathbf{G}^{\vx,j})^\dagger\big((\ms^\vx\circ\ms^\vx)^{j}\big)^{-1}(\ms^\vx\circ\ms^\vy)^j\dt\\
             &+(\ms^\vy\circ\ms^\vx)^j\big((\ms^\vx\circ\ms^\vx)^{j}\big)^{-1}\mathbf{G}^{\vx,j}\big(\mr{\normalfont{s}}^{j+1,J}-\mr{\nf}^j\big)(\mathbf{C}_j^j)^\dagger\dt+O(\dt^2),
        \end{align*}
        since $\left(\mathbf{C}_{22}^j\right)_{11}=\mr{\nf}^j+O(\dt)$ and $\mathbf{E}^j=\mathbf{C}_j^j+(\ms^\vy\circ\ms^\vx)^j\big((\ms^\vx\circ\ms^\vx)^{j}\big)^{-1}\mathbf{G}^{\vx,j}\dt+O(\dt^2)$. The first term is already accounted for in $\boldsymbol{\Theta}_1$, and so we let
        \begin{equation*}
            \boldsymbol{\Theta}_2=\mathbf{C}_j^j\big(\mr{\normalfont{s}}^{j+1,J}-\mr{\nf}^j\big)(\mathbf{G}^{\vx,j})^\dagger\big((\ms^\vx\circ\ms^\vx)^{j}\big)^{-1}(\ms^\vx\circ\ms^\vy)^j\dt,
        \end{equation*}
        and $\boldsymbol{\Theta}_3=(\boldsymbol{\Theta}_2)^\dagger$. Using then the fact that $\mathbf{C}_j^j=\mathbf{I}_{l\times l} +O(\dt)$, it is immediate that
        \begin{equation*}
            \boldsymbol{\Theta}_2=\big(\mr{\normalfont{s}}^{j+1,J}-\mr{\nf}^j\big)(\mathbf{G}^{\vx,j})^\dagger\big((\ms^\vx\circ\ms^\vx)^{j}\big)^{-1}(\ms^\vx\circ\ms^\vy)^j\dt+O(\dt^2),
        \end{equation*}
        and likewise for $\boldsymbol{\Theta}_3=(\boldsymbol{\Theta}_2)^\dagger=(\ms^\vy\circ\ms^\vx)^j\big((\ms^\vx\circ\ms^\vx)^{j}\big)^{-1}\mathbf{G}^{\vx,j}\big(\mr{\normalfont{s}}^{j+1,J}-\mr{\nf}^j\big)\dt+O(\dt^2)$.

        \item[$\bullet \Bigg\{\substack{\displaystyle \boldsymbol{\Theta}_4\\ \\ \displaystyle \boldsymbol{\Theta}_5}$:]\hspace{1em} Letting $\boldsymbol{\Theta}_4:=-\mathbf{F}^j\left(\mathbf{C}_{22}^j\right)^{\hspace{0.4em} \tran}_{12}(\mathbf{E}^j)^\dagger$, we have
        \begin{align*}
            \boldsymbol{\Theta}_4&=\mr{\nf}^j((\mathbf{G}^{\vx,j})^\dagger-(\ml^{\vx,j})^\dagger+O(\dt))\big((\ms^\vx\circ\ms^\vx)^{j}\big)^{-1}\mathbf{G}^{\vx,j}\mr{\nf}^j((\mathbf{C}_j^j)^\dagger+O(\dt))\dt\\
            &=\mr{\nf}^j((\mathbf{G}^{\vx,j})^\dagger-(\ml^{\vx,j})^\dagger)\big((\ms^\vx\circ\ms^\vx)^{j}\big)^{-1}\mathbf{G}^{\vx,j}\mr{\nf}^j(\mathbf{C}_j^j)^\dagger\dt+O(\dt^2)\\
            &=(\ms^\vy\circ\ms^\vx)^j\big((\ms^\vx\circ\ms^\vx)^{j}\big)^{-1}\mathbf{G}^{\vx,j}\mr{\nf}^j\dt+O(\dt^2),
        \end{align*}
        and likewise for
        \begin{equation*}
        \boldsymbol{\Theta}_5:=(\boldsymbol{\Theta}_4)^\dagger=-\mathbf{E}^j\left(\mathbf{C}_{22}^j\right)_{12}(\mathbf{F}^j)^\dagger=\mr{\nf}^j(\mathbf{G}^{\vx,j})^\dagger\big((\ms^\vx\circ\ms^\vx)^{j}\big)^{-1}(\ms^\vx\circ\ms^\vy)^j\dt+O(\dt^2).
        \end{equation*}
        
        \item[$\bullet \ \boldsymbol{\Theta}_6$:]\hspace{0.4em} Letting $\boldsymbol{\Theta}_6:=-\mathbf{F}^j\left(\mathbf{C}_{22}^j\right)_{22}(\mathbf{F}^j)^\dagger$, we have
        \begin{align*}
            \boldsymbol{\Theta}_6&=-\mr{\nf}^j((\mathbf{G}^{\vx,j})^\dagger-(\ml^{\vx,j})^\dagger+O(\dt))\big((\ms^\vx\circ\ms^\vx)^{j}\big)^{-1}(\ms^\vx\circ\ms^\vx)^j\\
            &\hspace{2cm}\times\big((\ms^\vx\circ\ms^\vx)^{j}\big)^{-1}(\mathbf{G}^{\vx,j}-\ml^{\vx,j}+O(\dt))\mr{\nf}^j\dt\\
            &=-\mr{\nf}^j((\mathbf{G}^{\vx,j})^\dagger-(\ml^{\vx,j})^\dagger)\big((\ms^\vx\circ\ms^\vx)^{j}\big)^{-1}(\mathbf{G}^{\vx,j}-\ml^{\vx,j})\mr{\nf}^j\dt+O(\dt^2)\\
            &=-(\ms^\vy\circ\ms^\vx)^j\big((\ms^\vx\circ\ms^\vx)^{j}\big)^{-1}(\ms^\vx\circ\ms^\vy)^j\dt+O(\dt^2).
        \end{align*}
    \end{itemize}

    Like we did for the smoother mean, adding all of these together, and based on the definitions of the $\boldsymbol{\Theta}_i$'s and expansions of the auxiliary matrices as shown in Lemmas \ref{lem:backwardscond} and \ref{lem:discretesmoother}, carrying out the necessary algebra and afterwards taking the limit as $\dt\to0^+$, or equivalently as $J\to \infty$, yields
    \begin{equation*}
        \smooth{\rmd \mr{\ns}}(t)= -\big((\ma+\mb\mr{\nf}^{-1})\mr{\ns}+\mr{\ns}(\ma+\mb\mr{\nf}^{-1})^\dagger-\mb\big)\rmd t,
    \end{equation*}
    for $T\geq t\geq 0$, where the $\ma,\mb$ matrices are given as in \eqref{eq:auxiliarymata}--\eqref{eq:auxiliarymatb}, and with the backward-arrow notation on the left-hand side denoting
    \begin{equation*}
        \smooth{\rmd \mr{\ns}}(t):=\lim_{\Delta t\to 0} \left(\mr{\ns}(t)-\mr{\ns}(t+\Delta t)\right),
    \end{equation*}
    for $\mr{\ns}(t)$ being the conditional covariance of the smoother distribution $\pp\big(\vyt\big|\vx(s), s\in[0,T]\big)$ at time $t$, where on the right-hand side the smoother covariance is evaluated at $t+\dt$, while all other terms and the filter estimate are being evaluated at $t$.

    Finally, the existence and uniqueness of continuous solutions to \eqref{eq:revbackinter1}--\eqref{eq:revbackinter2}, that continuously depend on the initial distribution and additional model parameters, can be established by the fact that $\vm{\ns}(t)$ and $\mr{\ns}(t)$ solve backward random ordinary differential equation that are linear in $\vm{\ns}(t)$ and $\mr{\ns}(t)$, respectively, with matrix-valued coefficients and forcings that satisfy the conditions of the random variant of the Picard-Lindel\"of theorem (under our assumed regularity conditions) \cite{strand1970random, han2017random, soong1973random}; adaptations are of course needed from the parallel approach in establishing these properties for the filter posterior Gaussian statistics, since \eqref{eq:revbackinter1}--\eqref{eq:revbackinter2} now run backwards in time (with a terminating instead of an initial condition, specifically the identification of the smoother statistics through the filter ones) \cite{lu2021mathematical, lim2001linear, sun2021linear}. An alternative approach to the existence, uniqueness, and continuity of the smoother Gaussian statistics is provided in the corresponding results from Liptser \& Shiryayev \cite{liptser2001statisticsII}.
\end{proof}

\subsection{Proof of Theorem \ref{thm:forwardsample}} \label{sec:Appendix_D}

Before proceeding with the derivation and proof of the formula for conditional optimal nonlinear forward sampling of the unobserved process in a CGNS, we first establish the following lemma.
\begin{lem}[\textbf{Cross-Covariance of $\boldsymbol{(\vy^{j},\vy^{j+1})}$ Conditioned on $\vx^s,s\leq J$}] \label{lem:cross-covariance}
        The cross-covariance matrix between $\vy^j$ and $\vy^{j+1}$ when conditioned on the whole observed time series of $\vx^s$, $s\leq J$, for $j+1\leq J$, is given by
        \begin{equation*}
            \mathrm{Cov}\big(\vy^j,\vy^{j+1}\big| \vx^s, s\leq J\big)=\mathbf{E}^j\mr{\normalfont{s}}^{j+1,J},
        \end{equation*}
        where $\mathbf{E}^j$ is defined by \eqref{eq:auxiliarymat6}.
\end{lem}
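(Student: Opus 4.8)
The plan is to combine the backward-conditional law from Lemma~\ref{lem:backwardscond} with the tower property of conditional expectation, leaning on the $\cF^\vx_j$-measurability of the CGNS coefficients and of the filter statistics. Abbreviate $\mathcal{G}:=\sigma(\vx^s,\,s\leq J)$. The structural input is the exact (in discrete time, for every $j$ and $\dt$) identity in \eqref{eq:secondordermarkov}, $\pp(\vy^j\big|\vy^{j+1},\vx^s,s\leq j+1)=\pp(\vy^j\big|\vy^{j+1},\vx^s,s\leq J)$, which together with \eqref{eq:backcond3} gives
\begin{equation*}
\ee{\vy^j\big|\vy^{j+1},\mathcal{G}}=\mathbf{m}_{j+1}^j=\vm{\nf}^j+\mathbf{E}^j\big(\vy^{j+1}-(\mathbf{I}_{l\times l}+\ml^{\vy,j}\dt)\vm{\nf}^j-\vf^{\vy,j}\dt\big)+\mathbf{F}^j\big(\vx^{j+1}-\vx^j-(\ml^{\vx,j}\vm{\nf}^j+\vf^{\vx,j})\dt\big),
\end{equation*}
an affine function of $\vy^{j+1}$ whose linear coefficient is exactly the $(1,1)$-block $\mathbf{E}^j$ of $\mathbf{C}_{j+1}^j$.

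Next I would record the measurability bookkeeping: $\vm{\nf}^j,\mathbf{E}^j,\mathbf{F}^j,\ml^{\vy,j},\vf^{\vy,j},\ml^{\vx,j},\vf^{\vx,j}$ are all $\cF^\vx_j$-measurable (hence $\mathcal{G}$-measurable, as $j\leq J$), and $\vx^j,\vx^{j+1}$ are $\mathcal{G}$-measurable, so $\mathbf{m}_{j+1}^j=\mathbf{E}^j\vy^{j+1}+\mathbf{c}^j$ with $\mathbf{c}^j$ a $\mathcal{G}$-measurable random vector. Applying $\ee{\,\cdot\,\big|\mathcal{G}}$ and recalling $\vm{\ns}^{j,J}=\ee{\vy^j\big|\mathcal{G}}=\ee{\mathbf{m}_{j+1}^j\big|\mathcal{G}}$ (which also matches the smoother mean recursion \eqref{eq:discretesmoother1} of Lemma~\ref{lem:discretesmoother}) yields $\vm{\ns}^{j,J}=\mathbf{E}^j\vm{\ns}^{j+1,J}+\mathbf{c}^j$, i.e.\ $\mathbf{c}^j=\vm{\ns}^{j,J}-\mathbf{E}^j\vm{\ns}^{j+1,J}$.

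Then I would compute the conditional cross-second-moment by conditioning first on $(\vy^{j+1},\mathcal{G})$, pulling the $\vy^{j+1}$-measurable factor $(\vy^{j+1})^\dagger$ out of the inner expectation, substituting $\ee{\vy^j\big|\vy^{j+1},\mathcal{G}}=\mathbf{E}^j\vy^{j+1}+\mathbf{c}^j$, and then taking the outer $\ee{\,\cdot\,\big|\mathcal{G}}$ and using $\ee{\vy^{j+1}(\vy^{j+1})^\dagger\big|\mathcal{G}}=\mr{\ns}^{j+1,J}+\vm{\ns}^{j+1,J}(\vm{\ns}^{j+1,J})^\dagger$:
\begin{align*}
\ee{\vy^j(\vy^{j+1})^\dagger\big|\mathcal{G}}
&=\ee{\big(\mathbf{E}^j\vy^{j+1}+\mathbf{c}^j\big)(\vy^{j+1})^\dagger\,\big|\,\mathcal{G}}\\
&=\mathbf{E}^j\big(\mr{\ns}^{j+1,J}+\vm{\ns}^{j+1,J}(\vm{\ns}^{j+1,J})^\dagger\big)+\big(\vm{\ns}^{j,J}-\mathbf{E}^j\vm{\ns}^{j+1,J}\big)(\vm{\ns}^{j+1,J})^\dagger\\
&=\mathbf{E}^j\mr{\ns}^{j+1,J}+\vm{\ns}^{j,J}(\vm{\ns}^{j+1,J})^\dagger.
\end{align*}
Subtracting $\vm{\ns}^{j,J}(\vm{\ns}^{j+1,J})^\dagger=\ee{\vy^j\big|\mathcal{G}}\,\ee{\vy^{j+1}\big|\mathcal{G}}^\dagger$ from both sides gives $\mathrm{Cov}\big(\vy^j,\vy^{j+1}\big|\vx^s,s\leq J\big)=\mathbf{E}^j\mr{\ns}^{j+1,J}$, as claimed.

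The main obstacle — and the only part I would write out in full — is justifying the two facts invoked as black boxes: that $\ee{\vy^j\big|\vy^{j+1},\mathcal{G}}$ genuinely coincides with the affine form $\mathbf{m}_{j+1}^j$ of Lemma~\ref{lem:backwardscond} (this hinges on the \emph{exact} discrete-time version of the second-order backward Markov property \eqref{eq:secondordermarkov}, not merely its $\dt\to0^+$ limit), and the ``take out what is known'' manipulations, whose required integrability of the relevant cross products is furnished by the finite fourth-moment assumptions \textbf{\magenta{(11)}}--\textbf{\magenta{(12)}}. Everything else is routine second-moment bookkeeping with the Hermitian transpose.
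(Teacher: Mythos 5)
Your proposal is correct. It shares the paper's key ingredient — the identification $\ee{\vy^j\big|\vy^{j+1},\vx^s,s\leq J}=\mathbf{m}_{j+1}^j$ via the \emph{exact} discrete-time Markov property \eqref{eq:markovproperty}, with the affine coefficient of $\vy^{j+1}$ being the $(1,1)$-block $\mathbf{E}^j$ of $\mathbf{C}_{j+1}^j$ — but the final step is genuinely different. The paper writes down the joint conditional Gaussian law of $(\vy^j,\vy^{j+1})$ given $\mathcal{G}$, invokes the theorem on normal correlation (Lemma \ref{lem:conditional}) to express $\ee{\vy^j\big|\vy^{j+1},\mathcal{G}}$ as $\vm{\ns}^{j,J}+\mathrm{Cov}\big(\vy^j,\vy^{j+1}\big|\mathcal{G}\big)\big(\mr{\ns}^{j+1,J}\big)^{-1}(\vy^{j+1}-\vm{\ns}^{j+1,J})$, and reads off the covariance by matching the coefficient of $\vy^{j+1}$; this uses both the Gaussianity of the smoother pair and the invertibility of $\mr{\ns}^{j+1,J}$. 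You instead compute $\ee{\vy^j(\vy^{j+1})^\dagger\big|\mathcal{G}}$ directly by the tower property, pulling out the $\mathcal{G}$-measurable intercept $\mathbf{c}^j=\vm{\ns}^{j,J}-\mathbf{E}^j\vm{\ns}^{j+1,J}$ and using $\ee{\vy^{j+1}(\vy^{j+1})^\dagger\big|\mathcal{G}}=\mr{\ns}^{j+1,J}+\vm{\ns}^{j+1,J}(\vm{\ns}^{j+1,J})^\dagger$; this needs only the affine form of the backward conditional mean plus integrability (second moments suffice, so your appeal to \textbf{\magenta{(11)}}--\textbf{\magenta{(12)}} is more than enough), and avoids both the Gaussianity argument and the inverse of the smoother covariance. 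Your measurability bookkeeping and the identity $\ee{\mathbf{m}_{j+1}^j\big|\mathcal{G}}=\vm{\ns}^{j,J}$ are correctly justified, and you rightly flag that the required input is the exact discrete-time identity (established in the proof of Lemma \ref{lem:backwardscond}), not its $\dt\to0^+$ limit. One cosmetic remark: $\mathbf{E}^j$ is the block defined in \eqref{eq:auxiliarymat5}/\eqref{eq:auxiliaryblocks} (the reference to \eqref{eq:auxiliarymat6} in the lemma statement is the paper's own mislabel), which is how you used it, so no issue.
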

\begin{proof}[\textbf{Proof of Lemma \ref{lem:cross-covariance}}] The standard proof would start from writing down the details of the joint distribution of $\vy^j$ and $\vy^{j+1}$ conditioned on the observations $\vx^s$, $s\leq J$, granted $j+1\leq J$. Here, we instead utilize a shortcut proxy method to arrive at the form of this cross-covariance matrix. A detailed proof which derives the form of this cross-covariance matrix by solving for the joint Gaussian distribution in detail, in the absence of cross-interacting noise feedbacks, can be found in Theorem A.2 of Chen \cite{chen2020learning} (i.e., for $\ms^\vy\circ\ms^\vx\equiv \zeros_{l\times k}$); the general case is an easy adaptation of that approach, by using the full $2\times 2$ block covariance matrix instead of a diagonal one.

We know from the discrete-time variant of Theorem \ref{thm:condgaussianity}, that the following joint distribution is Gaussian:
\begin{equation*}
    \pp\big(\vy^j,\vy^{j+1}\big|\vx^s, s\leq J\big)\overset{\rmd}{\sim}\mathcal{N}_{2l}\left(\begin{pmatrix}
        \vm{\ns}^{j,J}\\ \vm{\ns}^{j+1,J}
    \end{pmatrix},\begin{pmatrix}
        \mr{\ns}^{j,J} & \mathrm{Cov}\big(\vy^j,\vy^{j+1}\big| \vx^s, s\leq J\big)\\
        \mathrm{Cov}\big(\vy^{j+1},\vy^j\big| \vx^s, s\leq J\big) & \mr{\ns}^{j+1,J}
    \end{pmatrix}\right).
\end{equation*}
Then, in light of Lemma \ref{lem:conditional}, the conditional mean of $\vy^j$ given $\vy^{j+1}$ and $\vx^s$, $s\leq J$, is given by
\begin{equation*}
    \ee{\vy^j\big|\vy^{j+1},\vx^s,s\leq J}=\vm{\ns}^{j,J}+\mathrm{Cov}\big(\vy^j,\vy^{j+1}\big| \vx^s, s\leq J\big)\big(\mr{\ns}^{j+1,J}\big)^{-1}(\vy^{j+1}-\vm{\ns}^{j+1,J}).
\end{equation*}
But, in the general case where noise cross-interaction is present, this mean is exactly given by $\mathbf{m}_{j+1}^j$ in \eqref{eq:backcond3}, due to the Markovian property illustrated in \eqref{eq:markovproperty}. As such, by matching the terms appearing in front of $\vy^{j+1}$, after recalling that $\mathbf{C}_{j+1}^j=\begin{pmatrix} \mathbf{E}^j & \hspace{0.4em} \mathbf{F}^j \end{pmatrix}$), it is immediate that $\mathrm{Cov}\left(\vy^j,\vy^{j+1}\big| \vx^s, s\leq J\right)=\mathbf{E}^j\mr{\ns}^{j+1,J}$.
\end{proof}

We now proceed with the algorithm for forward sampling of the hidden process conditioned on the current observations.
\begin{proof}[\textbf{Proof of Theorem \ref{thm:forwardsample}}] By the discrete-time variant of Theorem \ref{thm:condgaussianity}, $\pp\big(\vy^{j+1},\vy^j\big|\vx^s,s\leq j+1\big)$ is normal (after using the exchangeability (in law) property of jointly distributed Gaussian random vectors by virtue of Lemma \ref{lem:affinity} and permutation matrices). Likewise, by $\pp\big(\vy^j,\vx^{j+1}\big|\vx^s,s\leq j\big)=\pp\big(\vx^{j+1}\big|\vy^j,\vx^s,s\leq j\big)\pp\big(\vy^j\big|,\vx^s,s\leq j\big)$ and Lemma \ref{lem:bayesian}, as a result of \eqref{eq:discretecondgauss1}, Lemma \ref{lem:affinity}, and Theorem \ref{thm:filtering}, so is $\pp\big(\vy^j,\vx^{j+1}\big|\vx^s,s\leq j\big)$. Also, note that in discrete time, for the forward sampling procedure, observations are given up to the $(j+1)$-st time instant which we consider to be the end point of the observable time series, and as such only at this point, by definition, the smoother and filter posterior Gaussian statistics coincide, i.e.,
\begin{equation} \label{eq:smootherfiltercoincide}
    \vm{\ns}^{j+1,j+1}=\vm{\nf}^{j+1} \text{ and } \mr{\ns}^{j+1,j+1}=\mr{\nf}^{j+1}.
\end{equation}
This observation is utilized in the sequel.

Now, we turn our attention to the joint distribution $p(\vy^j,\vx^{j+1}|\vx^s,s\leq j)$. As mentioned, this distribution is Gaussian ($\pp$-a.s.), and by \eqref{eq:cg1} and the optimal nonlinear filter estimate, it is explicitly expressed as
\begin{equation*}
    \pp\big(\vy^j,\vx^{j+1}|\vx^s,s\leq j\big)\overset{\rmd}{\sim}\mathcal{N}_{l+k}\left(\begin{pmatrix}
        \vm{\nf}^j\\\vx^j+(\ml^{\vx,j}\vm{\nf}^j+\vf^{\vx,j})\dt
    \end{pmatrix}, \begin{pmatrix}
        \mr{\nf}^j & \mr{\nf}^j(\ml^{\vx,j})^\dagger\dt \\
        \ml^{\vx,j}\mr{\nf}^j\dt & (\ms^\vx\circ\ms^\vx)^j\dt
    \end{pmatrix}\right).
\end{equation*}
As such, by virtue of Lemma \ref{lem:conditional}, we have that $\pp\big(\vy^j\big|\vx^s,s\leq j+1\big)\overset{\rmd}{\sim}\mathcal{N}_l(\mathbf{m}_{\text{f}}^{j,j+1},\mathbf{P}_{\text{f}}^{j,j+1})$, where the first superscript denotes the time step we are currently on for the unobserved process, while the latter denotes up to which time step we condition on, and the subscript ``$\,\text{f}\,$" denotes that this is a forward- or filter-based distribution. Note how we also condition on $\vx^{j+1}$ now, and as such this distribution differs from the filter posterior distribution. If we now define $\mathbf{M}^j:=\mr{\nf}^j(\ml^{\vx,j})^{\tran}$, then
\begin{align}
\begin{split}
    \mathbf{m}_{\text{f}}^{j,j+1}&=\vm{\nf}^j+\mathbf{M}^j\big((\ms^\vx\circ\ms^\vx)^{j}\big)^{-1}\big(\vx^{j+1}-\vx^j-(\ml^{\vx,j}\vm{\nf}^j+\vf^{\vx,j})\dt\big)=\vm{\nf}^j+O\big(\sqrt{\dt}\big),\\
    \mathbf{P}_{\text{f}}^{j,j+1}&=\mr{\nf}^j-\mathbf{M}^j\big((\ms^\vx\circ\ms^\vx)^{j}\big)^{-1}\ml^{\vx,j}\mr{\nf}^j\dt=\mr{\nf}^j+O(\dt).
\end{split} \label{eq:auxiliaryconditionalstatistics}
\end{align}
While at the end point, $t_{j+1}$, we have that the filtering and smoothing Gaussian statistics coincide, at $j$ we see that this is not necessarily the case. But at order $O(1)$, the filtered distribution is also recovered for the $j$-th time step. This observation is utilized in what follows.

In a very similar manner, and by recalling the observation in \eqref{eq:smootherfiltercoincide} and the normality of $\pp\big(\vy^{j+1},\vy^j\big|\vx^s,s\leq j+1\big)$, we have
\begin{equation*}
    \pp\big(\vy^{j+1},\vy^j\big|\vx^s,s\leq j+1\big)\overset{\rmd}{\sim}\mathcal{N}_{2l}\left(\begin{pmatrix}
        \vm{\nf}^{j+1}\\ \mathbf{m}_{\text{f}}^{j,j+1}
    \end{pmatrix}, \begin{pmatrix}
        \mr{\nf}^{j+1} & \mr{\nf}^{j+1}(\mathbf{E}^j)^\dagger \\
        \mathbf{E}^j\mr{\nf}^{j+1} & \mathbf{P}_{\text{f}}^{j,j+1}
    \end{pmatrix}\right),
\end{equation*}
where the form of the cross-covariance matrix comes from Lemma \ref{lem:cross-covariance} for $J=j+1$. Then Lemma \ref{lem:conditional} once again gives
\begin{align*}
    \ee{\vy^{j+1}|\vy^j,\vx^s,s\leq j+1}:=\mathbf{m}_{\text{f}}^{j+1,j+1}&=\vm{\nf}^{j+1}+\mr{\nf}^{j+1}(\mathbf{E}^j)^\dagger(\mathbf{P}_{\text{f}}^{j,j+1})^{-1}(\vy^j-\mathbf{m}_{\text{f}}^{j,j+1}),\\
    \mathrm{Var}\big(\vy^{j+1}\big|\vy^j,\vx^s,s\leq j+1\big):=\mathbf{P}_{\text{f}}^{j+1,j+1}&=\mr{\nf}^{j+1}-\mr{\nf}^{j+1}(\mathbf{E}^j)^\dagger(\mathbf{P}_{\text{f}}^{j,j+1})^{-1}\mathbf{E}^j\mr{\nf}^{j+1}.
\end{align*}
We now express these Gaussian statistics up to leading-order terms. But before that, it is important to observe that our end-goal is to produce a forward sampling formula, meaning our focus is extracting a discrete equation for the difference $\mathbf{m}_{\text{f}}^{j+1,j+1}-\vy^j$ up to leading-order $O(\dt)$, and then passing to the limit as to retrieve an SDE that can be used to forward sample the unobserved process based on the observations up until $\vx^{j+1}$. But exactly because our focus is the difference $\mathbf{m}_{\text{f}}^{j+1,j+1}-\vy^j$, and not $\vy^{j+1}-\vy^j$, it stands to reason that the statistics at point $j$, for arbitrarily small $\dt$, should necessarily coincide with the corresponding Gaussian filtering statistics. As such, we essentially have that
\begin{align*}
    \mathbf{m}_{\text{f}}^{j+1,j+1}&=\vm{\nf}^{j+1}+\mr{\nf}^{j+1}(\mathbf{E}^j)^\dagger(\mr{\nf}^j)^{-1}(\vy^j-\vm{\nf}^{j}),\\
    \mathbf{P}_{\text{f}}^{j+1,j+1}&=\mr{\nf}^{j+1}-\mr{\nf}^{j+1}(\mathbf{E}^j)^\dagger(\mr{\nf}^j)^{-1}\mathbf{E}^j\mr{\nf}^{j+1},
\end{align*}
for $\dt\ll 1$, where we note the appearance of $\vm{\nf}^j$ and $\mr{\nf}^j$ on the right-hand side instead of $\mathbf{m}_{\text{f}}^{j,j+1}$ and $\mathbf{P}_{\text{f}}^{j,j+1}$, respectively; this can also be argued rigorously through \eqref{eq:auxiliaryconditionalstatistics}.

As such, by using the results from Lemma \ref{lem:discretesmoother}, where we showed that
\begin{align*}
    \mathbf{E}^j&=\mathbf{C}_j^j+(\ms^\vy\circ\ms^\vx)^j\big((\ms^\vx\circ\ms^\vx)^{j}\big)^{-1}\mathbf{G}^{\vx,j}\dt+O(\dt^2)\\
    &=\mathbf{I}_{l\times l}+\big((\ms^\vy\circ\ms^\vx)^j\big((\ms^\vx\circ\ms^\vx)^{j}\big)^{-1}\mathbf{G}^{\vx,j}-\mathbf{G}^{\vy,j}\big)\dt+O(\dt^2),
\end{align*}
we have
\begin{align*}
    \mathbf{m}_{\text{f}}^{j+1,j+1}&=\vm{\nf}^{j+1}+\mr{\nf}^{j+1}(\mathbf{E}^j)^\dagger(\mr{\nf}^j)^{-1}(\vy^j-\vm{\nf}^{j})\\
    &=\vm{\nf}^{j+1}+\mr{\nf}^{j+1}(\mr{\nf}^j)^{-1}(\vy^j-\vm{\nf}^{j})\\
    &\hspace{1.7cm}+\mr{\nf}^{j+1}\big((\mathbf{G}^{\vx,j})^\dagger\big((\ms^\vx\circ\ms^\vx)^{j}\big)^{-1}(\ms^\vx\circ\ms^\vy)^j-(\mathbf{G}^{\vy,j})^\dagger\big)(\mr{\nf}^j)^{-1}(\vy^j-\vm{\nf}^{j})\dt+O(\dt^2).
\end{align*}
We now substitute $\mr{\nf}^{j+1}=\mr{\nf}^j+(\mathbf{N}_1^j+\mathbf{N}_2^j)\dt$ back into the previous equation, where by the discretization of \eqref{eq:filter2}, \eqref{eq:discretefilter2}), $\mathbf{N}_1^j$ and $\mathbf{N}_2^j$ are defined by
\begin{align*}
    \mathbf{N}_1^j&:=\ml^{\vy,j}\mr{\nf}^j+\mr{\nf}^j(\ml^{\vy,j})^{\dagger}+(\ms^\vy\circ\ms^\vy)^j=\mr{\nf}^j\mathbf{H}^j,\\
    \mathbf{N}_2^j&:=-\big(\mr{\nf}^j(\ml^{\vx,j})^{\dagger}+(\ms^\vy\circ\ms^\vx)^j\big)\big((\ms^\vx\circ\ms^\vx)^{j}\big)^{-1}\big(\ml^{\vx,j}\mr{\nf}^j+(\ms^\vx\circ\ms^\vy)^j\big)\\
    &=-\mr{\nf}^j(\mathbf{G}^{\vx,j})^\dagger\big((\ms^\vx\circ\ms^\vx)^{j}\big)^{-1}\mathbf{G}^{\vx,j}\mr{\nf}^j,
\end{align*}
with the auxiliary matrices $\mathbf{H}^j$ and $\mathbf{G}^{\vx,j}$ given in Lemma \ref{lem:discretesmoother}. Doing so, we end up with
\begin{align*}
    \mathbf{m}_{\text{f}}^{j+1,j+1}&=\vm{\nf}^{j+1}+(\vy^j-\vm{\nf}^{j})+(\mathbf{N}_1^j+\mathbf{N}_2^j)(\mr{\nf}^j)^{-1}(\vy^j-\vm{\nf}^{j})\dt\\
    &\hspace{1.7cm}+\mr{\nf}^{j}\big((\mathbf{G}^{\vx,j})^\dagger\big((\ms^\vx\circ\ms^\vx)^{j}\big)^{-1}(\ms^\vx\circ\ms^\vy)^j-(\mathbf{G}^{\vy,j})^\dagger\big)(\mr{\nf}^j)^{-1}(\vy^j-\vm{\nf}^{j})\dt+O(\dt^2),
\end{align*}
or equivalently, based on the definitions of $\mathbf{N}_1^j$, $\mathbf{N}_2^j$, $\mathbf{G}^{\vx,j}$, and $\mathbf{G}^{\vy,j}$, we have
\begin{align}
\begin{split}
    &\mathbf{m}_{\text{f}}^{j+1,j+1}-\vy^j=\vm{\nf}^{j+1}-\vm{\nf}^{j}+\ml^{\vy,j}(\vy^j-\vm{\nf}^j)\dt+\mr{\nf}^{j}(\mathbf{G}^{\vy,j})^\dagger(\mr{\nf}^{j})^{-1}(\vy^j-\vm{\nf}^{j})\dt\\
    &\hspace{0.1cm}-\mr{\nf}^{j}(\mathbf{G}^{\vx,j})^\dagger\big((\ms^\vx\circ\ms^\vx)^{j}\big)^{-1}\ml^{\vx,j}(\vy^j-\vm{\nf}^{j})\dt\mr{\nf}^{j}(\mathbf{G}^{\vx,j})^\dagger\big((\ms^\vx\circ\ms^\vx)^{j}\big)^{-1}(\ms^\vx\circ\ms^\vy)^j(\mr{\nf}^{j})^{-1}(\vy^j-\vm{\nf}^{j})\dt\\
    &\hspace{0.2cm}+\mr{\nf}^{j}(\mathbf{G}^{\vx,j})^\dagger\big((\ms^\vx\circ\ms^\vx)^{j}\big)^{-1}(\ms^\vx\circ\ms^\vy)^j(\mr{\nf}^{j})^{-1}(\vy^j-\vm{\nf}^{j})\dt-\mr{\nf}^{j}(\mathbf{G}^{\vy,j})^\dagger(\mr{\nf}^{j})^{-1}(\vy^j-\vm{\nf}^{j})\dt+O(\dt^2)\\
    \\
    &\hspace{2.3cm}\equiv \vm{\nf}^{j+1}-\vm{\nf}^{j} + (\ma^j-\mr{\nf}^{j}\mc^j)(\vy^j-\vm{\nf}^{j})\dt+O(\dt^2).
\end{split} \label{eq:discreteforwardsample}
\end{align}
This exactly accounts for the drift term in \eqref{eq:forwardsample} when passing to the limit. We just need to get a hold of the diffusion coefficient as well.

Recall from elementary SDE theory that for an Itô diffusion process with noise coefficient $\boldsymbol{\sigma}$, the covariance matrix for its infinitesimal increments is exactly given by $\boldsymbol{\sigma}\boldsymbol{\sigma}^\dagger\dt$. As such, accounting for the self-adjointness, the noise coefficient for the forward sampling formula is the square root of the positive-definite matrix $\mathrm{Var}\big(\vy^{j+1}|\vy^j,\vx^s,s\leq j+1\big)=\mathbf{P}_{\text{f}}^{j+1,j+1}$. We now proceed to find the form of this matrix up to $O(\dt)$ and take its unique square root. Going along this path, and by again using $\mr{\nf}^{j+1}=\mr{\nf}^j+(\mathbf{N}_1^j+\mathbf{N}_2^j)\dt$ and the expression of $\mathbf{E}^j$ from before, we have
\begin{align*}
    \mathbf{P}_{\text{f}}^{j+1,j+1}&=\mr{\nf}^{j+1}-\mr{\nf}^{j+1}(\mathbf{E}^j)^\dagger(\mr{\nf}^j)^{-1}\mathbf{E}^j\mr{\nf}^{j+1}\\
    &=\mr{\nf}^{j+1}\Big[\mathbf{I}_{l\times l}-\left(\mathbf{I}_{l\times l}+((\mathbf{G}^{\vx,j})^\dagger\big((\ms^\vx\circ\ms^\vx)^{j}\big)^{-1}(\ms^\vx\circ\ms^\vy)^j-(\mathbf{G}^{\vy,j})^\dagger)\dt\right)\\
    &\hspace{0.5cm}\times (\mr{\nf}^j)^{-1}\big(\mathbf{I}_{l\times l}+((\ms^\vy\circ\ms^\vx)^j\big((\ms^\vx\circ\ms^\vx)^{j}\big)^{-1}\mathbf{G}^{\vx,j}-\mathbf{G}^{\vy,j})\dt\big)(\mr{\nf}^j+(\mathbf{N}_1^j+\mathbf{N}_2^j)\dt)\Big]\\
    &=\mr{\nf}^{j+1}\Big[\mathbf{I}_{l\times l}-\mathbf{I}_{l\times l}-(\mr{\nf}^j)^{-1}((\ms^\vy\circ\ms^\vx)^j\big((\ms^\vx\circ\ms^\vx)^{j}\big)^{-1}\mathbf{G}^{\vx,j}-\mathbf{G}^{\vy,j})\mr{\nf}^j\dt\\
    &\hspace{0.5cm}-(\mr{\nf}^j)^{-1}(\mathbf{N}_1^j+\mathbf{N}_2^j)\dt-((\mathbf{G}^{\vx,j})^\dagger\big((\ms^\vx\circ\ms^\vx)^{j}\big)^{-1}(\ms^\vx\circ\ms^\vy)^j-(\mathbf{G}^{\vy,j})^\dagger)\dt\Big]+O(\dt^2)\\
    &=(\mathbf{G}^{\vy,j}-(\ms^\vy\circ\ms^\vx)^j\big((\ms^\vx\circ\ms^\vx)^{j}\big)^{-1}\mathbf{G}^{\vx,j})\mr{\nf}^j\dt-(\mathbf{N}_1^j+\mathbf{N}_2^j)\dt\\
    &\hspace{0.5cm}+\mr{\nf}^j((\mathbf{G}^{\vy,j})^\dagger-(\mathbf{G}^{\vx,j})^\dagger\big((\ms^\vx\circ\ms^\vx)^{j}\big)^{-1}(\ms^\vx\circ\ms^\vy)^j)\dt+O(\dt^2)\\
    &=\ml^{\vy,j}\mr{\nf}^j\dt+(\ms^\vy\circ\ms^\vy)^j\dt-(\ms^\vy\circ\ms^\vx)^j\big((\ms^\vx\circ\ms^\vx)^{j}\big)^{-1}\ml^{\vx,j}\mr{\nf}^j\dt\\
    &\hspace{0.5cm}-(\ms^\vy\circ\ms^\vx)^j\big((\ms^\vx\circ\ms^\vx)^{j}\big)^{-1}(\ms^\vx\circ\ms^\vy)^j\dt+\mr{\nf}^j(\ml^{\vy,j})^\dagger\dt+(\ms^\vy\circ\ms^\vy)^j\dt\\
    &\hspace{0.75cm}-\mr{\nf}^j(\ml^{\vx,j})^\dagger\big((\ms^\vx\circ\ms^\vx)^{j}\big)^{-1}(\ms^\vx\circ\ms^\vy)^j\dt-(\ms^\vy\circ\ms^\vx)^j\big((\ms^\vx\circ\ms^\vx)^{j}\big)^{-1}(\ms^\vx\circ\ms^\vy)^j\dt\\
    &\hspace{1cm}-\ml^{\vy,j}\mr{\nf}^j\dt-\mr{\nf}^j(\ml^{\vy,j})^\dagger\dt-(\ms^\vy\circ\ms^\vy)^j\dt+\mr{\nf}^j(\mathbf{G}^{\vx,j})^\dagger\big((\ms^\vx\circ\ms^\vx)^{j}\big)^{-1}\mathbf{G}^{\vx,j}\mr{\nf}^j+O(\dt^2)\\
    &=(\ms^\vy\circ\ms^\vy)^j\dt-(\ms^\vy\circ\ms^\vx)^j\big((\ms^\vx\circ\ms^\vx)^{j}\big)^{-1}\ml^{\vx,j}\mr{\nf}^j\dt\\
    &\hspace{0.5cm}-2(\ms^\vy\circ\ms^\vx)^j\big((\ms^\vx\circ\ms^\vx)^{j}\big)^{-1}(\ms^\vx\circ\ms^\vy)^j\dt-\mr{\nf}^j(\ml^{\vx,j})^\dagger\big((\ms^\vx\circ\ms^\vx)^{j}\big)^{-1}(\ms^\vx\circ\ms^\vy)^j\dt\\
    &\hspace{0.75cm}+\mr{\nf}^j\mc^j\mr{\nf}^j+(\ms^\vy\circ\ms^\vx)^j\big((\ms^\vx\circ\ms^\vx)^{j}\big)^{-1}\ml^{\vx,j}\mr{\nf}^j\dt+\mr{\nf}^j(\ml^{\vx,j})^\dagger\big((\ms^\vx\circ\ms^\vx)^{j}\big)^{-1}(\ms^\vx\circ\ms^\vy)^j\dt\\
    &\hspace{1cm}+(\ms^\vy\circ\ms^\vx)^j\big((\ms^\vx\circ\ms^\vx)^{j}\big)^{-1}(\ms^\vx\circ\ms^\vy)^j\dt+O(\dt^2)\\
    &\equiv (\mb^j+\mr{\nf}^j\mc^j\mr{\nf}^j)\dt+O(\dt^2).
\end{align*}
Since $\mathbf{P}_{\text{f}}^{j+1,j+1}=(\mb^j+\mr{\nf}^j\mc^j\mr{\nf}^j)\dt+O(\dt^2)$, this establishes that $\mb+\mr{\nf}\mc\mr{\nf}$ is nonnegative-definite $\pp$-a.s., or at least the infimum over time of its smallest eigenvalue is nonnegative $\pp$-a.s., where \eqref{eq:positivedefiniteforwardsample} can also be seen by the fact that $\mb\succeq \mathbf{0}_{l\times}$ $\pp$-a.s., as shown in the proof of Theorem \ref{thm:filtering}, and by the assumption on the positive-definiteness of the filter covariance matrices and $(\ms^\vx\circ\ms^\vx)^{-1}$ over time. As such its unique square root exists. So, by collecting the results thus far, using \eqref{eq:discreteforwardsample}, and taking the limit $\dt\to0^+$, leads to
\begin{equation*}
     \rmd\hat{\vy}_{\text{\nf}} = \rmd\vm{\nf}+(\ma-\mr{\nf}\mc)(\hat{\vy}_{\text{\nf}}-\vm{\nf})\rmd t+(\mb+\mr{\nf}\mc\mr{\nf})^{1/2}\rmd \vw_{\hat{\vy}_{\text{\nf}}},
\end{equation*}
which is exactly what we wanted to prove.
\end{proof}

\subsection{Proof of Theorem \ref{thm:forwardsamplefilter}} \label{sec:Appendix_E}

\begin{proof}[\textbf{Proof of Theorem \ref{thm:forwardsamplefilter}}]
    In this proof, the overline notation denotes taking the expectation of the process in question conditioned on the sub-$\sigma$-algebra $\cF_t^{\vx,\vy(t)}$. We plug-in the mean-fluctuation decomposition \eqref{eq:reynoldsdecomp} into \eqref{eq:forwardsample}, or equivalently by linearity, take the ensemble average of \eqref{eq:forwardsample} conditioned on $\cF_t^{\vx,\vy(t)}$, to get
    \begin{equation}
        \rmd \overline{\hat{\vy}}_{\text{f}}=\rmd\vm{\nf}+(\ma-\mr{\nf}\mc)(\overline{\hat{\vy}}_{\text{f}}-\vm{\nf})\rmd t,\label{eq:ensembleavgeqn}
    \end{equation}
    where $\ee{\vm{\nf}(t)\Big|\cF_t^{\vx,\vy(t)}}=\ee{\ee{\vyt|\cF^\vx_t}\Big|\cF_t^{\vx,\vy(t)}}=\ee{\vyt\big|\cF_t^{\vx}}=\vm{\nf}(t)$ by the stability property of conditional expectations and since $\ee{\vyt\big|\cF_t^{\vx}}$ is a $\cF_t^{\vx}$-measurable process with $\cF_t^{\vx}\subseteq\cF_t^{\vx,\vy(t)}$.

    Now, if $\overline{\hat{\vy}}_{\text{f}}$ at the initial time instant equals the filtering mean, $\vm{\nf}$, then $\overline{\hat{\vy}}_{\text{f}}\equiv\vm{\nf}$ at all time instants, due to the uniqueness of the solution to \eqref{eq:ensembleavgeqn} by the underlying assumptions considered thus far, regardless of whether we are talking about its weak or strong solution. On the other hand, if $\overline{\hat{\vy}}_{\text{f}}$ differs from the filter mean $\vm{\nf}$ at the initial time instant, i.e., in the case of misidentified initial conditions, then $\overline{\hat{\vy}}_{\text{f}}\to\vm{\nf}$, as time evolves, instead, with this convergence being exponentially fast. This is an immediate consequence of the assumption that the real parts of all the eigenvalues of $\mathbf{A}-\mr{\nf}\boldsymbol{\Gamma}$ are negative ($\pp$-a.s.) and by the linearity of \eqref{eq:ensembleavgeqn} \cite{soong1973random, neckel2013random, khasminskii2012stochastic, arnold2014random, crauel2015nonautonomous, han2017random}, where the exponential convergence is due to the mean-square exponential stability of the filter posterior mean which follows again by this condition on the spectrum of $\mathbf{A}-\mr{\nf}\boldsymbol{\Gamma}$. This establishes \textbf{(a)}.

    Now, by subtracting \eqref{eq:ensembleavgeqn} from \eqref{eq:forwardsample}, yields an SDE for the fluctuation part of $\hat{\vy}$,
    \begin{equation} \label{eq:forwresidualeqn}
        \rmd\hat{\vy}_{\text{\nf}}'=(\ma-\mr{\nf}\mc)\hat{\vy}_{\text{\nf}}'\rmd t +(\mb+\mr{\nf}\mc\mr{\nf})^{1/2}\rmd \vw_{\hat{\vy}_{\text{\nf}}}.
    \end{equation}
    The covariance is then given by $\overline{\hat{\vy}_{\text{\nf}}'(\hat{\vy}_{\text{\nf}}')^\dagger}$, which by Itô's lemma \cite{mao2008stochastic, evans2012introduction, oksendal2003stochastic, gardiner2009stochastic, steele2001stohastic} and linearity, has a differential given by
    \begin{equation} \label{eq:forwitolemmacov}
        \rmd \left(\overline{\hat{\vy}_{\text{\nf}}'(\hat{\vy}_{\text{\nf}}')^\dagger}\right)=\overline{\hat{\vy}_{\text{\nf}}'(\rmd\hat{\vy}_{\text{\nf}}')^\dagger}+\overline{\rmd\hat{\vy}_{\text{\nf}}'(\hat{\vy}_{\text{\nf}}')^\dagger}+\overline{\rmd\hat{\vy}_{\text{\nf}}'(\rmd\hat{\vy}_{\text{\nf}}')^\dagger}.
    \end{equation}
    As such, by plugging-in \eqref{eq:forwresidualeqn} into \eqref{eq:forwitolemmacov}, and using the facts
    \begin{gather*}
        \overline{\rmd \vw_{\hat{\vy}_{\text{\nf}}}(\hat{\vy}_{\text{\nf}}')^\dagger}=\overline{\hat{\vy}_{\text{\nf}}'(\rmd \vw_{\hat{\vy}_{\text{\nf}}})^\dagger}=\mathbf{0}_{l\times l},\\
        \overline{\rmd \vw_{\hat{\vy}_{\text{\nf}}}(\rmd \vw_{\hat{\vy}_{\text{\nf}}})^\dagger}=\mathbf{I}_{l\times l}\rmd t,
    \end{gather*}
    then up to leading-order term $O(\rmd t)$ we have,
    \begin{align*}
         \rmd \left(\overline{\hat{\vy}_{\text{\nf}}'(\hat{\vy}_{\text{\nf}}')^\dagger}\right)&=\big((\ma-\mr{\nf}\mc)\overline{\hat{\vy}_{\text{\nf}}'(\hat{\vy}_{\text{\nf}}')^\dagger}+\overline{\hat{\vy}_{\text{\nf}}'(\hat{\vy}_{\text{\nf}}')^\dagger}(\ma^\dagger-\mc\mr{\nf})+\mb+\mr{\nf}\mc\mr{\nf}\big)\rmd t\\
         &=\big(\ma\mr{\nf}+\mr{\nf}\ma^\dagger+\mb- \mr{\nf}\mc \mr{\nf}\big)\rmd t,
    \end{align*}
    which is exactly the same evolution equation as the for the filter covariance \eqref{eq:filter2} after using the definitions of $\ma$, $\mb$, and $\mc$ (\eqref{eq:auxiliarymata}, \eqref{eq:auxiliarymatb}, and \eqref{eq:auxiliarymatc}, respectively), with the last equality being a result of the fact that either $\overline{\hat{\vy}}_{\text{f}}=\vm{\nf}$ or $\overline{\hat{\vy}}_{\text{f}}\to\vm{\nf}$ exponentially fast in time, depending on the initial conditions, and as such, by continuity, $\overline{\hat{\vy}_{\text{\nf}}'(\hat{\vy}_{\text{\nf}}')^{\tran}}=\mr{\nf}$ or $\overline{\hat{\vy}_{\text{\nf}}'(\hat{\vy}_{\text{\nf}}')^{\tran}}\to\mr{\nf}$ exponentially fast in time, correspondingly. As such, the last equality should be understood either as a genuine equality in the case of correct initial conditions, or as a convergence result as $t\uparrow$, with the convergence being exponentially fast, when the initial conditions are misidentified. This establishes \textbf{(b)}, which completes the proof.
\end{proof}

\subsection{Proof of Theorem \ref{thm:backwardsample}} \label{sec:Appendix_F}

\begin{proof}[\textbf{Proof of Theorem \ref{thm:backwardsample}}]
    We proceed in a similar manner as in the proof of Theorem \ref{thm:forwardsample}. We start with the distribution
\begin{equation*}
    \pp\big(\vy^j\big|\vy^{j+1},\vx^s, s\leq J\big)=\pp\big(\vy^j\big|\vy^{j+1},\vx^s, s\leq j+1\big),
\end{equation*}
where in Lemma \ref{lem:backwardscond} we showed that it is Gaussian, $\mathcal{N}_l(\mathbf{m}_{j+1}^j,\mathbf{P}_{j+1}^j)$, with mean and covariance given by \eqref{eq:backcond3}--\eqref{eq:backcond4}. Looking at the covariance matrix and using the definitions of the matrices appearing in its expression (up to leading-order $O(\dt)$), we have
\begin{align*}
     \mathbf{P}_{j+1}^j&=\mr{\nf}^j-\mathbf{C}_{j+1}^j\mathbf{C}^j_{22}(\mathbf{C}_{j+1}^j)^\dagger=\mr{\nf}^j-\mathbf{C}_{j+1}^j(\mathbf{C}^j_{12})^\dagger\\
     &=\mr{\nf}^j-\underbrace{\mathbf{E}^j}_{\displaystyle\mathclap{=\mathbf{C}_j^j+(\ms^\vy\circ\ms^\vx)^j\big((\ms^\vx\circ\ms^\vx)^{j}\big)^{-1}\mathbf{G}^{\vx,j}\dt+O(\dt^2)}}(\mathbf{I}_{l\times l}+\ml^{\vy,j}\dt)\mr{\nf}^j-\overbrace{\mathbf{F}^j}^{\displaystyle\mathclap{=-\mr{\nf}^j\big((\mathbf{K}^j)^\dagger-(\ml^{\vx,j})^\dagger\big((\ms^\vx\circ\ms^\vx)^{j}\big)^{-1}\big)+O(\dt)}}\ml^{\vx,j}\mr{\nf}^j\dt\\
     &=\mr{\nf}^j-\overbrace{\mathbf{C}_j^j}^{\displaystyle\mathclap{\hspace{2.4cm}=\mathbf{I}_{l\times l}-\mathbf{G}^{\vy,j}\dt}}(\mathbf{I}_{l\times l}+\ml^{\vy,j}\dt)\mr{\nf}^j-(\ms^\vy\circ\ms^\vx)^j\big((\ms^\vx\circ\ms^\vx)^{j}\big)^{-1}\mathbf{G}^{\vx,j}\mr{\nf}^j\dt\\
     &\hspace{1cm}+\mr{\nf}^j\big((\mathbf{K}^j)^\dagger-(\ml^{\vx,j})^\dagger\big((\ms^\vx\circ\ms^\vx)^{j}\big)^{-1}\big)\ml^{\vx,j}\mr{\nf}^j\dt+O(\dt^2)\\
     &=\mr{\nf}^j-\mr{\nf}^j+\underbrace{(\mathbf{G}^{\vy,j}-\ml^{\vy,j})\mr{\nf}^j}_{\displaystyle\mathclap{=(\ms^\vy\circ\ms^\vy)^j}}\dt-(\ms^\vy\circ\ms^\vx)^j\big((\ms^\vx\circ\ms^\vx)^{j}\big)^{-1}\mathbf{G}^{\vx,j}\mr{\nf}^j\dt\\
     &\hspace{1cm}+\mr{\nf}^j(\mathbf{K}^j)^\dagger\ml^{\vx,j}\mr{\nf}^j\dt-\mr{\nf}^j(\ml^{\vx,j})^\dagger\big((\ms^\vx\circ\ms^\vx)^{j}\big)^{-1}\ml^{\vx,j}\mr{\nf}^j\dt+O(\dt^2)\\
     &=(\ms^\vy\circ\ms^\vy)^j\dt-(\ms^\vy\circ\ms^\vx)^j\big((\ms^\vx\circ\ms^\vx)^{j}\big)^{-1}\mathbf{G}^{\vx,j}\mr{\nf}^j\dt\\
     &\hspace{0.5cm}+((\ms^\vy\circ\ms^\vx)^j+\mr{\nf}^j(\ml^{\vx,j})^\dagger)\big((\ms^\vx\circ\ms^\vx)^{j}\big)^{-1}\ml^{\vx,j}\mr{\nf}^j\dt\\
     &\hspace{1cm}-\mr{\nf}^j(\ml^{\vx,j})^\dagger\big((\ms^\vx\circ\ms^\vx)^{j}\big)^{-1}\ml^{\vx,j}\mr{\nf}^j\dt+O(\dt^2)\\
     &=(\ms^\vy\circ\ms^\vy)^j\dt-(\ms^\vy\circ\ms^\vx)^j\big((\ms^\vx\circ\ms^\vx)^{j}\big)^{-1}(\ml^{\vx,j}\mr{\nf}^j+(\ms^\vx\circ\ms^\vy)^j)\dt\\
     &\hspace{1cm}+(\ms^\vy\circ\ms^\vx)^j\big((\ms^\vx\circ\ms^\vx)^{j}\big)^{-1}\ml^{\vx,j}\mr{\nf}^j\dt+O(\dt^2)\\
     &=(\ms^\vy\circ\ms^\vy)^j\dt-(\ms^\vy\circ\ms^\vx)^j\big((\ms^\vx\circ\ms^\vx)^{j}\big)^{-1}(\ms^\vx\circ\ms^\vy)^j\dt+O(\dt^2)\\
     &=\mb^j\dt+O(\dt^2),
\end{align*}
which shows that up to leading-order $\mathbf{P}_{j+1}^j=\mb^j\dt+O(\dt^2)$. This fully accounts for the diffusion tensor appearing in \eqref{eq:backwardsample}, and as such we now need to just account for the drift term through $\mathbf{m}_{j+1}^j-\vy^{j+1}$.

Using \eqref{eq:backcond3}, we have
\begin{align*}
    \mathbf{m}_{j+1}^j-\vy^{j+1}&=\vm{\nf}^j+(\mathbf{E}^j-\mathbf{I}_{l\times l})\vy^{j+1}-\mathbf{E}^j\big((\mathbf{I}_{l\times l}+\ml^{\vy,j}\dt)\vm{\nf}^j+\vf^{\vy,j}\dt\big)\\
    &\hspace{2cm}+\mathbf{F}^j\big(\vx^{j+1}-\vx^{j}-(\ml^{\vx,j}\vm{\nf}^j+\vf^{\vx,j})\dt\big).
\end{align*}
We analyze each term in this expression. We already know from the proof of Theorem \ref{thm:smoothing} (see derivations for the $\boldsymbol{\gamma}_3$ and $\boldsymbol{\gamma}_4$ terms) that
\begin{equation*}
    \mathbf{F}^j\big(\vx^{j+1}-\vx^{j}-(\ml^{\vx,j}\vm{\nf}^j+\vf^{\vx,j})\dt\big)=-(\ms^\vy\circ\ms^\vx)^j\big((\ms^\vx\circ\ms^\vx)^{j}\big)^{-1}\big(\vx^{j+1}-\vx^{j}-(\ml^{\vx,j}\vm{\nf}^j+\vf^{\vx,j})\dt\big).
\end{equation*}
As for the other terms, we have
\begin{align*}
    \vm{\nf}^j-\mathbf{E}^j\big((\mathbf{I}_{l\times l}+\ml^{\vy,j}\dt)\vm{\nf}^j+\vf^{\vy,j}\dt\big)&=\vm{\nf}^j-(\mathbf{I}_{l\times l}-\mathbf{G}^{\vy,j}\dt)\big((\mathbf{I}_{l\times l}+\ml^{\vy,j}\dt)\vm{\nf}^j+\vf^{\vy,j}\dt\big)\\
    &\hspace{2cm}-(\ms^\vy\circ\ms^\vx)^j\big((\ms^\vx\circ\ms^\vx)^{j}\big)^{-1}\mathbf{G}^{\vx,j}\vm{\nf}^j\dt\\
    &=-\ml^{\vy,j}\vm{\nf}^j\dt-\vf^{\vy,j}\dt+\mathbf{G}^{\vy,j}\vm{\nf}^j\dt\\
    &\hspace{2cm}-(\ms^\vy\circ\ms^\vx)^j\big((\ms^\vx\circ\ms^\vx)^{j}\big)^{-1}\mathbf{G}^{\vx,j}\vm{\nf}^j\dt,
\end{align*}
and
\begin{equation*}
    (\mathbf{E}^j-\mathbf{I}_{l\times l})\vy^{j+1}=((\ms^\vy\circ\ms^\vx)^j\big((\ms^\vx\circ\ms^\vx)^{j}\big)^{-1}\mathbf{G}^{\vx,j}-\mathbf{G}^{\vy,j})\vy^{j+1}\dt.
\end{equation*}
Adding these last two term then gives
\begin{align*}
    &\vm{\nf}^j+(\mathbf{E}^j-\mathbf{I}_{l\times l})\vy^{j+1}-\mathbf{E}^j\big((\mathbf{I}_{l\times l}+\ml^{\vy,j}\dt)\vm{\nf}^j+\vf^{\vy,j}\dt\big)\\
    &\hspace{2cm}=\big(\mathbf{G}^{\vy,j}-(\ms^\vy\circ\ms^\vx)^j\big((\ms^\vx\circ\ms^\vx)^{j}\big)^{-1}\mathbf{G}^{\vx,j}\big)(\vm{\nf}^j-\vy^{j+1})\dt-\ml^{\vy,j}\vm{\nf}^j\dt-\vf^{\vy,j}\dt\\
    &\hspace{2cm}=(\mb^j(\mr{\nf}^j)^{-1}+\ma^j)(\vm{\nf}^j-\vy^{j+1})\dt-\ml^{\vy,j}\vm{\nf}^j\dt-\vf^{\vy,j}\dt.
\end{align*}
By then summing up these results we recover
\begin{align*}
    \mathbf{m}_{j+1}^j-\vy^{j+1}&=-\ml^{\vy,j}\vm{\nf}^j\dt-\vf^{\vy,j}\dt+(\mb^j(\mr{\nf}^j)^{-1}+\ma^j)(\vm{\nf}^j-\vy^{j+1})\dt\\
    &\hspace{1cm}+(\ms^\vy\circ\ms^\vx)^j\big((\ms^\vx\circ\ms^\vx)^{j}\big)^{-1}\big(\vx^{j}-\vx^{j+1}+(\ml^{\vx,j}\vm{\nf}^j+\vf^{\vx,j})\dt\big),
\end{align*}
and so by combining all of these and taking the limit $\dt\to0^+$, leads to
\begin{equation*}
        \smooth{\rmd \hat{\vy}_{\text{\ns}}} = (-\ml^\vy \vm{\nf}-\vf^\vy +(\mb\mr{\nf}^{-1}+\ma)(\vm{\nf}-\hat{\vy}_{\text{\ns}}))\rmd t+ (\ms^\vy\circ \ms^\vx)(\ms^\vx\circ \ms^\vx)^{-1}\big(\smooth{\rmd \vx}+(\ml^\vx\vm{\nf}+\vf^\vx)\rmd t \big) +\mb^{1/2}\rmd \vw_{\hat{\vy}_{\text{\ns}}},
\end{equation*}
which is exactly what we wanted to prove.
\end{proof}

\subsection{Proof of Theorem \ref{thm:backwardsamplesmoother}} \label{sec:Appendix_G}

\begin{proof}[\textbf{Proof of Theorem \ref{thm:backwardsamplesmoother}}]
    In this proof, the overline notation denotes taking the expectation of the quantity in question conditioned on the $\sigma$-algebra $\cF_T^{\vx}$. Furthermore, since we are conditioning on the whole observational period, intricacies like those that appeared in the proof of Theorem \ref{thm:forwardsamplefilter} should not arise here. We plug-in the mean-fluctuation decomposition \eqref{eq:reynoldsdecomp} into \eqref{eq:backwardsample}, or equivalently by linearity, take the ensemble average of \eqref{eq:alternativebacksample} conditioned on the complete observational $\sigma$-algebra $\cF_T^{\vx}$, to get
    \begin{equation}
        \overline{\smooth{\rmd \hat{\vy}}}_{\text{s}}=\smooth{\rmd \vm{\ns}}-(\ma+\mb\mr{\nf}^{-1})(\overline{\hat{\vy}}_{\text{s}}-\vm{\ns})\rmd t,\label{eq:backensembleavgeqn}
    \end{equation}
    since $\ee{\vm{\ns}\big|\cF_T^{\vx}}=\ee{\ee{\vyt\big|\cF_T^{\vx}}\big|\cF_T^{\vx}}=\vm{\ns}$, by the stability property of conditional expectations, and $\vw_{\hat{\vy}_{\ns}}$ is a zero expectation source term. 
    
    Now, in a similar argument to the one found in the proof of Theorem \ref{thm:forwardsamplefilter}, if $\overline{\hat{\vy}}_{\text{s}}$ coincides with the smoother mean at the endpoint $t=T$, then by uniqueness of the solution to \eqref{eq:backensembleavgeqn} (again under all the underlying assumptions considered thus far), they should do so at all time instants, regardless of whether we are talking about its weak or strong solution. Note here that unlike in the filter-based sampling strategy (see proof of Theorem \ref{thm:forwardsamplefilter}), where it was possible to obtain exponentially fast converging consistency even in the case of incorrect or misidentified initial conditions, here it is required for the consistency to hold that the backward samples match the smoother mean at $t=T$, which in turn matches the filter mean. This is because now time moves backwards, so in the case of misidentified ``terminal" conditions, i.e., $\overline{\hat{\vy}}_{\text{s}}\neq\vm{\ns}$ at the end point $t=T$, even though there the filter and smoother statistics necessarily coincide, it could be that the relaxation to the smoother mean is slow as time ``devolves" from $T$ down to $0$, or equivalently as time runs backwards. Even though the assumption on the spectrum of $\ma+\mb\mr{\nf}^{-1}$ provides the exponentially mean-square stability of the smoother covariance statistics by the theory of backward random ordinary differential equations (see \eqref{eq:revbackinter2}), it might still fail to provide fast-converging consistency between the Gaussian statistics of the backward samples and smoother state estimation by virtue of the linearity in \eqref{eq:backensembleavgeqn}; this can also be observed by the behavior of the expected spectrum of the damping feedback $\ma+\mb\mr{\nf}^{-1}$ in Figure \ref{fig:case_study_fig_3}. This establishes \textbf{(a)}.

    Now, subtracting \eqref{eq:backensembleavgeqn} from \eqref{eq:alternativebacksample} yields an SDE for the fluctuation part of $\hat{\vy}_{\text{\ns}}$:
    \begin{equation} \label{eq:backresidualeqn}
        \smooth{\rmd\hat{\vy}}_{\text{s}}'=-(\ma+\mb\mr{\nf}^{-1})(\hat{\vy}_{\text{\ns}}-\vm{\ns})\rmd t+\mb^{1/2}\rmd \vw_{\hat{\vy}_{\text{\ns}}}.
    \end{equation}
    The covariance is given by $\overline{\hat{\vy}_{\text{\ns}}'\left(\hat{\vy}_{\text{\ns}}'\right)^\dagger}$, which by Itô's lemma \cite{mao2008stochastic, evans2012introduction, oksendal2003stochastic, gardiner2009stochastic, steele2001stohastic} and linearity, has a differential given by
    \begin{equation} \label{eq:backitolemmacov}
        \smooth{\rmd \big(\overline{\hat{\vy}_{\text{\ns}}'\left(\hat{\vy}_{\text{\ns}}'\right)^\dagger}\big)}=\overline{\hat{\vy}_{\text{\ns}}'\Big(\smooth{\rmd\hat{\vy}}_{\text{s}}'\Big)^\dagger}+\overline{\smooth{\rmd\hat{\vy}}_{\text{s}}'\Big(\hat{\vy}_{\text{\ns}}'\Big)^\dagger}-\overline{\smooth{\rmd\hat{\vy}}_{\text{s}}'\Big(\smooth{\rmd\hat{\vy}}_{\text{s}}'\Big)^\dagger}.
    \end{equation}
    As such, by plugging-in \eqref{eq:backresidualeqn} into \eqref{eq:backitolemmacov}, similar to the proof of Theorem \ref{thm:forwardsamplefilter}, up to leading-order term $O(\rmd t)$ we end up with
    \begin{equation} \label{eq:quadformcovsmooth}
         \smooth{\rmd \big(\overline{\hat{\vy}_{\text{\ns}}'\left(\hat{\vy}_{\text{\ns}}'\right)^\dagger}\big)}=-\big((\ma+\mb\mr{\nf}^{-1})\mr{\ns}+\mr{\ns}(\ma+\mb\mr{\nf}^{-1})^\dagger-\mb\big)\rmd t,
    \end{equation}
    since we have established in the proof of \textbf{(a)} that $\overline{\hat{\vy}_{\text{\ns}}}=\vm{\ns}$ necessarily, for all time instants $T\geq t\geq 0$ if they match at the end point (or $\overline{\hat{\vy}_{\text{\ns}}}$ converges to the smoother mean as $t$ decreases from $T$ towards $0$ exponentially fast, in the case where they differ at $t=T$), and so
    \begin{equation*}
        \ee{\hat{\vy}_{\text{\ns}}'(\hat{\vy}_{\text{\ns}}-\vm{\ns})^\dagger\Big|\cF_T^{\vx}}=\ee{(\hat{\vy}_{\text{\ns}}-\vm{\ns})\left(\hat{\vy}_{\text{\ns}}'\right)^\dagger\Big|\cF_T^{\vx}}=\mr{\ns},
    \end{equation*}
    with \eqref{eq:quadformcovsmooth} being exactly the same as the smoother covariance equation given in \eqref{eq:revbackinter2}. This establishes \textbf{(b)}, thus ending the proof.
\end{proof}

\bibliographystyle{unsrt}
\bibliography{references2}

\end{document}